\newtheorem{thm}{Theorem}[section]
\newtheorem{theorem}[thm]{Theorem}
\newtheorem{definition}[thm]{Definition}
\newtheorem{lemma}[thm]{Lemma}
\newtheorem{corollary}[thm]{Corollary}
\newtheorem{conjecture}[thm]{Conjecture}
\newtheorem{proposition}[thm]{Proposition}
\newcommand{\Q}{{\mathcal{Q}}}
\newcommand{\F}{{\mathcal{F}}}
\newcommand{\deletedred}{{\color{red}{\tiny{$\odot$ \kern-1em $\star$}}}}
\newcommand{\bridgered}{{\color{red}{\footnotesize{$\circ$ \kern-.85em $\star$}}}}
\newcommand{\bridgegreen}{{\color{green}{\footnotesize{$\circ$}}}}
\newcommand{\deletedgreen}{{\color{green}{\tiny{$\ominus$}}}}
\newcommand{\firsttermgreen}{{\color{green}{\tiny{$\triangle$}}}}
\newcommand{\firsttermred}{{\color{red}{\tiny{$\triangle$ \kern-1em $\star$}}}}
\newcommand{\secondtermgreen}{{\color{green}{\tiny{$\square$}}}}
\newcommand{\secondtermred}{{\color{red}{\tiny{$\square$ \kern-.9em \raise.3ex\hbox{$\star$}}}}}
\newcommand{\twoplates}{{
\put(30,120){\line(1,0){50}} \put(0,70){\line(1,0){50}} 
\put(0,70){\line(3,5){30}} \put(50,70){\line(3,5){30}} 
\put(30,50){\line(1,0){50}} \put(0,0){\line(1,0){50}} 
\put(0,0){\line(3,5){30}} \put(50,0){\line(3,5){30}}}}
\newcommand{\twoplatescl}{{
\put(30,90){\line(1,0){50}} \put(0,50){\line(1,0){50}} 
\put(0,50){\line(3,4){30}} \put(50,50){\line(3,4){30}} 
\put(30,40){\line(1,0){50}} \put(0,0){\line(1,0){50}} 
\put(0,0){\line(3,4){30}} \put(50,0){\line(3,4){30}}}}
\begin{document}

\title{Path coverings with prescribed ends in faulty hypercubes}

\author{Nelson Casta\~neda\\
Department of Mathematical Sciences,\\
Central Connecticut State University,\\
1615 Stanley Street, New Britain, CT 06050, USA\\
E-mail: castanedan@ccsu.edu
\and
Ivan S. Gotchev\\
Department of Mathematical Sciences,\\
Central Connecticut State University,\\
1615 Stanley Street, New Britain, CT 06050, USA\\
E-mail: gotchevi@ccsu.edu}

\date{}

\maketitle

%% Classification and key words; note that the 2010 classification is used:

\renewcommand{\thefootnote}{}

\footnote{2010 \emph{Mathematics Subject Classification}: Primary 05C38; 
05C45; Secondary 68R10; 68M10; 68M15.}

\footnote{\emph{Key words and phrases}: Hypercube, path covering, prescribed ends of a path, Hamiltonian path, Hamiltonian cycle.}

\renewcommand{\thefootnote}{\arabic{footnote}}
\setcounter{footnote}{0}

\begin{abstract}
We discuss the existence of vertex disjoint path coverings with
prescribed ends for the $n$-dimensional hypercube with or without
deleted vertices. Depending on the type of the set of deleted
vertices and desired properties of the path coverings we establish
the minimal integer $m$ such that for every $n \ge m$ such path
coverings exist. Using some of these results, for $k \le 4$, we
prove Locke's conjecture that a hypercube with $k$ deleted
vertices of each parity is Hamiltonian if $n \ge k +2.$ Some of
our lemmas substantially generalize known results of I. Havel and
T. Dvo\v{r}\'{a}k. At the end of the paper we formulate some
conjectures supported by our results.
\end{abstract}

\section{Introduction}

The $n-$dimensional binary hypercube $\Q_n$ is the graph whose
vertex set $\mathcal{V}(\Q_n)$ consists of  all binary sequences
of length $n$ and whose edge set $\mathcal{E}(\Q_n)$ consists of
all pairs of binary sequences that differ in exactly one position.
In recent years some attention has been given to the problem of
finding Hamiltonian cycles or maximal cycles in the
$n-$dimensional binary hypercube $\Q_n$ with faulty vertices or
with faulty edges.

In \cite{parkhomenko} Parkhomenko illustrates some techniques of
constructing cycles without faulty edges or vertices in
low dimensional hypercubes. His methods rely on a classification of
Hamiltonian cycles for hypercubes of dimension $4$ or less.

Caha and Koubek \cite{cahakoubek} and Dvo\v{r}\'{a}k \cite{dovarak}
have addressed the problem of prescribing a set of
edges $ \mathcal{P} $ through which a Hamiltonian cycle in $\Q_n$ must pass.
The best theorem in this direction known to us is the following:

\begin{theorem}[Dvo\v{r}\'{a}k \cite{dovarak}] Let $ {\mathcal P}$ be a
set of edges in $\Q_n$ such that each connected component of the subgraph
generated by ${\mathcal P}$ is a simple path. If the cardinality of
$ {\mathcal P}$ is less than or equal to $2n-3,$ then there exists a
Hamiltonian cycle in $\Q_n$ that passes through each edge in ${\mathcal P}.$
\end{theorem}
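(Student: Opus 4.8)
The plan is to argue by induction on $n$. For the base cases (say $n \le 3$) the bound $2n-3$ is small enough that the prescribed families are very short and the claim can be checked directly: for $n=2$ at most one edge is prescribed and $\Q_2$ is a $4$-cycle, while for $n=3$ one has at most three prescribed edges forming disjoint paths, a configuration that can be resolved by inspection (or via Parkhomenko's classification). So assume the statement for $n-1$ and let $\mathcal{P}$, with $|\mathcal{P}| \le 2n-3$, be prescribed in $\Q_n$, each component of the generated subgraph being a path.

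The core idea is to split $\Q_n$ along a coordinate direction $d$ into two subcubes $Q^0, Q^1 \cong \Q_{n-1}$, solve the problem in each half by the inductive hypothesis, and then fuse the two Hamiltonian cycles into one. Accordingly I would first partition $\mathcal{P}$ into the interior edges $P_0 \subseteq \mathcal{E}(Q^0)$, the interior edges $P_1 \subseteq \mathcal{E}(Q^1)$, and the crossing edges $P_c$ lying in direction $d$; write $p_d=|P_c|$ for the number of prescribed edges in direction $d$. The inductive hypothesis demands $|P_i| \le 2(n-1)-3 = 2n-5$ in each half, with one slot reserved for a \emph{bridge} edge used in the fusion; hence I need a direction $d$ for which the split is both \emph{balanced} ($\max(|P_0|,|P_1|)$ small) and has \emph{few crossings} ($p_d \le 1$). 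An averaging argument over the $n$ directions shows at once that some direction has at most one crossing edge, since $\sum_d p_d = |\mathcal{P}| < 2n$. The delicate point is to secure balance simultaneously.

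For the fusion step I would use the standard two-edge swap: given a Hamiltonian cycle $C_0$ of $Q^0$ containing an edge $ab$ and a Hamiltonian cycle $C_1$ of $Q^1$ containing the parallel edge $a'b'$, where $x' = x \oplus e_d$, deleting $ab, a'b'$ and inserting the crossing edges $aa', bb'$ produces a single Hamiltonian cycle of $\Q_n$. To force the two cycles to contain matching edges I add one bridge edge to $P_0$ and its mirror to $P_1$ before applying induction; when $P_c$ already contains a (necessarily unique, since $p_d\le 1$) prescribed crossing edge $aa'$, I let that edge play the role of one of the two inserted crossings and only need to prescribe a single further bridge pair. In every case all edges of $\mathcal{P}$ are covered: the interior edges by the inductively produced sub-cycles and the prescribed crossing edge by the swap itself.

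The main obstacle I anticipate is the direction-selection/balancing step, and in particular the degenerate situation in which, for \emph{every} admissible direction, almost all prescribed edges concentrate in one subcube, so that $|P_0| > 2n-5$ no matter how one splits. Here the two requirements---few crossings and balanced halves---pull against each other, and pure averaging is insufficient. My intended remedy is to exploit the path structure: if one half is overloaded, some prescribed path must run a long way inside it, and choosing $d$ to be the direction of one of that path's edges converts an interior edge into a crossing edge, relieving the overloaded side while keeping $p_d$ controlled. Verifying that such a choice is always available without destroying the ``disjoint paths'' property in either half---no vertex acquires degree $3$, no prescribed path is closed into a cycle, and the bridge edges stay disjoint from $\mathcal{P}$ and consistent across the two halves---is where the real work lies, and it is likely to demand a finer case analysis than the clean averaging bound suggests.
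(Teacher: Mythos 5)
There is a genuine gap, and it is precisely the point your last paragraph flags as ``where the real work lies.'' First, note that the paper you are being compared against does not prove this theorem at all: it is quoted from Dvo\v{r}\'{a}k's article, and the paper records only that Dvo\v{r}\'{a}k's proof rests on lemmas about covering the vertices of $\Q_n$ by vertex-disjoint paths with \emph{prescribed ends} (Havel's lemma, i.e.\ $[0,0,1,0]=1$, and Dvo\v{r}\'{a}k's lemma, i.e.\ $[0,0,2,0]=2$). That remark already identifies what is missing from your plan: your inductive hypothesis --- a Hamiltonian cycle through prescribed edges in each half, fused by a single two-edge swap --- is too weak to carry the induction. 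The swap framework forces you to demand simultaneously $p_d\le 1$ and both halves within the budget $2n-6$ (after reserving a bridge pair), and these requirements are irreconcilable, not merely delicate. Since a set of $2n-3$ edges forming disjoint paths spans at most $4n-6$ vertices, the entire prescribed set can be confined to a subcube of dimension roughly $\log_2 n$, chosen so that each direction of that subcube carries at least two prescribed edges. Then every direction $d$ outside the subcube gives $p_d=0$ but puts all $2n-3>2n-5$ edges into one half, killing the induction, while every direction inside the subcube gives $p_d\ge 2$, killing the fusion: a Hamiltonian cycle of $\Q_n$ that uses $c\ge p_d\ge 2$ crossing edges ($c$ even) intersects each half not in a Hamiltonian cycle or path but in a family of $c/2$ vertex-disjoint paths covering that half, with endpoints dictated by the crossing edges. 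Your proposed remedy --- deliberately converting interior edges of a long path into crossing edges --- lands you squarely in this second regime, which your single-swap construction cannot handle. The same budget problem appears even in benign cases: with $p_d=0$ the two halves share $2n-3$ prescribed edges, so one of them always exceeds $2n-6$ unless the split happens to be balanced, and nothing in an averaging argument forces balance and few crossings at once.

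The missing idea, then, is to strengthen the statement being proved by induction: what one must produce in each half is a \emph{path covering with prescribed end vertices} (and prescribed interior edges), so that the parts of the cycle in the two halves can be stitched together across arbitrarily many crossing edges. This is exactly the machinery that Dvo\v{r}\'{a}k's proof uses and that the present paper develops and quantifies with the symbols $[M,C,N,O]$; the fact that even the unfaulted three-pair case requires dimension five (the paper proves $[0,0,3,0]=5$, with an explicit counterexample in $\Q_4$) shows that the parity and dimension constraints arising in this reformulation are substantive and cannot be absorbed into the ``finer case analysis'' you defer. Without this reformulation the split-and-merge scheme cannot close, so the proposal as it stands does not constitute a proof.
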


Dvo\v{r}\'{a}k's proof uses two lemmas about covering the vertices
of $\Q_n$ by vertex disjoint paths with prescribed ends. The first
one, called Havel's lemma, states that given any two vertices of
opposite parity in $\Q_n ,$ with $n \ge 1,$ there exists a
Hamiltonian path with these two vertices as endpoints \cite[Proposition 2.3]{havel}.
Dvo\v{r}\'{a}k generalizes this lemma as follows:

\begin{lemma}[Dvo\v{r}\'{a}k \cite{dovarak}] Let $n \ge 2,$ $a_1,a_2$
be two distinct vertices of the same parity, and $b_1,b_2$ be two
distinct vertices of the opposite parity in the hypercube $\Q_n.$
Then there exist two vertex-disjoint paths, one joining $a_1$ to
$b_1$ and the other joining $a_2$ to $b_2,$ such that each vertex
of $\Q_n$ is contained in one of these paths.
\end{lemma}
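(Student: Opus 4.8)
The plan is to argue by induction on $n$. For the base case $n=2$, the cube $\Q_2$ is a $4$-cycle on which two of the ends (of one parity) are $a_1,a_2$ and the other two are $b_1,b_2$; cutting the cycle by deleting a suitable pair of its edges produces two disjoint paths realizing the required pairing of ends, and since there are only a couple of labelings this is checked directly. For the inductive step I assume the statement for $n-1\ge 2$ and fix $n\ge 3$. Because $a_1\ne a_2$, there is a coordinate $i$ in which they differ; splitting $\Q_n$ along $i$ writes $\Q_n=\Q_{n-1}^{0}\cup\Q_{n-1}^{1}$ as two $(n-1)$-subcubes joined by the perfect matching of bridge edges $x\,{-}\,x'$ (where $x'$ is $x$ with its $i$-th bit flipped), with $a_1\in\Q_{n-1}^{0}$ and $a_2\in\Q_{n-1}^{1}$. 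I record the parity bookkeeping: inside $\Q_{n-1}^{0}$ a vertex keeps the parity it has in $\Q_n$, inside $\Q_{n-1}^{1}$ its parity is reversed, and bridging preserves subcube-parity.

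The argument then splits according to where $b_1$ and $b_2$ fall. If $b_1\in\Q_{n-1}^{0}$ and $b_2\in\Q_{n-1}^{1}$ (the ``parallel'' case), then $a_1,b_1$ have opposite parity inside $\Q_{n-1}^{0}$ and $a_2,b_2$ have opposite parity inside $\Q_{n-1}^{1}$, so Havel's lemma supplies a Hamiltonian path of each subcube joining the respective ends, and their union is the required covering. If instead $b_1,b_2$ both lie in $\Q_{n-1}^{0}$ (symmetrically if both lie in $\Q_{n-1}^{1}$), then $\Q_{n-1}^{1}$ contains only the end $a_2$: I route the $a_2$--$b_2$ path by first taking, via Havel's lemma, a Hamiltonian path of $\Q_{n-1}^{1}$ from $a_2$ to a bridge vertex $u'$ of the correct parity, crossing to $u\in\Q_{n-1}^{0}$, and then invoking the inductive hypothesis on $\Q_{n-1}^{0}$ to cover it by two paths with ends $a_1$--$b_1$ and $u$--$b_2$. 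A short parity check shows $u$ can be chosen with the parity forced by both the Havel step and the inductive step and distinct from the prescribed ends, which is possible because $n-1\ge 2$.

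The remaining and genuinely delicate case is the ``crossed'' one, $b_2\in\Q_{n-1}^{0}$ and $b_1\in\Q_{n-1}^{1}$, in which both required paths must cross the bridge. Here I select two bridge edges $u\,{-}\,u'$ and $v\,{-}\,v'$, split the paths as $a_1\rightsquigarrow u\,{-}\,u'\rightsquigarrow b_1$ and $b_2\rightsquigarrow v\,{-}\,v'\rightsquigarrow a_2$, and apply the inductive hypothesis separately to $\Q_{n-1}^{0}$ (with ends $a_1$--$u$ and $b_2$--$v$) and to $\Q_{n-1}^{1}$ (with ends $b_1$--$u'$ and $a_2$--$v'$). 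The heart of the matter is to choose the two bridge vertices so that, on each side, the four prescribed ends have exactly the parities demanded by the inductive hypothesis and are pairwise distinct; this is the step I expect to be the main obstacle, since for small $n$ the subcubes are tight and a naive choice can collide with a prescribed end. The resolution is flexibility: when a forced bridge vertex would coincide with an end, the corresponding half-problem degenerates (one sub-path becomes trivial) and is handled by a single Hamiltonian path from Havel's lemma rather than by the full inductive hypothesis. Counting the available vertices of each parity shows that enough admissible choices exist for all $n\ge 3$, with the finitely many tight configurations (essentially $n=3$) verified by hand, which completes the induction.
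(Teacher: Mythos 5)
Your induction scheme --- split along a coordinate separating $a_1$ from $a_2$, apply Havel's lemma in the parallel case, Havel plus the inductive hypothesis when both $b_i$ lie in the half of $a_1$, and two bridges plus the inductive hypothesis on both halves in the crossed case --- is the natural one, and your parity bookkeeping is correct; note that the paper itself only quotes this lemma from Dvo\v{r}\'{a}k and gives no proof, so your argument stands or falls on its own. It falls at exactly the point you flagged: the crossed case for $n=3$, where your counting claim (``enough admissible choices exist for all $n\ge 3$'') is false. Take $a_1=(0,0,0)$, $a_2=(1,1,0)$, $b_1=(1,1,1)$, $b_2=(0,0,1)$. The only coordinates separating $a_1$ from $a_2$ are the first two, and for either choice of split the configuration is crossed; the half containing $a_1$ has exactly two green vertices, one of which is $b_2$, so $u$ is forced and its bridge-neighbor is precisely $a_2$, while the forced $v$ has bridge-neighbor $b_1$. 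Thus no admissible pair of bridges exists for either split. The lemma is still true for this configuration --- take the paths $(0,0,0),(1,0,0),(1,0,1),(1,1,1)$ and $(1,1,0),(0,1,0),(0,1,1),(0,0,1)$ --- but your construction cannot produce them.

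Moreover, your fallback for such collisions (``the half-problem degenerates and is handled by a single Hamiltonian path from Havel's lemma'') cannot work, for a parity reason. If the collision $u'=a_2$ is resolved by letting the path ending at $a_2$ meet the half of $a_2$ in that single vertex, then the other path must cover that half minus $a_2$: this set has $2^{n-1}-1$ vertices, an odd number with green in the majority, so any path covering it has two \emph{green} ends. Hence the bridge end $u'$ must be green, $u$ red, and the leftover subpaths in the other half are then forced to join red to red and green to green. What the degenerate case actually requires is a Hamiltonian path of a once-punctured subcube between two vertices of the \emph{same} parity --- the Lewinter--Widulski lemma, $[1,1,0,1]=2$ in the paper's notation, not Havel's lemma --- or else a covering of a half by two charged paths, i.e.\ $[0,0,0,2]$, which genuinely fails in dimension $3$ (the paper shows $[0,0,0,2]=4$). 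Indeed, in the example above the unique solution consists of two punctured-subcube paths with same-parity ends. The proof can be repaired either by importing $[1,1,0,1]=2$ and routing each path across the bridge at its colliding end, or more simply by adding $n=3$ as a second base case verified by inspection; with that, your argument as written is correct for all $n\ge 4$.
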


One of the main ingredients in the proof of Dvo\v{r}\'{a}k's
theorem is the existence of a covering of the vertices of $\Q_n$
by vertex disjoint  paths with prescribed end vertices. In this
article we address the existence of such path coverings with
prescribed end vertices for the hypercube with or without
deleted vertices. More specifically, we investigate what is
the minimal dimension $m$ of the hypercube $\Q_m$ such that for
every $n \geq m$ and every set $\mathcal{F}$ of $M\geq 0$ deleted
vertices from $\Q_n$ such that the absolute value of the
difference of the numbers of the deleted vertices of the two
parities is $C$, there exists a path covering of $\Q_n -
\mathcal{F}$ with $N$ paths whose end vertices are with different
parity and $O$ paths whose end vertices are of the same parity,
where all of the end vertices of these paths belong to an
arbitrary set of non-deleted vertices. The exact meaning of these
words can be found in Section 2 where more precise definitions are
given including the definition of the symbol $[M,C,N,O]$  that
represents the number $m$ mentioned above.

The main results of this paper are contained in the last 4
sections. Section 3 deals with special cases where the numbers
$M$, $C$, $N$, and $O$ are small and in many of those cases we use
pictorial proofs. In Section 4 we use words to represent paths in
the proofs and we study cases of larger numbers of $M$, $C$, $N$,
or $O$. In particular, in that section, we generalize
Dvo\v{r}\'{a}k's lemma (see Lemma \ref{threecoveringlemma}).
Section 5 contains general results that allow us to establish
connections between different values of $[M,C,N,O]$. These three
sections also contain, for $k \le 4$, a proof of Locke's
conjecture that a hypercube with $k$ deleted vertices of each
parity is Hamiltonian if $n \ge k +2.$ In Section 6 we state some
conjectures supported by our results and we give some concluding
remarks. Appendix \ref{2pathcoveringsofQ4} contains a proof of a
claim for $n=4$ that we found difficult to verify by inspection.
In a table in Appendix \ref{B} we summarize many of the results
contained in this paper.

\section{Some definitions}

To simplify the explanations that follow we introduce the following
terminology and conventions.
\emph{A path covering of a graph} is a set of vertex disjoint
paths that cover all the vertices of a given graph. \emph{$k-$path
covering} is a path covering by exactly $k$ paths. Sometimes we
call the end vertices of a path \emph{ends} or \emph{terminals.}
A vertex of $\Q_n$ is called \emph{even} (\emph{odd}) if it has an even
(odd) number of $1's.$ A transformation that changes the values of
a fixed  entry for all the vertices of $ \Q_n$ induces an
automorphism of the hypercube that sends even vertices to odd
vertices and vice versa. Therefore, any  statement about $\Q_n$ in
terms of even and odd vertices has an equivalent dual statement
obtained when the references to even and odd vertices are
interchanged.
For convenience, we call the vertices of one parity \emph{red} and
the vertices of the opposite parity \emph{green} without
specifying which are even and which are odd.

A \emph{fault} $\mathcal{F}$ in $\Q_n$ is a set of deleted vertices.
The \emph{mass} $M$ of a fault $\F$ is the total
number of vertices in the fault. The \emph{charge} $C$ of a fault is
the absolute value of the difference between the
number of red vertices and the number of green vertices. We say that a
fault is \emph{neutral} if its charge is zero.
When the endpoints of a path are of the same parity we say that the path
is \emph{charged}; otherwise the path is \emph{neutral.}
Regarding a pair of vertices we say that the \emph{pair is charged} if
the two elements in the pair are of the same parity and that the
\emph{pair is neutral} if the two elements are of opposite parity. If the two
elements of a charged pair of vertices are red (green) we say that
the \emph{pair is red (green)}.

Let $M$ be any nonnegative even number and let $\mathcal{A}_M$ be the set of
positive integers $m$ with the property that if $n \ge m$ then $\Q_n - \F$
is Hamiltonian for every neutral fault $\F$ of mass $M$ in $\Q_n.$ The set
$\mathcal{A}_M $ is nonempty (see \cite{lockestong}). We denote by $[M]$ the
smallest integer in this set. It is clear that $[0]=2$ since $\Q_n$ is
Hamiltonian if $n \ge 2$, and $[2k]\ge k+2$ since if $k$ vertices
adjacent to a given vertex are removed from $\Q_{k+1}$ then the resulting
graph is not Hamiltonian. In Problem 10892 of {\it The American Mathematical
Monthly} \cite{locke} S. Locke conjectures that $[2k]=k +2 $ for every
nonnegative integer $k.$  A proof of $[2]=3$ is contained in
\cite{lockestong} and a proof of $[4]=4$ was known to S. Locke
(personal communication). To the best of our knowledge Locke's conjecture in
its full generality remains unsolved. In Lemmas \ref{locke2}, \ref{locke3},
and \ref{locke4}, we prove that $[2k] = k +2 $ for $k=2,3,4.$

Let $r(\F)$ be the number of red vertices and  $g(\F)$ be the
number of green vertices in a fault $\F$  of $\Q_n.$ Let also
$\mathcal{E}$ be a set of disjoint pairs of vertices of $\Q_n$,
$r(\mathcal E)$ be the number of red pairs in $\mathcal{E}$, and
$g(\mathcal E)$ be the number of green pairs in $\mathcal{E}.$ We
say that \emph{the set of pairs $\mathcal{E}$ is in balance with
the fault $\F$} if all the vertices in the elements of
$\mathcal{E}$ are from $\Q_n - \F$ and $r(\F) - g(\F) =
g(\mathcal{E}) - r(\mathcal{E}).$ Since $\Q_n$ is a bipartite
graph with the set of even vertices  and the set of odd vertices
as partite sets, a necessary condition for a set $\mathcal{E}$ of
pairs of vertices to be the set of endpoints of a path covering of
$\Q_n - \F$ is that $\mathcal{E}$ to be in balance with $\F.$

\begin{definition} Let $M, C, N, O $ be nonnegative integers and $\F$
be a fault of mass $M$ and charge $C$ in $\Q_n.$  We say that \emph{one can freely prescribe ends for a path covering of
$\Q_n- \F$ with $N$ neutral paths and $O$ charged paths} if
\begin{itemize}
\item[(i)] there exists at least one set $\mathcal{E}$ of disjoint pairs of vertices that is in balance with $\F$
    and  contains exactly $N$ neutral pairs and $O$ charged pairs; and \item[(ii)] for every set $\mathcal{E}$ of
    disjoint pairs of vertices that is in balance with $\F$ and contains exactly $N$ neutral pairs and $O$ charged
    pairs there exists a path covering of $\Q_n- \F$ such that the set of pairs of end vertices of the paths in the
    covering coincides with $\mathcal{E}.$
\end{itemize}
\end{definition}

It is easy to see that if in $\Q_n$ there exists a fault $ \F$  of mass $M$ and charge $C,$  and  a set of pairs of
vertices $ \mathcal{E}$ that is  in balance with $\F$ and contains exactly $N $ neutral pairs and $ O$ charged pairs,
then $ 2^n \ge M + C + 2N + 2 O. $

\begin{definition}
Let $\mathcal{A}_{M,C,N,O}$ be the set of nonnegative integers $m$ such that
\begin{itemize}
\item[(i)] $m \ge \log_2{[M + C + 2N + 2O]};$ and \item[(ii)] for every $n \ge m$ and  for every fault $\F$ of mass
    $M$ and charge $C$ in $ \Q_n$ one can freely prescribe ends for a path covering of $\Q_n - \F$ with $N$ neutral
    paths and $O$ charged paths.
\end{itemize}

We let $[M,C,N,O] $ denote the smallest element in $\mathcal{A}_{M,C,N,O}$ if
this set is nonempty.
\end{definition}

For example, Havel's lemma quoted above is the statement
$[0,0,1,0]=1$ and Dvo\v{r}\'{a}k's lemma is the statement $[0,0,2,0]=2.$

\section{Some cases of small faults or small sets of prescribed end vertices}

In the statements below, since only a few vertices are deleted from
$\Q_{n+1}$ and we are looking for path coverings
with just a few paths, it is convenient to illustrate the proofs by using
diagrams. In these diagrams the hypercube
$\Q_{n+1}$ is viewed as two copies of the $n-$dimensional hypercube which we
call \emph{top plate} and \emph{bottom plate}
and we denote by $\Q_{n+1}^{top}$ and $\Q_{n+1}^{bot}$, respectively. The
edges connecting the two plates are called
\emph{bridges}. We mark on the diagrams only the vertices that are relevant
for the proof. To distinguish their colors
(parity) we mark the red vertices with stars and leave the green ones
unmarked. The prescribed ends of each path are
represented by the same geometric figure (triangle, square, etc.) and for
different paths we use different figures. The
deleted vertices are represented by big circles with a star inside if they
are red or a minus inside if they are green.
For the proof of a given lemma we usually produce connections on the plates
that are guaranteed by previous lemmas or by
an induction hypothesis and then we use bridges to connect paths from the top
plate to paths from the bottom plate.
Sometimes the paths from a plate are cut at certain places and the cut points
are connected to the other plate by
\emph{bridges}. In such cases we say that \emph{we perform surgery}. The
vertices at which we do cuts are represented by
tiny circles. The variables $r,r_1,r_2,\dots$ are reserved to represent red
vertices and the variables $g,g_1,g_2,\dots
$ are reserved to represent green vertices.

The following lemma that qualifies $ \Q_n$ as a hyper-Hamilton
laceable graph was proved by Lewinter and Widulski \cite[Corollary
4]{lewinter}.

\begin{lemma} $([1,1,0,1]=2)$ Let $n\ge 2$ and $d$ be any vertex in $\Q_n.$
Then one can freely prescribe ends for a charged Hamiltonian path
of $\Q_n- \{d\}.$
\end{lemma}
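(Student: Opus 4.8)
The plan is to establish the upper bound by proving, for every $n\ge 2$, the equivalent combinatorial statement, and to treat the lower bound separately. Condition (i) of the definition of $\mathcal{A}_{M,C,N,O}$ already forces $m\ge 2$, so it remains to show that for all $n\ge 2$ one can freely prescribe the ends. By the red/green duality noted in Section 2 I may assume the deleted vertex $d$ is red. The balance condition $r(\F)-g(\F)=g(\mathcal E)-r(\mathcal E)$ then reads $1=g(\mathcal E)-r(\mathcal E)$, so the single charged pair must be green; thus condition (i) holds (any two distinct green vertices form such a pair), and condition (ii) amounts to the following: for any two distinct green vertices $g_1,g_2$ (automatically $\ne d$), there is a Hamiltonian path of $\Q_n-\{d\}$ with ends $g_1,g_2$. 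I would prove this by induction on $n$.

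For the base case $n=2$, the graph $\Q_2-\{d\}$ is the three-vertex path joining the two green vertices, which is the unique path with green ends, so the property holds vacuously in the only admissible configuration. For the inductive step I would split $\Q_{n+1}$ along a suitable coordinate into $\Q_{n+1}^{top}$ and $\Q_{n+1}^{bot}$, placing $d$ in the top plate, and distinguish three cases according to how many of $g_1,g_2$ lie in the top plate. If both lie in the top plate, the induction hypothesis gives a green Hamiltonian path $P$ of the faulty top plate from $g_1$ to $g_2$; since $P$ is a path in a bipartite graph, any edge of $P$ joins a red vertex $r$ to a green vertex $g$, and I would delete this edge and reconnect through the bottom plate via the bridges joining $r$ to $r'$ and $g$ to $g'$, inserting a Hamiltonian path of the full bottom plate from $r'$ (green) to $g'$ (red) supplied by Havel's lemma $[0,0,1,0]=1$. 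If exactly one endpoint, say $g_1$, lies in the top plate, I would take a green Hamiltonian path of the faulty top plate from $g_1$ to a chosen green vertex $a$ (induction hypothesis), cross the bridge joining $a$ to its red partner $a'$, and finish with a Hamiltonian path of the full bottom plate from $a'$ to $g_2$, again by Havel's lemma.

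The main obstacle is the case in which both $g_1,g_2$ lie in the bottom plate, because a \emph{charged} green Hamiltonian path of the full top plate does not exist, and the fault must instead be absorbed into the top plate. Here I would choose two distinct green vertices $a',b'\ne d$ in the top plate and use the induction hypothesis to obtain a green Hamiltonian path of the faulty top plate from $a'$ to $b'$; their bridge partners $a,b$ are red vertices of the bottom plate, and I would then cover the full bottom plate by two vertex-disjoint neutral paths, one joining $g_1$ to $a$ and one joining $g_2$ to $b$. This is exactly Dvo\v{r}\'{a}k's lemma $[0,0,2,0]=2$, applied with the green pair $\{g_1,g_2\}$ and the red pair $\{a,b\}$. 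Concatenating the bottom path from $g_1$ to $a$, the bridge to $a'$, the top path from $a'$ to $b'$, the bridge from $b'$ to $b$, and the bottom path from $b$ to $g_2$ yields the desired Hamiltonian path of $\Q_{n+1}-\{d\}$ with ends $g_1,g_2$.

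The only delicate points are ensuring that the prescribed bridge endpoints are distinct from one another and from $d$ and $g_2$, which is possible because each plate contains $2^{n-1}\ge 2$ green vertices, giving ample freedom of choice once $n\ge 3$; the finitely many coincidences that can arise in the smallest dimension can be avoided by reselecting the splitting coordinate or checked directly. Combining the base case and the three inductive cases shows that every $n\ge 2$ admits free prescription, so $2\in\mathcal{A}_{1,1,0,1}$, and together with the lower bound this gives $[1,1,0,1]=2$.
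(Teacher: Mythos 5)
Your proof is correct, but note that the paper itself contains no proof of this lemma: it is quoted as a known result of Lewinter and Widulski \cite{lewinter} (hyper-Hamilton laceability of $\Q_n$), so your argument is genuinely new material relative to the paper rather than a variant of its proof. The logic is sound throughout: the balance condition correctly forces the prescribed charged pair to be green when $d$ is red, the base case $n=2$ is the three-vertex path, and in each of your three cases the concatenation of plate paths and bridges is a genuine Hamiltonian path of $\Q_{n+1}-\{d\}$. In fact the ``delicate points'' you flag at the end are vacuous: every distinctness you need ($a'\neq g_2$ in Case 2, $a,b\notin\{g_1,g_2\}$ in Case 3, etc.) is automatic on parity grounds, since the bridge partner of a green vertex is red, so no reselection of the splitting coordinate or low-dimensional inspection is ever required. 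One simplification is worth recording: since $g_1\neq g_2$, you may always split along a coordinate in which $g_1$ and $g_2$ differ; then $d$ shares a plate with exactly one of them and your Case 2 is the only case that occurs. This eliminates Cases 1 and 3 and with them the dependence on Dvo\v{r}\'{a}k's lemma $[0,0,2,0]=2$, so the whole induction rests on Havel's lemma $[0,0,1,0]=1$ alone --- a lighter and historically earlier ingredient, which matters if one wants this lemma available as a building block without importing the stronger $2$-path covering result. What your three-case version buys in exchange is that the splitting coordinate can be prescribed externally (only $d$'s plate matters), a flexibility that is sometimes useful in this style of argument but is not needed here.
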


Corollary \ref{refinedhavel} below is a refinement of Havel's lemma and
follows directly from $[0,0,2,0]=2$ and $[1,1,0,1]=2$. It also appears as
Corollary 3.4 in \cite{dovarak} and therefore is given here without proof.

\begin{corollary} \label{refinedhavel} Let $n \ge 2$, $r$ and $g$ be a
red and a green vertex in $\Q_n,$ and $e$ be an edge different from
$\{r,g\}.$ Then there exists a Hamiltonian path of $\Q_n$
that connects $r$ to $g$ and passes through $e.$
\end{corollary}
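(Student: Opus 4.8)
The plan is to deduce Corollary~\ref{refinedhavel} directly from the two results already available, namely $[0,0,2,0]=2$ (Dvo\v{r}\'{a}k's lemma) and $[1,1,0,1]=2$ (the Lewinter--Widulski lemma just stated). The idea is to split on whether the prescribed edge $e=\{u,v\}$ is neutral (its endpoints have opposite parity) or charged (its endpoints have the same parity), since the colour structure of $e$ determines which lemma applies.

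First I would handle the case where $e$ is neutral, say $e=\{r',g'\}$ with $r'$ red and $g'$ green. Here the strategy is to ``absorb'' the edge into one of the two paths produced by Dvo\v{r}\'{a}k's lemma. I would apply $[0,0,2,0]=2$ to the four vertices $r,g,r',g'$: since $r,r'$ are red (same parity) and $g,g'$ are green (opposite parity to the reds), these four vertices satisfy the hypotheses of the balanced $2$-path covering, and I can arrange the prescribed neutral pairs so that one path joins $r$ to $g'$ and the other joins $r'$ to $g$ (checking that this matching gives two neutral pairs in balance with the empty fault). Concatenating the two paths through the edge $e=\{r',g'\}$ then yields a single Hamiltonian path from $r$ to $g$ passing through $e$. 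A little care is needed when $e$ shares a vertex with $\{r,g\}$, but those degenerate cases can be treated separately or absorbed into the endpoint prescription.

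Second I would treat the case where $e$ is charged, say both endpoints of $e$ are red (the green case is dual by the parity-interchanging automorphism described in Section~2). The natural tool here is $[1,1,0,1]=2$: I would delete the green vertex $g$ to obtain $\Q_n-\{g\}$, a fault of mass $1$ and charge $1$, and then freely prescribe the charged pair of endpoints together with a routing that passes through $e$. The remaining difficulty is that $[1,1,0,1]$ as stated only prescribes the \emph{ends} of a charged Hamiltonian path, not an intermediate edge; to force passage through $e$ I would again invoke a splitting/concatenation argument, cutting at the edge $e$ and reconnecting, or apply Dvo\v{r}\'{a}k's lemma in $\Q_n$ with the two red endpoints of $e$ and the pair $\{r,g\}$ arranged so that $e$ becomes a connecting edge.

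I expect the main obstacle to be the bookkeeping of parities and the balance condition: one must verify in each case that the chosen matching of the four (or three) distinguished vertices into pairs actually constitutes a set $\mathcal{E}$ that is in balance with the relevant fault and has the correct counts of neutral and charged pairs, so that the hypotheses of $[0,0,2,0]=2$ or $[1,1,0,1]=2$ are genuinely met. Because the authors state this corollary without proof and attribute it to \cite{dovarak}, the intended argument is surely this short reduction; the only real work is confirming that the edge $e$ can always be realized as the ``seam'' along which two prescribed-end paths are concatenated, which the balance condition guarantees exists.
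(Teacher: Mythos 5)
Your first case is essentially right: pairing $r$ with $g'$ and $r'$ with $g$, invoking $[0,0,2,0]=2$, and concatenating through $e$ is exactly the core of the intended reduction, and the balance check for two neutral pairs against the empty fault is trivially satisfied. But the rest of the proposal has two problems. First, your entire second case is vacuous: $\Q_n$ is bipartite with the two parity classes as partite sets, so \emph{every} edge of $\Q_n$ joins a red vertex to a green vertex. A ``charged edge'' does not exist, and the difficulty you acknowledge there (that $[1,1,0,1]=2$ prescribes ends but not an interior edge) concerns a configuration that never occurs. Second, and this is the genuine gap, the degenerate case you wave away --- $e$ sharing a vertex with $\{r,g\}$ --- is precisely the case that needs a separate argument, and it cannot be ``absorbed into the endpoint prescription'': the pairs in a set $\mathcal{E}$ of prescribed ends must be disjoint, so when $e=\{r,g'\}$ you would have to hand $[0,0,2,0]=2$ the two pairs $\{r,g'\}$ and $\{r,g\}$, which both contain $r$. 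Dvo\v{r}\'{a}k's lemma simply does not apply there.

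The correct treatment of that case is where $[1,1,0,1]=2$ actually enters. Suppose $e=\{r,g'\}$ with $g'\ne g$ (the subcase $e=\{r',g\}$ is symmetric, and $e=\{r,g\}$ is excluded by hypothesis). Since $r$ is an end of the sought Hamiltonian path, any Hamiltonian path from $r$ to $g$ passing through $e$ must have $e$ as its initial edge; equivalently, one needs a Hamiltonian path of $\Q_n-\{r\}$ connecting $g'$ to $g$. This is a charged (green) pair in balance with the fault $\{r\}$ of mass $1$ and charge $1$, so $[1,1,0,1]=2$ supplies it, and prepending the edge $e$ finishes the construction. With this replacing your second case, the proof is complete and matches what the paper intends when it says the corollary ``follows directly from $[0,0,2,0]=2$ and $[1,1,0,1]=2$'': Dvo\v{r}\'{a}k's lemma handles $e$ disjoint from $\{r,g\}$, and the Lewinter--Widulski lemma handles $e$ incident to $r$ or to $g$.
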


The following lemma  is a solution to the first part of Problem
10892 proposed by S. Locke in {\it The American Mathematical
Monthly} \cite{locke}. For the solution published in {\it The
Monthly} see \cite{lockestong}. We present a different proof.

\begin{lemma}\label{locke1} $([2]=3)$ If $n \ge 3$ then $\Q_n - \F$ is
Hamiltonian for any neutral fault $\F$ of mass $2.$
\end{lemma}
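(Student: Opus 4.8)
The plan is to prove the substantive (upper-bound) direction, namely that for $n \ge 3$ and any neutral fault $\F = \{r,g\}$ consisting of one red vertex $r$ and one green vertex $g$, the graph $\Q_n - \F$ has a Hamiltonian cycle. The minimality half, that $2 \notin \mathcal{A}_2$, is immediate and I would dispose of it in one line: a neutral fault of mass $2$ deleted from $\Q_2$ leaves only two vertices, which cannot carry a cycle, so $[2] \ne 2$. The main tool for the positive direction will be the two-plate decomposition combined with the already-established statement $[1,1,0,1]=2$, which lets me prescribe the ends of a charged Hamiltonian path in a $1$-faulty subcube of dimension at least $2$.

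First I would observe that, since $r \ne g$, there is a coordinate in which $r$ and $g$ differ, and I would split $\Q_n$ along that coordinate into two plates, each a copy of $\Q_{n-1}$ (and $n-1 \ge 2$), arranged so that $r$ lies in one plate and $g$ in the other; say $r$ is in the top plate and $g$ in the bottom plate. Now each plate carries a fault of mass $1$. On the top plate I would pick any two distinct \emph{green} vertices $a,b$ (possible since the plate is $\Q_{n-1}$ with $2^{n-2}\ge 2$ green vertices when $n\ge 3$), and invoke $[1,1,0,1]=2$ with deleted vertex $r$ and the charged green pair $\{a,b\}$ to obtain a green--green Hamiltonian path $P_{\mathrm{top}}$ of the top plate minus $r$ with ends $a,b$. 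Letting $a',b'$ be the bridge-partners of $a,b$ (these are red and automatically distinct from the green vertex $g$), I would then apply the red/green dual of $[1,1,0,1]=2$ with deleted vertex $g$ and the charged red pair $\{a',b'\}$ to obtain a red--red Hamiltonian path $P_{\mathrm{bot}}$ of the bottom plate minus $g$ with ends $a',b'$. Joining $P_{\mathrm{top}}$ and $P_{\mathrm{bot}}$ through the two bridges $\{a,a'\}$ and $\{b,b'\}$ closes up into a single cycle through every vertex of $\Q_n - \F$.

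The step I expect to require the most care is the parity bookkeeping that guarantees the two applications of $[1,1,0,1]=2$ are legitimate and compatible. The top plate minus $r$ has one more green vertex than red, so any Hamiltonian path on it must have both ends green; this matches the balance condition for the fault $\{r\}$, which forces the prescribed charged pair to be green, and dually the bottom plate forces red ends, matching the fault $\{g\}$. Since bridges flip parity, the red ends $a',b'$ are exactly the partners of the green ends $a,b$, so the colors mesh across the bridges. Once this is checked, everything is routine, and because $[1,1,0,1]=2$ already holds in every plate dimension $\ge 2$, no separate induction or base case is needed: the smallest case $n=3$ is handled by the identical argument with plates equal to $\Q_2$.
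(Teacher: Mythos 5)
Your proof is correct and is essentially the paper's own argument: split $\Q_n$ along a coordinate separating $r$ from $g$, use $[1,1,0,1]=2$ on each plate to produce a green--green Hamiltonian path of $\Q_n^{top}-\{r\}$ and a red--red Hamiltonian path of $\Q_n^{bot}-\{g\}$ joined at the ends of two bridges, and close up into a cycle. Your explicit parity bookkeeping and the one-line lower bound $[2]\ge 3$ are the only (harmless) additions.
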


\begin{proof} Produce two plates that separate the deleted vertices $r$ and $g$
and assume that the deleted red vertex $r$ is on the top plate.
Find two bridges with green vertices on the top plate that do not
contain the deleted vertices. Use $[1,1,0,1]=2$ to produce a
Hamiltonian path of $\Q_n^{top} - \{r\}$ that connects the top
vertices of the bridges. Use $[1,1,0,1]=2$ to produce a
Hamiltonian path of $\Q_n^{bot} - \{g\}$ that connects the lower
vertices of the bridges. The paths produced on the plates
connected by the bridges form the desired Hamiltonian cycle in
$\Q_n - \F.$
\end{proof}

\begin{lemma}\label{twodifferentedges}
Let $n\ge 2,$ $r$ be a red vertex and $g_1, g_2$ be two green vertices in
$\Q_n.$ Then there are at least $n-1$ Hamiltonian paths of $\Q_n - \{r\}$ that
connect $g_1$ to $g_2$, all starting with different edges.
\end{lemma}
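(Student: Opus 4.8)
The plan is to argue by induction on $n$, reducing the count of admissible first edges to the induction hypothesis on one plate plus a single extra construction. First note the reformulation: a Hamiltonian path of $\Q_n-\{r\}$ from $g_1$ to $g_2$ whose first edge is $\{g_1,g_1^i\}$ (where $g_1^i$ is the neighbor of $g_1$ in coordinate direction $i$) exists if and only if $g_1^i\neq r$ and $\Q_n-\{r,g_1\}$ has a Hamiltonian path from the red vertex $g_1^i$ to $g_2$; so it suffices to realize $n-1$ distinct directions $i$. The base cases $n=2$ and $n=3$ are small and would be checked directly. For the inductive step ($n\ge 4$) I would split $\Q_n$ along a coordinate $d$ into plates $P_0$ and $P_1$ with $g_1\in P_0$, chosen so that $r\in P_0$ as well; such a $d$ exists unless $r=\bar g_1$ is the global antipode of $g_1$ (which forces $n$ odd), a degenerate configuration treated separately.

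With $r,g_1\in P_0\cong\Q_{n-1}$, I would produce $n-2$ of the paths from the induction hypothesis applied inside $P_0$. If $g_2\in P_1$, pick an auxiliary green $g'\in P_0$; the hypothesis gives at least $(n-1)-1=n-2$ Hamiltonian paths of $P_0-\{r\}$ from $g_1$ to $g'$ with pairwise distinct first edges, and each extends to a Hamiltonian path of $\Q_n-\{r\}$ by crossing the bridge from $g'$ to its red image $g''\in P_1$ and finishing with a Hamiltonian path of $P_1$ from $g''$ to $g_2$, which exists by Havel's lemma $[0,0,1,0]=1$. If instead $g_2\in P_0$, the hypothesis gives $n-2$ Hamiltonian paths of $P_0-\{r\}$ from $g_1$ to $g_2$ with distinct first edges, and I would incorporate $P_1$ by replacing one interior edge $\{u_0,v_0\}$ of each such path with a detour $u_0\to u_1\rightsquigarrow v_1\to v_0$, where $u_1\rightsquigarrow v_1$ is a Hamiltonian path of the clean plate $P_1$ between the (neutral) bridge images, again furnished by Havel's lemma. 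In both subcases the first edge lies in $P_0$ (a ``horizontal'' direction $\neq d$), is never directed at the deleted vertex $r$, and the $n-2$ directions remain pairwise distinct.

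It remains to realize one further direction, for which I would take the split direction $d$ itself, i.e.\ a path whose first edge is the bridge $\{g_1,\tilde g_1\}$ out of $g_1$. Since $n-1\ge 3$, Lemma \ref{locke1} ($[2]=3$) provides a Hamiltonian cycle $\mathcal C$ of $P_0-\{g_1,r\}$ (a neutral fault of mass $2$). When $g_2\in P_0$, I would choose a red $\mathcal C$-neighbor $w_0$ of $g_2$, so that $\mathcal C$ minus the edge $\{w_0,g_2\}$ is a Hamiltonian path of $P_0-\{g_1,r\}$ from $w_0$ to $g_2$, and splice it as $g_1\to\tilde g_1\rightsquigarrow w_1\to w_0\rightsquigarrow g_2$, where $\tilde g_1\rightsquigarrow w_1$ is a Hamiltonian path of $P_1$ from the red vertex $\tilde g_1$ to the green image $w_1$ of $w_0$ (Havel). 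When $g_2\in P_1$, I would instead pick any edge $\{w_0,z_0\}$ of $\mathcal C$ (with $w_0$ red, $z_0$ green) whose projection avoids $g_2$, let $w_1,z_1\in P_1$ be the bridge images, and use Dvo\v{r}\'ak's lemma $[0,0,2,0]=2$ on $P_1$ to cover it by two disjoint paths $\tilde g_1\rightsquigarrow w_1$ and $z_1\rightsquigarrow g_2$ (both neutral pairs), assembling $g_1\to\tilde g_1\rightsquigarrow w_1\to w_0\rightsquigarrow z_0\to z_1\rightsquigarrow g_2$. Either way the first edge is the bridge, i.e.\ direction $d$, which is distinct from the $n-2$ horizontal directions; together they yield the required $n-1$ distinct first edges.

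The main obstacle is the bookkeeping that makes the extra path legitimate, together with the two genuinely separate pieces of casework. For the extra path one must verify that the prescribed endpoints fed to Dvo\v{r}\'ak's (or Havel's) lemma are distinct and correctly paired by parity, and that a suitable edge of $\mathcal C$ avoiding the projection of $g_2$ exists; these are routine for $n\ge4$ but need care. The two situations left outside the main induction are the degenerate antipodal configuration $r=\bar g_1$ (with $n$ odd), where no split places $r$ and $g_1$ on a common plate and a separate, symmetry-based construction is required, and the base dimensions $n=2,3$, which I would verify by direct construction. I expect the antipodal case, and the verification that the induction hypothesis never spends a first edge on the deleted neighbor of $g_1$, to be the most delicate points.
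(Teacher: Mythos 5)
Your main construction is sound where it applies, and it is genuinely different from the paper's: you split so that $r$ and $g_1$ lie on a common plate, whereas the paper splits so that $g_1$ and $g_2$ lie on different plates. The $n-2$ inherited first edges, the detour/extension through the clean plate via Havel's lemma, and the extra bridge-first path built from a Hamiltonian cycle of $P_0-\{g_1,r\}$ (Lemma~\ref{locke1}) spliced through $P_1$ via Havel or Dvo\v{r}\'{a}k all check out, including the parity bookkeeping and the requirement that the chosen cycle edge avoid the projection of $g_2$.

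However, there is a genuine gap: the antipodal configuration $r=\bar g_1$ is not a negligible degenerate case, and you never actually handle it. When $r=\bar g_1$ (which occurs for every odd $n\ge 5$, since $r$ and $g_1$ have opposite parity), \emph{no} coordinate split places $r$ and $g_1$ on the same plate, so both of your subcases and your extra-path construction are unavailable: the cycle of $P_0-\{g_1,r\}$ does not exist as such, the induction hypothesis cannot be invoked on the plate containing $g_1$ (it has no deleted vertex), and the parity constraints change because the punctured plate $P_1-\{r\}$ now admits Hamiltonian paths only between two green vertices, while the vertex $\tilde g_1$ you would enter through is red. Filling this hole is not routine bookkeeping with the tools available at this point of the paper ($[0,0,1,0]=1$, $[0,0,2,0]=2$, $[1,1,0,1]=2$, $[2]=3$): one is forced into an assembly of a different shape, e.g.\ splitting so that $r$ sits with $g_2$ and $g_1$ sits alone on the other plate, which is essentially the paper's entire Case~2 (with its own surgery argument around $r$ for the bridge-first path) --- roughly half the total work, not a footnote. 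This is precisely why the paper instead separates $g_1$ from $g_2$: two \emph{distinct} vertices always admit a separating coordinate, so no exceptional configuration arises, at the cost of having to treat both possible locations of $r$; your split is cheaper when it exists but fails to exist exactly at $r=\bar g_1$. Until you supply the promised ``separate, symmetry-based construction,'' the induction is incomplete for all odd $n\ge 5$.
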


\begin{proof} The proof is by induction. The statement is obvious for $n=2$.
When $n=3$ there are only two cases to consider: $r$ belongs to the same two
dimensional subcube that contains $g_1$ and $g_2$ and $r$ does not belong to
it. In each one of these cases it is routine to construct the required two
paths.

Now let $n \ge 4.$ Produce two plates to separate the two green vertices.

\emph{Case 1.} $r$ and $g_1$ are on the top plate and $g_2$ is on the bottom
plate.

Let $g$ be any green vertex on the top plate different from $g_1$ and $r_1$ be
the vertex of $Q_n^{bot}$ that is adjacent to $g.$ By the induction hypothesis
there are at least $n-2$ Hamiltonian paths of $\Q_n^{top}-\{r\}$ that
connect $g_1$ to $g$ all starting with different edges from $g_1.$  Extend
each of these paths to produce a Hamiltonian path of $\Q_n- \{r\}$ that
connects $g_1$ to $g_2$ by adding the bridge $\{g,r_1\}$ and then a
Hamiltonian path of $\Q_n^{bot} $ that connects $r_1$ to $g_2.$ The latter
path exists since $[0,0,1,0]=2.$ Finally, let $r_2$ be the vertex
of $\Q_n^{bot}$ that is adjacent to $ g_1.$ We produce a Hamiltonian path of
$\Q_n - \{r\}$ that connects $ g_1$ to $g_2$ and starts with the bridge
$\{g_1,r_2\}$ as follows. Produce a Hamiltonian cycle of
$\Q_n^{top} -\{g_1,r\}.$
Such cycle exists since $[2]=3.$ Cut this Hamiltonian cycle at two consecutive
vertices whose adjacent vertices on $\Q_n^{bot}$ are a green vertex
$g_3\ne g_2$ and a red vertex $r_3 \neq r_2.$ Such consecutive vertices exist
since the length of the cycle is at least six. Produce a $2-$path covering of
$\Q_n^{bot} $ with one path connecting $r_2$ to $g_3$ and the other connecting
$r_3$ to $ g_2.$ Such path covering exists because $[0,0,2,0]=2.$ We obtain the
desired Hamiltonian path of $\Q_n -\{r\}$ by adding to the pieces so far
produced the bridge $ \{g_1, r_2\}.$

\emph{Case 2.} $r$ and $g_2$ are on the top plate and $g_1$ is on the bottom
plate.

We can assume that $r$ and $g_1$ are not adjacent; otherwise, we could
separate $r$, $g_1$, and $g_2$ as in Case 1. Let $r_1$ be the neighbor of
$g_1$ on the top plate, $g_3\neq g_2$ be any green vertex on the top plate,
$r_2$ be the neighbor of $g_3$ on the bottom plate, and $g_4\neq g_1$ be
adjacent to $r_2$ on the bottom plate. According to the induction hypothesis
there exist $n-2$ Hamiltonian paths in $\Q_n^{bot}-\{r_2\}$ that connect $g_1$
to $g_4$ that all begin with different edges. Similarly, there exist $n-2$
Hamiltonian paths in $\Q_n^{top}-\{r\}$ that connect $g_2$ to
$g_3$ that all begin with different edges. Let $\gamma$ be one of these paths.
Each Hamiltonian path on the bottom plate could be connected by means of the
edge $\{g_4,r_2\}$ and the bridge $\{r_2,g_3\}$ to $\gamma$. In that way, we
produce $n-2$ Hamiltonian paths of $\Q_n-\{r\}$ connecting $g_1$ to $g_2$ and
all beginning with different edges.

Now, to produce the $(n-1)$-th Hamiltonian path of $\Q_n-\{r\}$ that connects
$g_1$ to $g_2$ and begins with a different edge we proceed as follows. Produce
a Hamiltonian path of $\Q_n^{top}$ that connects $r_1$ to $g_2$. Cut this path
just before and right after $r$ and produce two bridges. Let their ends on the
bottom plate be $r_3$ and $r_4$. Then there exists a Hamiltonian path for
$\Q_n^{bot}-\{g_1\}$ that connects $r_3$ to $r_4$ ($[1,1,0,1]=2$). Then the
desired Hamiltonian path of $\Q_n-\{r\}$ that connects $g_1$ to $g_2$ is
obtained by connecting the paths constructed on the plates by means of the
bridges after removing the edges incident to $r$ from the path on the top
plate and attaching the edge $\{g_1,r_1\}$ to the resulting path.
\end{proof}

Let  $a$ be  a vertex in $\Q_n.$ There is a unique vertex $\bar{a}$ in $\Q_n$ at distance $n$ from $a.$ The coordinates
of $\bar{a}$ are the negation of the corresponding coordinates of $a.$

Let $\{r,g\} $ be a pair of a red and a green vertex in $\Q_3.$ We define the set of pairs of vertices
$\mathcal{B}_{\{r,g\}}$ in the following way: if $r = \bar{g}$ then $ \{r',g'\} \in \mathcal{B}_{\{r,g\}}$ if and only
if $ \{r', g'\} \neq \{r,g\}$ and $ r'=\bar{g'}$; if $r \neq \bar{g}$ then $\mathcal{B}_{\{r,g\}} =
\{\{\bar{r},\bar{g}\}\}.$

\begin{lemma} \label{q3minustwovertices} Let $r,g$ be a red and a green
vertex in $\Q_3,$ and let $r_1,g_1$ be a red and a green vertex in $\Q_3- \{r,g\}.$ Then
\begin{itemize}
\item[(1)] If $\{r_1, g_1\} \not \in \mathcal{B}_{\{r,g\}}$ then there exists a Hamiltonian path of $\Q_3 - \{r,g\}$
    that connects $r_1$ to $g_1.$

\item[(2)] If $\{r_1, g_1\}  \in \mathcal{B}_{\{r,g\}}$ then there does not exist a Hamiltonian path of $\Q_3 -
    \{r,g\}$ that connects $r_1$ to $g_1.$

\item[(3)] If $\{r_1, g_1\}  \in \mathcal{B}_{\{r,g\}}$ then there exist two distinct $2-$path coverings of $\Q_3
    -\{r,g\}$, with four distinct end points, with one path starting at $r_1,$ the other starting at $g_1,$ and both
    paths of length two.

\item[(4)] There exist two distinct $3-$path coverings of $\Q_3 -\{r,g\}$ with paths of length one.
\end{itemize}
\end{lemma}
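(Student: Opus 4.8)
The plan is to exploit the symmetry of $\Q_3$ to reduce all four claims to the inspection of just two small graphs. Since $r$ and $g$ have opposite parity, their Hamming distance is odd, hence either $3$ (so that $r=\bar g$) or $1$ (so that $r$ and $g$ are adjacent and $r\neq\bar g$). Writing vertices as binary vectors, every automorphism $\phi$ of $\Q_n$ has the form $\phi(x)=P(x)\oplus c$ with $P$ a coordinate permutation, and therefore satisfies $\phi(\bar x)=\overline{\phi(x)}$ (because $P(\mathbf{1})=\mathbf{1}$); consequently $\phi$ carries $\mathcal{B}_{\{r,g\}}$ onto $\mathcal{B}_{\{\phi(r),\phi(g)\}}$ and preserves Hamiltonian paths, path coverings, path lengths, and distinctness of coverings. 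As the automorphism group of $\Q_3$ acts transitively on unordered antipodal pairs of vertices and on unordered pairs of adjacent vertices, and as the whole statement is invariant under interchanging the two colors (its dual statement), it suffices to verify the lemma for one representative pair in each case, say $r=000$, $g=111$ in the first case and $r=000$, $g=001$ in the second.

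\emph{Antipodal case.} First I would check directly from the edge relation that $\Q_3-\{000,111\}$ is the $6$-cycle $011-001-101-100-110-010-011$, whose vertices alternate in color. In a $6$-cycle a Hamiltonian path joining two vertices exists if and only if the two vertices are adjacent (delete the edge between them to obtain the unique such path). The three red--green pairs that are \emph{not} edges of this cycle are exactly its three pairs of diametrically opposite vertices, namely the three antipodal pairs other than $\{000,111\}$, which is precisely $\mathcal{B}_{\{000,111\}}$. This at once yields (1) (a pair not in $\mathcal{B}$ is a cycle edge, so a Hamiltonian path exists) and (2) (a pair in $\mathcal{B}$ is diametrically opposite, so none exists). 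For (3), when $\{r_1,g_1\}\in\mathcal{B}$ the two vertices are antipodal on the hexagon; cutting the hexagon into its two complementary arcs of three vertices gives a $2$-path covering by paths of length two, one with terminal $r_1$ and the other with terminal $g_1$, and the two choices of direction from $r_1$ produce the two required distinct coverings with four distinct ends. For (4), the two perfect matchings of the $6$-cycle are the two desired $3$-path coverings by edges.

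\emph{Adjacent case.} Here I would verify that $\Q_3-\{000,001\}$ is the hexagon $110-010-011-111-101-100-110$ together with the single chord $\{110,111\}$ joining its two diametrically opposite vertices; note that $110=\bar g$ is red, $111=\bar r$ is green, and $\{110,111\}=\{\bar r,\bar g\}$ is the unique element of $\mathcal{B}_{\{000,001\}}$. For (2), a Hamiltonian path with ends $110$ and $111$ cannot use the chord (its two ends are already the two prescribed terminals), so it would have to be a Hamiltonian path of the bare hexagon between two diametrically opposite vertices, which does not exist. For (1), a red--green pair other than $\{110,111\}$ is either an edge of the hexagon, whence the cycle itself supplies a Hamiltonian path, or one of the two remaining diametrically opposite pairs; for each of the latter I would exhibit the explicit Hamiltonian path that routes through the chord, namely $011-010-110-111-101-100$ for $\{011,100\}$ and $101-100-110-111-011-010$ for $\{101,010\}$. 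Claims (3) and (4) follow exactly as in the antipodal case, using the two complementary arcs of the underlying hexagon and its two perfect matchings; the chord is not needed.

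The only step that is not a one-line observation is claim (1) in the adjacent case for the two diametrically opposite red--green pairs, where the chord must be used and the covering path written down explicitly; everything else reduces to the elementary behaviour of Hamiltonian paths, arcs, and perfect matchings in a $6$-cycle. The main care is therefore in pinning down the two isomorphism types (a hexagon, and a hexagon with one diameter) and in matching $\mathcal{B}_{\{r,g\}}$ to the diametrically opposite pairs in each model.
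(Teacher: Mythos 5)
Your proof is correct, and it is essentially the paper's approach carried out in full: the paper's own proof consists of the single phrase ``By inspection,'' and your argument is precisely that inspection made rigorous — the automorphism reduction to the two representative models ($\Q_3$ minus an antipodal pair is a $6$-cycle, $\Q_3$ minus an adjacent pair is a $6$-cycle plus one diameter), the identification of $\mathcal{B}_{\{r,g\}}$ with the diametrically opposite red–green pairs, and the elementary facts about Hamiltonian paths, arcs, and perfect matchings in $C_6$ are all sound, and the two explicit chord-using paths you exhibit are valid. Nothing is missing; your write-up in fact supplies the detail the paper omits.
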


\begin{proof} By inspection. \end{proof}

\begin{lemma} $([2,0,1,0]=4)$ \label{twodeletedonepath} Let $n \geq 2$ and
$r,r_1,g,g_1$ be two red and two green vertices in $\Q_n$. If $n=2$ or $n\ge 4$
then there exists a Hamiltonian path for $\Q_n-\{r_1,g_1\}$ connecting $r$ to $g.$
If $n=3$ the same conclusion follows provided $\{r,g\} \not \in
\mathcal{B}_{\{r_1,g_1\}}.$ 
\end{lemma}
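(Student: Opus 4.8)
The plan is to dispose of the small dimensions directly and to obtain the range $n\ge 4$ by induction on $n$, realizing $\Q_n$ as two copies of $\Q_{n-1}$ joined by bridges. For $n=2$ the graph $\Q_2-\{r_1,g_1\}$ is the single edge $\{r,g\}$, so the statement is immediate; for $n=3$ it is precisely part (1) of Lemma~\ref{q3minustwovertices}, whose hypothesis is the stated proviso $\{r,g\}\not\in\mathcal{B}_{\{r_1,g_1\}}$. For the inductive step I would use one structural fact throughout: since $r_1$ and $g_1$ have opposite parity, they differ in at least one coordinate, so some splitting direction always places $r_1$ and $g_1$ on opposite plates. The governing bookkeeping is a parity count: a copy of $\Q_{n-1}$ from which one red vertex has been deleted carries one more green vertex than red, so \emph{every} spanning path of that plate must have both endpoints green (dually for a deleted green vertex). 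This is what decides the admissible colours of the bridge vertices.

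The main line is a case analysis on the splitting coordinate, aiming to realize one of three favorable configurations. (S1): the split separates the terminals and pairs $g$ with $r_1$ on one plate and $r$ with $g_1$ on the other; a single bridge $\{w,w'\}$ with $w$ green then suffices, the top path $g$–$w$ and the bottom path $r$–$w'$ each being a charged Hamiltonian path avoiding one deleted vertex, supplied twice by $[1,1,0,1]=2$. (S2): the split keeps $r_1$ and $g_1$ together on one plate with a single terminal, the other plate being clean; a single bridge joins a spanning path of the punctured plate, given by the induction hypothesis in dimension $n-1$, to a Havel path $[0,0,1,0]=2$ of the clean plate. (S3): the split gathers both terminals on one (clean) plate and both deletions on the other; two bridges then join a two–path covering of the clean plate by neutral paths (Dvo\v{r}\'{a}k's lemma $[0,0,2,0]=2$) to a spanning $r_1,g_1$–avoiding path of the other plate from the induction hypothesis. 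In each of (S1)--(S3) the parity count makes the prescribed endpoint colours consistent. When $n=4$ the punctured plate is $\Q_3$ and the induction hypothesis carries the exceptional set $\mathcal{B}$ of Lemma~\ref{q3minustwovertices}; I would avoid it simply by choosing the green bridge-vertex $w$ so that the resulting pair lies outside $\mathcal{B}_{\{r_1,g_1\}}$, which is possible because that set is tiny while the admissible $w$ are many.

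The crux is that a direction realizing (S1)--(S3) need not exist. Writing $u=r\oplus g$, $v=r_1\oplus g_1$, $s=r\oplus r_1$, the three configurations correspond to coordinates with $u_i=v_i=s_i=1$, with $v_i=0$ and $u_i=1$, and with $u_i=v_i=0$; a short count shows that if none of these occurs then every coordinate in which the four vertices are not all equal has $v_i=1$, i.e. $r_1$ and $g_1$ are antipodal on the active subcube. If there is a degenerate coordinate (all four marked vertices on one side), I would peel off the opposite, clean facet $\Q_{n-1}$: build the $r$-to-$g$ path on the full facet by the induction hypothesis, then perform surgery by removing one of its edges $\{x,y\}$ and rerouting through the two bridges at $x,y$ and a Havel path of the clean facet; if $n=4$ and the facet path is blocked by $\mathcal{B}$, parts (3) and (4) of Lemma~\ref{q3minustwovertices} supply a $2$– or $3$–path covering that the clean facet stitches into a single path. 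The genuinely hard residue is the globally antipodal case $g_1=\overline{r_1}$, which forces $n$ odd and $n\ge 5$ and admits no clean facet from a one-coordinate split. There I would split off two coordinates on which $r_1,g_1$ differ, producing two once-punctured facets $\Q_{n-2}$ and two clean facets $\Q_{n-2}$; using the clean facets as buffers, I would weave a single $r$-to-$g$ path through the four pieces via $[1,1,0,1]=2$ on the punctured facets, Havel and Dvo\v{r}\'{a}k on the clean ones, and three connecting bridges, with a handful of sub-cases according to where $r$ and $g$ lie.

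The step I expect to be the main obstacle is exactly this two-coordinate surgical assembly in the antipodal case: the parity count forbids a naive single-bridge construction (a plate punctured at one red vertex cannot be spanned by a path ending at the red terminal), so one is forced to distribute the covering across four sub-facets while keeping the endpoint colours matched at every bridge. Throughout the reroutings, Lemma~\ref{twodifferentedges} is the convenient device for guaranteeing that an edge with a prescribed incident direction is available to cut, so that the surgery bridges can always be placed, and Corollary~\ref{refinedhavel} supplies prescribed-edge Havel paths whenever a buffer facet must be entered at a specified bridge.
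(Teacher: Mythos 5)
Your base cases, the constructions (S1)--(S3), and the degenerate-coordinate peeling (including the $n=4$ repair via Lemma \ref{q3minustwovertices}(3)) are sound, but the antipodal residue is a genuine gap, and not merely a matter of grinding out sub-cases: the construction you describe cannot work as stated. With a two-coordinate split the four sub-facets form a $4$-cycle $A$--$B$--$C$--$D$ with the punctured facets $A \ni r_1$ and $C \ni g_1$ opposite; three bridges means every facet is spanned by a single segment of the path. Your own parity bookkeeping then forces both ends of the $A$-segment to be green and both ends of the $C$-segment to be red, hence $r \notin A$ and $g \notin C$; but in the antipodal case (every coordinate has $v_i=1$ and none has $u_i=s_i=1$) one checks that $r\notin A$ together with $g\notin C$ forces $r$ and $g$ into the \emph{same} clean facet, which the path must then enter twice --- contradicting ``each facet once.'' So at least four bridges are needed, and once a facet carries two prescribed-end segments you need $2$-path coverings of a sub-facet $\Q_{n-2}$ with mixed ends: of type $[1,1,1,1]$ (punctured facet: one neutral plus one charged path) or $[0,0,0,2]$ (clean facet: one red pair plus one green pair). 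Since antipodality forces $n$ odd, the smallest instance is $n=5$, where these are statements about $\Q_3$ --- and both are false in general in $\Q_3$ (the paper's own counterexamples show $[1,1,1,1]>3$ and $[0,0,0,2]>3$). Your declared toolkit ($[1,1,0,1]=2$, Havel, Dvo\v{r}\'{a}k, three bridges) therefore cannot close this case, and the case is non-vacuous: $r_1=(0,0,0,0,0)$, $g_1=(1,1,1,1,1)$, $r=(1,1,0,0,0)$, $g=(1,1,1,0,0)$ in $\Q_5$ satisfies all of your ``bad'' conditions.

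The difficulty is self-inflicted. The paper splits along a coordinate in which the two \emph{red} vertices $r$ and $r_1$ differ --- such a coordinate always exists since $r \neq r_1$, regardless of how $r_1$ and $g_1$ sit --- and then disposes of the four possible positions of $g$ and $g_1$ with one or two bridges, using only $[1,1,0,1]=2$, $[0,0,1,0]=1$, $[0,0,2,0]=2$ and Lemma \ref{twodifferentedges}. In particular, in your antipodal case any coordinate with $s_i=1$ puts the deleted $r_1$ alone on one plate and $r$, $g$, and the deleted $g_1$ together on the other (the paper's Case 4): connect $r$ by a $[1,1,0,1]$-path of that plate minus $g_1$ to a red bridge foot; this path must traverse $g$, so cut it just before $g$ and bridge up at the cut; then join the two green bridge tops by a $[1,1,0,1]$-path of the opposite plate minus $r_1$. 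Your framework never considers splits that leave a deleted vertex and two terminals on the same plate, and that restriction --- not the geometry of the problem --- is what produces your unresolved residue.
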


\begin{proof} The statement is obvious for $n=2$ and for $n=3$ the claim is 
contained in Lemma \ref{q3minustwovertices}(1). Also, 
Lemma \ref{q3minustwovertices}(2) shows that $[2,0,1,0]\ge4$.

Now, let $n\ge 4.$ Produce two plates to separate $r$ from $r_1$ and assume that $r_1$ is on the top plate. Then $g$ and
$g_1$ can be distributed in four different ways:
\begin{itemize}
\item[(1)] both are on the top plate; \item[(2)] $g$ is on the top plate and $g_1$ is on the bottom plate;
    \item[(3)] $g_1$ is on the top plate and $g$ is on the bottom plate; and \item[(4)] both are on the bottom
    plate.
\end{itemize}

The following diagrams show how to handle these cases.

\begin{picture}(330,100)(11.5,2)

\twoplatescl

\put(20,55){\deletedred}

\put(30,5){\firsttermred}

\put(43,60){\firsttermgreen}

\put(28,79){\deletedgreen}

\thinlines \qbezier(46,64)(50,80)(33,81)

\put(37,78){\bridgered}

\put(39,79){\line(0,-1){50}}

\put(37,26){\bridgegreen}

\thinlines \qbezier(40,28)(60,25)(35,7)

\put(91,83){\parbox[t]{375pt}{(1) Use $[1,1,0,1]=2$ to produce a path covering of the top plate connecting the
green terminal $g$ to the deleted green vertex $g_1$ avoiding the deleted red vertex $r_1$. Cut this path just before
the deleted green vertex and produce a bridge from the cut vertex. Use $[0,0,1,0]=1$ to produce a Hamiltonian path of
the bottom plate that connects the lower vertex of the bridge to the red terminal $r$.}}
\end{picture}

\begin{picture}(330,100)(11.5,0)

\twoplatescl

\put(20,55){\deletedred}

\put(30,5){\firsttermred}

\put(43,60){\firsttermgreen}

\put(20,10){\deletedgreen}

\thinlines \qbezier(46,64)(50,80)(40,81)

\put(37,79){\bridgegreen}

\put(39,80){\line(0,-1){50}}

\put(37,28){\bridgered}

\thinlines \qbezier(40,30)(60,25)(35,7)

\put(91,83){\parbox[t]{375pt}{(2) Find a bridge with green vertex on the top different from $g$ and red vertex
on the bottom different from $r$. Use $[1,1,0,1]=2$  to connect the green terminal to the bridge avoiding the red
deleted vertex. Use $[1,1,0,1]=2$ to produce a Hamiltonian path of the bottom plate that connects the lower vertex of
the bridge to the red terminal avoiding the deleted green vertex.}}
\end{picture}

\begin{picture}(330,130)(11.5,-6)

\twoplates

\put(15,80){\deletedred}

\put(15,10){\firsttermred}

\put(62,40){\firsttermgreen}

\put(50,90){\deletedgreen}

\thinlines \qbezier(52,89)(50,68)(32,110)

\put(46,80){\bridgered}

\put(48,81){\line(0,-1){71}}

\put(46,7){\bridgegreen}

\thinlines \qbezier(48,10)(50,20)(19,12)

\put(30,109){\bridgegreen}

\put(32,110){\line(0,-1){68}}

\put(30,39){\bridgered}

\thinlines \qbezier(32,42)(35,25)(63,41)

\put(91,113){\parbox[t]{375pt}{(3) Find a bridge with green vertex on the top different from $g_1$ and red vertex
on the bottom different from $r$. Use $[1,1,0,1]=2$ and Lemma \ref{twodifferentedges}  to connect the upper vertex of
the bridge to the deleted green vertex avoiding the deleted red vertex and making sure that the vertex immediately next
to the deleted green vertex along the path is not adjacent to the green terminal on the bottom plate. Cut the path just
before the deleted green vertex and produce a bridge from the cut vertex. Use $[0,0,2,0]=2$ to produce a $2-$path
covering of the bottom plate that connects the lower vertices of the bridges to the appropriate terminals.}}
\end{picture}

\begin{picture}(330,130)(11.5,0)

\twoplates

\put(15,80){\deletedred}

\put(30,10){\firsttermred}

\put(40,32){\firsttermgreen}

\put(62,40){\deletedgreen}

\put(45,24){\bridgered}

\put(47,26){\line(0,1){66}}

\put(45,91){\bridgegreen}

\thinlines \qbezier(47,94)(50,110)(33,111)

\put(30,109){\bridgegreen}

\put(32,110){\line(0,-1){68}}

\put(30,39){\bridgered}

\thinlines \qbezier(33,41)(60,25)(35,12)

\put(91,113){\parbox[t]{375pt}{(4) Find a bridge with a red vertex on the bottom plate different from $r$. Use
$[1,1,0,1]=2$  to connect the red terminal on the bottom  plate to the lower vertex of the bridge avoiding the green
deleted vertex. This path must pass through the green terminal. Cut the path just before the green terminal and produce
another bridge at the cut vertex. On the top plate use $[1,1,0,1]=2$ to connect the upper vertices of the bridges
avoiding the red deleted vertex.}}
\end{picture}
\vskip-12pt
\end{proof}

\begin{corollary}\label{corpathconnectingadjacentvertices}
Let $n \ge 4$ and $\F$ be any neutral fault of mass $2$ in $\Q_n.$ Then for any 
edge $e$ in $\Q_n - \F$ there exists a
Hamiltonian cycle of $\Q_n - \F$ that contains $e$.
\end{corollary}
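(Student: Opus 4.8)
The plan is to reduce the problem to the Hamiltonian-path statement already proved in Lemma \ref{twodeletedonepath}, using the standard correspondence between a Hamiltonian cycle through a prescribed edge and a Hamiltonian path joining the two endpoints of that edge. Since $\F$ is a neutral fault of mass $2$, I would first note that it consists of exactly one red vertex $r_1$ and one green vertex $g_1$, so $\F = \{r_1, g_1\}$. Because $\Q_n$ is bipartite, the prescribed edge $e$ joins a red vertex $r$ to a green vertex $g$, and since $e$ is an edge of $\Q_n - \F$ we have $r, g \notin \F$. Thus $r, r_1$ are two distinct red vertices and $g, g_1$ are two distinct green vertices of $\Q_n$, which is precisely the configuration handled by $[2,0,1,0] = 4$.

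Next I would apply Lemma \ref{twodeletedonepath} directly: for $n \ge 4$ it produces a Hamiltonian path $P$ of $\Q_n - \{r_1,g_1\} = \Q_n - \F$ that connects $r$ to $g$. The desired Hamiltonian cycle is then obtained by adjoining the edge $e = \{r,g\}$ to $P$, which links the two endpoints of $P$ into a cycle visiting every vertex of $\Q_n - \F$ and, by construction, containing $e$.

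The only point that needs a word of justification is that $P$ does not already traverse the edge $e$; otherwise adjoining $e$ would not yield a simple cycle. But $r$ and $g$ are the two endpoints of $P$, so $e = \{r,g\}$ could lie on $P$ only if $P$ were the single edge joining $r$ and $g$, i.e. had exactly two vertices. Since $P$ covers all $2^n - 2 \ge 14$ vertices of $\Q_n - \F$, this is impossible, and the adjunction of $e$ genuinely closes $P$ into a Hamiltonian cycle through $e$.

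In short, there is no real obstacle here: the corollary is an essentially immediate consequence of Lemma \ref{twodeletedonepath}, with the trivial size bound $n \ge 4$ doing the small amount of bookkeeping needed to guarantee that the Hamiltonian path avoids the prescribed edge.
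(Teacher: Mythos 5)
Your proposal is correct and is exactly the argument the paper intends: the corollary is stated immediately after Lemma \ref{twodeletedonepath} with no separate proof, precisely because applying $[2,0,1,0]=4$ to the endpoints of $e$ and closing the resulting Hamiltonian path with $e$ is the whole argument. Your extra remark that the path cannot itself be the edge $e$ (since it covers $2^n-2\ge 14$ vertices) is a sound, if routine, piece of bookkeeping that the paper leaves implicit.
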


\begin{lemma}\label{locke2}$([4]=4)$ Let $n \ge 4$ and $\F$ be any neutral fault 
of mass $4$ in $\Q_n.$ Then $\Q_n - \F$ is Hamiltonian. The claim is not true for 
$n=3$.
\end{lemma}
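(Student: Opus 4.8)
The plan is to prove the negative assertion by an explicit example and the positive one by a direct two-plate argument, with no induction on the mass. For $n=3$ I would delete the two red (even) vertices $000,110$ and the two green (odd) vertices $001,010$; the four survivors $100,101,111,011$ have as their only edges $\{100,101\}$, $\{101,111\}$ and $\{111,011\}$, so they form a path whose endpoints $100,011$ have degree $1$, and no Hamiltonian cycle can exist. For $n\ge 4$ I write $\F=\{r_1,r_2,g_1,g_2\}$, let $D_r$ (resp.\ $D_g$) be the nonempty set of coordinates in which the red pair $r_1,r_2$ (resp.\ the green pair $g_1,g_2$) differ, and split $\Q_n$ into a top and a bottom copy of $\Q_{n-1}$ along a coordinate $i$ chosen by the dichotomy: in Case A ($D_r\cap D_g\ne\varnothing$) take $i\in D_r\cap D_g$, so both pairs are separated and each plate carries a neutral fault of mass $2$; in Case B ($D_r\cap D_g=\varnothing$) take $i\in D_r$, so the two greens stay together while the reds split, giving a ``heavy'' plate with fault $\{r_1,g_1,g_2\}$ and a ``light'' plate with the single red $r_2$.

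Assume first $n\ge5$, so each plate is a $\Q_{n-1}$ with $n-1\ge4$. In Case A I would pick an edge $e=\{u,v\}$ of $\Q_{n-1}$ whose endpoints and whose two bridge-partners $u',v'$ on the opposite plate all avoid the at most four faulty labels — such an edge exists because $\Q_{n-1}$ has $(n-1)2^{n-2}$ edges and only $O(n)$ are forbidden — and then apply Corollary~\ref{corpathconnectingadjacentvertices} to route a Hamiltonian cycle of the faulty top plate through $e$ and of the faulty bottom plate through $e'=\{u',v'\}$; deleting $e,e'$ and inserting the bridges $\{u,u'\},\{v,v'\}$ merges the two cycles into one Hamiltonian cycle of $\Q_n-\F$. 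In Case B I apply $[2,0,1,0]=4$ (Lemma~\ref{twodeletedonepath}) to the heavy plate $\Q_{n-1}-\{r_1,g_1\}$ with one prescribed red end $p$ and the other end set equal to $g_2$; deleting the terminal vertex $g_2$ leaves a Hamiltonian path $\pi$ of that plate minus all three of its faults, whose ends $p$ and $r^{\ast}$ (the neighbor of $g_2$ along $\pi$) are both red. On the light plate $\Q_{n-1}-\{r_2\}$ I use $[1,1,0,1]=2$ to produce a charged Hamiltonian path between the two green bridge-partners $p'$ and $r^{\ast\prime}$, and the bridges $\{p,p'\}$, $\{r^{\ast},r^{\ast\prime}\}$ splice $\pi$ and this path into a single Hamiltonian cycle of $\Q_n-\F$. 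Parity and balance are automatic throughout, and it is worth noting that this argument never invokes $[4]=4$ in a lower dimension.

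It remains to treat the base dimension $n=4$, where the plates are copies of $\Q_3$ and the strong tools above (Corollary~\ref{corpathconnectingadjacentvertices} and the unconditional form of $[2,0,1,0]=4$) are not available, so each plate construction must be realized inside $\Q_3$: in Case A I would build the two required neutral prescribed-end Hamiltonian paths on the plates $\Q_3-\{r,g\}$ using Lemma~\ref{q3minustwovertices}(1) and join them by the two bridges as above, and in Case B I would run the same route-through-$g_2$-and-delete step using the conditional $n=3$ form of Lemma~\ref{twodeletedonepath} on the heavy plate together with $[1,1,0,1]=2$ on the light plate. The only genuinely new difficulty is that every $\Q_3$ construction carries the exceptional forbidden end-pair family $\mathcal{B}_{\{r,g\}}$, so I must choose the prescribed path ends so as to avoid all these forbidden pairs on both plates at once. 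I expect this simultaneous $\mathcal{B}$-avoidance in dimension four to be the main obstacle: one must check that the freedom in the bridge edge and the prescribed ends always suffices to dodge every $\mathcal{B}_{\{r,g\}}$ obstruction on both plates together, a finite but fussy verification of exactly the kind the authors relegate to inspection and to Appendix~\ref{2pathcoveringsofQ4}.
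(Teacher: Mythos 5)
Your $n=3$ counterexample is correct, your $n\ge 5$ argument is complete and correct, and your two-way split (both pairs separated, versus reds separated with the greens riding together) is exactly the case division the paper uses. The problem is that the whole content of the lemma is the base case $n=4$, and that is precisely where your proposal stops being a proof: you outline a plan and then defer the "simultaneous $\mathcal{B}$-avoidance" as a finite but fussy verification. In your Case A this deferral conceals a genuine difficulty, not a routine check. At $n=4$ each plate is a $\Q_3$ carrying a neutral fault of mass $2$, and your construction needs an edge $e=\{u,v\}$ of the top plate such that $u,v$ miss the top fault, $\{u,v\}$ is not the exceptional pair of the top fault, the bridge partners $u',v'$ miss the bottom fault, and $\{u',v'\}$ is not the exceptional pair of the bottom fault. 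Now count: a neutral fault $\{r,g\}$ in $\Q_3$ forbids the edges meeting $r$ or $g$ (five if $r,g$ are adjacent, six if they are antipodal) plus, in the adjacent case, the single $\mathcal{B}$-edge $\{\bar{r},\bar{g}\}$ --- exactly six edges in every case. Two plates therefore forbid up to $6+6=12$ edges, and $\Q_3$ has only $12$ edges, so the cardinality argument you use for $n\ge5$ ("only $O(n)$ of the $(n-1)2^{n-2}$ edges are forbidden") gives nothing here. To close the case one must actually prove that the two forbidden sets always intersect: disjointness would force the top-shadows of the bottom fault to avoid the closed neighborhood of the top fault, which leaves only the antipodal pair $\{\bar{r}_1,\bar{g}_1\}$ (and nothing at all when the top fault is an antipodal pair), and that forced choice itself produces a common forbidden edge. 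This overlap argument is true but absent from your proposal; without it, Case A at $n=4$ is not established. (Your Case B at $n=4$ is much milder --- only one plate carries a $\mathcal{B}$-obstruction and at most one of the three admissible red ends $p$ is excluded --- but as written you do not carry out that check either.)

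It is instructive to see how the paper engineers around exactly this obstacle so that $n=4$ needs no special pleading. In the configuration where the two deleted greens share a plate, the paper takes the deleted greens themselves as the prescribed ends of a path avoiding the deleted red, using only $[1,1,0,1]=2$ (which has no exceptional pairs in dimension $3$), and then trims both ends --- no $\mathcal{B}$ condition ever arises, in contrast with your Case B, which routes through Lemma \ref{twodeletedonepath}. In the split configuration, the paper builds a Hamiltonian cycle of the top plate minus its fault via $[2]=3$ and spends its freedom in choosing the bridge edge to impose the single condition that $g_4$ be adjacent to $r_2$; since $r_4$ and $g_4$ are adjacent, this forces $\{r_4,g_4\}\notin\mathcal{B}_{\{r_2,g_2\}}$ in both the antipodal and the adjacent subcases, so only one conditional $\Q_3$ construction (Lemma \ref{q3minustwovertices}(1)) is ever invoked and its hypothesis is guaranteed by construction rather than by counting. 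If you either add the overlap argument sketched above to your Case A, or replace your symmetric cycle-through-an-edge construction by the paper's asymmetric cycle-plus-path construction, your proof becomes complete.
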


\begin{proof} 
Since $[2k]\ge k+2$ for each integer $k\ge 0$, we have $[4]\ge 4$.
 
Let $n \ge 4$, $r_1,r_2$ be the two red, and $g_1,g_2$ be the two
green vertices in $\F.$ Split $\Q_n$ into two plates with $r_1$ on  the top plate 
and $r_2$ on the bottom plate. There are two essentially different cases that 
depend on the distribution of the green deleted vertices between
the plates.

{\emph{Case $1.$}} The two deleted green vertices are on the top plate.

Use $[1,1,0,1]=2$ to produce a  path on the top plate that connects the two deleted green vertices and visits all the
vertices of the top plate except the deleted red vertex. From the vertices immediately next to the deleted green
vertices along the constructed path, produce bridges to connect to the bottom plate. Use $[1,1,0,1]=2$ to connect the
lower vertices of these bridges by a path on the bottom plate that visits all the vertices of the bottom plate except
the deleted red vertex. To produce the desired Hamiltonian cycle in $\Q_n - \F$ remove from the path constructed on the
top plate the edges connecting to the deleted green vertices and attach to the resulting path, by means of the bridges,
the path constructed on the bottom plate.

{\emph{Case $2.$}} $g_1$ is on the top plate and $g_2$ is on the bottom plate.

We produce a Hamiltonian cycle of $\Q_n^{top} -\{r_1,g_1\}$ using $[2]=3$. Along this cycle find two consecutive
vertices $r_3, g_3$ with adjacent vertices on the bottom plate $g_4$ and $r_4$, respectively, with $g_4\ne g_2$ and
$r_4\ne r_2$, and such that $g_4$ is adjacent to $r_2.$ This last requirement is important for $n=4$ but irrelevant for
higher dimensions. It guarantees that $\{r_4,g_4\} \not \in \mathcal{B}_{\{r_2,g_2\}}$ when the bottom plate is
isomorphic to $\Q_3.$ (To see that such vertices $r_3$ and $g_3$ exist just take $g_4$ to be a neighbor of $r_2$ in
$\Q_n^{bot}- \{g_2\}$ which is not a neighbor of $r_1$ (since $n\ge 4$ such a neighbor exists). Then denote by $r_3$ the
neighbor of $g_4$ in $\Q_n^{top}$. Clearly $r_3$ will be different from $r_1$ and will belong to the Hamiltonian cycle
on the top. Now take $g_3$ to be a neighbor of $r_3$ in that cycle which is not a neighbor of $r_2$.) Then using
$[2,0,1,0]=4$ we can produce a Hamiltonian path of $\Q_n^{bot}- \{r_2, g_2\}$ that connects $r_4$ to $g_4.$ The desired
Hamiltonian cycle of $ \Q_n- \F$ is formed by connecting the  path on the bottom plate to the  cycle on the top plate by
mean of the bridges $\{r_3,g_4\}, \{r_4, g_3\}$ and, of course, removing the edge $ \{r_3, g_3\}.$
\end{proof}

For the sake of brevity, from now on, we  adopt the following conventions for the proofs using diagrams. The paths drawn
on each plate are assumed to form path coverings of that plate so we indicate in the diagram just what vertices are
connected by these paths. From the diagram it will be clear which vertices are avoided by the path covering. A sentence
such as ``we find a bridge with green at the top" means that we select a green vertex on the top plate such that neither
it nor its adjacent vertex on the bottom plate is a terminal or a deleted vertex, and we produce the bridge between
these two vertices. A sentence such as ``we choose two adjacent bridges along this path to do surgery" means that 1) we
select two consecutive vertices along the mentioned path such that neither them nor their adjacent vertices on the other
plate are terminals or deleted vertices; 2) we produce bridges from the selected vertices to the other plate; and 3) we
remove the edge that connects the selected vertices. At the end of each construction, when we produce the final path
covering, all the edges of the original path covering that were connected to deleted vertices, if such edges exist, must
be cut out. The desired path covering is formed by the paths that connect figures of the same color and shape to each
other. These paths should be clear to the reader from the diagrams.

The following lemma was independently obtained by Caha and Koubek
\cite[Corollary 10]{cahakoubek2}. However their proof is too
involved. We provide here a simpler and direct proof.

\begin{lemma}\label{twochargedpaths} $([0,0,0,2]=4)$ Let $n \geq 3$ and
$r,r_1,g,g_1$ be two red and two green vertices in $\Q_n$. If $n\ge 4$ then there 
exists a $2-$path covering of $\Q_n$ with one path connecting $r$ to
$r_1$ and the other connecting $g$ to $g_1$.
If $n=3$ the same conclusion holds provided that $r$ and $r_1$ are contained in a 
two dimensional subcube $\alpha$ of $\Q_3$ and exactly one of the
vertices $g$ or $g_1$ is contained in $\alpha$. 
\end{lemma}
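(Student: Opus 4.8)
The plan is to prove the nontrivial inclusion $[0,0,0,2]\le 4$ by induction on $n$, taking $n=4$ as the base case and reducing every $n\ge 5$ to a configuration of the same type on a plate of dimension $n-1\ge 4$. The lower bound $[0,0,0,2]\ge 4$ and the conditional statement for $n=3$ are small, and I would settle them by inspection of $\Q_3$: any two red vertices of $\Q_3$ already lie in a common two-dimensional subcube $\alpha$, so the only genuine restriction is that exactly one of $g,g_1$ lie in $\alpha$, and one checks directly that when this fails no covering of the required shape exists. This simultaneously proves the $n=3$ clause and shows that $m=3$ does not qualify, whence $[0,0,0,2]\ge 4$.

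For the inductive step assume the statement for dimension $n-1\ge 4$ and let $r,r_1,g,g_1$ be given in $\Q_n$. Since $r\ne r_1$, choose the splitting coordinate to be one in which they differ, so that $\Q_n$ is cut into a top plate and a bottom plate, each isomorphic to $\Q_{n-1}$, with $r$ on top and $r_1$ on the bottom. The argument then splits according to how the two green vertices are distributed between the plates: (i) both on top, (ii) both on the bottom, or (iii) one on each.

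In case (i) the bottom plate carries only the prescribed vertex $r_1$. I would use $[0,0,1,0]=1$ to build a Hamiltonian path of the bottom plate from $r_1$ to some green vertex $g^\ast$, chosen so that its bridge-neighbour $\widehat{g^\ast}$ on the top plate is red and distinct from $r$. Crossing the bridge $\{g^\ast,\widehat{g^\ast}\}$ leaves on the top plate the two red vertices $r,\widehat{g^\ast}$ and the two green vertices $g,g_1$, and a $2$-path covering of the top plate with one red path from $r$ to $\widehat{g^\ast}$ and one green path from $g$ to $g_1$ is precisely the inductive hypothesis in dimension $n-1$. Attaching the bottom path along the bridge produces the red path $r\,\cdots\,\widehat{g^\ast}\,g^\ast\,\cdots\,r_1$ and the green path $g\,\cdots\,g_1$ covering $\Q_n$. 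Case (ii) is symmetric, with the roles of the plates interchanged. In case (iii), say $g$ on top and $g_1$ on the bottom, I would first produce a Hamiltonian path of the top plate from $r$ to $g$ using $[0,0,1,0]=1$, then cut it at an interior edge $\{a,b\}$ chosen with $a$ green (on the $r$-side) and $b$ red, and perform surgery by sending $a$ and $b$ across bridges to their neighbours $\widehat a$ (red) and $\widehat b$ (green) on the bottom plate. What remains on the bottom is exactly a request for a $2$-path covering with a red path from $\widehat a$ to $r_1$ and a green path from $\widehat b$ to $g_1$, again the inductive hypothesis. In all three cases the many available choices of $g^\ast$ (respectively of the cut edge) guarantee that the four prescribed vertices handed to the inductive hypothesis are distinct and of the correct parities.

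The main obstacle is the base case $n=4$, where both plates are isomorphic to $\Q_3$ and the reduction above would appeal to the merely conditional $n=3$ statement; the freedom in choosing the bridge is not always enough to force the surviving green vertices to meet the governing two-subcube in exactly one point, so some distributions cannot be pushed through the plate argument. These residual configurations are precisely the ones requiring a direct analysis, and I would dispose of them either through the exceptional coverings furnished by Lemma \ref{q3minustwovertices}(3)--(4) or, for the handful that resist inspection, by the case check carried out in Appendix \ref{2pathcoveringsofQ4}. Once $n=4$ is in hand, the induction runs unobstructed for all $n\ge 5$.
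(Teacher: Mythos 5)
Your inductive step for $n\ge 5$ is sound: in cases (i)--(ii) the four vertices handed to the plate of dimension $n-1\ge 4$ are distinct and correctly colored, and in case (iii) the number of green-first interior edges of the top path ($2^{n-2}-1\ge 7$) comfortably exceeds the at most two forbidden cut positions. The genuine gap is the base case $n=4$, on which the whole induction rests, and neither of your two fallbacks closes it. Appendix \ref{2pathcoveringsofQ4} proves Lemma \ref{neutralpathsinq4}, which concerns $\Q_4$ \emph{minus a neutral fault} $\{r,g\}$ with two prescribed \emph{neutral} pairs (it is the $n=4$ ingredient for $[2,0,2,0]=4$ in Lemma \ref{l4202}); it says nothing about covering the intact $\Q_4$ by a red--red path and a green--green path, so citing it for your residual configurations is a citation of the wrong statement. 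The other fallback, ``exceptional coverings furnished by Lemma \ref{q3minustwovertices}(3)--(4),'' is only a gesture: those parts describe $2$- and $3$-path coverings of $\Q_3$ with a neutral pair \emph{deleted}, whereas in your reduction nothing is deleted from either plate, and you do not explain what surgery would convert such coverings into the one you need. So for $n=4$ you have correctly identified that bad configurations exist (in your case (iii); your cases (i)--(ii) can in fact always be salvaged, since at least one of the three $2$-faces of $\Q_3^{top}$ through $r$ contains exactly one of $g,g_1$), but you have not actually handled them, and the induction never gets off the ground.

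It is worth seeing how the paper avoids the difficulty entirely: it never invokes the statement being proved on a plate, so no induction and no base case are needed. Two re-routings accomplish this. When both green terminals share a plate with $r$, instead of asking that plate for a red--red path, the paper treats $r$ as a deleted vertex: $[1,1,0,1]=2$ gives a Hamiltonian path of $\Q_n^{top}-\{r\}$ from $g$ to $g_1$, and the red path is then just $r$, one bridge, and a Havel path ($[0,0,1,0]=1$) on the bottom plate ending at $r_1$. When the greens are split between the plates, the paper cuts the Havel path of the top plate at a green-first edge \emph{traversed from $g$}, not from $r$: then the two bridge feet on the bottom plate form a red vertex to be joined to $g_1$ and a green vertex to be joined to $r_1$ --- two \emph{neutral} pairs --- so Dvo\v{r}\'{a}k's lemma $[0,0,2,0]=2$ finishes on the bottom plate. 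Both tools are valid on plates of dimension at least $2$, in particular on the three-dimensional plates arising when $n=4$. Your case (iii) differs from this only in the orientation of the cut, which is exactly what forces charged pairs onto the bottom plate and hence the appeal to the inductive hypothesis; reversing the orientation, and replacing your cases (i)--(ii) by the deleted-vertex trick, turns your proposal into a complete, induction-free proof.
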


\begin{proof} The claim is straightforward for $n=3.$ Also, one can
directly verify that if $r$ and $r_1$ are contained in a two dimensional subcube $\alpha$ of $\Q_3$ and none or both of
the vertices $g$ and $g_1$ are contained in $\alpha$ then there does not exist a $2-$path covering of $\Q_3$ with one
path connecting $r$ to $r_1$ and the other connecting $g$ to $g_1$. Therefore $[0,0,0,2]\ge 4.$

Let $n\ge 4.$ Split $\Q_n$ into two plates that separate the two red terminals. We can assume that $r \in \Q_n^{top}$
and $ r_1\in \Q_n^{bot}.$ There are two essentially different cases that depend on the distribution of the green
terminals between the plates: (1) the two green terminals are on the top plate; and (2) $g$ is on the top plate and
$g_1$ is on the bottom plate. These cases can be handled as explained in the following diagrams.

\begin{picture}(330,100)(11.5,0)

\twoplatescl

\put(30,80){\firsttermgreen}

\put(57,20){\secondtermred}

\thinlines \qbezier(33,81)(30,40)(53,60)

\put(50,60){\firsttermgreen}

\put(16,59){\secondtermred}

\put(17,10){\bridgegreen}

\thinlines \qbezier(58,22)(55,25)(19,13)

\put(19,60){\line(0,-1){47}}

\put(91,83){\parbox[t]{375pt}{(1) Use $[1,1,0,1]=2$ to find a Hamiltonian path of $\Q_n^{top}- \{r\} $ that
connects $g$ to $g_1.$ Connect the top red terminal $r$ to the bottom plate by a bridge. Use $[0,0,1,0]=1$ to find a
Hamiltonian path of the bottom plate that connects the lower vertex of the bridge to the red terminal $r_1$ on the
bottom plate.}}

\end{picture}

\begin{picture}(330,100)(11.5,0)

\twoplatescl

\put(30,80){\firsttermgreen}

\put(10,10){\firsttermred}

\thinlines \qbezier(33,80)(27,30)(60,82)

\put(50,30){\firsttermgreen}

\put(58,82){\firsttermred}

\put(30,60){\bridgegreen}

\put(30,22){\bridgered}

\put(33,53){\bridgered}

\put(33,12){\bridgegreen}

\put(32,61){\line(0,-1){36}}

\put(35,54){\line(0,-1){39}}

\thinlines \qbezier(32,24)(50,12)(52,30)

\thinlines \qbezier(35,13)(30,0)(13,10)

\put(91,83){\parbox[t]{375pt}{(2) Use $[0,0,1,0]=1$ to produce a Hamiltonian path of the top plate that connects
the two terminals. While traversing the path starting from the green terminal, find an edge  whose first vertex is green
and such that the adjacent vertices on the bottom are not terminals. Produce
 bridges from the vertices of this edge. Use $[0,0,2,0]=2$ to
produce a $2-$path covering of the bottom plate that connects the lower vertices of the bridges to the appropriate
terminals.}}
\end{picture}
\end{proof}

The following lemma is a refinement of Lemma \ref{twochargedpaths}. It shows
that one can choose which one of the two pairs
of terminals to be connected by the longer path.

\begin{lemma} \label{twochargedpathsonelong}Let $n\ge 3,$ $r,r_1$ be two
distinct red vertices and $g, g_1$ be two distinct green vertices in $\Q_n.$ If $n=3$ we also require that if $r$ and
$r_1$ are contained in a two dimensional subcube $\alpha$ of $\Q_3,$ then exactly one of the vertices $g$ or $g_1$ is
contained in $\alpha$. Then there exists a $2-$path covering of $\Q_n$ with the first path of length at least $2^{n-1}$
connecting $r$ to $r_1$ and the second path connecting $g$ to $g_1.$
\end{lemma}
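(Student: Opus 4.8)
The plan is to split $\Q_n$ into a top and a bottom plate and to recycle the constructions behind Lemma~\ref{twochargedpaths}, observing that in one of its two cases the path joining $r$ to $r_1$ is \emph{automatically} long, and supplying a new construction for the other case. First I would dispose of $n=3$ by inspection. Here $2^{n-1}=4$, and since the $g$--$g_1$ path joins two vertices of the same parity it has at least three vertices; hence the requirement that the $r$--$r_1$ path have length at least $4$ forces the two lengths to be exactly $4$ and $2$. One checks, respecting the stated proviso and using Lemma~\ref{q3minustwovertices}, that the two distinct green vertices of $\Q_3$ have a common red neighbor $r_x$ that can be selected so that $\Q_3-\{g,g_1,r_x\}$ admits a Hamiltonian path from $r$ to $r_1$; this is the desired covering.

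For $n\ge 4$ I would split $\Q_n$ along a coordinate on which $r$ and $r_1$ differ, putting $r$ on the top plate and $r_1$ on the bottom, and I would choose this coordinate so that $g$ is not bridge-adjacent to $r_1$. Such a choice exists: if $r$ and $r_1$ differ in a single coordinate then $g$ cannot be bridge-adjacent to $r_1$ across it, since otherwise $g=r$; and if they differ in at least two coordinates one simply avoids the unique coordinate (if any) across which $g$ and $r_1$ are adjacent. Up to the symmetry that swaps the two plates and interchanges $r$ with $r_1$, there are two cases. In \emph{Case $1$} both green terminals lie on the top plate, and the construction of case~(1) of Lemma~\ref{twochargedpaths} already works: using $[1,1,0,1]=2$ produce a Hamiltonian path of $\Q_n^{top}-\{r\}$ joining $g$ to $g_1$, bridge $r$ to its green neighbor on the bottom plate, and by $[0,0,1,0]=1$ extend along a Hamiltonian path of $\Q_n^{bot}$ ending at $r_1$. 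The $r$--$r_1$ path then consists of $r$ together with all of $\Q_n^{bot}$, so it has $2^{n-1}+1$ vertices and length exactly $2^{n-1}$.

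\emph{Case $2$}, where $g$ is on the top plate and $g_1$ on the bottom, is the crux, because the original case~(2) construction does not control which path is long. Instead I would use $[1,1,0,1]=2$ to delete $g$ from the top plate and build a Hamiltonian path of $\Q_n^{top}-\{g\}$ joining $r$ to a red vertex $r_3$, choosing $r_3$ so that its bottom neighbor $g_2$ differs from $g_1$. Let $r_2$ be the red bottom neighbor of $g$; by the choice of splitting coordinate $r_2\ne r_1$, so $r_2,g_1,g_2,r_1$ are four distinct vertices. Forming the bridges $\{g,r_2\}$ and $\{r_3,g_2\}$ and invoking $[0,0,2,0]=2$, produce a $2$-path covering of $\Q_n^{bot}$ by two neutral paths, one joining $r_2$ to $g_1$ and the other joining $g_2$ to $r_1$. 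After reconnection, $g$--$r_2\cdots g_1$ is the $g$--$g_1$ path, while $r\cdots r_3$--$g_2\cdots r_1$ is the $r$--$r_1$ path; the latter contains all $2^{n-1}-1$ vertices of $\Q_n^{top}-\{g\}$ together with at least two vertices of $\Q_n^{bot}$, hence has at least $2^{n-1}+1$ vertices and length at least $2^{n-1}$.

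I expect the main obstacle to be the bookkeeping in Case~$2$: verifying that the splitting coordinate and the vertex $r_3$ can be chosen to make the four bottom terminals distinct, and confirming that the length estimate survives \emph{every} admissible outcome of the $[0,0,2,0]$ covering. The counting is comfortable for $n\ge 4$, where the top plate carries $2^{n-2}\ge 4$ red vertices, so excluding $r$ and at most one forbidden choice still leaves an admissible $r_3$; and the length bound is robust because the bottom portion $g_2\cdots r_1$ of the $r$--$r_1$ path joins vertices of opposite colour and therefore always contributes at least two vertices. The base case $n=3$, where the length distribution is rigidly forced to be $(4,2)$, is the other place demanding a careful finite check against the proviso.
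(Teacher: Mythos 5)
Your reduction to two cases hides a gap that your construction cannot repair: the symmetry ``swap the plates and interchange $r$ with $r_1$'' does not preserve the condition you imposed when choosing the splitting coordinate. You pick $j$ so that $g$ is not bridge-adjacent to $r_1$; but if the split puts $g$ on the bottom plate (with $r_1$) and $g_1$ on the top plate, then the plate-swapped run of your Case~2 needs $g$ not bridge-adjacent to $r$, and the green-relabelled run needs $g_1$ not bridge-adjacent to $r_1$ --- neither of which your choice of $j$ guarantees. The failure is realized concretely whenever $r$ and $r_1$ are at Hamming distance $2$ and $g,g_1$ are their two common neighbors, e.g.\ $r=(0,0,0,0)$, $r_1=(1,1,0,0)$, $g=(1,0,0,0)$, $g_1=(0,1,0,0)$ in $\Q_4$. (This configuration is excluded for $n=3$ by the proviso of the lemma, but for $n\ge 4$ it is admissible and must be covered.) Here the only splitting coordinates separating $r$ from $r_1$ are the two coordinates of the $2$-dimensional subcube containing all four terminals, and for either choice each green terminal's unique bridge neighbor is exactly the red terminal on the opposite plate. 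Consequently, in your Case~2 (in whichever of its two symmetric versions) the vertex you call $r_2$ equals $r_1$, so the two pairs handed to $[0,0,2,0]=2$ share a vertex and no $2$-path covering of the bottom plate exists; choosing $r_3$ differently cannot help, because the collision sits on the bridge forced through the green terminal itself, not on the free bridge at $r_3$.

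This is precisely the situation the paper isolates as Subcase 2(c) of its proof (``$r$ is adjacent to $g_1$ and $r_1$ is adjacent to $g$''), and it is resolved there by a genuinely different routing: a $2$-path covering of the top plate joining $g$ to $g_2$ and $r$ to $r_3$, so that the green path crosses between the plates at a vertex other than the green terminal, followed by a Hamiltonian path of $\Q_n^{bot}-\{r_2,g_1\}$ from $g_3$ to $r_1$ obtained from $[2,0,1,0]=4$, reassembled via the bridges $\{r_2,g_2\}$, $\{g_3,r_3\}$ and the edge $\{r_2,g_1\}$; for $n=4$ an additional condition on which $2$-dimensional subcube contains $r$ and $r_3$ is required to make the top covering exist. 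Your Case~1 and your Case~2 in the generic position are correct (they coincide with case (1) and Subcase 2(a) of the paper), and your $n=3$ analysis forcing the length distribution $(4,2)$ is sound; but without a construction for the $4$-cycle configuration the proof is incomplete. A minor additional slip: $r$ and $r_1$ have the same parity, hence always differ in an even number of coordinates, so your clause about their differing in a single coordinate is vacuous.
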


\begin{proof} If $n=3$ then our claim can be verified directly.

For $n \ge 4$ we produce two plates as in the proof of Lemma \ref{twochargedpaths} and consider the same two cases. The
proof of case (1) does not need to be modified. For case (2) we assume without loss of generality that $r, g$ are on the
top plate and $r_1,g_1$ are on the bottom plate. There are three subcases to consider.

\emph{Subcase 2(a).} $g$ is not adjacent to $r_1.$

Let $r_2$ be any red vertex on the top plate that is  adjacent to vertex $g_2$ of the bottom plate different from $g_1.$
Use $[1,1,0,1]=2$ to produce a Hamiltonian path of $\Q_n^{top}-\{g\}$ that connects $r$ to $r_2.$ Let $r_3$ be the
vertex of the bottom plate that is adjacent to $g.$ Use $[0,0,2,0]=2$ to produce a $2-$path covering of the bottom plate
that connects $r_3$ to $g_1$ and $g_2$ to $r_1.$ The desired $2-$path covering of $\Q_n$ is obtained by connecting the
path produced on the plates by means of the bridges $\{r_2,g_2\}$ and $\{g,r_3\}.$

\emph{Subcase 2(b).} $r$ is not adjacent to $g_1.$

Let $r_2$ be the vertex of the top plate that is adjacent to $g_1.$ Let $g_2$ be any green vertex on the top plate
different from $g$ and adjacent to a vertex $r_3 \neq r_1$ of the bottom plate. Use $[0,0,2,0]=2$ to produce a $2-$path
covering of $\Q_n^{top}$ that connects $g$ to $r_2$ and $r$ to $g_2.$ Use $[1,1,0,1]=2$ to produce a Hamiltonian path of
$\Q_n^{bot}-\{g_1\}$ that connects $r_3$ to $r_1.$ The desired $2-$path covering of $\Q_n$ is obtained by attaching to
the paths constructed on the plates the bridges $\{r_2, g_1\}$ and $\{g_2,r_3\}.$

\emph{Subcase 2(c).} $r$ is adjacent to $g_1$ and $r_1$ is adjacent to $g.$

The care in choice of vertices below is important for dimension $n=4$ but can be relaxed for $n\ge 5.$

Let $r_2$ be any vertex of the bottom plate that is adjacent to $g_1,$ different from $r_1$, and let $g_2$ be the vertex
on the top plate that is adjacent to $r_2.$ On the top plate we can find a vertex $r_3$ whose adjacent vertex $g_3$ on
the bottom plate satisfies the following conditions: 1) $g_3$ is adjacent to $r_2;$ 2) $g_3 \neq g_1;$ and, in the case
$n=4$, we also require 3) the two-dimensional subcube that contains $r$ and $r_3$ contains exactly one of the vertices
$g$ or $g_2.$ Conditions 1) and 2) guarantee the existence of a Hamiltonian path of $\Q_n^{bot}-\{r_2,g_1\}$ that
connects $g_3$ to $r_1.$ Condition 3) guarantees the existence of a $2-$path covering of $\Q_n^{top}$ that connects $g$
to $g_2$ and $r$ to $r_3.$ The desired $2-$path covering of $\Q_n$ is obtained by attaching to the paths constructed on
the plates the bridges $\{r_2, g_2\},\{g_3,r_3\}$ and the edge $\{r_2,g_1\}.$
\end{proof}

\begin{lemma}$([1,1,1,1]=4)$ Let $n \geq 4$, $r$ be a deleted red
vertex in $\Q_n$, and $r_1,g,g_1,g_2$ be one red and three distinct green vertices in $\Q_n-\{r\}$. Then there exists a
$2-$path covering of $\Q_n-\{r\}$ with one path connecting $r_1$ to $g$ and the other connecting $g_1$ to $g_2$. The
claim is not true for $n = 3$.
\end{lemma}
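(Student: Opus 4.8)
The plan is to follow the now-standard two-plate strategy used throughout this section, splitting $\Q_n$ into a top plate $\Q_n^{top}$ and a bottom plate $\Q_n^{bot}$ (each a copy of $\Q_{n-1}$) and reducing the problem to path coverings on the plates that are guaranteed by the already-established lemmas, namely $[1,1,0,1]=2$, $[0,0,1,0]=1$, $[0,0,2,0]=2$, and $[2,0,1,0]=4$. Since the fault here is a single red vertex $r$ together with the four prescribed terminals $r_1, g, g_1, g_2$ (one red, three green), the natural first move is to choose the splitting direction so that the deleted vertex $r$ and the red terminal $r_1$ land on different plates; this isolates the two ``red obstructions'' and lets each plate carry at most one of them. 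I would place $r$ on the top plate and $r_1$ on the bottom plate.

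Next I would case-split on how the three green terminals $g, g_1, g_2$ distribute between the two plates. Up to the top/bottom symmetry there are essentially three configurations: all three greens on $r_1$'s plate, two greens with $r_1$ and one green with $r$, or one green with $r_1$ and two greens with $r$. In each case the strategy is the same: on the plate that contains the deleted red vertex $r$, use $[1,1,0,1]=2$ (hyper-Hamilton laceability of $\Q_{n-1}$ minus a vertex) to route a charged or neutral path avoiding $r$, while on the other plate use whichever of $[0,0,1,0]$, $[0,0,2,0]$, or $[2,0,1,0]$ matches the number of terminals and deletions present there. One then stitches the two plates together using bridges: select bridge vertices whose endpoints on both plates avoid all terminals and the deleted vertex (possible because $n\ge 4$ gives each plate dimension at least $3$, hence at least $8$ vertices, leaving ample slack), perform the surgery of cutting an edge on one plate and re-routing through the bridge, and check that the resulting glued object is precisely a $2$-path covering with the required pairing $\{r_1,g\}$ and $\{g_1,g_2\}$. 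A balance/parity check at each step confirms the endpoint colors match what the invoked lemma produces.

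The main obstacle I anticipate is the configuration in which the red terminal $r_1$ must be paired with a green terminal $g$ that lands on the \emph{other} plate, forcing the neutral path $r_1\!-\!g$ to cross a bridge, while simultaneously the charged green pair $\{g_1,g_2\}$ and the deleted vertex $r$ are distributed so that the plate carrying $r$ ends up needing to host an \emph{odd} number of free terminals — a parity conflict that cannot be absorbed by a single bridge and instead requires cutting a pre-built Hamiltonian path or cycle and re-routing through \emph{two} adjacent bridges (the surgery technique). Here I would build a Hamiltonian path or a charged Hamiltonian path on the $r$-plate via $[1,1,0,1]=2$, then cut it at a carefully chosen edge and send both cut-ends down to the bottom plate, where $[0,0,2,0]=2$ or $[2,0,1,0]=4$ finishes the covering. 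The delicate point — exactly as flagged in the earlier lemmas of this section — will be the low-dimensional base case $n=4$, where each plate is isomorphic to $\Q_3$: there I must invoke the restricted forms of $[2,0,1,0]$ and $[0,0,2,0]$ and verify that the bridge and cut vertices can be chosen so as to steer clear of the forbidden configurations in $\mathcal{B}_{\{\cdot,\cdot\}}$ recorded in Lemma~\ref{q3minustwovertices}. The failure of these avoidance choices is precisely what makes the claim false for $n=3$, so I would close by exhibiting a small obstruction in $\Q_3$ — for instance, deleting a red vertex and prescribing the three greens and one red so that no valid $2$-path covering with the required pairing exists — to confirm the stated sharpness.
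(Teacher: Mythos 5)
Your decomposition is genuinely different from the paper's, and the difference is where the proof breaks. The paper splits $\Q_n$ so as to separate the two \emph{charged} terminals $g_1$ and $g_2$; then the deleted red $r$ can always be paired with whichever green terminal shares its plate, so every plate-level subproblem involves only neutral pairs and is handled by $[0,0,2,0]=2$, $[1,1,0,1]=2$, and $[0,0,1,0]=1$ (plus, in one case, an oriented-cut counting argument), all of which are already valid for three-dimensional plates; hence $n=4$ needs no special treatment. You instead split to separate $r$ from $r_1$. Note first that your case count is wrong: there is no ``top/bottom symmetry'' here, since one plate carries the deleted vertex and the other the red terminal, so up to renaming $g_1,g_2$ there are six essentially different distributions of the three greens, not three. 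Your list omits the two extreme configurations (all three greens on $r$'s plate, all three greens on $r_1$'s plate) and conflates the cases in which the green sharing a plate with $r$ is the neutral terminal $g$ versus one of the charged pair; these require different constructions.

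The serious gap is in the configurations where $g_1$ and $g_2$ both land on $r_1$'s plate. That plate is a full copy of $\Q_{n-1}$ with no deleted vertex, hence perfectly balanced, so a simple count shows your claimed finishers cannot apply: if that plate is to be covered by two path-pieces, one piece is the green pair $\{g_1,g_2\}$ and balance forces the other piece (the bottom portion of the neutral path, from $r_1$ to a bridge image) to be a \emph{red} pair, i.e.\ you need $[0,0,0,2]$ --- not $[0,0,2,0]$ or $[2,0,1,0]$ --- and $[0,0,0,2]=4$ carries precisely the $\Q_3$ restriction of Lemma \ref{twochargedpaths} that bites in your base case $n=4$. (Incidentally, $[0,0,2,0]$ has no restricted form in $\Q_3$; the restricted statements are $[2,0,1,0]$ and $[0,0,0,2]$.) Worse, when all three greens are on $r_1$'s plate, \emph{no} two-path covering of that plate is in balance at all, so it must host at least three path-pieces; the corresponding prescription lemma would be $[0,0,3,0]$, which equals $5$ and is false in $\Q_4$ (Lemma \ref{threecoveringlemma}), so your plan fails there for $n=4$ and $n=5$. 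These cases can be rescued, but only by a construction your proposal never describes: build a Hamiltonian path on the $r_1$-plate by Havel's lemma (e.g.\ from $r_1$ to $g$, or to one of $g_1,g_2$), excise the segment joining $g_1$ to $g_2$ to serve as the charged path, and reconnect the leftover pieces through the $r$-plate via $[1,1,0,1]=2$ --- that is, the surgery must be performed on the plate \emph{opposite} to the one you designate, and the excised pieces are not freely prescribed ends but come from cutting a single path, which is why no three-path prescription lemma is needed. As written, the proposal does not establish the lemma in the critical dimension $n=4$.
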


\begin{proof} The following counterexample shows that $[1,1,1,1]>3$:
$n=3$, $r=(1,0,1)$, $r_1=(1,1,0)$, $g=(1,1,1)$, $g_1=(0,1,0)$, $g_2=(0,0,1)$.

Now let $n\ge 4.$ Produce two plates to separate the two green terminals $g_1$ and $g_2$ of the charged path and assume
that the deleted red $r$ and $g_1$ are on the top plate. The terminals of the neutral path $r_1$ and $g$ could be
distributed in four possible ways:
\begin{itemize}
\item[(1)] both are on the top plate; \item[(2)] the red is on the top plate and the green is on bottom plate;
    \item[(3)] the green is on the top plate and the red is on the bottom plate; \item[(4)] both are on the bottom
    plate.
\end{itemize}

The four cases can be approached as explained in the following diagrams.

\begin{picture}(330,100)(11.5,0)

\twoplatescl

\put(30,80){\firsttermgreen}

\thinlines \qbezier(33,80)(27,50)(59,80)

\put(28,12){\firsttermgreen}

\put(58,80){\deletedred}

\put(10,55){\secondtermgreen}

\put(40,55){\secondtermred}

\put(50,72){\bridgegreen}

\put(50,30){\bridgered}

\put(52,73){\line(0,-1){40}}

\thinlines \qbezier(31,16)(30,45)(51,33)

\thinlines \qbezier(15,58)(30,65)(41,58)

\put(91,83){\parbox[t]{375pt}{(1) Use $[0,0,2,0]=2$ to produce a $2-$path covering of the top plate that
connects the two terminals of the neutral path to each other, and the green terminal of the charged path to the deleted
red. Cut the last path just before the deleted red and produce a bridge. Use $[0,0,1,0]=1$ to find a Hamiltonian path
for the bottom plate connecting the lower vertex of the bridge to the green terminal on the bottom plate.}}
\end{picture}

\begin{picture}(330,100)(11.5,0)

\twoplatescl

\put(30,80){\firsttermgreen}

\thinlines \qbezier(33,80)(27,50)(59,80)

\put(28,12){\firsttermgreen}

\put(20,60){\bridgegreen}

\put(20,20){\bridgered}

\put(22,62){\line(0,-1){40}}

\put(58,80){\deletedred}

\put(12,6){\secondtermgreen}

\put(40,55){\secondtermred}

\put(50,72){\bridgegreen}

\put(50,30){\bridgered}

\put(52,73){\line(0,-1){40}}

\thinlines \qbezier(21,23)(25,15)(17,8)

\thinlines \qbezier(31,16)(30,45)(51,33)

\thinlines \qbezier(22,62)(30,53)(41,58)

\put(91,83){\parbox[t]{375pt}{(2) Find a bridge with green on the top. Use $[0,0,2,0]=2$ to produce a $2-$path
covering of the top plate that connects the red terminal of the neutral path to the upper vertex of the bridge and the
green terminal of the charged path to the deleted red vertex. Cut the second  path just before the deleted red vertex
and produce a second bridge there. Use $[0,0,2,0]=2$ to produce a $2-$path covering of the bottom plate that connects
the lower vertices of the bridges to the appropriate terminals. }}
\end{picture}

\begin{picture}(330,110)(11.5,0)

\twoplatescl

\put(30,80){\firsttermgreen}

\thinlines \qbezier(33,80)(22,70)(44,55)

\put(40,12){\firsttermgreen}

\put(33,60){\bridgered}

\put(33,20){\bridgegreen}

\put(35,62){\line(0,-1){39}}

\put(58,80){\deletedred}

\put(12,10){\secondtermred}

\put(43,53){\secondtermgreen}

\put(28,67){\bridgegreen}

\put(28,27){\bridgered}

\put(30,68){\line(0,-1){38}}

\thinlines \qbezier(30,30)(70,35)(44,14)

\thinlines \qbezier(15,10)(25,0)(35,21)

\put(91,83){\parbox[t]{375pt}{(3) Use $[1,1,0,1]=2$ to produce a Hamiltonian path of $\Q_n^{top} - \{r\}$ that
connects $g$ to $g_1.$ Traversing this path from $g$ to $g_1$  find two consecutive vertices that are not neighbors to
the green and red terminals on the bottom plate and such that the first vertex is green. Such pair of consecutive
vertices exist since the length of the path is at least six, hence there are at least three such pairs on the top and
only two vertices to avoid on the bottom. Produce bridges from these vertices. Use $[0,0,2,0]=2$ to produce a $2-$path
covering of the bottom plate that connects the lower vertices of the bridges to the appropriate terminals. }}
\end{picture}

\begin{picture}(330,123)(11.5,0)

\twoplatescl

\put(30,80){\firsttermgreen}

\thinlines \qbezier(33,80)(27,50)(51,62)

\put(28,12){\firsttermgreen}

\put(58,80){\deletedred}

\put(12,10){\secondtermred}

\put(40,10){\secondtermgreen}

\put(50,60){\bridgegreen}

\put(50,18){\bridgered}

\put(52,61){\line(0,-1){40}}

\thinlines \qbezier(31,16)(30,50)(51,21)

\thinlines \qbezier(16,11)(30,2)(42,11)

\put(91,83){\parbox[t]{375pt}{(4) Find a bridge with green on the top. Use $[1,1,0,1]=2$ to find a Hamiltonian
path of the top plate connecting the green terminal of the charged path to the bridge avoiding the deleted red
vertex. Use $[0,0,2,0]=2$ to find a $2-$path covering of the bottom plate  connecting the lower vertex of the
bridge to the green terminal of the charged path and the two terminals of the neutral path.}}
\end{picture}
\vskip-12pt
\end{proof}

\begin{lemma} \label{chargeoneoneedgeonepath}
Let $n \ge 4,$ $r_1$ and $r_2 $ be two distinct red vertices in $\Q_n$ and $g$ be a green vertex that is deleted from
$\Q_n.$ Assume further that $e=\{a,b\}$ is any edge in $\Q_n - \{g\}$. Then there exists a Hamiltonian path in $\Q_n -
\{g\}$ that connects $r_1$ to $r_2$ and passes through the edge $e.$ In the case when $\{a,b\}\cap\{r_1,r_2\}=\emptyset$
we can find an oriented Hamiltonian path in $\Q_n - \{g\}$ connecting $r_1$ to $r_2$ such that the path visits the
vertex $a$ first.
\end{lemma}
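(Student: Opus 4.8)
The plan is to reduce the statement to the path-covering results already established, by building the desired Hamiltonian path from a $2$-path covering glued along the prescribed edge. Since $\Q_n$ is bipartite, $e=\{a,b\}$ joins a red and a green vertex. The key observation is that any $2$-path covering of $\Q_n-\{g\}$ whose two paths have endpoint pairs $\{r_1,x\}$ and $\{y,r_2\}$, with $\{x,y\}=\{a,b\}$ and $x,y$ distinct from $r_1,r_2$, glues along the edge $\{x,y\}=e$ into a single Hamiltonian path of $\Q_n-\{g\}$ from $r_1$ to $r_2$ that passes through $e$; moreover, taking $x=a$ and $y=b$ makes the glued path traverse $a$ immediately before $b$, hence visit $a$ first when read from $r_1$. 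Thus, when $e$ is disjoint from $\{r_1,r_2\}$ it suffices to produce such a covering, while the case in which $e$ meets $\{r_1,r_2\}$ will be handled directly.

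First I treat the case $\{a,b\}\cap\{r_1,r_2\}=\emptyset$, in which $r_1,r_2,a,b$ are four distinct vertices, all distinct from $g$. To force the path to visit $a$ first I prescribe the endpoint pairs $\{r_1,a\}$ and $\{b,r_2\}$. Exactly one of $a,b$ is red, so this pairing always consists of one neutral pair and one charged (red) pair, and its endpoints comprise three red vertices together with the single green vertex among $\{a,b\}$; after deleting the green vertex $g$ the pairing is therefore in balance with the fault $\{g\}$. Because $n\ge 4$, the statement $[1,1,1,1]=4$ (which is symmetric in the two colors, so applies with the deleted vertex green) lets us freely prescribe these ends and produces a $2$-path covering of $\Q_n-\{g\}$ realizing exactly the pairs $\{r_1,a\}$ and $\{b,r_2\}$. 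Gluing along $e$ gives the required oriented Hamiltonian path.

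Now suppose $\{a,b\}\cap\{r_1,r_2\}\neq\emptyset$. Since $r_1,r_2$ are red and $e$ has a unique red endpoint, that endpoint is $r_1$ or $r_2$; by symmetry assume $e=\{r_1,b\}$ with $b$ green. As $r_1$ is an endpoint of the sought path, the only path-edge incident to $r_1$ is $e$ itself, so the path must start $r_1\to b\to\cdots\to r_2$. Removing $r_1$ leaves a Hamiltonian path of $\Q_n-\{r_1,g\}$ from the green vertex $b$ to the red vertex $r_2$. These four vertices are pairwise distinct and $n\ge 4$, so Lemma \ref{twodeletedonepath} ($[2,0,1,0]=4$), applied with deleted red vertex $r_1$ and deleted green vertex $g$, furnishes such a path; prepending the edge $\{r_1,b\}$ recovers a Hamiltonian path of $\Q_n-\{g\}$ from $r_1$ to $r_2$ through $e$ (no orientation is required in this case).

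I expect no serious obstacle here: the argument is a clean reduction, and the only delicate points are bookkeeping ones—confirming in each case that the prescribed endpoints are pairwise distinct and different from $g$, and verifying the balance condition against the deleted green vertex so that $[1,1,1,1]=4$ and Lemma \ref{twodeletedonepath} genuinely apply. The apparently hard part, namely controlling the direction in which $e$ is crossed, disappears once the orientation is recast as the choice of which endpoint of $e$ is paired with $r_1$, a freedom guaranteed by the free prescription of ends in $[1,1,1,1]=4$.
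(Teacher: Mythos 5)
Your proposal is correct and follows essentially the same route as the paper's proof: when $e$ is disjoint from $\{r_1,r_2\}$ you invoke the (color-dual) statement $[1,1,1,1]=4$ to prescribe the pairs $\{r_1,a\}$ and $\{r_2,b\}$ and glue along $e$, and when $e$ is incident to an endpoint you apply $[2,0,1,0]=4$ (Lemma \ref{twodeletedonepath}) to $\Q_n-\{r_1,g\}$ and prepend the edge. Your write-up merely makes explicit some bookkeeping (distinctness, balance, and the orientation remark) that the paper leaves implicit.
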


\begin{proof} If the prescribed edge $e$ is not incident to any of the
prescribed end vertices $r_1, r_2,$  use $[1,1,1,1]=4$ to connect $r_1$ to $a$ and $r_2$ to $b.$ The desired (oriented)
Hamiltonian path in $\Q_n - \{g\}$ is obtained by connecting these two paths to each other through the edge $e.$

Let the prescribed edge be incident to one of the prescribed end vertices. We can assume without loss of generality that
$a = r_1.$ Then use $[2,0,1,0]=4$ to produce a Hamiltonian path in $\Q_n - \{r_1,g\}$  that connects $r_2$ to $b.$ Then
attach the edge $e$ to this path to obtain the desired Hamiltonian path in $\Q_n - \{g\}.$
\end{proof}

\begin{lemma} $([3,1,0,1]=4)$ Let $n \geq 4$ and
$g$, $r$ and $r_1$ be one green and two distinct red vertices in
$\Q_n$. Let also $g_1$ and $g_2$ be two distinct green terminals
in $\Q_n-\{g,r,r_1\}$. Then there exists a Hamiltonian path for
$\Q_n-\{g,r,r_1\}$ connecting $g_1$ to $g_2$. The claim is not
true for $n = 3$.
\end{lemma}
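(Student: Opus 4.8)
The plan is to prove $[3,1,0,1]\le 4$ by a single plate splitting and to rule out $n=3$ directly. For the lower bound note that $m\ge\log_2(M+C+2N+2O)=\log_2 6$ only forces $m\ge 3$, so a separate argument is needed for $\Q_3$. Deleting the red vertices $000,011$ and the green vertex $111$ from $\Q_3$ leaves the five vertices $001,010,100$ (green) and $101,110$ (red), which induce a single path traversing $001,101,100,110,010$ in this order. Hence the only Hamiltonian path of $\Q_3-\{000,011,111\}$ joins $001$ to $010$, and in particular none joins the green terminals $g_1=001$ and $g_2=100$. Thus the claim fails for $n=3$ and $[3,1,0,1]\ge 4$.

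For $n\ge 4$ I would split $\Q_n$ into a top and a bottom plate along a coordinate in which the two deleted red vertices disagree, so that each plate, a copy of $\Q_{n-1}$, carries exactly one deleted red vertex. Renaming if necessary, let the top plate contain the deleted green vertex $g$ and a deleted red vertex $r$, while the bottom plate contains only the deleted red vertex $r_1$. I then split into three cases by the location of the terminals. \emph{If both $g_1$ and $g_2$ lie on the top plate}, I first disregard $g$ and use $[1,1,0,1]=2$ to build a Hamiltonian path of $\Q_n^{top}-\{r\}$ from $g_1$ to $g_2$. This path necessarily passes through $g$, and since $g\neq g_1,g_2$ the vertex $g$ is interior; excising $g$ and its two incident edges leaves two subpaths covering $\Q_n^{top}-\{r,g\}$ whose new ends $a,b$ are the (red) former neighbours of $g$. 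I bridge $a,b$ to their green partners $a',b'$ on the bottom plate and join $a'$ to $b'$ by a Hamiltonian path of $\Q_n^{bot}-\{r_1\}$ (again $[1,1,0,1]=2$); the four pieces stitch into one path from $g_1$ to $g_2$.

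In the remaining two cases the top fault $\{r,g\}$ consists of one red and one green vertex, so $[2,0,1,0]=4$ applies on the top plate. \emph{If $g_1$ lies on top and $g_2$ on the bottom}, I pick a red vertex $u$ of the top plate whose bottom partner $u'$ is distinct from $g_2$, join $g_1$ to $u$ by a Hamiltonian path of $\Q_n^{top}-\{r,g\}$ using $[2,0,1,0]=4$, bridge $u$ to $u'$, and complete with a Hamiltonian path of $\Q_n^{bot}-\{r_1\}$ from $u'$ to $g_2$ via $[1,1,0,1]=2$. \emph{If both terminals lie on the bottom}, I first join $g_1$ to $g_2$ by a Hamiltonian path of $\Q_n^{bot}-\{r_1\}$ ($[1,1,0,1]=2$), cut it at an edge $\{c,d\}$, bridge $c,d$ to their partners $c',d'$ on the top plate, and reconnect $c'$ to $d'$ by a Hamiltonian path of $\Q_n^{top}-\{r,g\}$ using $[2,0,1,0]=4$; as $c,d$ are adjacent, $c'$ and $d'$ have opposite parity, which is exactly the input $[2,0,1,0]=4$ expects.

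The only delicate point, and what I expect to be the main obstacle, is the boundary case $n=4$, where each plate is a copy of $\Q_3$ and every use of $[2,0,1,0]=4$ is subject to the exceptional condition that its prescribed ends not form a pair in the family $\mathcal{B}_{\{r,g\}}$ appearing in Lemma \ref{q3minustwovertices}. The hard part will be verifying that the free choices above can respect this: in the mixed case the red vertex $u$ must be chosen with $\{g_1,u\}\notin\mathcal{B}_{\{r,g\}}$ and $u'\neq g_2$, and in the last case the cut edge $\{c,d\}$ must be chosen with $\{c',d'\}\notin\mathcal{B}_{\{r,g\}}$ and with $c',d'$ distinct from $r,g$. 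Since $\mathcal{B}_{\{r,g\}}$ forbids at most one partner for each fixed vertex and the Hamiltonian path produced on $\Q_3-\{r_1\}$ is long enough to offer several admissible cut edges, such choices always exist; for $n\ge 5$ all of these side conditions are vacuous and the three constructions go through verbatim.
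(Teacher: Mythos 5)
Your overall construction is sound, and it is genuinely different from the paper's in two of the three cases: the paper's Case 1 is essentially your Case 1, but in the mixed case the paper deliberately avoids invoking $[2,0,1,0]=4$ on a three-dimensional plate, instead routing the top path into the deleted green vertex and using Lemma \ref{twodifferentedges} to control the edge just before it; and in the case of two bottom terminals it builds a Hamiltonian cycle of $\Q_n^{top}-\{r,g\}$ via $[2]=3$ and patches it to the bottom path (a parallel-bridges argument for $n=4$, Lemma \ref{chargeoneoneedgeonepath} for $n\ge 5$). Your plan --- use $[2,0,1,0]=4$ on the top plate throughout and handle the $\Q_3$ exceptions by avoiding $\mathcal{B}_{\{r,g\}}$ --- is legitimate and shorter, and your $n=3$ counterexample is correct. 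The problem is that the one step you yourself flag as delicate, the choice of the cut edge in the bottom-terminals case when $n=4$, is not actually justified by the reason you give.

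There you need an edge $\{c,d\}$ of the bottom Hamiltonian path ($7$ vertices, $6$ edges) with $c,d\notin\{rv,gv\}$ (up to $4$ edges excluded, since both $rv$ and $gv$ lie on that path unless $gv=r_1$) and with $\{c',d'\}\notin\mathcal{B}_{\{r,g\}}$. The fact that $\mathcal{B}_{\{r,g\}}$ "forbids at most one partner for each fixed vertex" only tells you that the $\mathcal{B}$-bad edges form a matching in the path, and a path on $7$ vertices contains matchings with $3$ edges; since $3+4>6$, this bound alone does not prevent every edge of the path from being excluded, so the count does not close. What actually saves the step is the finer structure of $\mathcal{B}_{\{r,g\}}$ from the definition preceding Lemma \ref{q3minustwovertices}: if $r=\bar g$ in the plate, then every pair in $\mathcal{B}_{\{r,g\}}$ is an antipodal pair, and since $c'$ and $d'$ are adjacent the condition $\{c',d'\}\notin\mathcal{B}_{\{r,g\}}$ is vacuous; if $r\ne\bar g$, then $\mathcal{B}_{\{r,g\}}=\{\{\bar r,\bar g\}\}$, which excludes at most the single edge $\{\bar rv,\bar gv\}$. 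Hence at most $4+1=5$ of the $6$ edges are bad and a valid cut edge always exists --- but this antipodality observation is the missing idea; without it your Case 3 at $n=4$ is not proved. (Your mixed case is fine as stated: there the "one forbidden partner" bound really does leave at least one of the three candidate vertices $u$ available.)
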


\begin{proof} The following counterexample shows that $[3,1,0,1]>3$:
$n=3$, $r=(1,0,1)$, $r_1=(1,1,0)$, $g=(1,1,1)$, $g_1=(0,1,0)$, $g_2=(0,0,1)$.

Now, let $n \ge 4$. There exist two plates that separate the deleted red vertices $r$ and $r_1$ and we assume that the
top plate is the one that contains the deleted green vertex $g$. We consider the three essentially different cases that
depend on the distribution of the green terminals $g_1$ and $g_2$ on the plates.

\emph{Case 1.} The two green terminals are on the top plate.

Use $[1,1,0,1]=2$ to produce a path that visits all the vertices of the top plate except the red deleted vertex and
starts at one green terminal and ends at the deleted green vertex. This path must pass through the second green
terminal. Cut this path at the vertex immediately preceding the second green terminal and at the vertex immediately
preceding the deleted green vertex along the path. From the cut vertices produce two bridges. The lower vertices of
these bridges are green. Connect them by a path on the bottom plate that visits all the vertices except the deleted red
vertex. This finishes the construction of the desired path for this case.

\emph{Case 2.} One green terminal is on the top plate and the other one is on the bottom plate.

Use $[1,1,0,1]=2$ to produce a path on the top plate that visits all the vertices except the deleted red vertex and that
starts at the green terminal and ends at the deleted green vertex. By Lemma \ref{twodifferentedges} this path can be
chosen in such a way that the vertex just before the deleted green is not adjacent to the green terminal on the bottom.
Cut the path just before the deleted green and produce a bridge from the cut vertex. Use $[1,1,0,1]=2$ to produce a path
on the bottom plate that connects the lower vertex of the bridge to the green terminal and that visits all the vertices
of the bottom plate except the red deleted vertex.

\emph{Case 3.} The two green terminals are on the bottom plate.

Use $[2]=3$ to produce a cycle on the top plate that visits all the vertices except the deleted ones.

If $n=4$, use $[1,1,0,1]=2$ to produce a path on the bottom plate that visits all the vertices except the deleted red
vertex and has the two green terminals as end vertices. At least one non-terminal vertex $u$ of this path is adjacent to
a vertex $v$ in the cycle on the top plate. Since the degree of each of these vertices relative to its plate is three,
one of the neighbors of $u$ in the bottom path must be adjacent to one of the neighbors of $v$ in the cycle produced on
the top plate. In other words, there exist two parallel bridges such that the edges connecting their ends on the bottom
and on the top plate belong to the path on the bottom plate and to the cycle on the top plate, respectively. Use these
bridges to do surgery to connect the bottom path to the cycle on the top plate by means of the bridges. This finishes
the construction of the desired path for this case when $n=4.$

If $n\geq 5$ then the plates are of dimension greater than three. Thus, there exist two consecutive vertices along the
cycle constructed on the top plate such that their adjacent vertices on the bottom plate are neither deleted vertices
nor terminal vertices. Select two such vertices and cut the cycle there and produce bridges to the bottom plate. Then
use Lemma \ref{chargeoneoneedgeonepath} to produce a path on the bottom plate that 1) starts at one green terminal and
ends at the other green terminal; 2) visits all the vertices of the bottom plate except the deleted red vertex; and 3)
passes through the edge incident to the lower vertices of the two bridges. Finally, do surgery to connect the path on
the bottom plate to the cycle on the top plate through the bridges. The result is the desired path. This finishes the
construction of the desired path for this case when $n \ge 5.$
\end{proof}

\begin{lemma}\label{new} Let $n \geq 4$ and
$g$ and $r$ be a green and a red vertex in $\Q_n$. Let also $g_1$ and $g_2$ be two distinct green vertices in
$\Q_n-\{g,r\}$. Then there exists a Hamiltonian cycle for $\Q_n-\{g,r\}$ such that the shortest distance between $g_1$
and $g_2$ along that cycle is at least four.
\end{lemma}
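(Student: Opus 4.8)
The plan is to split on the hypercube distance $d(g_1,g_2)$, which is even (both vertices are green) and at least $2$. The guiding observation is that in any Hamiltonian cycle of a bipartite graph the two arcs joining two vertices of the same colour have even length, so the cyclic distance between $g_1$ and $g_2$ is even; the only way it can fail to be at least $4$ is that it equals $2$, and cyclic distance $2$ forces $g_1$ and $g_2$ to share a cycle-neighbour, which is necessarily a common neighbour of $g_1$ and $g_2$ in $\Q_n$.

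First, when $d(g_1,g_2)\ge 4$ the vertices $g_1$ and $g_2$ have no common neighbour in $\Q_n$, so no Hamiltonian cycle can place them at cyclic distance $2$; hence \emph{every} Hamiltonian cycle of $\Q_n-\{g,r\}$ automatically has them at distance at least $4$. Such a cycle exists by Lemma \ref{locke1} ($[2]=3$), since $\{g,r\}$ is a neutral fault of mass $2$ and $n\ge 4\ge 3$. So this case is immediate.

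The remaining case is $d(g_1,g_2)=2$, in which $g_1$ and $g_2$ have exactly two common neighbours $c_1,c_2$, both red. Here I would force one of the two vertices, say $g_*\in\{g_1,g_2\}$, to have \emph{both} of its cycle-neighbours lying outside $\{c_1,c_2\}$; this prevents a shared cycle-neighbour and hence forbids cyclic distance $2$. Concretely, I pick two distinct red neighbours $b_1,b_2$ of $g_*$ with $b_1,b_2\notin\{c_1,c_2\}$ and $b_1,b_2\ne r$, and I build a Hamiltonian cycle of $\Q_n-\{g,r\}$ that contains the two edges $\{b_1,g_*\}$ and $\{g_*,b_2\}$. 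Such a cycle exists if and only if there is a Hamiltonian path of $\Q_n-\{g,r,g_*\}$ joining $b_1$ to $b_2$: deleting the middle vertex $g_*$ of the prescribed length-two path turns the cycle into such a path, and conversely reinserting $g_*$ between its endpoints $b_1,b_2$ closes the path into the desired cycle. The fault $\{g,r,g_*\}$ consists of the single red vertex $r$ and the two green vertices $g,g_*$, while $b_1,b_2$ form a charged (red) pair, so the required path is exactly the dual of the statement $[3,1,0,1]=4$, which applies since $n\ge 4$.

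It remains to justify that $b_1,b_2$ can always be chosen; this is the only delicate point, and $n=4$ is the tight case. Each of $g_1,g_2$ has $n$ neighbours, exactly two of which are $c_1,c_2$, leaving $n-2\ge 2$ non-common candidates, from which one must also avoid $r$. If $r$ is not a neighbour of $g_1$, or if $r\in\{c_1,c_2\}$, then all $n-2$ non-common neighbours of $g_1$ differ from $r$ and I take $g_*=g_1$. Otherwise $r$ is a non-common neighbour of $g_1$; then $r$ cannot be adjacent to $g_2$ (else $r$ would be a common neighbour, i.e.\ $r\in\{c_1,c_2\}$), so all $n-2$ non-common neighbours of $g_2$ differ from $r$ and I take $g_*=g_2$. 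Thus the selection is always possible, which completes the argument. The main obstacle is precisely this neighbour selection at $n=4$, together with checking that ``a Hamiltonian cycle through the prescribed two-edge path'' is correctly equivalent, after deleting $g_*$, to the charged Hamiltonian-path problem solved by the dual of $[3,1,0,1]=4$.
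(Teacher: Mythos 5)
Your proof is correct, but it follows a genuinely different route from the paper's. The paper proves the lemma by splitting $\Q_n$ into two plates that separate $g_1$ (top) from $g_2$ (bottom) and then building the cycle so that the two cycle-neighbours of $g_1$ lie in the top plate while those of $g_2$ lie in the bottom plate: in the case where $r,g$ share a plate it cuts a Hamiltonian cycle of $\Q_n^{top}-\{r,g\}$ (from $[2]=3$) at a well-chosen edge and closes it through the bottom plate with $[0,0,1,0]=1$; in the case where $r,g$ are on different plates it joins two bridges by Hamiltonian paths from $[1,1,0,1]=2$ on each plate. You instead split on the hypercube distance $d(g_1,g_2)$, observing that by bipartiteness the only possible failure is cyclic distance exactly $2$, i.e.\ a shared cycle-neighbour, which must be a common neighbour in $\Q_n$. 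That makes the case $d(g_1,g_2)\ge 4$ trivial (any Hamiltonian cycle from $[2]=3$ works), and in the case $d(g_1,g_2)=2$ you force both cycle-edges at one vertex $g_*$ to avoid the two common neighbours $c_1,c_2$ by deleting $g_*$ and applying the dual of $[3,1,0,1]=4$ to the fault $\{g,r,g_*\}$ with red terminals $b_1,b_2$; your neighbour-selection argument for $b_1,b_2$ (handling the case $r\in\mathcal{N}(g_1)\setminus\{c_1,c_2\}$ by switching to $g_2$) is sound, and there is no circularity since $[3,1,0,1]=4$ is established before this lemma and independently of it. The trade-off: the paper's plate argument is self-contained and uses only the lightest ingredients ($[2]=3$, Havel's lemma, and hyper-Hamilton laceability), with the separation of cycle-neighbours coming for free from the geometry of the split; your argument is shorter, dispenses with the positional case analysis for $r$ and $g$, and isolates cleanly the one genuine obstruction (a common neighbour serving as a cycle-neighbour of both vertices), at the cost of invoking the heavier lemma $[3,1,0,1]=4$.
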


\begin{proof}
Split $\Q_n$ into two plates such that $g_1$ is on the top plate and $g_2$ is on the bottom plate. There are two cases
to consider.

\emph{Case 1.} $r$ and $g$ are on the top plate.

Use $[2]=3$ to find a Hamiltonian cycle for $\Q_n^{top}-\{r,g\}$.
Choose an edge $(g_3,r_3)$ from this cycle such that $g_1\ne g_3$
and $r_3$ is not adjacent to $g_2$. Cut the cycle at that edge and
connect the resulting path with bridges to the bottom plate. Use
$[0,0,1,0]=1$ to find a Hamiltonian path for the bottom plate that
connects the bottom vertices of the two bridges. The resulting
Hamiltonian cycle of $\Q_n-\{g,r\}$ has the required property.

\emph{Case 2.} $r$ is on the top plate, $g$ is on the bottom plate.

Find two bridges with green vertices on the top plate that avoid
$g_1$. Use $[1,1,0,1]=2$ to find Hamiltonian paths for
$\Q_n^{top}-\{r\}$ and $\Q_n^{bot}-\{g\}$, respectively, that
connect the end vertices of the bridges. The resulting Hamiltonian
cycle of $\Q_n-\{g,r\}$ has the required property.
\end{proof}

\section{Larger faults and sets of prescribed ends}

In this section we identify the hypercube $\Q_n$ with the group $ {\bf Z}_2 ^n .$ We view $\Q_n$ as a Cayley graph with
the standard system of generators  $ {\bf S} = \{ e_1 = (1,0,\dots,0), e_2=(0,1,0,\dots,0),  \dots,
e_n=(0,\dots,0,1)\}.$ An oriented edge in $\Q_n$ is represented by $(a,x),$ where $a$ is the starting vertex and $x$ is
an element from the system of generators $\bf S.$  A path is represented by $(a,\omega),$ where $a$ is the initial
vertex and $\omega$ is a word with letters from ${\bf S}.$ If $\omega = x_1,x_2,\dots,x_k$ then the path $(a,\omega) $
is the path $a, ax_1, ax_1x_2, \dots, ax_1x_2\cdots x_n.$ The algebraic content of a word $\omega$ is the element of
${\bf Z}_2 ^n$ that is obtained by multiplying all the letters of $\omega. $ A path $(a,\omega)$ is simple if no subword
of $\omega$ is algebraically equivalent to the identity $(0,0,\dots,0).$ A path $(a, \omega)$ is a cycle if $\omega$ is
algebraically equivalent to the identity but no proper subword of $\omega$ is algebraically equivalent to the identity.

We shall use the following notation: $\omega^R $ means the reverse word of $\omega; $ $\omega'$ denotes the word
obtained after the last letter is deleted from $\omega;$ $ \omega^*$ is the word obtained after the first letter is
deleted from $\omega;$ $ \varphi(\omega) $ is the first letter of $\omega,$ and $ \lambda(\omega)$ is the last letter of
$\omega.$  The letter $v$ shall be reserved for steps connecting two plates. The letters $x,y,...$ shall be reserved to
represent steps along the plates.

The following lemma can be proved by inspection.

\begin{lemma} \label{twoorientedpaths} Let $r, r_1, r_2$ be three distinct
red vertices and $g, g_1, g_2$ be three distinct green vertices in $\Q_3.$ Then there exist two oriented paths
$\gamma_1, \gamma_2$  such that
\begin{itemize}
\item[(i)] $\gamma_1$ is Hamiltonian in $\Q_3 - \{g\}$ and connects $r_1$ to $r_2$; \item[(ii)] $\gamma_2$ is
    Hamiltonian in $\Q_3 - \{r\} $ and connects $g_1$ to $g_2$; and \item[(iii)] $\gamma_1$ and $\gamma_2$ share an
    edge that is traversed in the same direction in both paths.
\end{itemize}
\end{lemma}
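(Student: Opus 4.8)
The plan is to get the two paths separately out of the already–proved equality $[1,1,0,1]=2$, and then to argue that the coincidence demanded in (iii) is essentially free, so that the ``inspection'' can be replaced by a short counting argument. First I would produce $\gamma_1$ and $\gamma_2$ individually. Applying $[1,1,0,1]=2$ to $\Q_3$ with deleted vertex $g$ and the red (hence charged) pair $\{r_1,r_2\}$ gives a Hamiltonian path of $\Q_3-\{g\}$ joining $r_1$ to $r_2$: with $\F=\{g\}$ and $\mathcal E=\{\{r_1,r_2\}\}$ the balance condition $r(\F)-g(\F)=g(\mathcal E)-r(\mathcal E)$ reads $-1=-1$, so the prescription is legitimate. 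The colour–dual statement, applied with deleted vertex $r$ and the green pair $\{g_1,g_2\}$, produces a Hamiltonian path of $\Q_3-\{r\}$ joining $g_1$ to $g_2$. This settles (i) and (ii). The crucial remark is that (iii) then costs nothing once $\gamma_1$ and $\gamma_2$ share an edge: reversing an oriented path flips the direction in which it traverses every one of its edges, and reversing $\gamma_2$ leaves it a Hamiltonian path of $\Q_3-\{r\}$ joining $g_1$ to $g_2$. Hence, given any common edge, I may reverse one of the two paths if necessary to make the two traversals agree. So it remains only to show that any such $\gamma_1$ and $\gamma_2$ are forced to share an edge.

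For this I would count the edges of $\gamma_1$ and $\gamma_2$ against the set $E_0$ of edges of $\Q_3$ incident to neither $r$ nor $g$. Since $r$ and $g$ each have degree $3$, the number of edges meeting $r$ or $g$ is $6$ when $r$ and $g$ are non-adjacent and $5$ when they are adjacent, so $|E_0|\le 7$. On the other hand $r$ is an \emph{interior} vertex of $\gamma_1$ — it is red, distinct from the endpoints $r_1,r_2$, and present in $\Q_3-\{g\}$ — so exactly two of the six edges of $\gamma_1$ meet $r$; the other four meet neither $r$ (by choice) nor $g$ (since $\gamma_1$ avoids $g$) and therefore lie in $E_0$. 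Symmetrically, $g$ is interior to $\gamma_2$, and exactly four edges of $\gamma_2$ lie in $E_0$. Two $4$-element subsets of a set of size at most $7$ must intersect, so $\gamma_1$ and $\gamma_2$ share at least one edge; aligning directions on such an edge as above completes the proof.

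The step I expect to carry the real weight is not (iii) itself but the bookkeeping that lands exactly four edges of each path inside $E_0$; this is precisely where the hypotheses that $r,r_1,r_2$ and $g,g_1,g_2$ are distinct get used, since they are what guarantee that $r$ and $g$ are genuinely interior (degree $2$) in the path that keeps them. At first glance (iii) looks like the hard part, because it asks two independently constructed paths to agree on an edge and a direction, and the natural response is the brute-force check suggested by the word ``inspection'' — which one could organize by noting that the parity–preserving automorphisms of $\Q_3$ act as the full symmetric group $S_4$ on the four antipodal pairs, reducing the configurations to a handful of representatives. The pigeonhole count above makes that machinery unnecessary: both the existence of a shared edge and the direction requirement fall out uniformly, with no configuration-by-configuration verification.
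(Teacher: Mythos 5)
Your counting argument is correct, and it actually proves more than you need for edge-sharing: \emph{every} Hamiltonian path $\gamma_1$ of $\Q_3-\{g\}$ from $r_1$ to $r_2$ and \emph{every} Hamiltonian path $\gamma_2$ of $\Q_3-\{r\}$ from $g_1$ to $g_2$ share an edge, since each contributes exactly four edges to the set $E_0$ of edges incident to neither $r$ nor $g$, and $|E_0|\le 7$. The appeal to $[1,1,0,1]=2$ for (i) and (ii) is also fine. The gap is the step where you declare the direction requirement in (iii) to be ``essentially free.'' Reversing $\gamma_2$ does flip every traversal direction, but it also turns $\gamma_2$ into an oriented path from $g_2$ to $g_1$; it no longer connects $g_1$ to $g_2$ in the oriented sense that the lemma (and its use later in the paper) requires. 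You cannot repair this by relabeling, because the lemma is quantified over all labelings, so the statement for the pair $(g_1,g_2)$ and the statement for $(g_2,g_1)$ are both being claimed. What your argument establishes is only: either the pair $(\gamma_1:r_1\to r_2,\ \gamma_2:g_1\to g_2)$ or the pair $(\gamma_1:r_1\to r_2,\ \gamma_2^R:g_2\to g_1)$ has a shared edge traversed in the same direction --- and the pigeonhole count cannot tell which.

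This distinction is exactly the point of the lemma. In Lemma \ref{twoorientedpathsn}, and then in Case 4 of the proof that $[2,2,0,2]=4$, the shared edge and its lift to the other plate are where surgery is performed: the edges $\{a,ax\}$ and $\{av,avx\}$ are deleted and replaced by the bridges $\{a,av\}$ and $\{ax,avx\}$. If the two paths traverse these edges in the same direction \emph{relative to the prescribed starting vertices}, the surgery joins $g_1$ to $g_2$ and $g_3$ to $g_4$, which is the prescribed pairing; if one path must first be reversed to achieve agreement, the identical surgery instead joins $g_1$ to $g_4$ and $g_2$ to $g_3$, violating the prescription. If your weak version were adequate, the word ``oriented'' and the direction clause of (iii) would be redundant, since (as you observe) direction alignment after reversal is automatic once an edge is shared. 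So the same-direction requirement with respect to the fixed labels is the real content of (iii) beyond edge-sharing, and it is precisely the part the paper settles by inspection. Your count is still a useful reduction --- it shows that what remains is to rule out the configuration in which every shared edge of every admissible pair $(\gamma_1,\gamma_2)$ is traversed in opposite directions --- but that remaining analysis (by inspection, possibly organized via the symmetries of $\Q_3$ as you suggest) is missing from your proof.
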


The following lemma is a generalization of Lemma \ref{twoorientedpaths}.

\begin{lemma} \label{twoorientedpathsn} Let $n \geq 4$ and $r_1$, $r_2$,
$g_1$, $g_2$, $g_3$, $g_4$ be two distinct red and four distinct green vertices in $\Q_n$ such that $r_1,g_1,g_2 \in
\Q_n^{top}$ and $r_2,g_3,g_4 \in \Q_n^{bot}$. Then there exist two oriented paths $\gamma_1, \gamma_2$ such that
\begin{itemize}
\item[(i)] $\gamma_1$ is Hamiltonian in $\Q_n^{top} - \{r_1\}$ and connects $g_1$ to $g_2$; \item[(ii)] $\gamma_2$
    is  Hamiltonian in $\Q_n^{bot} - \{r_2\}$ and connects $g_3$ to $g_4$; and \item[(iii)] there exist an edge
    $(a,a x) \in \gamma_1$ such that $(a v,a v x) \in \gamma_2$ and both edges are traversed in the same direction
    in both paths.
\end{itemize}
\end{lemma}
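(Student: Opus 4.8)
The plan is to induct on $n$, keeping the bridge direction $v$ fixed throughout ($\Q_n^{top}$ and $\Q_n^{bot}$ being its two cosets). The device that makes the base case transparent is the translation $\phi(a)=av$, a \emph{parity-reversing} isomorphism of $\Q_n^{top}$ onto $\Q_n^{bot}$: under $\phi$ a pair of $v$-parallel edges $(a,ax)$, $(av,avx)$ traversed in the same direction collapses to a single edge of the common $(n-1)$-cube traversed in the same direction. Hence, after identifying the two plates through $\phi$, condition (iii) becomes the demand that the two Hamiltonian paths share one edge with matching orientation. Transporting $\gamma_2$ to the top plate by $\phi$ turns the bottom data $(r_2;g_3,g_4)$ into a \emph{deleted green} $\phi(r_2)$ together with two \emph{red} endpoints $\phi(g_3),\phi(g_4)$, while $\gamma_1$ keeps its deleted red $r_1$ and green endpoints $g_1,g_2$. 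For $n=4$ the common cube is $\Q_3$ and this is exactly the configuration of Lemma \ref{twoorientedpaths} (with the roles of the two paths interchanged); the only point to check is that the six vertices remain distinct, coincidences occurring precisely when a prescribed top vertex is $v$-adjacent to a prescribed bottom vertex, and these finitely many positions are verified directly as in Lemma \ref{twoorientedpaths}.

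For the inductive step assume $n\ge 5$. I would pick a second generator $y\ne v$ and split each plate by the value of its $y$-coordinate into $(n-2)$-subcubes $T_0,T_1\subseteq\Q_n^{top}$ and $B_0,B_1\subseteq\Q_n^{bot}$, indexed so that $T_i$ and $B_i$ are joined by $v$-bridges. The generator $y$ is chosen so that (a) $r_1\in T_0$ and $r_2\in B_0$ (the matched pair), (b) $g_1$ and $g_2$ fall in different $T_i$, and (c) $g_3$ and $g_4$ fall in different $B_i$; after relabelling this means $g_2\in T_0$, $g_1\in T_1$, $g_4\in B_0$, $g_3\in B_1$. Such a $y$ is a coordinate in which $r_1,r_2$ agree while both green pairs disagree, and for $n\ge 5$ (so at least four admissible coordinates) a generic configuration admits one. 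The reason I insist that both deleted reds lie in the matched pair is that the induction step will delete a red on each of $T_0,B_0$, and in $\gamma_1,\gamma_2$ the only uncovered vertices are $r_1,r_2$.

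Now apply the lemma in dimension $n-1$ to the matched pair $(T_0,B_0)$: delete the reds $r_1\in T_0$ and $r_2\in B_0$, and prescribe green endpoints $t_0,g_2$ on $T_0$ and $b_0,g_4$ on $B_0$, where $t_0\in T_0$ and $b_0\in B_0$ are auxiliary greens distinct from everything prescribed (available since each subcube has at least eight vertices). This yields a Hamiltonian path $\gamma_1^0$ of $T_0-\{r_1\}$ from $t_0$ to $g_2$, a Hamiltonian path $\gamma_2^0$ of $B_0-\{r_2\}$ from $b_0$ to $g_4$, and a shared $v$-parallel edge between them traversed in the same direction, which will be the edge required by (iii). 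To reach the true endpoints, set $t_1=t_0y\in T_1$, which is red, and use Havel's lemma ($[0,0,1,0]=1$) to get a Hamiltonian path of $T_1$ from $g_1$ to $t_1$; splicing it to $\gamma_1^0$ across the bridge $\{t_1,t_0\}$ gives a Hamiltonian path $\gamma_1$ of $\Q_n^{top}-\{r_1\}$ from $g_1$ to $g_2$. The symmetric construction with $b_1=b_0y\in B_1$ produces $\gamma_2$ on the bottom. The shared edge lies inside $T_0$ and $B_0$ and is left intact, so (iii) holds; the parities are consistent because a Hamiltonian path of $\Q_{n-2}$ has an odd number of edges, so $g_1$ (green) and $t_1$ (red) are legitimate opposite-parity ends.

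The main obstacle is the existence of the separating generator $y$ and the treatment of the degenerate configurations where it fails, for instance when $g_1,g_2$ differ only in coordinates in which $r_1,r_2$ also differ, or when $r_2$ is antipodal to $r_1$ (possible only for even $n$). In such cases no single $y$ satisfies (a)--(c) simultaneously, and one must either move both green endpoints of the offending plate to the matched side and replace the corresponding single induction path by a carefully oriented two-path covering that still carries a $v$-parallel edge, or fall back on direct surgery in the spirit of Lemmas \ref{twodifferentedges} and \ref{twoorientedpaths}. Verifying that these finitely many exceptional patterns can always be realized while preserving the matched-orientation edge is where I expect the real work to lie.
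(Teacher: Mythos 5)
Your base case coincides with the paper's: fold the bottom plate onto the top plate by $a\mapsto av$ and invoke Lemma \ref{twoorientedpaths}. (The paper performs this reduction without even mentioning the coincidences you flag, such as $g_3=r_1v$, so your remark is if anything the more careful one; both treatments ultimately rest on inspection of $\Q_3$.) The problem is the step for $n\ge 5$, and you have named it yourself. Your construction needs a generator $y\ne v$ in which $r_1$ and $r_2$ agree while \emph{both} pairs $\{g_1,g_2\}$ and $\{g_3,g_4\}$ are separated; such a $y$ must be a coordinate in which $g_1,g_2$ differ and simultaneously $g_3,g_4$ differ. This fails for a large and entirely non-degenerate family of configurations: if $g_2=g_1x_1x_2$ and $g_4=g_3x_3x_4$ (two green pairs at distance two whose difference coordinates are disjoint), then no coordinate splits both pairs, no matter where $r_1$ and $r_2$ sit. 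So the exceptional cases are not ``finitely many degenerate patterns''; they are the typical situation when the prescribed green pairs are close together, and the fallback you sketch (moving endpoints to the matched side, a ``carefully oriented two-path covering,'' or unspecified surgery) is not carried out. As written, the proof is incomplete exactly where you say the real work lies.

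The idea you are missing is that for $n\ge 5$ no induction and no coordinate splitting are needed at all, because each plate now has dimension $n-1\ge 4$ and Lemma \ref{chargeoneoneedgeonepath} applies \emph{inside} each plate. This is the paper's argument: choose an edge $(a,ax)$ in $\Q_n^{top}$ such that none of the six prescribed vertices is incident to $(a,ax)$ or to $(av,avx)$ (possible, since only six vertices must be avoided in a plate with at least sixteen vertices); then apply Lemma \ref{chargeoneoneedgeonepath}, in its color-dual form, once on the top plate to get a Hamiltonian path of $\Q_n^{top}-\{r_1\}$ from $g_1$ to $g_2$ that visits $a$ before $ax$, and once on the bottom plate to get a Hamiltonian path of $\Q_n^{bot}-\{r_2\}$ from $g_3$ to $g_4$ that visits $av$ before $avx$. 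The two paths then traverse the parallel edges $(a,ax)$ and $(av,avx)$ in the same direction, which is precisely condition (iii). The oriented ``visits $a$ first'' clause of Lemma \ref{chargeoneoneedgeonepath} exists exactly to serve this lemma, and your proposal never uses it; incorporating it would let you delete your entire splitting apparatus, its auxiliary vertices $t_0,t_1,b_0,b_1$, and the unresolved exceptional cases.
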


\begin{proof}
The proof is by induction. If $n=4$ then the claim is contained in Lemma \ref{twoorientedpaths}. If $n > 4$ then choose
an edge $(a,a x) \in \Q_n^{top}$ such that none of the given vertices $r_1$, $r_2$, $g_1$, $g_2$, $g_3$, $g_4$ is
incident to $(a,a x)$ or $(a v,a v x)$ and apply Lemma \ref{chargeoneoneedgeonepath} to construct $\gamma_1$ and
$\gamma_2$ in the desired way.
\end{proof}

\begin{lemma} $([2,2,0,2]=4)$ Let $n \geq 4,$ $\F=\{r_1,r_2\}$ be a fault
with two distinct red vertices and $g_1$, $g_2$, $g_3$, $g_4$ be
four distinct green vertices in $\Q_n$. Then there exists a
$2-$path covering of $\Q_n-\F$ with one path connecting $g_1$ to
$g_2$ and the other connecting $g_3$ to $g_4$. The claim is not
true for $n = 3$.
\end{lemma}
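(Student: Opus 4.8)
The plan is to split $\Q_n$ along one coordinate into a top and a bottom copy of $\Q_{n-1}$, to build path coverings on the two plates from the lemmas already proved, and to splice them through bridges, exactly as in the earlier proofs; the dimension $n=3$ is a genuine exception and I would dispatch it with an explicit counterexample. Declaring the even vertices green, delete the red vertices $r_1=(0,0,1)$ and $r_2=(0,1,0)$ and prescribe the green pairs $\{(0,0,0),(0,1,1)\}$ and $\{(1,0,1),(1,1,0)\}$. Only six vertices survive---four green and two red---and a short count on the bipartition of $\Q_3$ forces any admissible covering to consist of two paths of length exactly two, each with a single interior red vertex. Since the only common neighbours of $(0,0,0)$ and $(0,1,1)$ are the two deleted vertices, the pair $\{(0,0,0),(0,1,1)\}$ cannot be joined by such a path, so no covering exists and $[2,2,0,2]>3$.

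For $n\ge 4$ the key preliminary step is to choose the splitting coordinate so that the four green terminals are \emph{not} all sent to one plate; since they are distinct, some coordinate separates them, so this is always possible. I would first try to pick this coordinate among those in which $r_1$ and $r_2$ differ, so that the two faulty reds also land on different plates. When this succeeds each plate carries at most one deleted (red) vertex and the construction is routine: two whole green pairs lying one per plate are joined by two independent charged Hamiltonian paths ($[1,1,0,1]=2$ on each plate); a pair split across the plates, or a $3$--$1$ distribution, is produced by applying $[1,1,1,1]=4$ (respectively $[1,1,1,1]=4$ together with $[1,1,0,1]=2$) on the plates and closing up with one or two bridges, the parities of the bridge-heads being dictated by the balance condition on each plate. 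The symmetric split--split configuration can alternatively be assembled directly from Lemma \ref{twoorientedpathsn}, whose two aligned plate-Hamiltonian paths glue across the shared bridge edge into the two required cross-plate paths.

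The one case that resists this is when the faulty reds cannot be separated without driving all four greens onto a single plate, i.e.\ when the greens agree in every coordinate in which $r_1,r_2$ differ. Here I would instead split along a coordinate that separates the greens but keeps both reds on the top plate (such a coordinate exists, for otherwise the greens would agree in every coordinate and coincide). The bottom plate is then fault-free and receives some green terminals while the top plate carries both reds; routing a green--green excursion from the bottom up into the top, I would reduce the top plate to an instance of $[2,2,0,2]$ in dimension $n-1$ (available by induction, since $n-1\ge 4$) and the clean bottom plate to an instance of $[0,0,2,0]=2$, again gluing through bridges. This inductive call is why the argument is organized as an induction on $n$.

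The genuine obstacle is the base case $n=4$. There every plate is $\Q_3$, where precisely the tools needed above---$[1,1,1,1]$, $[0,0,0,2]$, and $[2,2,0,2]$ itself---either fail outright or acquire the exceptional configurations recorded in Lemma \ref{q3minustwovertices} and in the $\Q_3$ clauses of the earlier lemmas, so the clean reductions collapse and the pairing produced by Lemma \ref{twoorientedpathsn} can no longer be steered at will. I expect this case to require a direct, essentially finite verification rather than a plate splitting; this is exactly the computation carried out for $\Q_4$ in Appendix \ref{2pathcoveringsofQ4}, and it is the hard part of the proof.
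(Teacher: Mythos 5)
Your $n=3$ counterexample is correct (it is different from the paper's but works for the same reason: the two common neighbours of one prescribed green pair are exactly the two deleted reds, while a parity count forces both paths to have length two), and your case analysis for $n\ge 5$ is plausible in outline. But the proposal has a genuine gap: the case $n=4$ is never proved, and everything else you do depends on it. You correctly observe that when $n=4$ the plates are copies of $\Q_3$, where $[1,1,1,1]$, $[0,0,0,2]$ and $[2,2,0,2]$ itself all fail, so your reductions collapse; you then defer $n=4$ to ``a direct, essentially finite verification'' and identify it with Appendix \ref{2pathcoveringsofQ4}. That identification is wrong: Appendix \ref{2pathcoveringsofQ4} proves Lemma \ref{neutralpathsinq4}, which concerns a \emph{neutral} fault $\{r,g\}$ of mass $2$ and two \emph{neutral} prescribed pairs, not the charged fault $\F=\{r_1,r_2\}$ with two green pairs of the present lemma. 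So the hard part of your argument is missing, and since your inductive step for the exceptional configuration (both reds on one plate) needs $[2,2,0,2]$ in dimension $n-1\ge 4$, the unproved base case also invalidates the induction for all $n\ge 5$ in that branch.

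For contrast, the paper needs no induction on this lemma and no finite check of $\Q_4$. It always splits $r_1$ from $r_2$ and handles \emph{all} distributions of the greens, including all four on one plate: there one takes a Hamiltonian path of $\Q_n^{top}-\{r_1\}$ from $g_1$ to $g_2$ (which necessarily passes through $g_3$ and $g_4$), keeps its middle segment as the path $g_3$ to $g_4$, and reroutes the outer segments through a Hamiltonian path of $\Q_n^{bot}-\{r_2\}$ via two bridges. The dimension $n=4$ is absorbed into the same case analysis by invoking the refined three-dimensional statements: Lemma \ref{twodeletedonepath} with its exceptional sets $\mathcal{B}_{\{\cdot,\cdot\}}$ (one checks, or arranges by a different choice of bridge vertex, that the prescribed pair avoids $\mathcal{B}$), and, for the case where both green pairs are split between the plates, Lemma \ref{twoorientedpathsn}, whose $n=4$ base case is the inspection Lemma \ref{twoorientedpaths}. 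If you want to salvage your approach, the honest options are either to reproduce that kind of careful $\Q_3$-exceptional-case bookkeeping at $n=4$, or to actually carry out the finite verification you allude to --- but you cannot cite Appendix \ref{2pathcoveringsofQ4} for it.
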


\begin{proof} The following counterexample shows that $[2,2,0,2]>3$:
$n=3$, $r_1=(1,1,0)$, $r_2=(1,0,1)$, $g_1=(0,1,0)$, $g_2=(0,0,1)$, $g_3=(1,0,0)$, $g_4=(1,1,1)$.

Now let $n \ge 4.$ Split the hypercube in such a way that $r_1$ is on the top plate and $r_2$ is on the bottom plate.
Then consider four cases that depend on the distribution of the green terminals on the plates.

\emph{Case 1.} All green terminals $g_1$, $g_2$, $g_3$, $g_4$ are on the top plate.

Use $[1,1,0,1]=2$ to find a Hamiltonian path $(g_1, \omega)$ of $\Q_n^{top} - \{r_1\} $ that connects $g_1$ to $g_2.$
Let $\omega = \xi \eta \theta $ with $g_1 \xi = g_3, g_3 \eta = g_4,$ and $g_4 \theta = g_2,$ where $g_3, g_4$ are
renumbered, if necessary.

Use $[1,1,0,1]=2$ to find a Hamiltonian path $(g_1 \xi' v, \mu)$ of $\Q_n^{bot} - \{r_2\}$ that connects $g_1 \xi' v$ to
$g_1 \xi \eta \varphi(\theta) v.$ Then the desired $2-$path covering of $\Q_n - \F$ is $(g_1, \xi' v \mu v \theta^*),$
$(g_3, \eta).$

\emph{Case 2.} $g_1$, $g_2$, $g_3$ are on the top and $g_4$ is on the bottom plate.

Use $[1,1,0,1]=2$ to find a Hamiltonian path $(g_1, \omega)$ of $\Q_n^{top} - \{r_1\} $ that connects $g_1$ to $g_3.$
Let $\omega = \xi \eta $, where $g_1 \xi = g_2$ and $g_2 \eta = g_3.$

\emph{Subcase 2(a).} $g_2\varphi(\eta) v \neq g_4$.

On the bottom plate use again $[1,1,0,1]=2$ to find a Hamiltonian path $(g_2 \varphi(\eta) v, \mu)$ of $ \Q_n^{bot}-
\{r_2\}$ that connects $g_2\varphi(\eta) v$ to $g_4$. Then the desired $2-$path covering of $\Q_n - \F$ is $(g_1, \xi)$,
$(g_3, (\eta^R)'v \mu).$

\emph{Subcase 2(b).} $g_2\varphi(\eta) v = g_4$.

Either $g_1$ or $g_2$ is not adjacent to $r_2$. Without loss of generality assume that it is $g_1$. If $n\ge 5,$ use
$[2,0,1,0]=4$ to find a Hamiltonian path $(g_1 v, \mu)$ of $ \Q_n^{bot}- \{r_2,g_4\}$ that connects $g_1 v$ to $g_1
\varphi(\xi) v$. Then the desired $2-$path covering of $\Q_n - \F$ is $(g_1, v \mu v \xi^*)$, $(g_3, (\eta^R)'v).$

The same argument works for $n=4$ whenever $\{g_1 v, g_1 \varphi(\xi)v\} \not \in \mathcal{B}_{\{g_4, r_2\}}$ (Lemma \ref{twodeletedonepath}). If $\{g_1 v, g_1
\varphi(\xi)v \}  \in \mathcal{B}_{\{g_4, r_2\}}$ then  the distance from $g_1 \varphi(\xi)v$ to $r_2$ is three and
therefore $g_1 \varphi(\xi^*)v\neq r_2$ and $\{g_1 \varphi(\xi) v, g_1 \varphi(\xi^*)v\} \not \in \mathcal{B}_{\{g_4,
r_2\}}.$ Then use Lemma \ref{twodeletedonepath} to find a Hamiltonian path $(g_1 \varphi(\xi) v, \mu)$ of $ \Q_n^{bot}- \{r_2,g_4\}$
that connects $g_1 \varphi(\xi) v$ to $g_1 \varphi(\xi^*) v$. The desired $2-$path covering of $\Q_n - \F$ is $(g_1,
\varphi(\xi)v \mu v \xi^{**})$, $(g_3, (\eta^R)'v).$

\emph{Case 3.} $g_1$, $g_2$ are on the top and $g_3$, $g_4 $ are on the bottom plate.

Use $[1,1,0,1]=2$ to find a Hamiltonian path $(g_1, \omega)$ of $\Q_n^{top} - \{r_1\}$ that connects $g_1$ to $g_2$ and
use again $[1,1,0,1]=2$ to find a Hamiltonian path $(g_3, \mu)$ of $\Q_n^{bot} - \{r_2\}$ that connects $g_3$ to $g_4.$
Then the desired $2-$path covering of $\Q_n - \F$ is $(g_1, \omega)$, $(g_3, \mu).$

\emph{Case 4.} $g_1$, $g_3$ are on the top and $g_2$, $g_4$ are on the bottom plate.

According to Lemma \ref{twoorientedpathsn} there exist an oriented Hamiltonian path $\gamma_1 = (g_1,\xi x \eta)$ of
$\Q_n^{top}- \{r_1\}$ connecting $g_1$ to $g_3$ and an oriented Hamiltonian path $\gamma_2 = (g_2, \mu x \theta) $ of
$\Q_n^{bot}- \{r_2\}$ connecting $g_2$ to $g_4$ such that $g_1 \xi v = g_2 \mu.$ The desired $2-$path covering is $(g_1,
\xi v \mu^R)$, $(g_3, \eta^R v \theta).$
\end{proof}

In some proofs it is useful to be able to find Hamiltonian paths that pass through each element of a given set of
vertices in such a way that the distance between two consecutive elements of that set along the path is at least $4$.
The following lemma gives a situation when that can be done. It will be used in the proofs of Lemma \ref{locke3} and
Lemma \ref{locke4}.

\begin{lemma}\label{separatinglemma} Let $n \ge 3$,
$L=\{g_1, g_2, \dots, g_{n-1}\}$ be a set of green vertices and $r$ be a red vertex in $\Q_n.$ Then there exists a
Hamiltonian path in $\Q_n-\{r\}$ that connects $g_1$ to $g_{n-1}$ in such a way that the distance along the path between
any two vertices in $L$ is at least $4.$
\end{lemma}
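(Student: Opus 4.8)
I would prove Lemma~\ref{separatinglemma} by induction on $n$. For the base case $n=3$ the set $L=\{g_1,g_2\}$ consists of exactly the two prescribed endpoints, so it suffices to produce \emph{any} Hamiltonian path of $\Q_3-\{r\}$ joining the green vertices $g_1$ and $g_2$: such a path exists by $[1,1,0,1]=2$ (the pair $\{g_1,g_2\}$ is a green charged pair in balance with the single red fault $\{r\}$), and since it has $2^3-1=7$ vertices, $g_1$ and $g_2$ sit at the two ends and are at distance $6\ge 4$ along it. This settles $n=3$ trivially because there is nothing to separate beyond the endpoints.

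For the inductive step $n\ge 4$, the plan is to split $\Q_n$ into two plates $\Q_n^{top}$ and $\Q_n^{bot}$ along a coordinate chosen so that the deleted vertex $r$ and \emph{both} endpoints $g_1,g_{n-1}$ lie in the top plate, while at least one vertex of $L$ lands in the bottom plate. Putting $r$ together with the green--green endpoints on the same plate is forced by parity (a green--green Hamiltonian path can cover a $\Q_{n-1}$ only when one red vertex is removed), and requiring a green in the bottom plate guarantees that $\Q_n^{top}$ now carries at most $n-2$ vertices of $L$. After padding the top greens with arbitrarily chosen non-terminal green vertices up to exactly $n-2$, the inductive hypothesis produces a Hamiltonian path $\Pi$ of $\Q_n^{top}-\{r\}$ from $g_1$ to $g_{n-1}$ along which all vertices of $L$ lying in the top plate are pairwise at distance at least $4$.

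It remains to absorb the bottom plate. Let $h_1,\dots,h_j$ be the vertices of $L$ in $\Q_n^{bot}$ (with $j\ge 1$). The idea is to cover $\Q_n^{bot}$ by vertex-disjoint paths, each carrying at most one of the $h_i$, whose prescribed endpoints are the bottom neighbours of edges of $\Pi$, and then to splice these paths into $\Pi$ by surgery. When $j=1$ this is a single bridge together with a neutral Hamiltonian path of $\Q_n^{bot}$ supplied by $[0,0,1,0]=1$; for larger $j$ one uses $[0,0,2,0]=2$ and the multi-path coverings to isolate each $h_i$ in its own excursion. Because each such excursion traverses a long portion of $\Q_n^{bot}$ (length at least $4$), each bottom green is interior to a long stretch of the final path and hence far from the next marked vertex in either direction; choosing all the splice edges of $\Pi$ pairwise far apart and far from the top greens (possible since $\Pi$ has length $\approx 2^{n-1}$ but only $\le n-2$ constrained vertices) preserves the separation of the top greens as well. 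The main obstacle is exactly this simultaneous bookkeeping: a near-balanced split can strand as many as about $(n-1)/2$ greens in the deletion-free plate, so one must both choose the splitting coordinate to keep each plate within reach of the available covering lemmas and then place all surgeries at once so that no insertion ever pulls two vertices of $L$ within distance $3$. I expect the heaviest case analysis to live here, in covering the bottom plate by enough prescribed-end paths with each green isolated and in verifying the joint distance bound after splicing.
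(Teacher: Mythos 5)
Your base case is fine, but your inductive step fails at its very first move. You split $\Q_n$ along a coordinate in which $r$, $g_1$, and $g_{n-1}$ all agree, and such a coordinate need not exist: take $n=4$, $r=(0,0,0,0)$, $g_1=(1,1,1,0)$, $g_3=(0,1,1,1)$, and any third green vertex $g_2$. The three vertices $r$, $g_1$, $g_3$ disagree in every one of the four coordinates, so no plate can contain all of them, and your construction cannot even begin. The parity claim you use to justify this configuration is also false: there is no parity obstruction to the endpoints lying on different plates, because the final path restricted to a plate need not be a Hamiltonian path of that full plate. Indeed, the paper's proof places the endpoints on \emph{different} plates (always possible, since $g_1\neq g_{n-1}$; one only renumbers the two endpoints so that the plate containing $r$ is the plate of $g_1$), and fixes the parity of the bottom plate by deleting from it an auxiliary red vertex adjacent to the junction; that auxiliary vertex is then covered by the two-edge detour (bridge plus one step) joining the top path to the bottom path, so the bottom restriction of the final path is a red-to-green Hamiltonian path of the whole bottom plate.

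Even granting your split, the absorption of the bottom plate cannot be carried out with the tools you cite. The lemmas $[0,0,k,0]$ only prescribe the \emph{ends} of the covering paths; they give no control over which path contains which bottom green $h_i$, nor over how close an $h_i$ sits to the endpoints of its excursion, so "isolating each $h_i$ in its own excursion" is not something they can deliver (two bottom greens may land at distance $2$ on the same excursion, or an $h_i$ may sit next to an excursion endpoint $av$ and hence within distance $2$ of vertices of $\Pi$). Moreover, under your split the number of bottom greens can be as large as $n-3$, so you would need $[0,0,k,0]$ for $k\ge 3$, which is not available here ($[0,0,3,0]=5$ is established only later in the paper, and $k\ge 4$ not at all). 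The paper's proof avoids all of this bookkeeping precisely by its different decomposition: each plate is handled either by $[1,1,0,1]=2$ or by the induction hypothesis itself, with the junction vertex $g_ivx$ adjoined to the green set of the receiving plate, so that the distance-$4$ separation across the single junction is guaranteed by the hypothesis rather than by surgery.
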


\begin{proof} The proof is by induction. The statement is obvious for $n=3.$
Let $n \geq 3$ and $ L=\{g_1,g_2, \dots, g_{n-1},g_n\}$ be a set of $n$ green vertices and $r$ be any red vertex in
$\Q_{n+1}.$  Produce plates in a way that $g_1 \in \Q_{n+1}^{top}$ and $g_n \in \Q_{n+1}^{bot}.$ We can assume that $r
\in \Q_{n+1}^{top}$ by renumbering $g_1$ and $g_n$, if necessary.

If $ g_1 $ is the only element of $L$ in $\Q_{n+1}^{top}$ then use $[1,1,0,1]=2$ to produce a Hamiltonian path $(g_1,
\xi)$ of $\Q_{n+1}^{top}-\{r\}$ that connects $g_1$ to $g_2 x v,$ where $x$ is any letter different from $v$ and such
that $g_2 x v \neq g_1$. By the induction hypothesis there is a Hamiltonian path $(g_2, \eta)$ of
$\Q_{n+1}^{bot}-\{g_2x\}$ that connects $g_2$ to $g_n$ and such that the distance between any two different elements of
$L$ along this path is at least $4.$ The desired Hamiltonian path of $\Q_{n+1}- \{r\}$ for this case is  $ (g_1, \xi v x
\eta ).$

If in addition to $g_1$ there is another element $g_i \in L \cap \Q_{n+1}^{top}$ (the total number of such elements
cannot be more than $n-2$) then use the induction hypothesis to produce a Hamiltonian path $(g_1, \xi)$ of
$\Q_n^{top}-\{r\}$ that connects $g_1$ to $g_i$ and such that the distance between any two elements of $L$ along this
path is at least $4.$ On the bottom plate there are at most $n-2$ elements of $L.$ Therefore, there exists a letter $x$
such that $g_i vx$ is not in $L.$ By the induction hypothesis there is a Hamiltonian path $(g_ivx, \eta)$ of
$\Q_{n+1}^{bot}-\{g_iv\}$ that connects $g_ivx$ to $g_n$ and such that the distance between any two elements from $L$
along the path is at least $4.$  The desired Hamiltonian path of $\Q_{n+1}- \{r\}$ for this case is  $ (g_1, \xi v x
\eta ).$
\end{proof}

\begin{lemma}\label{locke3}$([6]=5)$ Let $ n \ge 5 $ and $\F$ be any neutral
fault of mass $6$ in $\Q_n.$ Then $\Q_n - \F$ is Hamiltonian. The claim is not 
true if $n=3$ or $n=4$.
\end{lemma}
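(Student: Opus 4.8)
The plan is to prove $[6]=5$ by the same plate-splitting strategy used throughout Section 3, reducing a neutral fault of mass $6$ in $\Q_n$ to already-established results about smaller faults on each plate. The lower bound $[6]\ge 5$ follows immediately from the general inequality $[2k]\ge k+2$ stated in the introduction (with $k=3$), together with the explicit failure at $n=3,4$; so the real content is the upper bound, namely that for every $n\ge 5$ and every neutral fault $\F$ with three red and three green deleted vertices, $\Q_n-\F$ has a Hamiltonian cycle.

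For $n\ge 5$, I would split $\Q_n$ into a top plate $\Q_n^{top}$ and a bottom plate $\Q_n^{bot}$, each isomorphic to $\Q_{n-1}$ with $n-1\ge 4$, and distribute the six deleted vertices between the two plates. The three red and three green deleted vertices can land on the plates in several configurations according to how many of each color fall on top versus bottom. The governing principle is that each plate should receive a fault that is small enough to be handled by an earlier lemma, and that whatever imbalance (charge) one plate carries must be compensated through the bridges. First I would enumerate the distributions by the charge on each plate; since each plate has dimension at least $4$, a plate carrying a neutral fault of mass $0$, $2$, or $4$ is Hamiltonian by Lemma \ref{locke1} ($[2]=3$), Lemma \ref{locke2} ($[4]=4$), or the trivially-Hamiltonian case, while a plate carrying a charged fault is handled by the ``freely prescribe ends'' machinery — in particular $[1,1,0,1]=2$, $[2,0,1,0]=4$, and $[3,1,0,1]=4$ govern the plates that have an excess of one color. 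The typical construction is: produce a Hamiltonian path (avoiding the deleted vertices) on each plate whose prescribed endpoints sit atop suitable bridges, then splice the two plate-paths together through the bridges, performing surgery (cutting the path just before a deleted vertex and rerouting through a bridge) whenever a deleted green vertex must be bypassed, exactly as in the proofs of Lemmas \ref{locke1} and \ref{locke2}.

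The case analysis splits naturally by the most unbalanced plate. In the balanced configurations — where each plate receives a neutral fault of mass $0$, $2$, or $4$ — I would simply find two bridges whose endpoints on each plate avoid the deleted vertices, apply $[2]=3$ or $[4]=4$ on each plate to obtain Hamiltonian paths connecting the bridge endpoints, and close up into a cycle. The genuinely delicate configurations are the unbalanced ones, for instance when the top plate carries $3$ red and $1$ green deletions (mass $4$, charge $2$) while the bottom carries $2$ green (mass $2$, charge $2$): here I would invoke $[3,1,0,1]=4$ to route a path on the top plate that starts and ends at bridge-heads while avoiding all four top deletions, and use surgery near the two bottom green deletions, connecting through bridges to a path furnished by $[1,1,0,1]=2$ on the bottom plate. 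Lemma \ref{separatinglemma}, explicitly advertised as a tool for this proof, lets me guarantee the deleted green vertices on a plate are at distance at least $4$ apart along a constructed path, which is precisely what makes the surgery cuts non-interfering (one needs room to insert distinct bridges near each deleted vertex without the cuts colliding or hitting a terminal).

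The main obstacle I expect is the bookkeeping of the worst unbalanced distribution, where three deleted vertices of one color pile onto a single plate; there the plate's fault has mass $4$ and charge as large as $2$, so I must simultaneously satisfy the parity constraint at the bridges and ensure the prescribed endpoints demanded by $[3,1,0,1]=4$ are compatible with the colors the bridges force. Matching parities across the bridges is the crux: each bridge joins vertices of opposite parity on the two plates, so every surgery near a deleted green vertex flips the color available at the bridge-head, and I must track these flips carefully to keep the endpoint-color hypotheses of the invoked lemmas satisfied. The potential trap is an adjacency degeneracy — a bridge-head coinciding with a deleted vertex or with a required terminal — and as in Lemma \ref{locke2} the remedy is to exploit that each plate has dimension at least $4$ (indeed $n-1\ge 4$) so that a deleted or terminal vertex always has a free neighbor, giving enough room to choose the bridges and cut points to avoid all collisions.
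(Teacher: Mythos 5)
Your overall architecture (plate splitting, casework on the distribution of the deleted vertices, splicing plate paths through bridges) matches the paper's, and your lower bound is handled the same way. However, two of your case treatments contain genuine gaps, both rooted in parity/balance bookkeeping. In your balanced configuration (top plate: two red and two green deleted; bottom plate: one of each), you fix the bridges first and then ask each plate for a Hamiltonian path between the bridge feet avoiding its fault. For the plate carrying the mass-4 neutral fault this is exactly the statement $[4,0,1,0]$, which the paper proves equals $5$: it is \emph{false} in dimension $4$, so your plan breaks precisely in the base case $n=5$, where the plates are copies of $\Q_4$ (the paper exhibits an explicit counterexample). You cannot cite $[2]=3$ or $[4]=4$ for this purpose, since those give Hamiltonian cycles, not paths with prescribed ends. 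The repair, which is the paper's Case 2, reverses the order of quantifiers: first build a Hamiltonian cycle on the heavily faulted plate using $[4]=4$ (Lemma \ref{locke2}), then choose two consecutive vertices of that cycle whose neighbors on the other plate are not deleted, and only then invoke the prescribed-ends result $[2,0,1,0]=4$ (Lemma \ref{twodeletedonepath}) on the lightly faulted plate.

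Your ``genuinely delicate'' configuration --- three red and one green deleted on top, two green on bottom --- is doubly flawed. A plate whose fault has charge $2$ cannot be covered by a single path at all: one path changes the red/green balance by at most one, while deleting three red and one green vertex from $\Q_{n-1}$ leaves two more green vertices than red ones, so no Hamiltonian path of that plate minus the fault exists, whatever its endpoints. Invoking $[3,1,0,1]=4$ ``to avoid all four top deletions'' thus misuses a lemma that tolerates only mass-3 faults in order to produce an object that provably does not exist; the same balance obstruction applies to your bottom plate (two deleted greens), where $[1,1,0,1]=2$ is likewise inapplicable. Any charge-2 plate must be covered by \emph{two} paths, which is why the paper's Cases 3 and 4 use, respectively, a two-bridge splice of a pair of $[3,1,0,1]=4$ paths, and the $2$-path covering result $[2,2,0,2]=4$ combined with Lemma \ref{separatinglemma}. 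Your proposal never invokes any multi-path covering lemma, and without one the charge-2 configurations cannot be completed. Incidentally, your particular worst case is avoidable: since the three red vertices are distinct, the splitting coordinate can always be chosen so that they fall $2$--$1$ across the plates, as the paper does from the outset; but the configurations with all three greens on one plate are unavoidable and still require the two-path machinery.
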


\begin{proof} Since $[2k]\ge k+2$ for each integer $k\ge 0$, we have $[6]\ge 5$.

Let $n \ge 5$ and $ \F = \{r_1, r_2, r_3, g_1, g_2, g_3 \}$ be
such that the first three vertices are red and the last three vertices are green. 
Produce two plates in such a way that
$r_1$ and $r_2$ are on the top plate and $r_3$ is on the bottom plate. Then consider the four essentially different
cases that depend on the distribution of the deleted green vertices on the plates.

\emph{Case $1.$} The three deleted green vertices are on the top plate.

Use $[4]=4$ to find a Hamiltonian cycle $(g_3, \xi )$ of $ \Q_n^{top}- \{r_1,r_2,g_1,g_2\}.$  Then use $[1,1,0,1]=2$ to
find a Hamiltonian path $(g_3\varphi(\xi) v, \eta) $ of $\Q_n^{bot}- \{r_3\}$ that connects $g_3 \varphi(\xi) v $ to
$g_3 \xi' v.$  The desired Hamiltonian cycle of $\Q_n - \F$ for this case is $(g_3\varphi(\xi), v \eta v (\xi^R)'^*).$

\emph{Case $2.$} $g_1$ and $g_2$ are on the top plate and $g_3$ is on the bottom plate.

Use $[4]=4$ to produce a Hamiltonian cycle on $\Q_n^{top} - \{r_1,r_2,g_1,g_2\}.$ Let $a,b$ be two consecutive vertices
along this cycle whose respective adjacent vertices on the bottom plate $c,d$ are not deleted vertices. Use
$[2,0,1,0]=4$ to connect $c$ to $d$ by a Hamiltonian path of $\Q_n^{bot}-\{r_3,g_3\}.$ The desired Hamiltonian cycle of
$\Q_n - \F$ for this case is obtained by removing the edge $\{a,b\}$ from the cycle constructed on the top plate and
attaching to the resulting path by means of the bridges $\{a,c\},\{b,d\}$ the path constructed on the bottom plate.

\emph{Case $3.$} $g_1$ is on the top plate and $g_2$ and $g_3$ are on the bottom plate.

Let $g_4,g_5$ be any two green non-deleted vertices on the top plate such that their respective adjacent vertices
$r_4,r_5$ on the bottom plate are also non-deleted. Use $[3,1,0,1]=4$ to produce a Hamiltonian path of $\Q_n^{top} -
\{r_1,r_2,g_1\}$ that connects $g_4$ to $g_5.$ In the same way produce a Hamiltonian path of $\Q_n^{bot} -
\{r_3,g_2,g_3\}$ that connects $r_4$ to $r_5.$ The desired Hamiltonian cycle of $\Q_n- \F$ for this case is obtained by
attaching the resulting paths to each other by means of the bridges $\{g_4,r_4\}, \{g_5,r_5\}.$

\emph{Case $4.$} The three green deleted vertices are on the bottom.

Use Lemma \ref{separatinglemma} to find a Hamiltonian path $(g_1, \xi)$ of $\Q_n^{bot} - \{r_3\}$ that connects $g_1$ to
$g_3$ and such that $\xi = \eta \theta,$ with $g_1 \eta = g_2,$ and both $\eta $ and $\theta $ have length at least
four. Then use $[2,2,0,2]=4$ to produce a $2-$path covering of $\Q_n^{top}-\{r_1,r_2\}$ with paths $(g_1 \varphi(\eta)v,
\mu ), (g_1 \eta' v, \nu )$ connecting $g_1 \varphi(\eta) v $ to $g_2 \varphi(\theta)v$ and $g_1 \eta' v $ to $g_2
\theta' v $, respectively. The desired Hamiltonian cycle of $\Q_n - \F$ for this case is $(g_1\varphi(\eta), v \mu v
\theta'^* v \nu^R v (\eta^R)'^*).$
\end{proof}

\begin{lemma}$([4,0,1,0]=5)$ Let $n \geq 5$, $r, r_1, r_2$ be three
distinct red vertices and $g, g_1, g_2$ be three distinct green
vertices in $\Q_n.$ Then there exists a Hamiltonian path of $\Q_n
- \{r_1,r_2,g_1,g_2\}$ that connects $r$ to $g.$ The claim is not
true if $n=3$ or $n=4$.
\end{lemma}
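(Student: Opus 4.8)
The lower bound is obtained from explicit counterexamples showing the conclusion fails for $n=3$ and $n=4$: for $n=4$ one exhibits a neutral fault $\{r_1,r_2,g_1,g_2\}$ together with terminals $r,g$ for which no Hamiltonian path of $\Q_4-\{r_1,r_2,g_1,g_2\}$ joins $r$ to $g$ (an obstruction in the spirit of the exceptional set $\mathcal{B}_{\{\cdot,\cdot\}}$ appearing in Lemma \ref{q3minustwovertices}), so $4\notin\mathcal{A}_{4,0,1,0}$ and $[4,0,1,0]\ge 5$. For the positive direction I assume $n\ge 5$ and split $\Q_n$ into a top and a bottom plate, each isomorphic to $\Q_{n-1}$ with $n-1\ge 4$; hence every lemma of Sections~3--4 that requires dimension at least $4$ — in particular $[0,0,1,0]=1$, $[0,0,2,0]=2$, $[1,1,0,1]=2$, $[2]=3$, $[2,0,1,0]=4$, $[1,1,1,1]=4$, $[3,1,0,1]=4$ and its color dual — is available on each plate.

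I would organize the proof as in Lemma \ref{locke3}, noting that the six distinguished vertices consist of three red vertices $r_1,r_2,r$ and three green vertices $g_1,g_2,g$, with $r,g$ the prescribed terminals. The splitting coordinate is chosen so as to \emph{separate the two deleted red vertices}, placing $r_1$ on the top plate and $r_2$ on the bottom plate, so each plate inherits exactly one deleted red vertex. The cases are then indexed by how $g_1,g_2$ and the terminals $r,g$ distribute among the plates, and by the top-bottom symmetry (which interchanges $r_1\leftrightarrow r_2$) it suffices to treat the configurations with at most one deleted green on top.

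If the deleted greens split one-and-one, each plate carries a neutral fault of mass $2$: when both terminals lie on the same plate I build a Hamiltonian $r$--$g$ path of that plate by $[2,0,1,0]=4$, perform surgery on an edge whose two neighbors on the other plate are non-deleted, and fill the other plate by a second application of $[2,0,1,0]=4$; when the terminals lie on opposite plates I join two such paths through a single bridge. The delicate configurations are those in which both deleted greens fall on one plate (say the bottom), so that the bottom inherits the charged fault $\{r_2,g_1,g_2\}$ while the top inherits only $\{r_1\}$; here I borrow the pivoting device of Lemma \ref{locke3}, Case~1. On the bottom I treat one deleted green $g_1$ as a \emph{pivot}: using $[2,0,1,0]=4$ (respectively the dual of $[3,1,0,1]=4$, according to the terminal placement) I produce a covering of $\Q_n^{bot}-\{r_2,g_2\}$ that passes through $g_1$, then excise $g_1$ by cutting its two incident edges and running bridges up from its two (red) neighbors. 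This leaves green bridge-tops, so the top is completed either by $[1,1,1,1]=4$, when it must supply a neutral and a charged segment, or by the charged path $[1,1,0,1]=2$, when a single green--green segment suffices; the pieces then assemble into one $r$--$g$ path. The configurations with both greens on top are the top-bottom mirrors of these.

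The main obstacle is precisely this charged-fault family: one must never cover a plate that has absorbed all four deleted vertices, since that would require $[4,0,1,0]$ on $\Q_{n-1}$ — the very statement being proved, whose base case $n=4$ is false — so separating $r_1$ from $r_2$ (one deleted red per plate) is essential, and the pivoting trick is what lets a plate carrying a charged mass-$3$ fault still be spliced into a neutral $r$--$g$ path using only the already-established lemmas. The second point needing care is the base dimension $n=5$, where each plate is $\Q_4$: there the side conditions of $[2,0,1,0]=4$ and of $[3,1,0,1]=4$ involving $\mathcal{B}_{\{\cdot,\cdot\}}$ must be checked and the bridge vertices chosen to avoid the few forbidden vertices, exactly as in the delicate vertex selections of Lemmas \ref{locke2} and \ref{locke3}. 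I expect these low-dimensional verifications, rather than the large-$n$ construction, to be where the real work lies.
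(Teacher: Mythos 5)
Your plan is, in substance, the paper's own proof. The paper splits $\Q_n$ by a coordinate separating $r_1$ from $r_2$, handles the one-deleted-green-per-plate case by two applications of $[2,0,1,0]=4$ (joined by a single bridge when the terminals are on different plates, or by surgery on an edge of one path when they are on the same plate), and handles the case where both deleted greens share a plate with one deleted red by exactly your pivot device: delete only $\{r_1,g_1\}$ from that plate, run a $[2,0,1,0]$-path through $g_2$, excise $g_2$, bridge its two red neighbors down, and finish the light plate with $[1,1,0,1]=2$ or $[1,1,1,1]=4$ (the paper's Subcases A1 and A2).

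Two points must be tightened before this is a complete proof. First, the pivot does not work for every terminal placement, and your parenthetical alternative is stated inconsistently: the color dual of $[3,1,0,1]=4$ applies to a fault of mass $3$, so it cannot produce a covering of $\Q_n^{bot}-\{r_2,g_2\}$ that ``passes through $g_1$''; it must be applied with all of $\{r_2,g_1,g_2\}$ deleted, connecting two red vertices, with no pivot at all. This direct route is genuinely forced when neither $r$ nor $g$ lies on the heavy plate: there the pivot assembly would require a $3$-path covering (two neutral pairs plus one charged pair) of the light plate minus one vertex, i.e. $[1,1,2,1]$ on $\Q_{n-1}$, which is not among the available lemmas (Corollary \ref{c0030} gives only an upper bound of $6$) and in fact fails on $\Q_4$, so a literal execution of the pivot would collapse at the base case $n=5$; this is why the paper uses the dual of $[3,1,0,1]$ directly in its Subcases A3 and A4. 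Your hedge ``according to the terminal placement'' points in the right direction, but the dividing line (pivot only when $g$, or both terminals, lie on the heavy plate) is the crux of the case analysis and must be made explicit. Second, the lower bound requires exhibiting a counterexample, not appealing to one; the paper takes $r=(0,1,0,0)$, $r_1=(1,0,0,0)$, $r_2=(1,1,1,0)$, $g=(1,0,0,1)$, $g_1=(1,1,1,1)$, $g_2=(0,0,1,1)$ in $\Q_4$. A final remark: your worry about $\mathcal{B}$-type side conditions at $n=5$ is moot, since $[2,0,1,0]=4$, $[3,1,0,1]=4$ and $[1,1,1,1]=4$ hold unconditionally on plates of dimension $n-1\ge 4$; the genuine low-dimensional care is only in choosing auxiliary vertices (for instance, a red vertex at distance at least three from the pivot, as in the paper's Subcase A2).
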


\begin{proof}
Let $r = (0,1,0,0)$, $r_1 = (1,0,0,0)$, $r_2 = (1,1,1,0)$ and $g =(1,0,0,1),$ $g_1 = (1,1,1,1)$, $g_2 = (0,0,1,1)$ be
vertices in $\Q_4$. Then one can verify directly that a Hamiltonian path of $\Q_4 - \{r_1,r_2,g_1,g_2\}$ connecting $r$
to $g$ does not exist.

Let $n \ge 5$. Choose two plates that separate the deleted red vertices and consider the six essentially different cases
depending on the distribution of the green deleted vertices and the terminals on the plates. We can assume that $r_1$ is
the deleted red vertex on the top plate and $r_2$ is the deleted red vertex on the bottom plate.

\emph{Case A.} The two deleted green vertices are on the top plate.

\emph{Subcase A1.} The two terminals are on the top plate.

Use $[2,0,1,0]=4$ to produce a Hamiltonian path $(r, \xi)$  of $\Q_n^{top}- \{r_1,g_1\}$ that connects $r$ to $g$ and
let $\xi = \mu \eta $, with $r \mu = g_2.$ Use $[1,1,0,1]=2$ to produce a Hamiltonian path $(r \mu' v, \theta)$ of
$\Q_n^{bot}-\{r_2\}$ that connects $r \mu' v $ to $r \mu \varphi(\eta)v.$ The desired Hamiltonian path of $\Q_n -
\{r_1,r_2,g_1,g_2\}$ for this case is $(r, \mu' v \theta v \eta^*).$

\emph{Subcase A2.} $g$ is on the top plate and $r$ is on the bottom plate.

Let $r_3$ be a red vertex on the top plate at a distance at least three away from $g_2.$ Use $[2,0,1,0]=4$ to produce a
Hamiltonian path $(g, \xi )$ of $ \Q_n^{top} - \{r_1,g_1\}$ that connects $g$ to $r_3.$ Let $ \xi = \mu \eta $, with $g
\mu = g_2$ and $\eta $ of length at least three. Use $[1,1,1,1]=4$ to produce a $2-$path covering of
$\Q_n^{bot}-\{r_2\}$ with paths $(g \mu' v, \theta), (r, \nu)$ connecting $g \mu' v$ to $g \mu \varphi(\eta) v$ and $r$
to $r_3 v$, respectively. The desired Hamiltonian path of $\Q_n - \{r_1,r_2,g_1,g_2\}$ for this case is $(g, \mu' v
\theta v \eta^* v \nu^R ).$

\emph{Subcase A3.} $r$ is on the top plate and $g$ is on the bottom plate.

Let $r_3$ be a red vertex on the top plate which is not adjacent to $g.$ Use $[3,1,0,1]=4$ to produce a Hamiltonian path
$(r, \xi )$ of $ \Q_n^{top} - \{r_1,g_1,g_2\}$ that connects $r$ to $r_3.$ Use $[1,1,0,1]=2$ to produce a Hamiltonian
path $(r_3 v, \mu)$ of $\Q_n^{bot}-\{r_2\}$ connecting $r_3 v$ to $g.$ Then the desired Hamiltonian path of $\Q_n -
\{r_1,r_2,g_1,g_2\}$ for this case is $(r, \xi v \mu).$

\emph{Subcase A4.} $r$ and $g$ are both on the bottom plate.

Let $r_3$ and $r_4$ be two red vertices on the top plate such that $r_3 v$ and $r_4 v$ are different from $g.$ Use
$[3,1,0,1]=4$ to produce a Hamiltonian path $(r_3,\xi)$ of $\Q_n^{top} - \{r_1,g_1,g_2\}$ that connects $r_3$ to $r_4.$
Use $[1,1,1,1]=4$ to produce a $2-$path covering of $\Q_n^{bot} - \{r_2\}$ with paths $(r, \eta)$ and $(r_4 v, \mu)$
connecting $r$ to $r_3 v$ and $r_4 v$ to $g$, respectively. The desired Hamiltonian path of $\Q_n - \{r_1,r_2,g_1,g_2\}$
for this case is $(r, \eta v \xi v \mu).$

\emph{Case B.} Each plate contains one deleted green vertex. We can assume that $g_1$ is on the top plate and $g_2$ is
on the bottom plate.

\emph{Subcase B1.} The two terminals are on the top plate.

Use $[2,0,1,0]=4$ to produce a Hamiltonian path $(r, \xi)$ of $\Q_n^{top} - \{r_1,g_1\}$ that connects $r$ to $g.$ Since
$n-1 \geq 4$ there exist words $\mu$ and $\eta$ and a letter $x$ such that $\xi = \mu x \eta $ with neither $r \mu v $
nor $r \mu x v $ being a deleted vertex. Use again $[2,0,1,0]=4$ to produce a Hamiltonian path $(r \mu v, \zeta)$ of
$\Q_n^{bot} -\{r_2, g_2\}$ that connects $r \mu v$ to $r \mu x v.$ The desired Hamiltonian path of $\Q_n -
\{r_1,r_2,g_1,g_2\}$ for this case is $(r, \mu v \zeta v \eta ).$

\emph{Subcase B2.} $g$ is on the top plate and $r$ is on the bottom plate.

Let $r_3$ be any red vertex on the top plate such that $r_3 v \ne g_2.$ Use $[2,0,1,0]=4$ to produce a Hamiltonian path
$(g, \xi )$ of $\Q_n^{top} - \{r_1, g_1\}$ connecting $g$ to $r_3$. Use again $[2,0,1,0]=4$ to produce a Hamiltonian
path $(r_3 v, \eta)$ of $\Q_n^{bot} - \{r_2, g_2\}$ connecting $r_3 v$ to $r$. The desired Hamiltonian path in $\Q_n -
\{r_1,r_2,g_1,g_2\}$ for this case is $(g, \xi v \eta).$
\end{proof}

\begin{lemma} $([0,0,3,0]= 5)$ \label{threecoveringlemma}
Let $n \ge 5$, $r_1,r_2,r_3$ be three distinct red vertices and
$g_1,g_2,g_3$ be three distinct green vertices in $\Q_n.$ Then
there exists a $3-$path covering of $\Q_n$ with paths $\gamma_i$
connecting $r_i $ to $g_i$ for $i=1,2,3.$ The claim is not true if
$n = 3$ or $n = 4$.
\end{lemma}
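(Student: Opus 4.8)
The plan is to follow the established two-plate induction scheme used throughout Sections 3 and 4, splitting $\Q_n$ into a top and bottom copy of $\Q_{n-1}$ and distributing the three red/green terminal pairs across the plates. First I would verify the negative assertions for $n=3$ and $n=4$ by exhibiting explicit counterexamples; for $n=4$ this likely requires the exhaustive verification deferred to Appendix \ref{2pathcoveringsofQ4}, so I would simply cite that. For $n\ge 5$ the base case $n=5$ has plates isomorphic to $\Q_4$, and since $n-1\ge 4$ throughout, each plate is large enough that the single-plate lemmas proved earlier ($[1,1,0,1]=2$, $[2,0,1,0]=4$, $[0,0,2,0]=2$, $[1,1,1,1]=4$, $[3,1,0,1]=4$, and $[2,2,0,2]=4$) apply without dimensional restriction.

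Next I would split the hypercube so that $r_1$ lies on the top plate and, say, $r_3$ lies on the bottom plate, then enumerate cases according to how $r_2$ and the three green terminals $g_1,g_2,g_3$ are distributed between the plates. The strategy in each case mirrors the earlier lemmas: on each plate I produce a path covering connecting some of the terminals to each other and to the endpoints of suitably chosen \emph{bridges}, then splice the two plate-coverings together across the bridges so that the resulting three global paths connect $r_i$ to $g_i$. Concretely, when a pair $r_i,g_i$ lies entirely on one plate I can realize $\gamma_i$ within that plate using $[0,0,2,0]=2$ or $[0,0,3,0]$-type sub-coverings; when a pair is split across plates I join the two halves through a bridge, using the deletion-style lemmas ($[2,0,1,0]=4$, $[1,1,1,1]=4$) on whichever plate carries a bridge-endpoint that must be avoided. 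The word-calculus notation $(a,\omega)$ from Section 4 makes it possible to specify exactly where each path is cut and where each bridge letter $v$ is inserted, so most cases reduce to bookkeeping of the form ``produce $(g_i,\xi)$ on the top, produce $(\cdot,\mu)$ on the bottom, and output $(g_i,\xi v\mu\cdots)$.''

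The main obstacle, as in the earlier lemmas, will be the tightest case at the base dimension $n=5$ (plates $\cong\Q_4$), where the applicable single-plate lemmas carry their own $\mathcal{B}_{\{\cdot,\cdot\}}$-type exceptional configurations. In particular, a case where one red–green pair, together with the deleted vertex induced by a bridge on a plate, falls into a forbidden configuration of $[2,0,1,0]$ or $[2,2,0,2]$ on a $\Q_4$-plate will require the same kind of careful secondary choice of bridge vertices seen in Subcase 2(b) of $[2,2,0,2]=4$ and Subcase 2(c) of Lemma \ref{twochargedpathsonelong}: I would show that the relevant bridge endpoints can always be re-selected so that the induced pair is not in the exceptional set $\mathcal{B}$, exploiting that on $\Q_4$ each vertex has several neighbors in the opposite plate and that the forbidden configurations are antipodal-type and hence avoidable. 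For $n\ge 6$ the plates have dimension at least $5$, all exceptional sets are empty, and each case closes directly by the cited lemmas.
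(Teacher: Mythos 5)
Your skeleton matches the paper's: split $\Q_n$ so that two red terminals lie on one plate and one on the other, enumerate distributions of the green terminals, build coverings on the plates with the earlier lemmas, and splice across bridges. The genuine gap is in which plate-level lemmas can close the hard cases. Your toolbox ($[1,1,0,1]$, $[2,0,1,0]$, $[0,0,2,0]$, $[1,1,1,1]$, $[3,1,0,1]$, $[2,2,0,2]$) omits the two charged-path lemmas that the paper's proof leans on: $[0,0,0,2]=4$ (Lemma \ref{twochargedpaths}) and, crucially, its length-controlled refinement Lemma \ref{twochargedpathsonelong}. These are not a convenience but a parity necessity. Take the case $r_1,r_2,g_1,g_2,g_3\in\Q_n^{top}$ and $r_3\in\Q_n^{bot}$: the bottom plate has no fault and equally many red and green vertices, so the pieces of the $\gamma_i$ covering it must contain equally many red-ended and green-ended pairs. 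Whatever bridge carries $\gamma_3$ down to $r_3$, either its top vertex is green, and then the bottom plate is forced to contain one red pair and hence also one green pair --- exactly a $[0,0,0,2]$ instance, which none of your neutral-pair tools produces; or its top vertex is red, and then the top plate must be covered by three disjoint neutral-pair paths --- a $[0,0,3,0]$ instance in dimension $n-1$, which is the statement being proved and is \emph{false} on $\Q_4$, so this route is circular precisely at the base case $n=5$. (Your $[1,1,1,1]=4$ does yield $[0,0,0,2]=4$ by the delete-a-red-terminal-and-reattach trick formalized in Lemma \ref{redchargedtoneutral}, but in the case $r_1,r_2,g_3$ on top and $r_3,g_1,g_2$ below the paper additionally needs the $r_1$-to-$r_2$ path to have length at least $8$, so that it contains an edge whose two bottom neighbours avoid $g_1,g_2,r_3$; that control over which charged pair gets the long path is exactly what Lemma \ref{twochargedpathsonelong} provides and it does not follow from the reattachment trick.)

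Two further points. Your anticipated ``main obstacle'' is misdiagnosed: the exceptional sets $\mathcal{B}_{\{\cdot,\cdot\}}$ live in dimension $3$ only --- Lemma \ref{twodeletedonepath} gives $[2,0,1,0]=4$ with no side condition for $n\ge 4$, and similarly $[2,2,0,2]=4$ --- so for $n\ge 5$ no $\mathcal{B}$-type configuration ever needs to be avoided on a plate. The real delicacies at $n=5$ are coincidences of bridge feet with terminals (for instance $r_3v=g_3$, or $r_2v=g_2$ together with $g_3v=r_3$), which the paper resolves by separate subcases and which your plan does not address. Finally, Appendix \ref{2pathcoveringsofQ4} cannot be cited for the $n=4$ counterexample: it proves a \emph{positive} statement (two freely prescribed neutral pairs in $\Q_4$ minus a neutral fault of mass $2$), not a non-existence claim. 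The counterexample must be exhibited explicitly, e.g.\ $r_1=(0,0,0,0)$, $r_2=(0,1,0,1)$, $r_3=(0,1,1,0)$, $g_1=(0,1,1,1)$, $g_2=(0,0,1,0)$, $g_3=(0,0,0,1)$ as in the paper; since these six vertices lie in a common $3$-dimensional subcube, the same configuration also settles $n=3$.
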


\begin{proof}
Let $r_1 = (0,0,0,0)$, $r_2 = (0,1,0,1)$, $r_3 = (0,1,1,0)$, $g_1 = (0,1,1,1)$, $g_2 = (0,0,1,0)$, and $g_3 = (0,0,0,1)$
be vertices in $\Q_4$. Then it is not difficult to verify that a $3-$path covering of $\Q_4$ with paths $\gamma_i$
connecting $r_i $ to $g_i$ for $i=1,2,3$ does not exist (see also \cite[Fig.1]{DGK}).

Let $n \geq 5$. Choose two plates to split the deleted red vertices such that $r_1$ and $r_2$ are on $\Q_n^{top}$ and
$r_3$ is on $\Q_n^{bot}$. There are five substantially different cases depending on the distribution of the green
terminals on the plates.

\emph{Case 1.} The three green terminals are on the top plate.

Use $[0,0,2,0]=2$ to produce a path covering $(r_1,\xi)$, $(r_2, \eta)$ of $\Q_n^{top}$ that connects $r_1$ to $g_1$ and
$r_2$ to $g_2.$ Without loss of generality we may assume that $g_3$ lies on the path between $r_2$ and $g_2.$ Let $\eta
= \mu \theta$, where $r_2 \mu = g_3$.

If $g_3 v \neq r_3$ then use $[0,0,0,2]=4$ to produce a path covering $(r_2 \mu' v, \nu)$, $(g_3 v, \zeta)$ of
$\Q_n^{bot}$ that connects $r_2 \mu' v $ to $g_2 (\theta^R)'v $ and $g_3 v$ to $r_3.$ The desired $3-$path covering for
this case is $(r_1, \xi)$, $(r_2, \mu' v \nu v \theta^*)$, $(r_3, \zeta^R v).$

If $g_3 v = r_3$ then use $[1,1,0,1]=2$ to produce a Hamiltonian path $(r_2 \mu' v, \nu)$ of $\Q_n^{bot}- \{r_3\}$ that
connects $r_2 \mu' v $ to $g_2 (\theta^R)'v $. The desired $3-$path covering for this case is $(r_1, \xi)$, $(r_2, \mu'
v \nu v \theta^*)$, $(r_3, r_3 v).$

\emph{Case 2.} Two green terminals are on the top plate and one is on the bottom plate.

If the green terminal on the bottom plate is $g_3$ then use $[0,0,2,0]=2$ to produce a $2-$path covering of $\Q_n^{top}$
connecting $r_1$ to $g_1$ and $r_2$ to $g_2$ and use $[0,0,1,0]=1$ to produce a Hamiltonian path of $\Q_n^{bot}$
connecting $r_3$ to $g_3.$

Now, assume that $g_1$ and $g_3$ are on the top plate and $g_2$ is on the bottom plate.

If $r_2 v \neq g_2$ and $g_3 v \neq r_3$ then use $[2,0,1,0]=4$ to find a Hamiltonian path $(r_1,\xi)$ of $\Q_n^{top}-
\{r_2,g_3\}$ connecting $r_1$ to $g_1$ and use $[0,0,0,2]=4$ to produce a $2-$path covering $(r_2 v, \eta)$, $(r_3,
\zeta)$ of $\Q_n^{bot}$ that connects $r_2 v$ to $g_2$ and $r_3$ to $g_3 v$. The desired $3-$path covering for this case
is $(r_1,\xi)$, $(r_2,  v \eta)$, $(r_3, \zeta v).$

Let $r_2 v \neq g_2$ and $g_3 v = r_3$ (the case $r_2 v = g_2$ and $g_3 v \neq r_3$ is symmetrical). Use $[2,0,1,0]=4$
to find a Hamiltonian path $(r_1,\xi)$ of $\Q_n^{top}- \{r_2,g_3\}$ connecting $r_1$ to $g_1$ and use $[1,1,0,1]=2$ to
produce a Hamiltonian path $(r_2 v, \eta)$ of $\Q_n^{bot}- \{r_3\}$ that connects $r_2 v$ to $g_2$. The desired $3-$path
covering for this case is $(r_1,\xi)$, $(r_2, v \eta)$, $(r_3, v).$

Finally, let $r_2 v = g_2$ and $g_3 v = r_3$. Use $[2,0,1,0]=4$ to find a Hamiltonian path $(r_1,\xi)$ of $\Q_n^{top}-
\{r_2,g_3\}$ connecting $r_1$ to $g_1$. Clearly, the length of the path $(r_1,\xi)$ is more than $1$. Use $[2,0,1,0]=4$
to find a Hamiltonian path $(r_1 v,\eta)$ of $\Q_n^{bot}- \{r_3,g_2\}$ connecting $r_1 v$ to $r_1 \varphi(\xi) v$. The
desired $3-$path covering for this case is $(r_1,v \eta v \xi^*)$, $(r_2, v)$, $(r_3,v).$

\emph{Case 3.} $g_3$ is on the top plate and the other two green terminals are on the bottom plate.

If $r_3 v = g_3$ then use $[1,1,0,1]=2$ to find a Hamiltonian path $(r_1, \xi )$ of $\Q_n^{top} - \{g_3\}$ that connects
$r_1$ to $r_2.$ Let $\xi = \mu x \eta $, with neither $r_1\mu v$ nor $r_1 \mu x v$ being a prescribed end. On the bottom
plate use $[1,1,1,1]=4$ to produce a $2-$path covering $(r_1 \mu v, \theta)$, $(r_1 \mu xv, \zeta)$ of $\Q_n^{bot}-
\{r_3\}$ connecting $r_1\mu v$ to $g_1$ and $r_1 \mu x v $ to $g_2$, respectively. The desired $3-$path covering for
this case is $(r_1, \mu v \theta)$, $(r_2, \eta^R v \zeta)$, $(r_3, v ).$

If $r_3 v \neq g_3$ use Corollary \ref{twochargedpathsonelong} to produce a $2-$path covering $(g_3, \xi)$ and $(r_1,
\eta)$ of the top plate with the first path connecting $g_3$ to $r_3 v$ and the second path of length at least $8$
connecting $r_1$ to $r_2.$ Let $\eta = \mu x \theta$, with neither $r_1 \mu v $ nor $r_1 \mu x v $ being a prescribed
end. Use $[1,1,1,1]=4$ to produce a $2-$path covering $(r_1 \mu v, \nu)$, $(r_1 \mu x v, \zeta)$ of $\Q_n^{bot}-\{r_3\}$
connecting $r_1 \mu v $ to $g_1$ and $r_1\mu x v $ to $g_2$, respectively. The desired $3-$path covering of $\Q_n$ for
this case is $(r_1, \mu v \nu)$, $(r_2, \theta^R v \zeta)$, $(r_3, v \xi^R).$

\emph{Case 4.} Either $g_1$ or $g_2$ is on the top plate and the other two green terminals are on the bottom plate.

Without loss of generality we can assume that $g_1$ is the green terminal on the top plate. Let $g_4$ be any green
vertex on the top plate such that $g_4 v$ is not a terminal vertex. Use $[0,0,2,0]=2$ to find a $2-$path covering $(r_1,
\xi)$, $(r_2, \eta)$ of $\Q_n^{top}$ that connects $r_1$ to $g_1$ and $r_2 $ to $g_4$, and a $2-$path covering $(r_3,
\mu)$, $(r_2 \eta v, \nu)$ of $\Q_n^{bot}$ that connects $r_3$ to $g_3$ and $r_2\eta v$ to $g_2.$ The desired $3-$path
covering for this case is $(r_1, \xi)$, $(r_2, \eta v \nu)$, $(r_3, \nu).$

\emph{Case 5.}  All the green terminals are on the bottom plate.

Let $r_4=g_1 x$ be any vertex on the bottom plate adjacent to $g_1$ and different from $r_3$. Use $[2,0,1,0]=4$ to
produce a Hamiltonian path $(g_2, \xi)$ of $\Q_n^{bot}- \{g_1, r_4\}$ that connects  $g_2$ to $r_3.$ Let $\xi = \mu \eta
$, with $g_2 \mu = g_3.$ Use $[0,0,2,0]=2$ to produce a $2-$path covering $(r_4 v,\theta)$, $(g_2 \mu' v, \zeta)$ of
$\Q_n^{top}$ connecting $r_4 v$ to $r_1$ and $g_2 \mu' v$ to $r_2$, respectively. The desired $3-$path covering of
$\Q_n$ for this case is $(g_1,xv\theta)$, $(g_2, \mu' v \zeta)$, $(g_3,\eta).$
\end{proof}

\section{Some general results}

Let $\mathcal{G}$ be a graph and $v$ be a vertex in $\mathcal{G}.$ We denote by $\mathcal{N}(v)$ the set of vertices
adjacent to $v$ in $\mathcal{G}.$ If $A$ is a subset of the set of vertices  of $ \mathcal{G}$ then the set
$\mathcal{N}(A) = \bigcup_{v\in A} \mathcal{N}(v)$ is called the set of neighbors of $A.$

As usual, if $X$ is a set, $|X|$ denotes the cardinality of $X.$

\begin{proposition} \label{neighborsofacluster} Let $A \subset \mathcal{N}(r)$
for some vertex $r$  in $\Q_n.$ Then $|\mathcal{N}(A) | = 1 + n |A| - \frac{|A| (|A|+1)}{2}.$
\end{proposition}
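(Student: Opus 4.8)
The plan is to work in the coordinate description of $\Q_n$ as ${\bf Z}_2^n$ introduced in Section 4 and to count the elements of $\mathcal{N}(A)$ according to their Hamming distance from $r.$ First I would write each neighbor of $r$ as $r + e_i,$ so that $A = \{r + e_i : i \in S\}$ for a uniquely determined index set $S \subseteq \{1, \dots, n\}$ with $|S| = |A|.$ A vertex $w$ lies in $\mathcal{N}(A)$ precisely when $w = (r + e_i) + e_j$ for some $i \in S$ and some $j \in \{1, \dots, n\};$ hence every element of $\mathcal{N}(A)$ is at distance $0$ or $2$ from $r,$ the distance-$0$ case occurring exactly when $j = i$ and producing the single vertex $r.$

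Next I would count the distance-$2$ vertices. Writing such a vertex as $r + e_i + e_j$ with $i \neq j,$ it is determined by the unordered pair $\{i, j\},$ and it belongs to $\mathcal{N}(A)$ if and only if at least one of $i, j$ lies in $S,$ because its only neighbors at distance $1$ from $r$ are $r + e_i$ and $r + e_j.$ The pairs $\{i, j\}$ meeting $S$ number $\binom{n}{2} - \binom{n - |A|}{2},$ since $\binom{n}{2}$ counts all pairs and $\binom{n - |A|}{2}$ counts those contained in the complement of $S.$ Adding the vertex $r$ gives
\[
|\mathcal{N}(A)| = 1 + \binom{n}{2} - \binom{n - |A|}{2}.
\]

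Finally I would simplify the binomial difference: expanding $\binom{n}{2} - \binom{n - |A|}{2}$ and collecting terms yields $n|A| - \frac{|A|(|A|+1)}{2},$ which is the claimed formula. The only point needing care, and the one I would regard as the main obstacle, is the membership characterization of the distance-$2$ vertices: one must verify both that no neighbor of $A$ can lie at distance $1$ from $r$ (so that only $r$ and the distance-$2$ layer occur) and that a distance-$2$ vertex is adjacent to some element of $A$ exactly when its pair of flipped coordinates meets $S,$ so that each such vertex is counted once. Once this characterization is secured the rest is routine arithmetic, and sanity checks such as $|A| = 1$ giving $n$ and $|A| = 2$ giving $2n - 2$ confirm the count.
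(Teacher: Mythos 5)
Your proposal is correct, but it counts differently from the paper. The paper's proof is incremental: it observes that $r \in \mathcal{N}(A)$, that any two elements of $A$ have exactly two common neighbors, one of which is $r$ while the other is distinct for distinct pairs, and concludes that listing the vertices of $A$ one at a time each new vertex contributes first $n-1$, then $n-2$, \dots, then $n-|A|$ previously uncounted neighbors, giving $|\mathcal{N}(A)| = 1 + (n-1) + \cdots + (n-|A|)$. You instead coordinatize, identify $\mathcal{N}(A)$ as $\{r\}$ together with the distance-$2$ vertices $r+e_i+e_j$ whose flipped-coordinate pair $\{i,j\}$ meets the index set $S$, and count those by complementation as $\binom{n}{2} - \binom{n-|A|}{2}$; the algebra then matches the stated formula. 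The two arguments rest on the same structural fact --- each distance-$2$ vertex is determined by a unique pair of coordinates, equivalently each pair of vertices of $A$ shares exactly one neighbor besides $r$ --- but your complementary count makes the no-double-counting claim explicit and checkable (the pair $\{i,j\}$ determines the vertex), and it also records in passing why no vertex at distance $1$ can occur, a point the paper leaves implicit; the paper's telescoping sum is terser and avoids coordinates altogether. Both are complete and elementary, so this is a matter of presentation rather than substance.
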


\begin{proof}  Obviously $r \in \mathcal{N}(A).$
Any pair of elements $g_1,g_2 \in A$ has exactly two neighbors in common  one of which is the root $r,$ and the other is
different for different pairs. It follows that $$|\mathcal{N}(A) | = 1 + (n-1) + (n-2) + \cdots + (n-|A|).$$
\vskip-18pt
\end{proof}

The following lemma is a particular case of an isoperimetric inequality
for the hypercube. See \cite[Theorem 7.3]{bezrukov} for a more
general statement and a discussion of several proofs available in
the literature. Here we just state and prove what we need in the sequel.

\begin{lemma} \label{neighborscardinality}
Let $k$ and $n$ be positive integers such that $1 \le k \le n$ and let $A$ be a set of green vertices in $ \Q_n$ of
cardinality $k.$ Then $$|\mathcal{N}(A)| \ge 1 + (n-1) + \cdots +(n-k) = 1 + k n - \frac{k (k+1)}{2},$$ with equality if
and only if $A \subset \mathcal{N}(r)$ for some red vertex $r.$
\end{lemma}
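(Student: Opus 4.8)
The plan is to prove this isoperimetric inequality by induction on $n$, peeling off one coordinate and splitting $\Q_n$ into two copies of $\Q_{n-1}$ (a top plate and a bottom plate, in the language of the paper). Let $A$ be a set of $k$ green vertices. Writing $A^{top}$ and $A^{bot}$ for the parts of $A$ lying in each plate, and letting $a = |A^{top}|$ and $b = |A^{bot}|$ with $a + b = k$, I would exploit two sources of neighbors: the neighbors of $A^{top}$ computed \emph{within} the top plate (plus those of $A^{bot}$ within the bottom plate), together with the ``vertical'' neighbors obtained by flipping the split coordinate. The key combinatorial observation is that the vertical neighbor of any vertex in $A^{top}$ lands in the bottom plate and the vertical neighbor of any vertex in $A^{bot}$ lands in the top plate, so these two contributions sit in different plates and do not collide with the horizontal neighbor sets in an uncontrolled way.

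First I would handle the base case $n = 1$ (or $k = n$) directly, and then set up the inductive step by choosing the splitting coordinate advantageously. The natural choice is to split so that the two plates are as balanced as possible, but in fact the cleanest route is to argue that I may assume $0 < a, b$ after choosing the coordinate well, or else reduce to a lower-dimensional instance. Applying the induction hypothesis to $A^{top}$ inside $\Q_{n-1}$ gives $|\mathcal{N}(A^{top})| \ge 1 + (n-1)a - \tfrac{a(a+1)}{2}$ (horizontal neighbors in the top plate), and similarly for $A^{bot}$. The vertical neighbors contribute at least the vertices of $A^{top}$ projected down and $A^{bot}$ projected up; after accounting for overlaps, summing these contributions and simplifying the quadratic should recover the bound $1 + kn - \tfrac{k(k+1)}{2}$. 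The essential inequality to verify is that the convexity of the function $t \mapsto 1 + (n-1)t - \tfrac{t(t+1)}{2}$, plus the linear vertical terms, dominates the target quadratic in $k = a + b$; this is a routine but careful algebraic comparison.

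The hard part will be the \emph{equality characterization}, not the inequality itself. To prove that equality forces $A \subseteq \mathcal{N}(r)$ for a single red vertex $r$, I would trace back through the induction: equality in the summed bound forces equality in the induction hypothesis for both plates simultaneously \emph{and} forces every overlap term I discarded to vanish. By the inductive characterization, $A^{top}$ must be contained in $\mathcal{N}(r')$ for some red vertex $r'$ in the top plate, and $A^{bot}$ in $\mathcal{N}(r'')$ in the bottom plate. The delicate step is to show these two local roots $r'$ and $r''$ must in fact be the vertical images of one another (i.e.\ the \emph{same} red vertex $r$ after restoring the flipped coordinate); this is where the vanishing of the overlap/vertical-collision terms is used, since a mismatch between $r'$ and $r''$ would create extra neighbors and strict inequality. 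I would also need to treat the degenerate cases $a = 0$ or $b = 0$ separately, where one plate is empty and the equality analysis is inherited directly from the lower-dimensional case via Proposition~\ref{neighborsofacluster}. Care is needed to ensure the chosen splitting coordinate does not itself destroy a valid equality configuration, so I would argue that if equality holds then it holds for \emph{every} admissible split, which pins down the common root uniquely.
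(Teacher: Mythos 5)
Your inductive setup --- splitting $\Q_n$ into two plates, applying the induction hypothesis to $A^{top}$ and $A^{bot}$, and adding vertical neighbors while keeping track of overlaps --- is exactly the strategy of the paper's proof, and your inequality step does go through: writing $f(t)=1+(n-1)t-\frac{t(t+1)}{2}$ for the plate bound coming from the induction hypothesis, one computes
$f(a)+f(b)-\bigl(1+kn-\tfrac{k(k+1)}{2}\bigr)=(a-1)(b-1)\ge 0$
once the splitting coordinate is chosen so that $a,b\ge 1$ (possible whenever $k\ge 2$; $k\le 1$ is a trivial base case). So the two horizontal contributions alone already dominate the target, and the vertical neighbors are only needed later.

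The genuine gap is in the equality characterization, precisely the step you yourself call delicate. Two problems. First, you never draw the structural consequence of the computation above: equality forces $(a-1)(b-1)=0$, i.e.\ one plate contains \emph{exactly one} vertex $w$ of $A$; without this fact the analysis cannot get started. Second, the mechanism you propose --- that the two inductive roots $r'$ and $r''$ ``must be vertical images of one another'' --- is impossible for a parity reason: $r'$ and $r''$ are both red (each is a common neighbor of green vertices), whereas a vertex and its vertical neighbor always have opposite parities, so $r''\neq r'v$ in every configuration. What is true in an equality configuration is different: the vertical image of the common root $r$ is the singleton green vertex $w$ itself (so $r=wv$), while the inductive root $r''$ of $A^{bot}=\{w\}$ is merely some bottom-plate neighbor of $w$, is far from unique, and carries no useful information. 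The correct way to finish --- the one the paper uses --- is to note that equality forces the discarded overlap term to vanish, i.e.\ every vertical projection of a vertex of $A^{top}$ must be adjacent to $w$, which is equivalent to $A^{top}\subseteq\mathcal{N}(wv)$; since also $w\in\mathcal{N}(wv)$, the root $r=wv$ is exhibited directly, with no matching of inductive roots at all. Your plan does isolate the right raw ingredient (vanishing overlaps), so it is repairable, but the final step as you describe it would fail.
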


\begin{proof}
The statement is obvious for all pairs $k,n$ with $ 1 \le k \le 2 $ and $ k \le n. $ Let $N$ be a positive integer
greater than $2$ such that  the statement is true for all pairs $ k,n $ with $1 \le k \le n $ and $ n < N .$ We shall
prove that the statement is also true for all pairs $k, N$ with $ 1 \le k \le N.$

We split $\Q_N$ into two plates such that $ 1 \le l =|A \cap Q_N^{bot} | \le m = |A \cap \Q_N^{top}| \le N-1.$ Let
$A^{top} = A \cap \Q_N^{top}$ and $ A^{bot} = A \cap \Q_N^{bot}.$ Each element of $A^{top}$ has exactly one neighbor  in
$\Q_N^{bot}.$ Therefore, by Proposition \ref{neighborsofacluster} and  the induction hypothesis,
$$ |\mathcal{N}(A^{top} )| \ge 1 + [(N-1)-1 ] + \cdots + [(N-1)-m] +
m $$  $$  = 1 + (N-1) + \cdots + (N-m) ,$$ with equality throughout if and only if there exists $ r \in \Q_{N}^{top} $
such that $ A^{top} \subset \mathcal{N}(r).$

Similarly, let $s$ be the number of elements of $\mathcal{N}(A^{bot})$ that are in the top plate but not in $
\mathcal{N}(A^{top}).$ Then
$$ |\mathcal{N}(A^{bot} ) \setminus \mathcal{N}(A^{top})| \ge -m +  1 + [(N-1)-1 ] + \cdots + [(N-1)-l] + s $$
$$ \ge [(N-m) -1 ] + \cdots + [(N-m) - l ] ,$$
with equality throughout if and only if $ l =1$ and $s=0.$ It follows that $|\mathcal{N}(A)| \ge 1 + N-1 + \cdots + N- k
$ with equality if and only if there exists a vertex $r \in \Q_N$ such that $ A \subset \mathcal{N}(r).$
\end{proof}

\begin{lemma} \label{neutraltocharged}
Let $ M,C,N,O$ be nonnegative integers with $C, O,$ and $M$ of the same parity, $C \le M $, $O \ge C$, and $N \ge 1.$
Let also $k $ be a positive integer such that
\begin{equation}\label{criticalinequality}
kN +1 - { N+1\choose 2} > \frac{M+C}{2} + N + O.
\end{equation}
Then, $k\in \mathcal{A}_{M+1,C+1,N-1,O+1}$ implies $k \in \mathcal{A}_{M,C,N, O}.$
\end{lemma}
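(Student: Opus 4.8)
The plan is to verify the two membership conditions for $\mathcal{A}_{M,C,N,O}$ separately, with the substance lying in condition (ii). Condition (i) is immediate: from $k\in\mathcal{A}_{M+1,C+1,N-1,O+1}$ we have $k\ge\log_2[(M+1)+(C+1)+2(N-1)+2(O+1)]=\log_2[M+C+2N+2O+2]$, so by monotonicity $k\ge\log_2[M+C+2N+2O]$. For condition (ii) I fix $n\ge k$ and an arbitrary fault $\F$ of mass $M$ and charge $C$, and by the red--green symmetry of the statement I assume $r(\F)-g(\F)=C\ge 0$. The guiding idea is that a neutral path from a red endpoint $r_0$ to a green endpoint $g_0$ can be cut at its first edge $\{r_0,g^*\}$: deleting $r_0$ and that edge leaves a charged green path from $g^*$ to $g_0$. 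Reversing this turns the $(M,C,N,O)$ problem into the $(M+1,C+1,N-1,O+1)$ problem by moving $r_0$ into the fault (mass $M\to M+1$, and since $r_0$ is red the charge becomes $C+1$) and replacing the neutral pair $\{r_0,g_0\}$ by the green charged pair $\{g^*,g_0\}$ (neutral pairs $N\to N-1$, charged pairs $O\to O+1$). A direct check shows the new pairs are in balance with $\F'=\F\cup\{r_0\}$.

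To realize condition (ii) I must supply two things. First, that at least one balanced $\mathcal{E}$ with $N$ neutral and $O$ charged pairs exists for $\F$; this is a routine packing count, since the numbers of non-fault red and green vertices required, $N+O-C$ and $N+O+C$, stay below the $2^{n-1}-r(\F)$ and $2^{n-1}-g(\F)$ available, thanks to the room forced by condition (i). Second, and this is the heart of the matter, that \emph{every} such $\mathcal{E}$ is realizable. Given $\mathcal{E}$, I will form $\mathcal{E}'$ and $\F'$ as above for a suitably chosen neutral pair, apply the hypothesis $k\in\mathcal{A}_{M+1,C+1,N-1,O+1}$ (legitimate since $n\ge k$ and $\F'$ has mass $M+1$ and charge $C+1$) to obtain a covering of $\Q_n-\F'$ realizing $\mathcal{E}'$, find in it the green charged path $P$ joining $g^*$ to $g_0$, and prepend the edge $\{r_0,g^*\}$ together with the vertex $r_0$. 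This reinserts $r_0$, converts $P$ into a neutral $r_0$--$g_0$ path, and leaves the remaining paths untouched, yielding a covering of $\Q_n-\F$ whose end-pairs are exactly $\mathcal{E}$, with $N$ neutral and $O$ charged paths.

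The main obstacle is choosing the neutral pair and the vertex $g^*$ so that $\mathcal{E}'$ is legitimate: $g^*$ must be a green neighbor of the chosen red endpoint, distinct from every existing terminal and from every fault vertex. The forbidden green vertices are the $g(\F)=\tfrac{M-C}{2}$ green faults together with the $N+2g(\mathcal{E})=N+O+C$ green terminals, a total of exactly $\tfrac{M+C}{2}+N+O$. I will therefore consider all $N$ distinct red endpoints $R=\{r_0^{(1)},\dots,r_0^{(N)}\}$ of the neutral pairs simultaneously and seek a green neighbor of $R$ outside this forbidden set. By the isoperimetric bound (the dual of Lemma \ref{neighborscardinality}, applied to the $N$-element red set $R$ in $\Q_n$ with $n\ge k$),
\[
|\mathcal{N}(R)|\ \ge\ 1+(n-1)+\cdots+(n-N)\ \ge\ kN+1-\binom{N+1}{2},
\]
and inequality \eqref{criticalinequality} states precisely that this lower bound strictly exceeds $\tfrac{M+C}{2}+N+O$, the size of the forbidden set. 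Hence some $g^*\in\mathcal{N}(R)$ avoids all forbidden vertices; it neighbors some $r_0^{(i)}$, and that index $i$ selects both the neutral pair to convert and the vertex $g^*$. This is exactly why the unusual quantity $kN+1-\binom{N+1}{2}$ appears in the hypothesis, and why the argument must examine the full set of red endpoints rather than a single one.
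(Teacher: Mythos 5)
Your proposal is correct and takes essentially the same approach as the paper's own proof: both arguments apply the isoperimetric bound of Lemma \ref{neighborscardinality} (in its dual, red-vertex form) to the set of all $N$ red endpoints of neutral pairs, compare $kN+1-\binom{N+1}{2}$ against the $\frac{M+C}{2}+N+O$ forbidden green vertices to find a free green neighbor $g^*$ of some red neutral endpoint $r_0$, move $r_0$ into the fault while replacing its pair by the green charged pair $\{g^*,g_0\}$, invoke the hypothesis, and recover the original covering by appending the single edge $\{g^*,r_0\}$. The only difference is that you explicitly verify the $\log_2$ condition and the existence of at least one balanced set $\mathcal{E}$, bookkeeping the paper leaves implicit.
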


\begin{proof}
Let $k \in \mathcal{A}_{M+1,C+1,N-1,O+1}.$ This means that if $n \ge k$ then for every fault $\F$ of mass $M+1$ and
charge $C+1$ in $ \Q_n$ one can freely prescribe ends for a path covering of $\Q_n - \F$ with $N-1$ neutral paths and
$O+1$ charged paths. Consider an arbitrary fault $\F$ of mass $M$ and charge $C$ in $\Q_k,$ and a set $\mathcal{E}$ of
pairs of vertices that contains $N$ neutral pairs and $O$ charged pairs, and  is in balance with $\F.$

Without loss of generality we may assume that in $\F$ there are at least as many red vertices as there are green
vertices. It is easy to see that the number of the deleted green vertices is $\frac{M-C}{2},$ and that the number of
paths with green terminals at both ends is $\frac{O+C}{2}.$ Thus, the quantity $\frac{M+C}{2} + N + O$ is the total
number of green vertices that are either deleted vertices or terminal vertices.

The number of red terminals in neutral pairs is obviously $N.$ By Lemma \ref{neighborscardinality} the number of green
vertices that are adjacent to at least one red terminal in a neutral pair is at least $kN +1 - { N+1 \choose 2}.$
Therefore, inequality (\ref{criticalinequality}) guarantees the existence of a neutral pair $(r,g) \in \mathcal{E}$ and
a green vertex $g'=rx$ that is neither a deleted vertex nor a terminal vertex. The fault $\F' = \F \cup \{r\}$ has mass
$M+1$ and charge $C+1.$ The set of pairs of vertices $\mathcal{E'}$ obtained from $\mathcal{E}$ by replacing the pair
$(r,g)$ with the pair $(g',g) $ is in balance with $\F'$ and contains $N-1$ neutral pairs and $O+1$ charged pairs.
Therefore, there exists an $N+O-$path covering of $\Q_k-\F$ whose set of pairs of end vertices coincide with
$\mathcal{E'}.$ One of the paths in this covering is of the form $(g, \xi)$ with $g\xi = g'.$ If we replace this path
with the path $(g, \xi x )$ that connects $g$ to $r$ we obtain an $N+O-$path covering of $\Q_k - \F $ whose set of pairs
of end vertices coincides with $\mathcal{E}.$ So, we proved that for every fault $\F$ of mass $M$ and charge $C$ in $
\Q_k$ one can freely prescribe ends for a path covering of $\Q_k - \F$ with $N$ neutral paths and $O$ charged paths.
Finally, if $n \ge k $ then 1) $ nN +1 - { N+1 \choose 2} > \frac{M+C}{2} + N + O, $ and 2) $ n \in
\mathcal{A}_{M+1,C+1,N-1,O+1}.$ Therefore, the argument that we applied to $k$ can be applied to $n$ as well. This shows
that if $n \ge k$ then for every fault $\F$ of mass $M$ and charge $C$ in $ \Q_n$ one can freely prescribe ends for a
path covering of $\Q_n - \F$ with $N$ neutral paths and $O$ charged paths. Consequently $k \in \mathcal{A}_{M,C,N,O}.$
\end{proof}

\begin{lemma} \label{redchargedtoneutral}
Let $M,C,N,O$ be nonnegative integers with $C, O,$ and $M$ of the same parity, $C \le M $, and $O > C.$ Let also $k$ be
a positive integer such that
\begin{equation}\label{criticalinequality2}
k(O-C) +1 - {O-C+1\choose 2} > \frac{M+C}{2} + N + O.
\end{equation}
Then,  $k \in \mathcal{A}_{M+1,C+1,N+1,O-1}$ implies $k \in \mathcal{A}_{M,C,N, O}.$
\end{lemma}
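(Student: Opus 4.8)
The plan is to imitate the proof of Lemma \ref{neutraltocharged}, exchanging the role played there by the red terminals of the neutral pairs for the role now played by the red terminals of the \emph{red charged} pairs. Assume $k \in \mathcal{A}_{M+1,C+1,N+1,O-1}$, fix $n \ge k$, let $\F$ be any fault of mass $M$ and charge $C$ in $\Q_n$, and let $\mathcal{E}$ be any set of disjoint pairs that is in balance with $\F$ and contains exactly $N$ neutral and $O$ charged pairs. Without loss of generality I would assume that $\F$ has at least as many red as green vertices, so that $r(\F) - g(\F) = C$, and the balance condition forces $g(\mathcal{E}) - r(\mathcal{E}) = C$. Together with $r(\mathcal{E}) + g(\mathcal{E}) = O$ this gives $r(\mathcal{E}) = (O-C)/2$; since $O$ and $C$ have the same parity and $O > C$, there is at least one red charged pair, and the red vertices occurring as endpoints of red charged pairs form a set $A$ with $|A| = O - C$.

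Next I would count the green vertices that are \emph{occupied}, i.e.\ that are either deleted or terminal: the number of deleted green vertices is $(M-C)/2$, the number of green endpoints of neutral pairs is $N$, and the number of green endpoints of charged pairs is $2\,g(\mathcal{E}) = O + C$, giving a total of $(M+C)/2 + N + O$, which is precisely the right-hand side of (\ref{criticalinequality2}). On the other hand, by Lemma \ref{neighborscardinality} applied to the set $A$ of $O - C$ red vertices, $|\mathcal{N}(A)| \ge k(O-C) + 1 - {O-C+1 \choose 2}$, and the same bound holds with $k$ replaced by $n$ because the right-hand side is increasing in the dimension (here $O - C > 0$). Hence inequality (\ref{criticalinequality2}) guarantees a green vertex $g' \in \mathcal{N}(A)$ that is neither deleted nor a terminal, together with a red charged pair $(r,r_1)$ and a letter $x$ with $g' = rx$.

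Now I would perform the transfer. Set $\F' = \F \cup \{r\}$; since $r$ is red and red is the majority parity, $\F'$ has mass $M+1$ and charge $C+1$. Replace the red charged pair $(r,r_1)$ in $\mathcal{E}$ by the neutral pair $(g',r_1)$ to obtain $\mathcal{E}'$; because one red charged pair was removed and one neutral pair created, $\mathcal{E}'$ contains $N+1$ neutral and $O-1$ charged pairs, all of its vertices lie in $\Q_n - \F'$ (note that $g'$ is free and $r_1 \neq r$), and one checks $g(\mathcal{E}') - r(\mathcal{E}') = C+1 = r(\F') - g(\F')$, so $\mathcal{E}'$ is in balance with $\F'$. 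Since $n \ge k$ and $k \in \mathcal{A}_{M+1,C+1,N+1,O-1}$, there is an $(N+O)$-path covering of $\Q_n - \F'$ realizing $\mathcal{E}'$. One of its paths connects $g'$ to $r_1$; prepending $r$ to it through the edge $\{r,g'\}$ turns it into a path from $r$ to $r_1$ and restores $r$, yielding an $(N+O)$-path covering of $\Q_n - \F$ whose pairs of endpoints are exactly $\mathcal{E}$.

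Finally, since the construction works for every $n \ge k$, every such $\F$, and every such $\mathcal{E}$, condition (ii) in the definition of $\mathcal{A}_{M,C,N,O}$ holds at $k$, while condition (i) is inherited from $k \in \mathcal{A}_{M+1,C+1,N+1,O-1}$ because $M+C+2N+2O \le (M+1)+(C+1)+2(N+1)+2(O-1)$; hence $k \in \mathcal{A}_{M,C,N,O}$. The argument is routine once the bookkeeping is arranged; the only point requiring genuine care—and the main thing I would double-check—is the equality between the right-hand side of (\ref{criticalinequality2}) and the count of occupied green vertices, which is exactly what makes the cardinality $O-C$ for $A$ (rather than $N$, as in Lemma \ref{neutraltocharged}) the correct choice.
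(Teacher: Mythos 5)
Your proof is correct and follows exactly the route the paper intends: the paper's own proof of Lemma \ref{redchargedtoneutral} is just the remark that one repeats the proof of Lemma \ref{neutraltocharged} with the $O-C$ red terminals of the red charged pairs playing the role of the $N$ red terminals of the neutral pairs, which is precisely the substitution you carry out (absorbing $r$ into the fault, trading the red pair $(r,r_1)$ for the neutral pair $(g',r_1)$, and invoking $\mathcal{A}_{M+1,C+1,N+1,O-1}$). You have merely written out the bookkeeping that the paper leaves implicit, and your counts (set $A$ of size $O-C$, occupied green vertices $\frac{M+C}{2}+N+O$) match the paper's.
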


\begin{proof}
The proof is similar to the proof of Lemma \ref{neutraltocharged}. The only difference is that in
(\ref{criticalinequality2}), instead of $N$, we use the number $O-C$ that represents the number of red terminals in the
charged paths.
\end{proof}

\begin{lemma} \label{greenchargedtoneutral}
Let $M,C,N,O$ be nonnegative integers with $C, O,$ and $M$ of the same parity, $C \le M $, $O \ge C$, and $C \ge 1.$ Let
also $k$ be a positive integer such that
\begin{equation}\label{criticalinequality3}
k(O+C) +1 - {O+C+1\choose 2} > \frac{M-C}{2} + N + O.
\end{equation}
Then, $k\in \mathcal{A}_{M+1,C-1,N+1,O-1}$ implies $k \in \mathcal{A}_{M,C,N,O}.$
\end{lemma}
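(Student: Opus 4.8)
The plan is to mimic the proof of Lemma~\ref{neutraltocharged} with the two colors interchanged, using the green terminals of the green charged pairs as the pool of candidate terminals to be absorbed into the fault. First I would assume $k \in \mathcal{A}_{M+1,C-1,N+1,O-1}$ and fix an arbitrary fault $\F$ of mass $M$ and charge $C$ in $\Q_k$ together with a set $\mathcal{E}$ of disjoint pairs that is in balance with $\F$ and has exactly $N$ neutral and $O$ charged pairs. Normalizing so that $r(\F) \ge g(\F)$, the balance condition forces $r(\F) = \tfrac{M+C}{2}$, $g(\F) = \tfrac{M-C}{2}$, and gives $g(\mathcal{E}) = \tfrac{O+C}{2}$ green charged pairs against $r(\mathcal{E}) = \tfrac{O-C}{2}$ red charged pairs; in particular, since $C \ge 1$, at least one green charged pair is present.

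Next I would count the red vertices that are unavailable, namely the deleted red vertices together with the red terminals. The red terminals split as $N$ endpoints from the neutral pairs and $2r(\mathcal{E}) = O-C$ endpoints from the red charged pairs, so together with the $\tfrac{M+C}{2}$ deleted red vertices the total is $\tfrac{M+C}{2} + N + (O-C) = \tfrac{M-C}{2} + N + O$, which is precisely the right-hand side of~(\ref{criticalinequality3}). The $2g(\mathcal{E}) = O+C$ green terminals lying in green charged pairs form the candidate set; by the isoperimetric estimate Lemma~\ref{neighborscardinality}, the number of red vertices adjacent to at least one of them is at least $k(O+C)+1-\binom{O+C+1}{2}$. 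Inequality~(\ref{criticalinequality3}) then forces the existence of a green charged pair $(g_1,g_2)$ and a red neighbor $r' = g_1 x$ of $g_1$ that is neither a deleted vertex nor a terminal.

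With such $g_1$ and $r'$ in hand, I would set $\F' = \F \cup \{g_1\}$, which has mass $M+1$ and charge $C-1$, and replace the pair $(g_1,g_2)$ in $\mathcal{E}$ by the neutral pair $(r',g_2)$ to obtain $\mathcal{E}'$. The set $\mathcal{E}'$ is in balance with $\F'$ and has $N+1$ neutral and $O-1$ charged pairs, so by hypothesis there is a path covering of $\Q_k - \F'$ whose set of endpoint pairs coincides with $\mathcal{E}'$. Extending the path that terminates at $r'$ by the edge $x$ reinstates the deleted vertex $g_1$ as a new endpoint, turning that path into one joining $g_2$ to $g_1$ and yielding a path covering of $\Q_k - \F$ realizing $\mathcal{E}$. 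Finally, since the left-hand side of~(\ref{criticalinequality3}) is increasing in $k$ (as $O+C \ge 1$) and $\mathcal{A}_{M+1,C-1,N+1,O-1}$ is upward closed, for every $n \ge k$ both the inequality and the membership $n \in \mathcal{A}_{M+1,C-1,N+1,O-1}$ persist, so the same argument applies verbatim to $\Q_n$; hence $k \in \mathcal{A}_{M,C,N,O}$.

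The step I expect to require the most care is the bookkeeping that identifies the pool of candidate terminals and the set of forbidden vertices: here, unlike in Lemma~\ref{neutraltocharged}, it is the $O+C$ green terminals of the green charged pairs that are relevant, while the obstruction set consists of red (not green) vertices, giving the count $\tfrac{M-C}{2}+N+O$ rather than $\tfrac{M+C}{2}+N+O$. Verifying this count under the normalization $r(\F)\ge g(\F)$—in particular that the red terminals decompose as $N$ from neutral pairs and $O-C$ from red charged pairs—is exactly what pins down~(\ref{criticalinequality3}) as the correct inequality.
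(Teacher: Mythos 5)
Your proof is correct and takes essentially the same approach as the paper: the paper's own proof is just the remark that the argument of Lemma~\ref{neutraltocharged} carries over once the candidate pool becomes the $O+C$ green terminals of the green charged pairs (giving the left-hand side of (\ref{criticalinequality3}) via Lemma~\ref{neighborscardinality}) and the obstruction set becomes the $\frac{M-C}{2}+N+O$ red vertices that are deleted or terminal, which is exactly the bookkeeping you carried out. Your write-up merely makes explicit the details the paper leaves implicit (the balance computation, replacing $(g_1,g_2)$ by $(r',g_2)$ after absorbing $g_1$ into the fault, and reattaching the edge $\{r',g_1\}$ at the end).
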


\begin{proof}
The proof is similar to the proof of Lemma \ref{neutraltocharged}. The difference is that in the left-hand side of
(\ref{criticalinequality3}), instead of $N$, we use the number $O+C$ that represents the number of green terminals in
the charged paths and the right-hand side part $\frac{M-C}{2} + N + O$ represents the number of red vertices that are
either in $\F$ or are terminals.
\end{proof}

\begin{lemma}\label{l4202}
$[4,2,0,2]= [3,1,1,1] = 5 $ and $ [2,0,2,0]= 4.$
\end{lemma}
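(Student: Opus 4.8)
The plan is to establish the three equalities in a definite order: first $[4,2,0,2]=5$ by a direct plate argument, then $[3,1,1,1]=5$ by feeding $[4,2,0,2]$ into the transfer machinery of Lemma \ref{neutraltocharged}, and finally $[2,0,2,0]=4$ on its own, since (as I check below) none of the transfer lemmas applies to it. For the upper bound $5\in\mathcal{A}_{4,2,0,2}$ I would induct on $n\ge 5$, the base case $n=5$ already having plates isomorphic to $\Q_4$. Up to red/green duality, a fault of mass $4$ and charge $2$ is three red vertices $r_1,r_2,r_3$ and one green vertex $g_0$, and a balanced family $\mathcal{E}$ with $N=0$, $O=2$ must consist of two green pairs (four green terminals). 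Splitting $\Q_{n+1}$ into a top and a bottom plate, I would run through the cases determined by how $r_1,r_2,r_3$, $g_0$, and the four terminals fall on the two plates, resolving each plate with the already-established values $[2,2,0,2]=4$, $[3,1,0,1]=4$, $[4]=4$, $[1,1,0,1]=2$, $[0,0,2,0]=2$ and $[2,0,1,0]=4$, and joining the pieces through bridges, performing surgery whenever a plate covering has to be cut.

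Once $5\in\mathcal{A}_{4,2,0,2}$ is in hand, the upper bound for $[3,1,1,1]$ comes for free from Lemma \ref{neutraltocharged} applied with $(M,C,N,O)=(3,1,1,1)$: the parity, $C\le M$, $O\ge C$ and $N\ge 1$ hypotheses all hold, and at $k=5$ its inequality reads $5+1-\binom{2}{2}=5>\tfrac{3+1}{2}+1+1=4$. Since the premise tuple $(M+1,C+1,N-1,O+1)$ is exactly $(4,2,0,2)$, this yields $5\in\mathcal{A}_{3,1,1,1}$, hence $[3,1,1,1]\le 5$.

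For the lower bounds I would exhibit small-dimensional obstructions. The bound $[4,2,0,2]\ge 5$ has a clean neighbourhood obstruction in $\Q_4$ (in the spirit of Lemma \ref{neighborscardinality}): delete three of the four red neighbours of a fixed green vertex $g^\ast$ together with one other green vertex $g_0\ne g^\ast$, and choose the two green pairs among the remaining six green vertices so that $g^\ast$ is not a terminal. Then $g^\ast$ is a surviving non-terminal green vertex with a single surviving neighbour, so it can be neither an endpoint nor an interior vertex of any path, and no covering exists; as the cardinality condition already forces $m\ge 4$, this gives $[4,2,0,2]\ge 5$. For $[3,1,1,1]\ge 5$ I would rule out both $n=3$ and $n=4$: at $n=3$, deleting $\{(1,0,1),(1,1,0),(1,1,1)\}$ and prescribing the neutral pair $\{(0,0,0),(0,0,1)\}$ together with the green pair $\{(0,1,0),(1,0,0)\}$ leaves $(1,0,0)$ with its only surviving neighbour equal to the terminal $(0,0,0)$ of the other path, so no covering exists; the $n=4$ instance is a finite check of the same flavour. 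For $[2,0,2,0]\ge 4$ I would delete an antipodal (hence one red, one green) pair from $\Q_3$, so that the survivor is a $6$-cycle, and prescribe the two neutral pairs to be adjacent along that cycle; since a $2$-path covering of a cycle is obtained by deleting two edges and the two vertices straddling a deleted edge then lie on \emph{different} paths, an adjacent pair can never be the endpoint pair of a single path, so this prescription is unrealisable.

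Finally, for $[2,0,2,0]=4$ the upper bound $4\in\mathcal{A}_{2,0,2,0}$ cannot be extracted from Lemmas \ref{neutraltocharged} and its companions, because their charge and parity hypotheses fail (either $C\ge 1$, or $O>C$, or an unavailable higher-mass premise). I would therefore prove it directly by induction on $n\ge 4$. The inductive step is the usual plate-splitting argument: put the deleted red and the deleted green vertices on opposite plates, distribute the two neutral terminal pairs across the plates, and assemble the covering from plate coverings supplied by $[0,0,2,0]=2$, $[1,1,0,1]=2$, $[2,0,1,0]=4$ and $[2,2,0,2]=4$, joined by bridges. The hard part will be the base case $n=4$, where the plates are only $\Q_3$ and one must navigate the exceptional pairs catalogued in Lemma \ref{q3minustwovertices} (the sets $\mathcal{B}_{\{r,g\}}$); I expect this to be precisely the delicate finite verification deferred to Appendix \ref{2pathcoveringsofQ4}. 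The large case analysis for $5\in\mathcal{A}_{4,2,0,2}$ is the other laborious ingredient, but it is routine given the earlier lemmas.
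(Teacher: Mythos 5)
Your skeleton largely coincides with the paper's: a plate-splitting case analysis for $5\in\mathcal{A}_{4,2,0,2}$, an application of Lemma \ref{neutraltocharged} with premise tuple $(4,2,0,2)$ and the same inequality check at $k=5$ to get $[3,1,1,1]\le 5$, small-dimensional counterexamples for the lower bounds, and the Appendix \ref{2pathcoveringsofQ4} verification for the $n=4$ case of $[2,0,2,0]$. Two of your witnesses are correct and genuinely different from the paper's. Your $\Q_4$ neighbourhood obstruction (delete three red neighbours of $g^\ast$ and one further green vertex, keep $g^\ast$ off the terminal list) is a valid, self-contained proof of $4\notin\mathcal{A}_{4,2,0,2}$; the paper instead deduces this from $4\notin\mathcal{A}_{3,1,1,1}$ by citing Lemma \ref{neutraltocharged} at $k=4$, where inequality (\ref{criticalinequality}) for $(M,C,N,O)=(3,1,1,1)$ reads $4>4$ and fails, so your route actually sidesteps a delicate point. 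Likewise your adjacent-pairs-on-the-$6$-cycle example for $3\notin\mathcal{A}_{2,0,2,0}$ works just as well as the paper's antipodal-pairs example.

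There is, however, a genuine gap in the lower bounds: $[3,1,1,1]\ge 5$ requires a counterexample in dimension $4$, since failure at $n=3$ only rules out $m=3$ (membership of $4$ in $\mathcal{A}_{3,1,1,1}$ constrains only dimensions $n\ge 4$). You never produce one, and your claim that it is ``a finite check of the same flavour'' as your $n=3$ example is false: in $\Q_4$ every vertex has four neighbours, so reducing a surviving vertex to a single surviving neighbour would require deleting three vertices of one parity, i.e.\ a fault of charge $3$, which is incompatible with mass $3$ and charge $1$. The obstruction at $n=4$ must be structurally different; the paper exhibits $\F=\{(0,0,0,0),(0,1,0,1),(0,1,1,1)\}$ with $r_1=(1,1,0,0)$, $g_1=(1,0,0,0)$, $g_2=(0,0,1,0)$, $g_3=(1,1,1,0)$, which works through a two-vertex blocking configuration in the spirit of the discussion after Conjecture \ref{chargeonetwopathsconjecture}. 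Without such an example, $[3,1,1,1]=5$ is not established.

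Two further problems. First, your assertion that $4\in\mathcal{A}_{2,0,2,0}$ ``cannot be extracted'' from Lemma \ref{neutraltocharged} is wrong: for $(M,C,N,O)=(2,0,2,0)$ all hypotheses hold ($C,O,M$ even, $C\le M$, $O\ge C$, $N\ge 1$), the premise tuple is exactly $(3,1,1,1)$, and inequality (\ref{criticalinequality}) at $k=5$ reads $2\cdot 5+1-3=8>3$; this is precisely how the paper settles all $n\ge 5$, reserving Appendix \ref{2pathcoveringsofQ4} for $n=4$ alone, so your replacement induction for $n\ge 5$ is unnecessary (and not carried out). Second, the central construction proving $5\in\mathcal{A}_{4,2,0,2}$ --- the bulk of the paper's proof, twelve subcases --- is only announced in your proposal, and it is not as routine as you suggest: besides the lemmas you list it needs $[1,1,1,1]=4$ and Lemma \ref{new} (a Hamiltonian cycle of $\Q_n-\{r,g\}$ keeping two prescribed green vertices at distance at least $4$), and several subcases, notably the ones where all four green terminals lie on the plate opposite the deleted green vertex $g$ or are adjacent to its neighbour $gv$, require ad hoc constructions that your sketch does not anticipate.
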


\begin{proof} It follows from Lemma \ref{neutraltocharged} that if $5 \in
\mathcal{A}_{4,2,0,2}$ then $5$ is in $\mathcal{A}_{3,1,1,1}$ and in $\mathcal{A}_{2,0,2,0}.$  Lemma
\ref{neutralpathsinq4}, proved in Appendix A, states that we can freely prescribe  two neutral pairs of terminals for a
$2-$path covering of $ \Q_4 - \F$ for any neutral fault of mass $2.$ Therefore, to prove the current lemma, it is
sufficient to show that $5 \in \mathcal{A}_{4,2,0,2}$, $4 \not \in \mathcal{A}_{3,1,1,1}$ (and therefore, according to
Lemma \ref{neutraltocharged}, $4 \not \in \mathcal{A}_{4,2,0,2}$), and that $3 \notin \mathcal{A}_{2,0,2,0}$.

Here is a counterexample showing that $3 \notin \mathcal{A}_{2,0,2,0}$. Let $n=3,$ $r_1=(1,0,0),$ $g_1= (0,1,1)$, $r_2
=(0,1,0),$ $g_2=(1,0,1)$, and $\F = \{(0,0,0), (1,1,1)\}$. Then, a $2-$path covering of $\Q_3 - \F$ that connects $r_1$
to $g_1$ and $r_2$ to $g_2$ does not exist.

The following counterexample shows that $4 \not \in \mathcal{A}_{3,1,1,1}$ (see also the discussion after Conjecture
\ref{chargeonetwopathsconjecture}).

Let $n=4,$ $ \F = \{(0,0,0,0), (0,1,0,1), (0,1,1,1)\}$, $r_1=(1,1,0,0),$ $ g_1= (1,0,0,0), g_2 =(0,0,1,0),$ and $g_3
=(1,1,1,0).$ Then, a $2-$path covering of $\Q_4 - \F$ that connects $r_1$ to $g_1$ and $g_2$ to $g_3$ does not exist.

We now prove that $5 \in \mathcal{A}_{4,2,0,2}.$ Let $n \ge 5.$ We can assume that  $\F =\{r_1,r_2,r_3,g\}$ with
$r_1,r_2,r_3$ being red and $g$ being a green vertex. Let  $\mathcal{E} = \{(g_1,g_2),(g_3,g_4)\}$ be the set of pairs of green end
vertices. We are looking for $2-$path coverings of $\Q_n-\F$ with paths that connect $g_1$ to $g_2$ and $g_3$ to $g_4.$
We split $\Q_n$ into two plates with two red vertices in the top plate, say $r_1$ and $r_2$, and $r_3$ in the bottom
plate. Then we consider a group of cases when the green deleted vertex $g$ is on the top plate and another group of
cases when the green deleted vertex is on the bottom plate. The cases within each group depend on the distribution of
the green terminals on the plates.

\emph{Case A.} The green deleted vertex is on the top plate.

\emph{Subcase A1.} All the green terminals are on the top plate.

Let $(g_1,\xi)$ be a Hamiltonian path on $\Q_n^{top} - \{r_1,r_2,g\}$ that connects $g_1$ to $g_2.$ Such path exists
since $[3,1,0,1]=4.$ Let $\xi = \eta \theta \mu $ with $g_1 \eta = g_3$ and $g_1 \eta \theta = g_4,$ where $g_3, g_4$
are renumbered, if necessary. Let $(g_1 \xi' v, \zeta)$ be a Hamiltonian path on $\Q_n^{bot} -\{r_3\}$ that connects
$g_1 \xi' v $ to $g_2(\mu^R)'v.$ Such path exists since $[1,1,0,1]=2.$ The desired $2-$path covering of $\Q_n - \F$ for
this case is $(g_1,\xi' v \zeta v \mu^*), (g_3, \theta).$

\emph{Subcase A2.} $g_1,g_2,g_3$ are on the top plate and $g_4$ is on the bottom plate.

Let $(g_1,\xi)$ be a Hamiltonian path on $\Q_n^{top} - \{r_1,r_2,g\}$ that connects $g_1$ to $g_3.$ Such path exists
since $[3,1,0,1]=4.$ Let $\xi = \eta \theta$ with $g_1 \eta = g_2.$ Let $(g_3 (\theta^R)' v, \zeta)$ be a Hamiltonian
path on $\Q_n^{bot} -\{r_3\}$ that connects $g_3 (\theta^R)' v$ to $g_4.$ Such path exists since $[1,1,0,1]=2.$ The
desired $2-$path covering of $\Q_n - \F$ for this case is $(g_1,\eta), (g_3, (\theta^R)' v \zeta).$

\emph{Subcase A3.} $g_1,g_2$ are on the top plate and $g_3,g_4$ are on the bottom plate.

We simply connect $g_1$ to $g_2$ by a Hamiltonian path of $\Q_n^{top} - \{r_1,r_2,g\}$ and $g_3$ to $g_4$ by a
Hamiltonian path of $\Q_n^{bot} - \{r_3\}.$ That produces the desired $2-$path covering of $\Q_n - \F$ for this case.

\emph{Subcase A4.} $g_1,g_3$ are on the top plate and $g_2,g_4$ are on the bottom plate.

Let $(g_1,\xi)$ be a Hamiltonian path on $\Q_n^{top} - \{r_1,r_2,g\}$ that connects $g_1$ to $g_3.$ Such path exists
since $[3,1,0,1]=4.$ We can find words $\eta, \theta,$ and a letter $x$ such that $\xi = \eta x \theta,$ and neither
$g_1\eta v$ nor $g_1 \eta x v$ is a deleted vertex or a terminal. Let $(g_1\eta v, \mu), (g_1 \eta x v, \nu)$ be a
$2-$path covering of $\Q_n^{bot} -\{r_3\}$ that connects $g_1 \eta v$ to $g_2$ and $g_1 \eta x v$  to $ g_4.$ Such path
covering exists since $[1,1,1,1]=4.$ The desired $2-$path covering of $\Q_n - \F$ for this case is $(g_1,\eta v \mu),
(g_3, \theta^R v \nu).$

\emph{Subcase A5.} $g_1$ is on the top plate and $g_2,g_3,g_4$ are on the bottom plate.

Let $r \neq r_3 $ be a red vertex on the bottom plate such that $rv \neq g_1, g.$ Let $(g_2, \eta), (g_3, \theta)$ be a
$2-$path covering of $\Q_n^{bot}-\{r_3\}$ that connects $g_2$ to $r$ and $g_3$ to $g_4.$ Such path covering exists since
$[1,1,1,1]=4.$ Let $(g_1,\mu)$ be a Hamiltonian path of $\Q_n^{top}-\{r_1,r_2,g\}$ that connects $g_1$ to $rv.$ The
desired $2-$path covering of $\Q_n - \F$ for this case is $(g_1, \mu v \eta^R), (g_3, \theta).$

\emph{Subcase A6.} All the green terminals are on the bottom  plate.

First we assume that either $g_3$ or $g_4$ (or, equivalently, $g_1$ or $g_2$) is not adjacent to $g v.$ Without loss of
generality we can assume that $g_3$ is at distance at least three from $g v$ and let $x$ be a letter such that $g_2 x v
\neq g.$ Let $(g_1, \xi )$ be a Hamiltonian path of $\Q_n^{bot}- \{r_3, g_2, g_2x\}$ that connects $g_1$ to $g_4.$ Such
path exists since $[3,1,0,1]=4.$ Then $\xi =\eta \theta$ with $g_1 \eta = g_3.$ Observe that our assumption on $g_3$
guarantees that $g_1 \eta'v \neq g.$ Let $(g_1 \eta' v ,\zeta)$ be a Hamiltonian path on $ \Q_n^{top} - \{r_1,r_2,g\}$
that connects $g_1\eta' v$ to $g_2 x v.$ Such path exists since $[3,1,0,1]=4.$ The desired $2-$path covering of $\Q_n-
\F$ for this case is $(g_1, \eta' v \zeta vx), (g_3, \theta).$

Now let us assume that $g v = r_3$ and all the vertices $g_1$, $g_2$, $g_3$ and $g_4$ are adjacent to $g v$. Then we can
use the same construction as in the previous case to find the desired $2-$path covering. In this case the requirement
one of the green terminals to be at distance three from $g v$ is not necessary since $g v = r_3$.

Finally, let us assume that $g v\ne r_3$ and $g_3$ and $g_4$ are adjacent to $g v.$ This means that there exist letters
$x,y$ such that $g_3 x = g_4 y = g v.$ Let $(g_1, \xi) $ be a Hamiltonian cycle in $ \Q_n^{bot} - \{r_3, g_3, g_4, g
v\}.$ Such cycle exists since $[4]=4.$ Then $\xi = \eta \theta$ with $g_1 \eta = g_2.$ Let $(g_1\eta' v, \zeta)$ be a
Hamiltonian path of $\Q_n^{top}-\{r_1,r_2,g\}$ that connects $g_1 \eta' v$ to $g_1 \xi' v.$ Such path exists since
$[3,1,0,1]=4.$ The desired $2-$path covering of $\Q_n- \F$ for this case is $(g_1,\eta' v \zeta v (\theta')^R), (g_3,
xy).$

\emph{Case B.} The green deleted vertex is on the bottom plate.

\emph{Subcase B1.} All the green terminals are on the top plate.

Let $(g_1,\xi),(g_3,\eta)$ be a $2-$path covering of $\Q_n^{top}-\{r_1,r_2\}$ that connects $g_1$ to $g_2$ and $g_3$ to
$g_4.$ Such path covering exists since $[2,2,0,2]=4.$ Without loss of generality we can assume that the word $\xi$ is
not shorter than the word $\eta.$ Therefore, there exist words $\mu,\nu$ and a letter $x$ such that $\xi = \mu x \nu$
with neither $g_1\mu v$ nor $g_1 \mu x v$ being a deleted vertex. Let $(g_1 \mu v,\zeta)$ be a Hamiltonian path of
$\Q_n^{bot} -\{r_3, g\}$ that connects $g_1 \mu v$ to $g_1 \mu x v.$ Such path exists since $[2,0,1,0]=4.$ The desired
$2-$path covering of $\Q_n- \F$ for this case is $(g_1, \mu v \zeta v \nu), (g_3,\eta).$

\emph{Subcase B2.} $g_1,g_2,g_3$ are on the top plate and $g_4$ is on the bottom plate.

Let $g_5$ be a green vertex on the top plate such that $g_5 v$ is not a deleted vertex. Let $(g_1, \xi),(g_3,\eta)$ be a
$2-$path covering of $\Q_n^{top} - \{r_1, r_2\}$ that connects $g_1$ to $g_2$ and $g_3$ to $g_5.$ Such path covering
exists since $[2,2,0,2]=4.$ Let $(g_5 v, \zeta)$ be a Hamiltonian path of $\Q_n^{bot}- \{r_3,g\}$ that connects $g_5 v$
to $g_4.$ Such path exists since $[2,0,1,0]=4.$ The desired $2-$path covering of $\Q_n- \F$ for this case is $(g_1,
\xi), (g_3, \eta v \zeta).$

\emph{Subcase B3.} $g_1,g_2$ are on the top plate and $g_3,g_4$ are on the bottom plate.

Since $n \ge 4$ we can find words $\eta, \theta$ of length greater than three such that $(g_3, \eta \theta)$ is a
Hamiltonian cycle of $\Q_n^{bot}-\{r_3,g\}$ with $g_3 \eta = g_4$ (Lemma \ref{new}). For at least one of the four pairs
of green vertices $(g_3 \varphi(\eta)v,g_3 \eta'v),$ $(g_3 \varphi(\eta)v , g_4 \varphi(\theta)v),$ $(g_3 \eta'v, g_4
\theta'v), (g_4 \varphi(\theta)v, g_4 \theta'v)$ the two elements in the pair are not terminals on the top plate.

Assume that neither $g_3 \varphi(\eta)v$ nor $g_3\eta'v$ is a terminal vertex. Let $(g_1,\mu),$ $(g_2,\nu)$ be a
$2-$path covering of $\Q_n^{top}-\{r_1,r_2\}$ that connects $g_1$ to $g_3 \varphi(\eta)v$ and $g_2$ to $g_3 \eta' v.$
Such path covering exists since $[2,2,0,2]=4.$ The desired $2-$path covering of $\Q_n- \F$ for this case is $(g_1,\mu v
\eta'^* v\nu^R),$ $(g_3, \theta^R).$

The case when neither $g_4 \varphi(\theta)v$ nor $g_4\theta'v$ is a terminal vertex is equivalent to the previous case.

Assume now that neither $g_3 \varphi(\eta)v$ nor $g_4\varphi(\theta)v$ is a terminal vertex.  Let $(g_1, \mu),(g_4
\varphi(\theta) v,\nu)$ be a $2-$path covering of $\Q_n^{top}-\{r_1,r_2\}$ that connects $g_1$ to $g_2$ and $g_4
\varphi(\theta) v$ to $g_3 \varphi(\eta)v.$ Such path covering exists since $[2,2,0,2]=4.$ The desired $2-$path covering
of $\Q_n- \F$ for this case is $(g_1,\mu),(g_3,(\theta^R)'v\nu v \eta^*).$

The case when neither $g_3 \eta'v $ nor $g_4\theta'v$ is a terminal vertex is equivalent to the previous case.

\emph{Subcase B4.} $g_1,g_3$ are on the top plate and $g_2,g_4$ are on the bottom plate.

Let $x$ be a letter such that $g_2 x \neq r_3$ and $g_2 x v \neq g_1, g_3.$ Such letter exists since the dimension of
the plates is greater than or equal to $4.$ Let $(g_4, \xi)$ be a Hamiltonian cycle of $\Q_n^{bot}-\{r_3,g,g_2, g_2
x\}.$ Such cycle exists since $[4]=4.$ We can also assume that $g_4\xi'v \neq g_1$ by replacing $\xi$ with $\xi^R$, if
necessary.

Assume that $g_4 \xi' v = g_3.$ Let $(g_1, \mu)$ be a Hamiltonian path of $\Q_n^{top}-\{r_1,r_2,g_3\}$ that connects
$g_1$ to $g_2 x v.$ Such path exists since $[3,1,0,1]= 4.$ The desired $2-$path covering of $\Q_n- \F$ for this case is
$(g_1, \mu v x),$ $(g_4, \xi'v).$

Finally, if $g_4 \xi' v\neq g_3$ we proceed as follows. Let $(g_1,\mu), (g_3, \nu)$ be a $2-$path covering of
$\Q_n^{top}-\{r_1,r_2\}$ that connects $g_1$ to $ g_2 xv $ and $g_3$ to $g_4 \xi' v.$ Such path covering exists since
$[2,2,0,2]=4.$ The desired $2-$path covering of $\Q_n- \F$ for this case is $(g_1, \mu v x), (g_3, \nu v (\xi')^R).$

\emph{Subcase B5.} $g_1$ is on the top plate and $g_2,g_3,g_4$ are on the bottom plate.

Let $ x$ be a letter different from $v$ such that $ g_2x \ne r_3$ and $ g_2 x v \ne g_1. $ Let  $ ( g_3, \xi) $ be a Hamiltonian cycle of $ \Q_n^{bot} - \{r_3, g, g_2,g_2x\}$ ($[4]=4$).  $ \xi = \eta \zeta $ with   $ g_3 \eta = g_4.$  We can also assume, by renumbering the vertices and/or reversing the cycle if necessary, that $\eta$ has more than two letters and that $g_3 \eta' \ne g_1v. $

If $g_3 \varphi(\eta) $ is also different from $ g_1v $ then let $ ( g_1, \mu), (g_3 \eta' v , \nu) $ be a $2-$path covering of $ \Q_n^{top} -\{r_1,r_2\}$ that connects $g_1$ to $ g_3 \varphi(\eta) v,$ and $ g_3 \eta' v$ to $ g_2 x v $ ($[2,2,0,2]=4$). The desired $2-$path covering of $\Q_n- \F$  is
$(g_1, \mu v (\eta^*)' v \nu v x  )$, $(g_3, \zeta^R).$

If $g_3 \varphi(\eta) = g_1 v$ then let $ (g_3 \eta' v, \mu) $ be a Hamiltonian 
path of $\Q_n^{top} - \{r_1, r_2, g_1\}$ that connects $g_3 \eta' v$ to $g_2xv$ 
($[3,1,0,1]=4$). The desired $2-$path 
covering of $\Q_n- \F$ is $(g_1,v(\eta^*)'v\mu vx),(g_3,\zeta^R).$

\emph{Subcase B6.} All the green terminals are on the bottom  plate.

Let $(g_1, \xi)$ be a Hamiltonian cycle of $ \Q_n^{bot}-\{r_3,g\}.$ Such cycle exists for $[2]=3.$ Since the dimension
of the plates are greater than or equal to $4$ we can also assume that the distance from $g_1$ to $g_2$ along the cycle
is at least $4$ (Lemma \ref{new}). There are two essentially different distributions of the four green terminals along
the cycle. In the first case $\xi = \eta \theta \zeta \kappa$ with $g_1 \eta = g_2, g_2 \theta = g_3, g_3 \zeta = g_4,$
where $g_3, g_4$ are to be renumbered, if necessary. In the second case $\xi = \eta \theta \zeta \kappa$ with $g_1 \eta
= g_3, g_3 \theta = g_2, g_2 \zeta = g_4,$ where $g_3, g_4$ are to be renumbered, if necessary.

In the first case we proceed as follows. Let $(g_1 \varphi(\eta)v, \mu), (g_1 \eta'v, \nu)$ be a $2-$path covering of
$\Q_n^{top}-\{r_1,r_2\}$ that connects $g_1 \varphi(\eta)v$ to $g_1 (\kappa^R)'v$ and $g_1\eta'v$ to $g_2\theta'v.$ Then the
desired $2-$path covering of $\Q_n- \F$ for this case is $(g_1, (\kappa^R)' v \mu^R  v \eta'^* v \nu v (\theta')^R),$ $(g_3,\zeta).$

In the second case we proceed as follows. Let $(g_1 \eta' v, \mu),(g_3 \theta' v , \nu)$ be a $2-$path covering of
$\Q_n^{top}-\{r_1,r_2\}$ that connects $g_1\eta'v$ to $g_2 \zeta' v$ and $g_3 \theta' v$ to $g_4 \kappa' v.$  Such path
covering exists since $[2,2,0,2]=4.$ The desired $2-$path covering of $\Q_n- \F$ for this  case is $(g_1, \eta' v \mu v
(\zeta')^R),$ $(g_3, \theta' v \nu v (\kappa')^R).$
\end{proof}

\begin{lemma}\label{l2002}
$[2,0,0,2] = 5.$
\end{lemma}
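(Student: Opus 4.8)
The plan is to prove the two inequalities $[2,0,0,2]\le 5$ and $[2,0,0,2]\ge 5$ separately. The upper bound should fall out immediately from the charged-to-neutral reduction of Lemma~\ref{redchargedtoneutral}, using the value $[3,1,1,1]=5$ just established in Lemma~\ref{l4202}; the lower bound will require exhibiting one fault together with a balanced set of prescribed ends in $\Q_4$ for which no admissible covering exists.

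For the upper bound I would apply Lemma~\ref{redchargedtoneutral} with $(M,C,N,O)=(2,0,0,2)$. Here $C=0$, $O=2$ and $M=2$ are all even, $C\le M$, and $O>C$, so the hypotheses hold, and the critical inequality $(\ref{criticalinequality2})$ becomes $k(O-C)+1-\binom{O-C+1}{2}=2k-2>\frac{M+C}{2}+N+O=3$, i.e.\ $k\ge 3$. Since $5\ge 3$ and, by Lemma~\ref{l4202}, $5\in\mathcal{A}_{3,1,1,1}$, Lemma~\ref{redchargedtoneutral} yields $5\in\mathcal{A}_{2,0,0,2}$, so $[2,0,0,2]\le 5$. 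Conceptually, one deletes one of the two red terminals of the red charged pair, reattaches its partner to a free green neighbour (whose existence the inequality guarantees), solves the resulting $[3,1,1,1]$ instance, and finally extends the new neutral path by one edge back to the reinstated red terminal.

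For the lower bound I would give an explicit counterexample in $\Q_4$. Take the neutral fault $\F=\{(0,1,0,1),(0,1,1,1)\}$, with the first vertex red and the second green, the red pair $\{(0,0,0,0),(1,1,0,0)\}$, and the green pair $\{(0,0,1,0),(1,1,1,0)\}$; this set of ends is in balance with $\F$. Writing $A=(0,0,0,0)$, $B=(1,1,0,0)$, $C=(0,0,1,0)$, $D=(1,1,1,0)$, $u=(0,1,0,0)$ and $w=(0,1,1,0)$, one checks that after the two deletions the only surviving neighbours of the green vertex $u$ are $A,B,w$, and the only surviving neighbours of the red vertex $w$ are $C,D,u$. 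Since $u$ and $w$ are not prescribed ends, each is an interior vertex of the covering and uses exactly two incident edges. If the edge $\{u,w\}$ is unused, then $u$ must use $\{u,A\}$ and $\{u,B\}$ and $w$ must use $\{w,C\}$ and $\{w,D\}$; as $A,B,C,D$ are the prescribed ends, this collapses the covering to the two paths $A\,u\,B$ and $C\,w\,D$, covering only six of the fourteen vertices. If instead $\{u,w\}$ is used, then one end of the resulting path lies in $\{A,B\}$ and the other in $\{C,D\}$, giving a path with one red and one green endpoint, i.e.\ a neutral path, whereas both prescribed paths are charged. Either way there is a contradiction, so $4\notin\mathcal{A}_{2,0,0,2}$ and hence $[2,0,0,2]\ge 5$. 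Combining the bounds gives $[2,0,0,2]=5$.

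The routine part is the upper bound, a one-line application of the reduction. The main obstacle is the lower bound: one must locate a fault and ends in $\Q_4$ whose deletions \emph{trap} the adjacent pair $u,w$ so that the only ways to complete the covering are either a forbidden neutral path through $\{u,w\}$ or a pair of degenerate three-vertex paths. Finding such a configuration is the genuine difficulty; once it is in hand, the verification reduces to the short local argument above rather than an exhaustive search over coverings of $\Q_4-\F$.
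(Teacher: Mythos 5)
Your proposal is correct and takes essentially the same approach as the paper: the upper bound via Lemma~\ref{redchargedtoneutral} together with $[3,1,1,1]=5$ from Lemma~\ref{l4202}, and the lower bound via an explicit counterexample in $\Q_4$. Your counterexample (deleted pair adjacent to the trapped vertices $u,w$) is a different, non-isomorphic configuration from the paper's ($r=(0,1,1,0)$, $g=(1,1,0,1)$ with ends $r_1=(0,0,1,1)$, $r_2=(0,1,0,1)$, $g_1=(1,0,1,1)$, $g_2=(1,1,1,0)$), but it rests on the same trapped-pair idea, and you supply the full non-existence verification that the paper leaves to the reader.
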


\begin{proof}
It follows from Lemma \ref{redchargedtoneutral} that $[2,0,0,2]\leq [3,1,1,1]$ and 
since $[3,1,1,1]=5$ (Lemma \ref{l4202}) we have $[2,0,0,2]\leq 5.$ The following
counterexample shows that $[2,0,0,2]\geq 5$.

Let $r=(0,1,1,0)$, $r_1 = (0,0,1,1)$, $r_2 = (0,1,0,1)$, $g=(1,1,0,1)$, $g_1 = (1,0,1,1)$, $g_2 = (1,1,1,0)$ be vertices
in $\Q_4$. Then it is not difficult to verify that a $2-$path covering of $\Q_4-\{r,g\}$ with path $\gamma_1$ connecting
$r_1 $ to $r_2$ and path $\gamma_2$ connecting $g_1 $ to $g_2$ does not exist.
\end{proof}

\begin{lemma}$([5,1,0,1]=5)$ Let $n \geq 5$ and
$\F = \{r_1,r_2,r_3,g_1,g_2\}$ be a fault with three distinct red and two distinct green vertices. If $g_3,g_4\in \Q_n -
\F$ are two distinct green vertices then there exists a Hamiltonian path of $\Q_n - \F$ that connects $g_3$ to $g_4$.
The claim is not true if $n = 3$ or $n = 4$.
\end{lemma}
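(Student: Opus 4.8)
The plan is to record the lower bound by explicit small counterexamples and then prove that $n\ge 5$ suffices by a plate-splitting argument, isolating the dimension-$5$ boundary as the essential difficulty.

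For the lower bound I would exhibit a fault of mass $5$ and charge $1$ together with a green pair, both in $\Q_3$ and in $\Q_4$, admitting no spanning $g_3$--$g_4$ path, exactly as in the counterexamples attached to $[3,1,0,1]=4$ and $[4,0,1,0]=5$. In $\Q_3$ only three non-deleted vertices remain, so a Hamiltonian $g_3$--$g_4$ path would force the remaining red vertex to be adjacent to both greens, which is easy to defeat; in $\Q_4$ an explicit list of six vertices can be checked by inspection. This gives $[5,1,0,1]\ge 5$.

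For $n\ge 5$ I would split $\Q_n$ into two copies of $\Q_{n-1}$, choosing the splitting coordinate so that the three deleted reds are separated $2$--$1$, say $r_1,r_2\in\Q_n^{top}$ and $r_3\in\Q_n^{bot}$; such a coordinate exists because any two distinct reds differ somewhere, and there the three reds split $2$--$1$. Each plate has dimension $n-1\ge 4$. The single global $g_3$--$g_4$ path decomposes into arcs on the two plates joined by bridges, so it suffices to produce prescribed-ends path systems on the plates and splice them with the surgery conventions of Section 3 and the word calculus of Section 4. The crucial structural point is that the two green terminals can be \emph{absorbed as endpoints of the green--green paths} furnished by $[2,2,0,2]=4$ (when the $2$-red plate carries no extra deleted green) or by $[3,1,0,1]=4$ (when it carries one), while the complementary plate is covered using $[2,0,1,0]=4$, $[1,1,0,1]=2$, $[1,1,1,1]=4$, $[0,0,1,0]=1$ and the color-dual of $[3,1,0,1]=4$, according to the parity bookkeeping. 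Letting $c$ be the number of deleted greens on the $2$-red plate: for $c=0$ I route the terminals on top by $[2,2,0,2]=4$ and cover the bottom fault $\{r_3,g_1,g_2\}$ by a red--red path via the dual of $[3,1,0,1]=4$; for $c=1$ the top fault $\{r_1,r_2,g_1\}$ is handled by $[3,1,0,1]=4$ and the bottom fault $\{r_3,g_2\}$ by $[2,0,1,0]=4$ or $[1,1,1,1]=4$. The various placements of the terminals only change which green--green/neutral endpoints are prescribed and whether one or two bridges are used, with bridge feet always chosen at non-deleted, non-terminal vertices.

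The hard part will be the case $c=2$, where the $2$-red plate inherits the mass-$4$ \emph{neutral} fault $\{r_1,r_2,g_1,g_2\}$. For $n\ge 6$ this plate has dimension $\ge 5$ and $[4,0,1,0]=5$ (with the neutral-path machinery when the terminals also lie there) dispatches it. The genuinely delicate situation is $n=5$: both plates are only $\Q_4$, and \emph{no prescribed-ends covering of a mass-$4$ neutral fault is available} since $[4,0,1,0]=5>4$. Here I would first try to avoid the configuration by choosing the splitting coordinate so that at least one deleted green lands on the one-red plate (forcing $c\le 1$); a combinatorial check shows, however, that some faults force $c=2$ for \emph{every} $2$--$1$ red split, and the adversary may place both terminals on the offending plate. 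For these residual $\Q_4$ configurations I would fall back on $[4]=4$ to produce a Hamiltonian cycle of the neutral-mass-$4$ plate and perform surgery, feeding the terminals through the one-red plate by $[1,1,1,1]=4$ and the $\Q_4$ result $[2,0,2,0]=4$ of Lemma \ref{l4202} and Appendix \ref{2pathcoveringsofQ4}, using Lemma \ref{new} to control bridge positions and Lemma \ref{q3minustwovertices} for the finitely many remaining adjacency patterns. Confirming that every such $n=5$ sub-configuration can be resolved is where essentially all of the work lies; a cleaner alternative, if one is willing to invoke a spanning-path result for mass-$6$ neutral faults, is Lemma \ref{greenchargedtoneutral}, which with $(M,C,N,O)=(5,1,0,1)$ and $k=5$ requires only $2\cdot 5+1-\binom{3}{2}=8>3=\tfrac{5-1}{2}+0+1$, so that $5\in\mathcal{A}_{6,0,1,0}$ would yield $5\in\mathcal{A}_{5,1,0,1}$ uniformly in $n$ and bypass the plate analysis at $n=5$ altogether.
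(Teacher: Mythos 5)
Your overall architecture (the $2$--$1$ red split, case analysis on where the deleted greens fall, prior lemmas plus bridge surgery) is the same as the paper's, and your sketches for the cases $c=0$ and $c=1$ do align with the paper's Cases B and C. The genuine gap is in the case you yourself single out as central: both deleted greens on the two-red plate, so that plate carries the neutral mass-$4$ fault $\{r_1,r_2,g_1,g_2\}$. You claim that for $n\ge 6$ this is dispatched by $[4,0,1,0]=5$ ``with the neutral-path machinery when the terminals also lie there.'' When both terminals $g_3,g_4$ lie on that same plate this cannot work at \emph{any} dimension: the plate's fault is neutral while $\{g_3,g_4\}$ is a green (charged) pair, so the balance condition fails --- equivalently, the plate has an even number of non-deleted vertices while a path with two green ends covers an odd number --- and no spanning path of that plate with ends $g_3,g_4$ exists. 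A two-path covering of that plate would instead require $[4,0,2,0]$ or $[4,0,0,2]$, neither of which is established in this paper. So your primary tool fails in this subcase for every $n$, not only $n=5$, and your proposal leaves it unproven for $n\ge 6$. The paper's proof handles the entire case uniformly for all $n\ge 5$ by exactly the device you relegate to an $n=5$ fall-back: use $[4]=4$ (Lemma \ref{locke2}) to produce a Hamiltonian cycle of the faulted plate passing through both terminals, cut it at suitable vertices, and route through the one-red plate via $[1,1,0,1]=2$ or $[1,1,1,1]=4$. Once that cycle-plus-surgery device is applied to all terminal placements and all $n\ge 5$, your coordinate-re-choosing detour and the $n=5$ versus $n\ge 6$ dichotomy evaporate; as written, however, the difficulty is mislocated and a subcase is missing.

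A second, more clear-cut error: your ``cleaner alternative'' is not merely conditional but impossible. Lemma \ref{greenchargedtoneutral} with $(M,C,N,O)=(5,1,0,1)$ and $k=5$ does require $5\in\mathcal{A}_{6,0,1,0}$, and that statement is false: the blocking construction the paper gives after Conjecture \ref{chargezeroonepathconjecture} (take $k=3$, a red vertex $r$ in $\Q_5$, $\F=\{rx_1,rx_2,rx_3,rx_5x_1,rx_5x_2,rx_5x_3\}$, terminals $rx_4$ and $rx_4x_5$) shows $[6,0,1,0]\ge 6$, so the hypothesis of the reduction can never be supplied and this route cannot bypass anything. For comparison, the paper obtains the lower bound $[5,1,0,1]\ge 5$ without exhibiting a new counterexample at all, by running Lemma \ref{neutraltocharged} in the contrapositive against the known counterexample for $[4,0,1,0]=5$; your explicit-counterexample plan for $\Q_3$ and $\Q_4$ is workable, but the $\Q_4$ instance is only asserted, not produced.
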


\begin{proof} It follows from Lemma \ref{neutraltocharged} that if $k \ge 4$
and $k \in \mathcal{A}_{5,1,0,1}$ then $k$ is in $\mathcal{A}_{4,0,1,0}$ and since $[4,0,1,0] = 5$ we have $[5,1,0,1]
\ge 5.$ We shall prove that $[5,1,0,1]=5.$ Let $n\ge 5.$ Split $\Q_n$ into two plates in a way that two red vertices,
say $r_1$ and $r_2$, are on the top plate and $r_3$ is on the bottom plate. We shall consider all essentially different
cases depending on the distribution of the two green deleted vertices and the two green terminals.

\emph{Case A.} The two green deleted vertices are on the top plate.

\emph{Subcase A1.} $g_3$ and $g_4$ are on the top plate.

Use $[4]=4$ to find a Hamiltonian cycle $(g_3, \xi)$ of $\Q_n^{top}- \{r_1,r_2,g_1,g_2\}.$ Let $\xi = \eta \theta $,
with $ g_3 \eta = g_4.$ Use $[1,1,0,1]= 2$ to find a Hamiltonian path $(g_3 \eta' v, \zeta)$ of $\Q_n^{bot}- \{r_3\}$
that connects $g_3 \eta' v$ to $g_3 \xi' v.$ The desired Hamiltonian path of $\Q_n - \F$ for this case is $(g_3, \eta' v
\zeta v (\theta')^R).$

\emph{Subcase A2.} $g_3$ is on the top plate and $g_4$ is on the bottom plate.

Use $[4]=4$ to find a Hamiltonian cycle $(g_3, \xi)$ of $\Q_n^{top}- \{r_1,r_2,g_1,g_2\}.$ Either $g_3\varphi(\xi)$ or
$g_3 \xi'$ is not adjacent to $g_4$. Assume, without loss of generality, that $g_3 \xi'$ is not adjacent to $g_4$. Use
$[1,1,0,1]= 2$ to find a Hamiltonian path $(g_3 \xi' v, \eta )$ of $ \Q_n^{bot}- \{r_3\}$ that connects $g_3 \xi' v$ to
$g_4.$ The desired Hamiltonian path of $\Q_n - \F$ for this case is $(g_3, \xi' v \eta ).$

\emph{Subcase A3.} $g_3$ and $g_4$ are on the bottom plate.

Use $[4]=4$ to find a Hamiltonian cycle $\gamma$ of $\Q_n^{top} - \{r_1,r_2,g_1,g_2\}.$ Let $a,b$ be two consecutive
vertices along this cycle such that neither $av$ nor $bv$ is a deleted vertex or a terminal and let $\gamma = (a, \xi),$
with $a\xi' = b$. Use $[1,1,1,1]=4$ to find a $2-$path covering $(av , \eta), (bv, \theta)$  of $\Q_n^{bot}- \{r_3\}$
that connects $av$ to $g_3$ and $bv$ to $g_4.$ The desired Hamiltonian path of $\Q_n - \F$ for this case is $(g_3,\eta^R
v  \xi' v \theta ).$

\emph{Case B.} $g_1$ is on the top plate and $g_2$ is on the bottom plate.

\emph{Subcase B1.} $g_3$ and $g_4$ are on the top plate.

Use $[3,1,0,1]=4$ to find a Hamiltonian path $(g_1, \xi)$ of $\Q_n^{top}- \{r_1, r_2, g_1\}$ that connects $g_3$ to
$g_4.$ Since $n \geq 5$ there exist words $\eta, \theta$ and a letter $x$ such that $\xi = \eta x \theta,$ and neither
$g_3 \eta v$ nor $g_3 \eta x v $ is a deleted vertex. Use $[2,0,1,0]=4$ to find a Hamiltonian path $(g_3 \eta v, \zeta)$
of $\Q_n^{bot} - \{r_3,g_2\}$ that connects $g_3 \eta v$ to $g_3 \eta x v.$ The desired Hamiltonian path of $\Q_n - \F$
for this case is $(g_3,\eta v \zeta v \theta).$

\emph{Subcase B2.} $g_3$ is on the top plate and $g_4$ is on the bottom plate.

Let $g_5$ be a green vertex on the top plate such that neither $g_5$ nor $g_5 v$ is a deleted vertex or a terminal. Use
$[3,1,0,1]=4$ to find a Hamiltonian path $(g_3, \xi)$ of $\Q_n^{top}- \{r_1, r_2, g_1\}$ that connects $g_3$ to $g_5.$
Use $[2,0,1,0]=4$ to find a Hamiltonian path $(g_5 v, \eta)$ of $\Q_n^{bot} - \{r_3,g_2\}$ that connects $g_5 v$ to
$g_4.$ The desired Hamiltonian path of $\Q_n - \F$ for this case is $ (g_3, \xi v \eta ).$

\emph{Subcase B3.} $g_3$ and $g_4$ are on the bottom plate.

Let $g_5$ and $g_6$ be any two green vertices on the top plate different from $g_1$ such that neither $g_5 v$ nor $g_6
v$ is a deleted vertex (clearly they cannot be terminal vertices). Use $[3,1,0,1]=4$ to find a Hamiltonian path $(g_5,
\xi)$ of $\Q_n^{top}- \{r_1, r_2, g_1\}$ that connects $g_5$ to $g_6.$ Use $[2,0,2,0]=4$ to find a $2-$path covering
$(g_3,\eta)$, $(g_4, \theta)$ of $\Q_n^{bot}- \{r_3,g_2\}$ that connects $g_3$ to $g_5 v$ and $g_4$ to $g_6 v.$ The
desired Hamiltonian path of $\Q_n - \F$ for this case is $(g_3,\eta v \xi v \theta^R ).$

\emph{Case C.} The two green deleted vertices are on the bottom plate.

\emph{Subcase C1.} $g_3$ and $g_4$ are on the top plate.

Let $g_5$ and $g_6$ be any two green vertices on the top plate different from $g_3$ and $g_4$ such that $g_5 v \neq r_3$
and $g_6 v \neq r_3$. Use $[2,0,2,0]=4$ to find a $2-$path covering $(g_3,\xi)$, $(g_4, \eta)$ of $\Q_n^{top}-
\{r_1,r_2\}$ that connects $g_3$ to $g_5$ and $g_4$ to $g_6.$ Use $[3,1,0,1]=4$ to find a Hamiltonian path $(g_5
v,\zeta)$ of $\Q_n^{bot}- \{r_3, g_1, g_2\}$ that connects $g_5 v$ to $g_6 v.$ The desired Hamiltonian path of $\Q_n -
\F$ for this case is $(g_3, \xi v \zeta v  \eta^R ).$

\emph{Subcase C2.} $g_3$ is on the top plate and $g_4$ is on the bottom plate.

Let $r_4$ be a red vertex on the bottom plate such that neither $r_4$ nor $r_4 v$ is a deleted vertex or a terminal. Use
$[4]=4$ to find a Hamiltonian cycle $(g_4, \xi )$ of $\Q_n^{bot} - \{r_3,r_4,g_1,g_2\}.$ By replacing $\xi$ with
$\xi^R$, if necessary, we can assume that $g_4 \xi' v \neq g_3.$ Since the bottom plate is of dimension at least $4,$
there exists a letter $y$ such that $g_5=r_4 y$ is neither a terminal nor a deleted vertex. Let $\xi = \eta \theta$ with
$g_4 \eta = g_5.$ Set $g_6 = g_4\eta' v $ or $g_6 = g_4 \eta \varphi(\theta) v,$ making sure that $g_6 \neq g_3.$ Use
$[2,2,0,2]=4$ to find a $2-$path covering $(g_3, \mu)$, $(g_6, \nu)$ of $\Q_n^{top}-\{r_1,r_2\}$ that connects $g_3$ to
$g_4 \xi' v$ and $g_6$ to $r_4 v.$ The desired Hamiltonian path of $\Q_n - \F$ for this case is $(g_4, \eta' v \nu v y
\theta' v \mu^R)$ if $g_6 = g_4 \eta' v $ or $(g_4, \eta y  v \nu^R v \theta'^* v \mu^R)$ if $g_6 = g_4 \eta
\varphi(\theta) v.$

\emph{Subcase C3.} $g_3$ and $g_4$ are on the bottom plate.

Let $r_4$ and $r_5$ be any two red vertices on the bottom plate that are not deleted vertices. Use $[3,1,0,1]=4$ to find
a Hamiltonian path $(r_4, \xi)$ of $\Q_n^{bot}- \{r_3, g_1, g_2\}$ that connects $r_4$ to $r_5$ and let $\xi = \eta
\theta \mu$, with $r_4\eta = g_3$ and $r_4\eta \theta = g_4$, where $g_3$ and $g_4$ should be renumbered, if necessary.
If the length of $\eta$ is at least three then use $[2,2,0,2]=4$ to find a $2-$path covering $(r_4\eta'v, \nu), (g_3
\theta' v, \zeta)$ of $\Q_n^{top}-\{r_1,r_2\}$ that connects $r_4\eta' v $ to $r_5 v$ and $g_3 \theta'v $ to $r_4 v.$
The desired Hamiltonian path of $\Q_n - \F$ for this case is $(g_3,\theta'v \zeta v \eta' v \nu v \mu^R ).$

The case when the length of $\mu$ is at least three is equivalent to the case when the length of $\eta$ is at least
three.

If $\eta$ and $\mu$ are both of length one then $\theta$ is of length greater than three. In this case use $[2,2,0,2]=4$
to produce a $2-$path covering $(r_4 v, \nu), (g_3\varphi(\theta)v,\zeta)$ of $\Q_n^{top}-\{r_1,r_2\}$ that connects
$r_4 v$ to $g_3 \theta' v$ and $g_3 \varphi(\theta)v$ to $r_5 v.$ The desired Hamiltonian path of $\Q_n - \F$ for this
case is $(g_3, \eta^R v \nu v (\theta'^*)^R v \zeta v \mu^R ).$
\end{proof}

\begin{lemma}\label{threechargedpaths}$([3,3,0,3]\le 6)$
Let $n\ge 6$ and $\F=\{r_1,r_2,r_3\}$ be a fault in $\Q_n$ with three distinct red vertices. If $g_1$, $g_2$, $g_3$,
$g_4$, $g_5$, $g_6$ are six distinct green vertices in $\Q_n- \F$ then there exists a $3-$path covering of $\Q_n- \F$ that
connects $g_1$ to $g_2,$ $g_3$ to $g_4,$ and $g_5$ to $g_6.$
\end{lemma}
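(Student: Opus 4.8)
The plan is to split $\Q_n$ into two plates $\Q_n^{top}$ and $\Q_n^{bot}$ of dimension $n-1\ge 5$, choosing the splitting coordinate so that the three red faults are distributed $2$--$1$; say $r_1,r_2\in \F\cap\Q_n^{top}$ and $r_3\in\F\cap\Q_n^{bot}$. Such a coordinate exists because $r_1,r_2,r_3$ are distinct, so some coordinate does not take the same value on all three of them, and on that coordinate they split $2$--$1$ (call the two-fault side the top). I would then run a case analysis on how the three prescribed green pairs $\{g_1,g_2\}$, $\{g_3,g_4\}$, $\{g_5,g_6\}$ distribute between the plates, recording for each pair whether both terminals lie on the top, both on the bottom, or the pair is split, and I would write the resulting paths in the word notation of Section~4.

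The two engines are the following plate coverings. On the two-red plate the fault $\{r_1,r_2\}$ has mass $2$ and charge $2$, so $[2,2,0,2]=4$ lets me freely prescribe two green--green paths covering $\Q_n^{top}-\{r_1,r_2\}$, where the four prescribed green vertices may be genuine terminals or \emph{launch} vertices from which a bridge descends. On the one-red plate $[1,1,0,1]=2$ produces one green--green Hamiltonian path, while $[1,1,1,1]=4$ produces a green--green path together with one neutral path whose red endpoint can receive a bridge from a green launch vertex on top. The cleanest case is when two full pairs lie on top and one full pair on the bottom: then $[2,2,0,2]=4$ on top and $[1,1,0,1]=2$ on the bottom finish the proof with no bridges. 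When exactly one pair is split, leaving three terminals on each plate, I would realize the two top paths by $[2,2,0,2]=4$ (one endpoint being the green launch vertex of the split pair), the two bottom paths by $[1,1,1,1]=4$, and join them across the single bridge; a component count $2+2-1=3$ confirms that exactly three paths result and that they realize the prescribed pairing.

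The difficulty is concentrated in the \emph{unbalanced} distributions --- when five or six green terminals crowd onto one plate, or when two or three pairs are split --- because the naive local coverings then demand configurations such as ``two green--green and one neutral path on a two-fault plate'' for which no ready-made lemma is available, and because a plate of green-excess $2$ simply cannot host three green--green paths. To handle these I would import the deletion/surgery device already used for $[0,0,3,0]=5$ and $[5,1,0,1]=5$: a green terminal that will be reached through the other plate is temporarily treated as a deleted vertex on its own plate, converting the plate sub-problem into a faulted single- or double-path problem solvable by $[2,0,1,0]=4$ or $[3,1,0,1]=4$, or (after deleting a suitable red companion) by the Hamiltonian-cycle result $[4]=4$, with the deleted terminal re-attached to its partner by a bridge from a neighbouring vertex. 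Where even this does not balance the two plates, I would re-choose the splitting coordinate so that the six green terminals are split as evenly as the red $2$--$1$ split permits; Lemma~\ref{separatinglemma} and Lemma~\ref{new} supply the flexibility to spread prescribed vertices along a covering, which also helps select good bridge positions. The main obstacle, and the bulk of the work, is exactly this bookkeeping in the unbalanced cases: checking that each local covering carries the correct green-excess, that every bridge endpoint avoids all three faults and all six terminals (easy, since each plate has dimension at least $5$ and hence many free bridges), and above all that after stitching the fragments merge into the three \emph{prescribed} pairs rather than into an accidental re-pairing.
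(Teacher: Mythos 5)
Your global strategy --- the $2$--$1$ red split, a case analysis on how the green pairs distribute over the plates, and plate coverings stitched by bridges --- is exactly the paper's, and your easy cases coincide with the paper's: two pairs up/one pair down via $[2,2,0,2]=4$ and $[1,1,0,1]=2$, one split pair via $[2,2,0,2]=4$ and $[1,1,1,1]=4$. The gap is precisely in the cases you yourself flag as ``the bulk of the work'': the toolkit you allow yourself cannot finish them. The paper dispatches the unbalanced distributions with the stronger results proved just before this lemma, namely $[4,2,0,2]=5$ and $[3,1,1,1]=5$ (Lemma \ref{l4202}) and $[5,1,0,1]=5$; seven of its ten cases invoke at least one of these, while your proposal never does, and the lemmas you list for the converted sub-problems ($[2,0,1,0]=4$, $[3,1,0,1]=4$, $[4]=4$) are too weak for the sub-problems your own device creates. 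Concretely, suppose all six green terminals lie on the top plate. Balance forces one pair, say $\{g_5,g_6\}$, to route through the bottom; deleting $g_5$ alone from the top leaves a fault of charge $1$ against two prescribed green pairs, and deleting $g_5$ and $g_6$ leaves charge $0$ against two green pairs, so both violate balance. The only repair --- which you half-recognize with your ``red companion'' remark --- is to delete $g_5$ together with a red neighbor $g_5x$, and then the top plate needs a $2$-path covering of $\Q_n^{top}-\{r_1,r_2,g_5,g_5x\}$ with prescribed pairs $\{g_1,g_2\}$ and $\{g_3,g_4\}$: this is an instance of $[4,2,0,2]$ on a plate of dimension $n-1\ge 5$, i.e.\ exactly Lemma \ref{l4202}, and of nothing in your list. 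The alternative of covering the top by $[2,2,0,2]=4$ and surgically extracting $g_5$ and $g_6$ forces the bottom plate to host three prescribed charged paths (two green pairs rejoining the cuts, one red pair $g_5v$ to $g_6v$) over the single fault $\{r_3\}$, i.e.\ an instance of $[1,1,0,3]$, for which this paper proves only the bound $\le 6$ --- useless on a dimension-$5$ plate. Likewise, when all three pairs are split, the natural one-bridge-per-path stitching needs a $[2,2,1,2]$ covering on top and a $[1,1,2,1]$ covering on the bottom, both again known here only to be $\le 6$ (Corollary \ref{c0030}); the paper instead routes one whole pair through the bottom via a red companion and invokes $[5,1,0,1]=5$ on the top plate and $[3,1,1,1]=5$ on the bottom.

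Your fallback of re-choosing the splitting coordinate does not rescue this: if the six green terminals all lie in a $3$-dimensional subcube spanned by coordinates on which the three red faults agree, then every coordinate splitting the reds $2$--$1$ leaves all six greens on one plate, so the unbalanced cases are unavoidable. In short, the architecture of your proof is right, but the lemma is not reachable from the sub-mass-$4$ lemmas alone; the missing ingredient is the trio $[4,2,0,2]=5$, $[3,1,1,1]=5$, $[5,1,0,1]=5$ already established earlier in this section, and without citing (or re-proving) them your hard cases cannot be closed.
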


\begin{proof} Split $\Q_n$ into two plates with two red vertices, say $r_1$
and $r_2$, on the top plate, and $r_3$ on the bottom plate. We consider several cases that depend on the distribution of
the green terminals on the plates.

\emph{Case 1.} All the green terminals are on the top plate.

Without loss of generality we can assume that $g_6v \neq r_3.$ Let $x$ be a letter such that $g_5 x$ is not a deleted
vertex. Let $(g_1, \xi), (g_3, \eta) $ be a $2-$path covering of $\Q_n^{top}-\{r_1, r_2, g_5, g_5x\} $ that connects
$g_1$ to $g_2$ and $g_3$ to $g_4.$ Such path covering exists since $[4,2,0,2]=5.$ Without loss of generality we can
assume that $g_6$ lies on the path from $g_3$ to $g_4.$ Let $\eta = \theta \zeta$ with $g_3 \theta = g_6$ and let
$(g_5xv, \mu), (g_3 \theta'v, \nu)$ be a $2-$path covering of $\Q_n^{bot} -\{r_3\}$ that connects $g_5xv$ to $g_6v$ and
$g_3\theta'v$ to $g_6 \varphi(\zeta)v.$ Such path covering exists since $[1,1,1,1]=4.$ The desired $3-$path covering of
$\Q_n- \F$ for this case is $(g_1,\xi), (g_3, \theta'v\nu v \zeta^*), (g_5, x v \mu v).$

\emph{Case 2.} $g_1,g_2,g_3,g_4,g_5$ are on the top plate and $g_6$ is on the bottom plate.

Let $x$ be a letter such that  $g_5x$ is not a deleted vertex and $g_5xv \neq g_6.$ Let $(g_1,\xi), (g_3,\eta)$ be a
$2-$path covering of $\Q_n^{top}-\{r_1,r_2,g_5,g_5x\}$ that connects $g_1$ to $g_2$ and $g_3$ to $g_4.$ Such path
covering exists since $[4,2,0,2]=5.$ Let $(g_5 x v, \mu)$ be a Hamiltonian path of $\Q_n^{bot}-\{r_3\}$ that connects
$g_5 xv$ to $g_6.$ Such path exists since $[1,1,0,1]=2.$ The desired $3-$path covering of $\Q_n- \F$ for this case is
$(g_1,\xi), (g_3, \eta), (g_5, x v \mu ).$

\emph{Case 3.}  $g_1,g_2,g_3,g_4,$ are on the top plate and $g_5, g_6$ are on the bottom plate.

Here we simply connect $g_1$ to $g_2$ and $g_3$ to $g_4$ by a $2-$path covering of $\Q_n^{top}-\{r_1,r_2\}$ and $g_5$ to
$g_6$ by a Hamiltonian path of $\Q_n^{bot}-\{r_3\}.$ That produces the desired $3-$path covering of $\Q_n- \F$ for this
case.

\emph{Case 4.}  $g_1,g_2,g_3,g_5$ are on the top plate and $g_4, g_6$ are on the bottom plate.

Let $x$ be a letter such that $g_3x v \neq g_4,g_6,$ and let $g$ be any green vertex on the top plate such that $gv \neq
r_3.$ Let $(g_1, \xi), (g_5, \eta)$ be a $2-$path covering of $\Q_n^{top}-\{r_1,r_2,g_3,g_3x\}$ that connects $g_1$ to
$g_2$ and $g_5$ to $g.$ Such path covering exists since $[4,2,0,2]=5.$ Let $(g_3xv,\mu),(gv,\nu)$ be a $2-$path covering
of $\Q_n^{bot}-\{r_3\}$ that connects $g_3 xv$ to $ g_4$ and $gv$ to $g_6.$ Such path covering exists since
$[1,1,1,1]=4.$ The desired $3-$path covering of $\Q_n- \F$ for this case is $(g_1,\xi), (g_3, xv\mu), (g_5, \eta v \nu
).$

\emph{Case 5.}  $g_1,g_2,g_3,$ are on the top plate and $g_4,g_5, g_6$ are on the bottom plate.

Let $g$ be a green vertex on the top plate such that $gv\neq r_3.$ Let $(g_1, \xi), (g_3, \eta)$ be a $2-$path covering
of $\Q_n^{top}-\{r_1,r_2\}$ that connects $g_1$ to $g_2$ and $g_3$ to $g.$ Such path covering exists since $[2,2,0,2]=
4.$ Let $(gv, \mu), (g_5, \nu)$ be a $2-$path covering of $\Q_n^{bot}-\{r_3\}$ that connects $gv$ to $g_4$ and $g_5$ to
$g_6.$ Such path covering exists since $[1,1,1,1]= 4.$ The desired $3-$path covering of $\Q_n- \F$ for this case is
$(g_1,\xi), (g_3, \eta v\mu), (g_5, \nu ).$

\emph{Case 6.}  $g_1,g_3,g_5$ are on the top plate and $g_2,g_4,g_6$ are on the bottom plate.

Without loss of generality we can assume that $g_5v \neq r_3.$ Since $g_1$ and $g_3$ together have at least eight
neighbors in $\Q_n^{top}$ (Lemma \ref{neighborscardinality}) and there are only two deleted red vertices on the top
plate and three green terminals on the bottom plate, we can also assume, renumbering $g_1$ and $g_3$, if necessary, that
there is a letter $x$ such that $g_3xv$ is not a terminal and $g_3x$ is not a deleted vertex. Finally, let $y$ be a
letter such that $g_2 y \ne r_3$ and $g_2 y v $ is not a terminal. Let $(g_1, \eta)$ be a Hamiltonian path of
$\Q_n^{top}-\{r_1,r_2,g_3,g_3x,g_5\}$ that connects $g_1$ to $g_2yv.$ Such path exists since $[5,1,0,1]=5.$ Let $(g_3xv,
\theta), (g_5v, \zeta)$ be a $2-$path covering of $\Q_n^{bot}-\{r_3, g_2,g_2y\}$ that connects $g_3xv$ to $g_4$ and
$g_5v$ to $g_6.$ Such path exists since $[3,1,1,1]=5.$ The desired $3-$path covering of $\Q_n-\{r_1,r_2,r_3\}$ for this
case is $(g_1, \eta v y), (g_3, x v \theta), (g_5, v \zeta).$

\emph{Case 7.} $g_1, g_2$ are on the top plate and $g_3,g_4,g_5,g_6$ are on the bottom plate.

Let $x,y$ be letters such that neither $g_5 xv$ nor $g_6 yv $ is a terminal vertex. Let $(g_1, \xi),$ $(g_5 xv, \eta)$
be a $2-$path covering of $\Q_n^{top}-\{r_1,r_2\}$ that connects $g_1$ to $g_2$ and $g_5xv $ to $g_6 yv$
($[2,2,0,2]=4$). Let $(g_3,\mu)$ be a Hamiltonian path of $\Q_n^{bot}-\{r_3,g_5,g_5x,g_6,g_6y\}$ that connects $g_3$ to
$g_4.$ Such path exists since $[5,1,0,1]=5.$ The desired $3-$path covering of $\Q_n-\{r_1,r_2,r_3\}$ for this case is
$(g_1, \xi),$ $(g_3, \mu),$ $(g_5, x v \eta v y).$

\emph{Case 8.} $g_1, g_3$ are on the top plate and $g_2,g_4,g_5,g_6$ are on the bottom plate.

Let $x$ be a letter such that $g_4x\ne r_3$ and $g_4xv$ is not a terminal, and let $g$ be any green vertex on the top
plate such that $gv \neq r_3.$ Let $(g_1, \xi), (g_3, \eta)$ be a $2-$path covering of $\Q_n^{top}-\{r_1,r_2\}$ that
connects $g_1$ to $g$ and $g_3$ to $g_4 x v$ ($[2,2,0,2]=4$). Let $(gv,\mu),(g_5,\nu)$ be a $2-$path covering of
$\Q_n^{bot}-\{r_3, g_4, g_4 x\}$ that connects $gv$ to $g_2$ and $g_5$ to $g_6.$ Such path covering exists since
$[3,1,1,1]=5.$ The desired $3-$path covering of $\Q_n-\{r_1,r_2,r_3\}$ for this case is $(g_1, \xi v \mu),$ $(g_3, \eta
v x ),$ $(g_5,\nu).$

\emph{Case 9.} $g_1 $ is on the top plate and $g_2,g_3,g_4,g_5,g_6$ are on the bottom plate.

Assume that there exists a letter $x$ such that $g_2 x = g_1 v \neq r_3.$  Let $(g_3, \xi),$ $(g_5, \eta)$ be any
$2-$path covering of $\Q_n^{bot}-\{r_3,g_2x\}$ that connects $g_3$ to $g_4$ and $g_5$ to $g_6.$ Such path covering
exists since $[2,2,0,2]=4.$ Without loss of generality we can assume that $g_2$ lies on the path connecting $g_3$ to
$g_4.$ Let $\xi = \mu \nu$ with $g_3 \mu = g_2$ and let $(g_3 \mu' v, \zeta)$ be a Hamiltonian path of
$\Q_n^{top}-\{r_1,r_2,g_1\}$ that connects $g_3 \mu'v $ to $g_2 \varphi(\nu) v$ ($[3,1,0,1]=4$). The desired $3-$path
covering of $\Q_n-\{r_1,r_2,r_3\}$ for this case is $(g_1, v x), (g_3, \mu' v \zeta v \nu^* ), (g_5,\eta).$

If $g_1 v = r_3$ or if the distance from $g_1$ to $g_2$ is greater than $2$ we let $x$ be any letter such that $g_2 x
\neq r_3.$ Let $(g_3, \xi), (g_5, \eta)$ be any $2-$path covering of $\Q_n^{bot}-\{r_3,g_2x\}$ that connects $g_3$ to
$g_4$ and $g_5$ to $g_6$ ($[2,2,0,2]=4$). Without loss of generality we can assume that $g_2$ lies on the path
connecting $g_3$ to $g_4.$ Let $\xi = \mu \nu$ with $g_3 \mu = g_2$ and let $(g_1, \theta), ( g_3 \mu' v, \zeta)$ be a
$2-$path covering of $\Q_n^{top}-\{r_1,r_2\}$ that connects $g_1$ to $g_2xv$ and $g_3 \mu'v $ to $g_2 \varphi(\nu) v$
($[2,2,0,2]=4$). The desired $3-$path covering of $\Q_n-\{r_1,r_2,r_3\}$ for this case is $(g_1, \theta v x), (g_3, \mu'
v \zeta v \nu^* ), (g_5,\eta).$

\emph{Case 10.} All the green terminals are on the bottom plate.

Let $x$ and $y$ be any letters different from $v.$ Let $(g_1,\xi)$ be a Hamiltonian path of $Q_n^{bot}-\{r_3, g_5, g_5x,
g_6, g_6y \}$ that connects $g_1$ to $g_2.$ Such path exists since $[5,1,0,1]=5.$  We can assume that $\xi = \eta \theta
\zeta $ with $ g_3 = g_1 \eta, g_4 = g_3 \theta,$ by renumbering $g_3$ and $g_4$, if necessary. Let $(g_5 x v, \mu),
(g_1, \eta' v, \nu)$ be a $2-$path covering of $\Q_n^{top}-\{r_1,r_2\}$ that connects $g_5xv$ to $g_6yv$ and $g_1 \eta'
v$ to $ g_4 \varphi(\zeta)v$ ($[2,2,0,2]=4$). The desired $3-$path covering of $\Q_n-\{r_1,r_2,r_3\}$ for this case is
$(g_1, \eta' v \nu v  \zeta^* )$, $(g_3, \theta)$, $(g_5,xv \mu v y).$
\end{proof}

The following corollary follows directly from Lemma \ref{threecoveringlemma}, Lemma \ref{threechargedpaths}, and Lemma
\ref{neutraltocharged}.

\begin{corollary}\label{c0030}
$5 = [0,0,3,0] \leq [1,1,2,1] \leq [2,2,1,2] \leq 6$.
\end{corollary}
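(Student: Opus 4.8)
The plan is to read $[0,0,3,0]=5$ directly from Lemma \ref{threecoveringlemma} and then to produce each of the three inequalities by a single application of Lemma \ref{neutraltocharged}, taking $(M,C,N,O)$ equal to $(0,0,3,0)$, $(1,1,2,1)$, and $(2,2,1,2)$ in turn; in each case the quadruple $(M+1,C+1,N-1,O+1)$ is exactly the next entry in the chain. The first thing I would verify is that all three quadruples meet the standing hypotheses of Lemma \ref{neutraltocharged}: $M$, $C$, $O$ share a parity (all even, all odd, all even, respectively), $C\le M$ and $O\ge C$ (each holds with equality), and $N\ge 1$ (here $N=3,2,1$).

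Next I would check inequality (\ref{criticalinequality}) for each quadruple. For $(0,0,3,0)$ it reads $3k+1-\binom{4}{2}>3$, i.e.\ $k\ge 3$; for $(1,1,2,1)$ it reads $2k+1-\binom{3}{2}>4$, i.e.\ $k\ge 4$; and for $(2,2,1,2)$ it reads $k+1-\binom{2}{2}>5$, i.e.\ $k\ge 6$. Since the left-hand side of (\ref{criticalinequality}) is increasing in $k$ (the coefficient of $k$ is $N\ge 1$), once the inequality holds at some $k_0$ it holds for all $k\ge k_0$. I would also record that each set $\mathcal{A}_{M,C,N,O}$ is upward closed, so once nonempty it equals $\{m:\ m\ge[M,C,N,O]\}$.

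For the first two inequalities I would combine the thresholds above with the lower bound from part (i) of the definition of $\mathcal{A}_{M,C,N,O}$, namely $[M,C,N,O]\ge\log_2(M+C+2N+2O)$. This gives $[1,1,2,1]\ge\log_2 8=3$ and $[2,2,1,2]\ge\log_2 10>3$, hence $[2,2,1,2]\ge 4$. Thus every $k\ge[1,1,2,1]$ satisfies $k\ge 3$ and $k\in\mathcal{A}_{1,1,2,1}$, so Lemma \ref{neutraltocharged} with $(M,C,N,O)=(0,0,3,0)$ yields $k\in\mathcal{A}_{0,0,3,0}$, whence $[0,0,3,0]\le[1,1,2,1]$. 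Likewise every $k\ge[2,2,1,2]$ satisfies $k\ge 4$ and $k\in\mathcal{A}_{2,2,1,2}$, so the lemma with $(M,C,N,O)=(1,1,2,1)$ gives $k\in\mathcal{A}_{1,1,2,1}$ and hence $[1,1,2,1]\le[2,2,1,2]$ (this also shows $\mathcal{A}_{1,1,2,1}\ne\emptyset$, so $[1,1,2,1]$ is well defined).

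The one step needing extra care is the last inequality, and this is where I expect the only real subtlety to lie. Lemma \ref{threechargedpaths} gives only $[3,3,0,3]\le 6$, so I cannot write a clean comparison $[2,2,1,2]\le[3,3,0,3]$; instead I would apply Lemma \ref{neutraltocharged} with $(M,C,N,O)=(2,2,1,2)$ at the single value (and beyond) forced by its threshold. Since $[3,3,0,3]\le 6$ and $\mathcal{A}_{3,3,0,3}$ is upward closed, every $k\ge 6$ lies in $\mathcal{A}_{3,3,0,3}$, and every such $k$ meets the threshold $k\ge 6$ of (\ref{criticalinequality}); the lemma then gives $k\in\mathcal{A}_{2,2,1,2}$ for all $k\ge 6$, which is precisely $[2,2,1,2]\le 6$. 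Combining the three inequalities with $[0,0,3,0]=5$ finishes the proof.
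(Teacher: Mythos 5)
Your proposal is correct and is exactly the paper's intended argument: the paper states the corollary "follows directly" from Lemma \ref{threecoveringlemma} ($[0,0,3,0]=5$), Lemma \ref{threechargedpaths} ($[3,3,0,3]\le 6$), and Lemma \ref{neutraltocharged}, and your three chained applications of Lemma \ref{neutraltocharged} with the threshold computations $k\ge 3$, $k\ge 4$, $k\ge 6$ (plus the upward-closure and well-definedness remarks) are precisely the details the paper leaves to the reader.
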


\begin{corollary} $[0,0,1,2] \leq 6$,
$5\le [1,1,0,3] \leq 6$, and $[5,1,1,1]\geq 5.$\footnote{While this paper was under review the authors were
able to prove that $[0,0,1,2] = 4$ (\cite{CGGL1}), $[1,1,0,3]=5$, 
$[1,1,2,1]=5$ (\cite{CGGL3}), $[4,0,2,0]=5$ and $[7,1,0,1]=6$ 
(\cite{CGGL2}).}
\end{corollary}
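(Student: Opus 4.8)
The plan is to obtain all three bounds from the transfer Lemmas \ref{neutraltocharged}, \ref{redchargedtoneutral}, and \ref{greenchargedtoneutral}, anchored to the values $[2,2,1,2]\le 6$ (Corollary \ref{c0030}) and $[4,2,0,2]=5$ (Lemma \ref{l4202}); the only genuinely new ingredient will be a direct obstruction in $\Q_4$ for the lower bound $5\le[1,1,0,3]$. In each transfer the sole verification is that the relevant critical inequality holds at the dimensions in play, and in every instance below this reduces to $k\ge 4$.

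For the two upper bounds I would work in the order $[1,1,0,3]\le 6$, then $[0,0,1,2]\le 6$. Applying Lemma \ref{redchargedtoneutral} with target $(M,C,N,O)=(1,1,0,3)$ forces the source $(2,2,1,2)$; the hypotheses ($C\le M$, $O>C$, equal parities) hold, and inequality (\ref{criticalinequality2}) becomes $2k-2>4$, i.e. $k\ge 4$. Since $6\in\mathcal{A}_{2,2,1,2}$ (Corollary \ref{c0030}) and $6\ge 4$, we get $6\in\mathcal{A}_{1,1,0,3}$, so $[1,1,0,3]\le 6$. Feeding this into Lemma \ref{neutraltocharged} with target $(0,0,1,2)$ and source $(1,1,0,3)$ (here $N=1$, and inequality (\ref{criticalinequality}) becomes $k>3$) yields $6\in\mathcal{A}_{0,0,1,2}$, hence $[0,0,1,2]\le 6$.

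For $[5,1,1,1]\ge 5$ I would argue by contraposition through Lemma \ref{greenchargedtoneutral}. Choosing target $(M,C,N,O)=(4,2,0,2)$ makes the source exactly $(5,1,1,1)$; the hypotheses ($C\ge 1$, $O\ge C$, $C\le M$, equal parities) hold and inequality (\ref{criticalinequality3}) becomes $4k-9>3$, already valid at $k=4$. Thus $4\in\mathcal{A}_{5,1,1,1}$ would entail $4\in\mathcal{A}_{4,2,0,2}$, contradicting $4\notin\mathcal{A}_{4,2,0,2}$ (which holds since $[4,2,0,2]=5$, Lemma \ref{l4202}). Therefore $4\notin\mathcal{A}_{5,1,1,1}$ and $[5,1,1,1]\ge 5$.

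The remaining bound $5\le[1,1,0,3]$ is the crux, since it is unreachable by the transfer machinery: the unique admissible transfer with $(1,1,0,3)$ as source (Lemma \ref{neutraltocharged}) only produces the target $(0,0,1,2)$, yielding the too-weak $[1,1,0,3]\ge[0,0,1,2]$. I would instead prove $4\notin\mathcal{A}_{1,1,0,3}$ directly, by exhibiting in $\Q_4$ a single deleted red vertex together with a set $\mathcal{E}$ of one red pair and two green pairs in balance with the fault---the balance relation $r(\F)-g(\F)=g(\mathcal{E})-r(\mathcal{E})$ forcing precisely this two-green/one-red split---for which no $3$-path covering of $\Q_4$ minus the deleted vertex has $\mathcal{E}$ as its set of end-pairs. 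Such an obstruction is of the same type as the $\Q_4$ counterexamples already used for $[2,0,0,2]\ge 5$ and $[0,0,3,0]\ge 5$; the hard part is locating explicit terminals for which every candidate covering fails and certifying that failure, which I expect to settle by a (computer-assisted) case analysis rather than by the routine inequality bookkeeping that handles the other three bounds.
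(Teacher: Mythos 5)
Your treatment of three of the four bounds is correct and is essentially the paper's own argument. For $[1,1,0,3]\le 6$ you apply Lemma \ref{redchargedtoneutral} with source $(2,2,1,2)$ and anchor $[2,2,1,2]\le 6$ from Corollary \ref{c0030}, exactly as the paper does; for $[5,1,1,1]\ge 5$ your contrapositive use of Lemma \ref{greenchargedtoneutral} against $[4,2,0,2]=5$ (Lemma \ref{l4202}) is precisely the paper's argument, and your verification of inequality (\ref{criticalinequality3}) at $k=4$ is right. For $[0,0,1,2]\le 6$ you deviate slightly: the paper applies Lemma \ref{redchargedtoneutral} a second time, with source $(1,1,2,1)$ and anchor $[1,1,2,1]\le 6$ from Corollary \ref{c0030}, whereas you chain through your freshly obtained $6\in\mathcal{A}_{1,1,0,3}$ via Lemma \ref{neutraltocharged}. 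Both verifications reduce to $k\ge 4$, so this difference is cosmetic and your version is equally valid.

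The genuine gap is the lower bound $5\le[1,1,0,3]$. You correctly diagnose that the transfer machinery cannot yield it (the only transfer with $(1,1,0,3)$ as source targets $(0,0,1,2)$, whose lower bound is too weak), and you correctly describe the required witness: a fault of mass one in $\Q_4$ together with one red pair and two green pairs of terminals, in balance with the fault, admitting no $3$-path covering. But you stop at the description and defer the existence of such a configuration to an unperformed computer search; a proof of the corollary requires the witness itself. The paper exhibits one: delete $r=(0,1,1,0)$ from $\Q_4$ and prescribe the red pair $r_1=(0,0,1,1)$, $r_2=(0,1,0,1)$ and the green pairs $g_1=(1,0,1,1)$, $g_2=(1,1,1,0)$ and $g_3=(1,1,0,1)$, $g_4=(1,0,0,0)$; one then verifies directly that no $3$-path covering of $\Q_4-\{r\}$ connects $r_1$ to $r_2$, $g_1$ to $g_2$, and $g_3$ to $g_4$. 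Note that this is exactly the counterexample used for $[2,0,0,2]\ge 5$ in Lemma \ref{l2002}, with the deleted green vertex $(1,1,0,1)$ converted into a terminal paired with the new green vertex $(1,0,0,0)$ --- so a hand construction, perturbing the known obstructions rather than searching blindly, would have closed your gap. Until such a configuration is produced and verified, your argument establishes only the other three bounds, not $5\le[1,1,0,3]$.
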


\begin{proof}
The upper bounds of the first two inequalities follow directly from Corollary \ref{c0030} and Lemma
\ref{redchargedtoneutral}. The last inequality follows from Lemma \ref{greenchargedtoneutral} and the fact that
$[4,2,0,2]=5$ (Lemma \ref{l4202}). The following counterexample shows that $[1,1,0,3]\ge 5$.

Let $n=4$ and $r=(0,1,1,0)$. Let also $r_1 = (0,0,1,1)$, $r_2 = (0,1,0,1)$, $g_1 = (1,0,1,1)$, $g_2 = (1,1,1,0)$, $g_3 =
(1,1,0,1)$, and  $g_4=(1,0,0,0)$ be vertices in $\Q_4-\{r\}$. Then one can directly verify that a $3-$path covering of
$\Q_4-\{r\}$ with paths connecting $r_1$ to $r_2$, $g_1$ to $g_2$, and $g_3$ to $g_4$ does not exist.
\end{proof}

\begin{lemma}\label{locke4}$([8]=6)$ 
Let $ n \ge 6 $ and $\F$ be any neutral fault of mass eight in $\Q_n.$ 
Then $\Q_n - \F$ is Hamiltonian.
\end{lemma}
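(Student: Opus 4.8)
The lower bound $[8]\ge 6$ is immediate from the general inequality $[2k]\ge k+2$. For the upper bound fix $n\ge 6$ and write $\F=\{r_1,r_2,r_3,r_4,g_1,g_2,g_3,g_4\}$, the $r_i$ red and the $g_i$ green. As in the proofs of Lemmas \ref{locke2} and \ref{locke3}, the plan is to split $\Q_n$ into two plates $\Q_n^{top},\Q_n^{bot}$, each isomorphic to $\Q_{n-1}$ with $n-1\ge 5$, to realize a convenient distribution of the eight deleted vertices, and then to build a Hamiltonian cycle of $\Q_n-\F$ by producing cycles or path coverings on the two plates and joining them through bridges (performing surgery where needed). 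A useful preliminary remark is that since $\F$ is neutral, every split produces two plates of \emph{equal} charge: if the top plate carries $a$ red and $b$ green deletions, then both plates have charge $|a-b|$. The governing parameters of the argument are therefore how the four red and the four green deleted vertices distribute between the plates.

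First I would treat the generic situation in which the splitting coordinate can be chosen so that each plate carries exactly two red and two green deleted vertices. Then each plate has a neutral fault of mass $4$, so by $[4]=4$ there is a Hamiltonian cycle on each plate avoiding its deletions. I would merge these two cycles into a single Hamiltonian cycle of $\Q_n-\F$ by locating an edge $\{a,b\}$ of the top cycle whose shadow edge $\{av,bv\}$ lies on the bottom cycle, deleting these two edges and inserting the bridges $\{a,av\}$ and $\{b,bv\}$. Such a pair of parallel bridges exists by the same counting (degree) argument used for the surgery in Case~3 of the lemma establishing $[3,1,0,1]=4$, which is comfortable here because each plate has dimension at least $5$.

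When a balanced split of this kind is unavailable, I would instead separate the reds as $3$ on one plate and $1$ on the other and sub-case on the green distribution, mirroring the case structure of Lemma \ref{locke3} but one unit larger. On the plate carrying more red deletions one typically obtains a charged sub-fault of mass up to $5$, for which the strong lemmas $[5,1,0,1]=5$, $[4,2,0,2]=5$, and $[3,1,0,1]=4$ produce the required path coverings with green terminals; on the complementary plate the smaller faults are handled by $[4]=4$, $[2,0,1,0]=4$, $[2,2,0,2]=4$, or $[1,1,0,1]=2$, after which the pieces are glued through two bridges. In each case one selects the bridge feet among non-deleted vertices and, whenever a deleted vertex lands on a constructed path, cuts the path immediately before it and re-routes through a bridge, exactly as in the earlier proofs.

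The main obstacle is the organization and verification of this case analysis rather than any single hard step, and two points require care. First, one must always be able to choose the bridge feet (and, in the cycle-merging case, the two parallel bridges) among non-deleted, non-terminal vertices; this is a counting argument that is easy in dimension $\ge 5$ but must be confirmed in each case. Second, and more delicate, is the bookkeeping of charge: because the two plates always carry equal charge, one must arrange the plate coverings so that the prescribed endpoints and the bridge feet have parities that match and glue into a \emph{single} cycle. The genuinely exceptional configurations are those in which the four reds (or the four greens) all lie in the neighborhood of one common vertex, so that no coordinate splits them $2$–$2$; these are precisely the ones that force the three-and-one red split and the use of the charged-path lemmas above, and isolating one such vertex onto a plate and applying $[5,1,0,1]=5$ and $[4,2,0,2]=5$ is the key that makes even these cases go through.
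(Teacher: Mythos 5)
Your overall plan (split into plates, case on the distribution of the eight deleted vertices, glue plate coverings through bridges) is the paper's plan, but two of your key steps do not work as stated. The first is the merging step in your ``generic'' case. You build Hamiltonian cycles on both plates independently via $[4]=4$ and then claim that an edge $\{a,b\}$ of the top cycle whose shadow $\{av,bv\}$ lies on the bottom cycle exists ``by the same counting (degree) argument'' as in Case~3 of the lemma establishing $[3,1,0,1]=4$, and that this is ``comfortable here because each plate has dimension at least $5$.'' You have this backwards: that pigeonhole works only because the plates there are copies of $\Q_3$, where each vertex has exactly three plate-neighbors, so the two shadows of a vertex's path-neighbors and the two cycle-neighbors of its shadow are $2$-subsets of a $3$-set and must intersect. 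In a plate of dimension $n-1\ge 5$ the inequality $2+2>n-1$ fails, and nothing forces two independently constructed cycles to contain a parallel pair of edges (each cycle uses only $2^{n-1}-4$ of the $(n-1)2^{n-2}$ plate edges, so counting does not rule out shadow-disjointness). This is precisely why the paper never constructs both plates independently: in its Subcase A3 (your generic case, two reds and two greens on each plate) it builds a Hamiltonian path on the top plate through the mass-$4$ fault, chooses the bridge feet, and only then covers the bottom plate with \emph{prescribed ends} using $[4,0,1,0]=5$ (legitimate since the plate dimension is $n-1\ge5$); in higher-dimensional surgery situations it uses the prescribed-edge Lemma~\ref{chargeoneoneedgeonepath}. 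Your generic case can be repaired this way, but as written the gluing step is unjustified.

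The second gap is in the case analysis itself. You never address the configuration in which the reds split $3$--$1$ and all four greens land on the plate containing the single red (the paper's Subcase~B5), and your proposed toolkit cannot handle it: the red-heavy plate then carries a fault of mass $3$ and charge $3$, and after routing a Hamiltonian path on the other plate through the three deleted greens and cutting around them, one must join six green cut-vertices by \emph{three charged paths} on the red-heavy plate. That is $[3,3,0,3]$, i.e.\ Lemma~\ref{threechargedpaths}, used together with Lemma~\ref{separatinglemma} to keep the cut points at distance at least $4$; none of the lemmas you cite ($[5,1,0,1]=5$, $[4,2,0,2]=5$, $[3,1,0,1]=4$, \dots) applies to a charge-$3$ fault with three charged pairs. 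Moreover Lemma~\ref{threechargedpaths} requires dimension at least $6$ while the plates have dimension $n-1$, so for $n=6$ the paper needs an additional argument that this configuration can always be avoided by choosing a different splitting coordinate. Your proposal also does not cover plate faults of mass $6$ (two reds plus all four greens on one plate, the paper's Subcase~A1, handled by routing the plate paths \emph{into} the deleted greens and trimming the ends). Since the entire content of this lemma is exactly this organization and these borderline cases, these omissions are genuine gaps rather than routine verification.
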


\begin{proof} Let $n\ge 6.$ We split $\Q_n$ into two plates so that
each plate has at least one  red deleted vertex. There are two general cases: \emph{Case A} -- there are two red deleted
vertices on each plate and \emph{Case B} -- there are three red deleted vertices on the top plate and one red deleted
vertex on the bottom plate. Within each general case there are subcases that depend on the distribution of the green
deleted  vertices on the plates.

Let the fault be $\F=\{r_1, r_2, r_3, r_4, g_1, g_2, g_3, g_4\}$ with the $r_i$ red and the $g_i$ green.

\emph{Case A.} $r_1, r_2$ are on the top plate and $r_3, r_4$ are on the bottom plate.

\emph{Subcase A1.} All the green deleted vertices are on the top plate.

Let $(g_1, \xi),$ $(g_2, \eta) $ be a $2-$path covering of $\Q_n^{top}-\{r_1, r_2\}$ that connects $g_1$ to $g_3$ and
$g_2$ to $g_4.$ Such path covering exists since $[2,2,0,2]=4.$ Let $(g_1 \xi'v, \mu),$ $(g_1 \varphi(\xi) v, \nu)$ be a
$2-$path covering of $\Q_n^{bot}-\{r_3,r_4\}$ that connects $g_1\xi' v$ to $g_2 \eta' v$ and $g_1 \varphi(\xi)v$ to
$g_2\varphi(\eta)v.$ Such path covering exists since $[2,2,0,2]=4.$ The desired Hamiltonian cycle for this case is 
$(g_1 \varphi(\xi), \xi '^* v \mu v (\eta'^*)^R v \nu^R v).$

\emph{Subcase A2.} $g_1,g_2,g_3$ are on the top plate and $g_4$ is on the bottom plate.

Let $r_5, r_6$ be any two non-deleted red vertices on the top plate such that neither $r_5v$ nor $r_6v$ is a deleted
vertex. Let $(r_5,\xi)$ be a Hamiltonian path of $\Q_n^{top}-\{r_1,r_2,g_1,g_2,g_3\}$ that connects $r_5$ to $r_6.$ Such
path exists since $[5,1,0,1]=5.$ Let $(r_6v,\eta)$ be a Hamiltonian path of $\Q_n^{bot}-\{r_3,r_4,g_4\}$ that connects
$r_6v$ to $r_5v.$ Such path exists since $[3,1,0,1]=4.$ The desired Hamiltonian cycle for this case is $(r_5, \xi v \eta
v).$

\emph{Subcase A3.} $g_1,g_2$ are on the top plate and $g_3, g_4$ are on the bottom plate.

Let $r, g$ be a red and a green non-deleted vertices on the top plate such that neither $rv$ nor $gv$ is a deleted
vertex. Let $(r,\xi)$ be a Hamiltonian path of $\Q_n^{top}-\{r_1,r_2,g_1,g_2\}$ that connects $r$ to $g.$ Such path
exists since $[4,0,1,0]=5.$ Let $(gv , \eta)$ be a Hamiltonian path of $\Q_n^{bot}-\{r_3,r_4,g_3,g_4\}$ that connects
$gv$ to $rv.$ Such path exists since   The desired Hamiltonian cycle for this case is $(r, \xi v \eta v).$

\emph{Case B.} $r_1, r_2, r_3$ are on the top plate and $r_4$ is on the bottom plate.

\emph{Subcase B1.} All the green deleted vertices are on the top plate.

Let $(g_1, \xi)$ be a Hamiltonian path for $\Q_n^{top}-\{r_1,r_2,r_3,g_3,g_4\}$ that connects $g_1$ to $g_2.$ Such path
exists since $[5,1,0,1]=5.$ Let $(g_1 \xi' v,\eta)$ be a Hamiltonian path of $\Q_n^{bot}-\{r_4\}$ that connects $g_1\xi'
v$ to $g_1 \varphi(\xi)v.$ Such path exists since $[1,1,0,1]=2.$ The desired Hamiltonian cycle for this case is $(g_1
\varphi(\xi), \xi'^* v \eta v).$

\emph{Subcase B2.} $g_1,g_2,g_3$ are on the top plate and $g_4$ is on the bottom plate.

Let $\gamma $ be any Hamiltonian cycle of $\Q_n^{top}-\{r_1,r_2,r_3,g_1,g_2,g_3\}.$ Such cycle exists since $[6]=5.$ We
can find a vertex $g$ on this cycle such that $\gamma =(g,\xi)$ with neither $gv$ nor $g\xi'v$ being a deleted vertex.
Let $(g\xi'v, \eta)$ be a Hamiltonian path of $\Q_n^{bot}-\{r_4,g_4\}$ that connects $g\xi'v$ to $gv.$ The desired
Hamiltonian cycle for this case is $(g, \xi' v \eta v).$

\emph{Subcase B3.} $g_1$ is on the top plate and $g_2,g_3,g_4$ are on the bottom plate.

Let $g_5, g_6, g_7, g_8$ be any green non-deleted vertices on the top plate such that none of $g_5v, g_6v, g_7v ,g_8v$
is a deleted vertex. Let $(g_5, \xi), (g_7, \eta)$ be a $2-$path covering of $\Q_n^{top}-\{r_1,r_2,r_3,g_1\}$ that
connects $g_5$ to $g_6$ and $g_7$ to $g_8.$ Such path covering exists since $[4,2,0,2]=5.$ Let $(g_6v, \mu ), (g_8,\nu)$
be a $2-$path covering of $\Q_n^{bot}-\{r_4,g_2,g_3,g_4\}$ that connects $g_6v$ to $g_7v$ and $g_8v$ to $g_5v.$ Such
path covering exists since $[4,2,0,2]=5.$ The desired Hamiltonian cycle for this case is $(g_5,\xi v \mu v \eta v \nu
v).$

\emph{Subcase B4.} $g_1,g_2$ are on the top plate and $g_3, g_4$ are on the bottom plate.

This case is equivalent to \emph{Subcase A2.}

\emph{Subcase B5.} All the green deleted vertices are on the bottom plate.

This case can be avoided if $n=6.$ Indeed, if the four deleted red vertices are contained in a three dimensional subcube
of $\Q_6$ then we can split $\Q_6$ into two plates with $2$ deleted red vertices on each plate. If the four deleted red
verctices are not contained in any three dimensional subcube of $\Q_6$ then there are at least $4$ coordinates that
split the red vertices. At least one of these coordinates must split the green deleted vertices as well, for otherwise
the $4$ green deleted vertices would have to be contained in a two dimensional subcube which is impossible. Therefore,
for this case we assume that $n \ge 7.$

Let $(g_1, \xi )$ be a Hamiltonian path of $\Q_n^{bot}-\{r_4\}$ that connects $g_1$ to $g_4.$ Such path exists since
$[1,1,0,1]=2.$ It follows from Lemma \ref{separatinglemma}, renumbering $g_2$ and $g_3$, if necessary, that $\xi=
\eta\theta\zeta$ with $g_1 \eta = g_2$, $g_2 \theta = g_3$, $g_3 \zeta = g_4$ and the words $\eta$, $\theta$, and
$\zeta$ each of length at least $4.$ Let $(g_1\varphi (\eta) v, \kappa)$, $(g_3 \varphi (\zeta) v, \mu)$, and $(g_2
\varphi(\theta)v, \nu)$ be a $3-$path covering of $\Q_n^{top}-\{r_1,r_2,r_3\}$ that connects $g_1\varphi(\eta)v$ to $g_1
\xi'v,$ $g_3 \varphi(\zeta)v$ to $g_2\theta'v,$ and $g_2 \varphi(\theta)v$ to $g_1 \eta' v.$ The existence of such path
covering follows from Lemma \ref{threechargedpaths}. The desired Hamiltonian cycle for this case is $(g_1 \varphi(\eta),
v \kappa v (\zeta'^*)^R v \mu v (\theta'^*)^R v \nu v(\eta'^*)^R.$
\end{proof}

\section{Concluding Remarks and Conjectures}

We have found several values of $ [M,C,N,O]$  when the parameters involved are relatively small. Unfortunately, as the
parameters increase the number of cases to be considered in the proofs becomes extremely large. We hope that further
analysis and improvement of our proofs will lead to substantial simplifications. Our results support the following
conjectures:

\begin{conjecture}[Locke \cite{locke}]\label{C1} Let $k \ge 0.$ Then $[2k] = k+2.$
\end{conjecture}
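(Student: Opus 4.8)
The plan is to prove the conjecture by establishing the upper bound $[2k]\le k+2$ for every $k\ge 0$, since the matching lower bound $[2k]\ge k+2$ is already recorded (removing the $k$ neighbors of a fixed vertex of $\Q_{k+1}$ disconnects it). Equivalently, I must show that $\Q_n-\F$ is Hamiltonian whenever $n\ge k+2$ and $\F$ is a neutral fault of mass $2k$, and I would do this by strong induction on $k$; the cases $k\le 4$ are exactly $[0]=2$ together with Lemmas \ref{locke1}, \ref{locke2}, \ref{locke3} and \ref{locke4}. The essential point is that $[2k]\le k+2$ cannot be isolated from the rest of the $[M,C,N,O]$ machinery: a single plate-splitting step converts a Hamiltonicity statement into path-covering problems with prescribed ends on the two half-cubes, and those half-cube problems generically acquire a nonzero charge. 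I would therefore run a \emph{simultaneous} induction over a finite family of statements of a common level $k$, whose two distinguished members are $A_k\colon[2k]=k+2$ and the charged Hamiltonian-path statement $B_k\colon[2k-1,1,0,1]=k+2$ (so that $B_2,B_3,B_4$ read $[3,1,0,1]=4$, $[5,1,0,1]=5$, $[7,1,0,1]=6$), together with the auxiliary path coverings of smaller mass and charge --- of the types $[\,\cdot\,,0,1,0]$, $[\,\cdot\,,2,0,2]$, $[\,\cdot\,,1,1,1]$ and the three-path $[3,3,0,3]$ --- that already drive the proofs of Lemmas \ref{locke2}--\ref{locke4}.

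For the inductive step I would split $\Q_n$ (with $n\ge k+2$, so that each plate is a $\Q_{n-1}$ of dimension $n-1\ge k+1$) along a coordinate chosen to place at least one deleted red vertex on each plate, and then distribute the $k$ red and $k$ green deleted vertices, and any prescribed terminals, between the plates. In each resulting configuration the fault carried by a single plate has mass strictly smaller than $2k$, so every path-covering result needed there belongs to a strictly lower level and is available from the induction hypothesis; the recurring scheme, already visible in Lemma \ref{locke4}, is to produce a Hamiltonian cycle via $A_{k-1}$ or a charged Hamiltonian path via $B_{k-1}$, that is, $[2k-3,1,0,1]=k+1$, on the plate carrying the extra red faults, a complementary path covering on the other plate via the auxiliary members of the family, and then to splice the pieces together through one or two bridges. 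The separating Lemma \ref{separatinglemma} would be invoked whenever a guaranteed minimum distance between consecutive faults along a constructed path is needed in order to locate those bridges, exactly as in Case $4$ of Lemma \ref{locke3} and Subcase B5 of Lemma \ref{locke4}. The charged members $B_k$ and the multi-path coverings would be proved inside the same induction by the identical splitting scheme, and Lemmas \ref{neutraltocharged}, \ref{redchargedtoneutral} and \ref{greenchargedtoneutral} would be used to convert between neutral and charged versions at a fixed dimension, which is what keeps the family internally consistent.

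The main obstacle --- and the reason the general conjecture remains open despite this clean pattern --- is twofold. First, one must pin down an explicit finite family of $[M,C,N,O]$ statements that is genuinely \emph{closed} under the splitting reduction, meaning that every admissible split of a level-$k$ member decomposes into level-$(\le k-1)$ members of the same family; it is precisely here that the number of relevant parameter quadruples appears to proliferate as $k$ grows, in accordance with the warning in our concluding remarks that the case analysis becomes ``extremely large.'' Second, the base of the induction is delicate: whenever a plate degenerates to $\Q_3$ or $\Q_4$ the exceptional configurations encoded by the sets $\mathcal{B}_{\{r,g\}}$ (Lemma \ref{q3minustwovertices}) and the sporadic non-existence examples isolated in Lemmas \ref{twochargedpaths}, \ref{threecoveringlemma} and \ref{l4202} must each be avoided or circumvented by a judicious choice of the splitting coordinate and of the auxiliary vertices, in the spirit of the ``important for $n=4$'' provisos that recur throughout Section $3$. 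Simultaneously controlling the combinatorial growth of the statement family and these low-dimensional obstructions is the crux that a complete proof of Conjecture \ref{C1} would have to overcome.
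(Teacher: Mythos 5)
This statement is a \emph{conjecture} in the paper, not a theorem: the paper itself establishes only the lower bound $[2k]\ge k+2$ and the cases $0\le k\le 4$ (namely $[0]=2$ together with Lemmas \ref{locke1}, \ref{locke2}, \ref{locke3}, and \ref{locke4}), and it explicitly defers the case $k\ge 5$ to \cite{castanedagotchev4}, where Conjecture \ref{C1} is proved jointly with Conjecture \ref{chargeoneonepathconjecture}. Your proposal reconstructs precisely this situation: your lower-bound recollection is the paper's own argument, your base cases are the paper's lemmas, and your distinguished family $B_k\colon [2k-1,1,0,1]=k+2$ is Conjecture \ref{chargeoneonepathconjecture} reindexed, so the simultaneous induction you outline is exactly the mutual dependence the authors themselves describe (the proof of Conjecture \ref{C1} for $k\ge 5$ depends on Conjecture \ref{chargeoneonepathconjecture}, whose proof for $k\ge 4$ in \cite{CGGL4} depends in turn on Conjecture \ref{C1}). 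Since you state candidly that the closure of your auxiliary family under plate-splitting and the $\Q_3$/$\Q_4$ exceptional configurations remain unresolved, what you have written is a faithful plan matching the intended route rather than a proof --- but the paper contains no proof of the general statement either, so relative to the paper's own treatment nothing in your proposal is missing or wrong. One small imprecision worth fixing: removing $k$ of the $k+1$ neighbors of a fixed vertex of $\Q_{k+1}$ does not \emph{disconnect} the cube; it leaves that vertex with degree one (and the fault must then be padded with $k$ vertices of the opposite parity to become neutral of mass $2k$), and it is the degree-one vertex that obstructs a Hamiltonian cycle.
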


We have already discussed that $[2k] \ge k+2.$ And after this paper we know that 
the conjecture is true for $ 0 \le k \le 4.$ The proof of this conjecture for 
$k\ge 5$, which depends on the proof of Conjecture \ref{chargeoneonepathconjecture}, is contained in \cite{castanedagotchev4}.

\begin{conjecture} \label{chargeoneonepathconjecture} Let $k \ge 1.$ Then $[2k+1,1,0,1]=k+3.$
\end{conjecture}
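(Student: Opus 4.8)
The plan is to prove the two inequalities $[2k+1,1,0,1]\ge k+3$ and $[2k+1,1,0,1]\le k+3$ separately, the first by a single obstruction and the second by induction on $k$. By the duality between the two parities I may assume throughout that the fault $\F$ consists of $k+1$ red and $k$ green vertices and that the prescribed charged pair is a green pair $\{g_3,g_4\}$; the balance requirement $r(\F)-g(\F)=g(\mathcal E)-r(\mathcal E)$ then reads $1=1$, so such an $\mathcal E$ is admissible.

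For the lower bound I would exhibit a fault in $\Q_{k+2}$ for which free prescription fails; this shows $k+2\notin\mathcal A_{2k+1,1,0,1}$ and hence $[2k+1,1,0,1]\ge k+3$. Fix a green vertex $g^\ast$ of $\Q_{k+2}$, which has exactly $k+2$ (red) neighbors, delete $k+1$ of them, and delete $k$ further green vertices chosen away from $g^\ast$. The resulting fault has $k+1$ red and $k$ green vertices, so mass $2k+1$ and charge $1$, and in $\Q_{k+2}-\F$ the vertex $g^\ast$ has degree $1$. Prescribing a green pair $\{g_3,g_4\}$ with $g_3,g_4\ne g^\ast$ (which is in balance with $\F$) now leads to a contradiction: $g^\ast$ is a non-deleted vertex of degree $1$, so it must be an endpoint of any spanning path, yet the endpoints are required to be $g_3$ and $g_4$. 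The $2k+4$ vertices involved fit in $\Q_{k+2}$ for every $k\ge1$, and this is the exact analogue of the argument giving $[2k]\ge k+2$.

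For the upper bound I would induct on $k$, the cases $k=1$ and $k=2$ being the already-proved statements $[3,1,0,1]=4$ and $[5,1,0,1]=5$. Fix $n\ge k+3$ and split $\Q_n$ into plates $\Q_n^{top},\Q_n^{bot}\cong\Q_{n-1}$ along a coordinate chosen so that the red deleted vertices are distributed between the plates, noting $n-1\ge k+2$. The idea is to place a balanced portion of the fault on one plate and let the other plate carry the slightly unbalanced remainder: if $a$ red and $a$ green deleted vertices lie on the top plate, then the top carries a neutral fault of mass $2a$, handled by a Hamiltonian cycle ($[2a]=a+2$) when both terminals lie on the bottom and by a neutral Hamiltonian path ($[2a,0,1,0]$) when a terminal lies on top, while the bottom carries a fault of mass $2(k-a)+1$, charge $1$, together with the green terminals, which is an instance of $[2(k-a)+1,1,0,1]$ resolved by the induction hypothesis since $a\ge1$ gives $(k-a)+3\le k+2\le n-1$. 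The two pieces are stitched into a single green-to-green path by one bridge (when the split separates the terminals) or by surgery on two parallel bridges (when the top piece is a cycle), exactly as in the proofs of $[5,1,0,1]=5$ and $[6]=5$. As usual this breaks into subcases according to how the two green terminals and the green deleted vertices fall across the plates; each subcase reduces to a combination of a smaller member of one of these families with an already-established value such as $[1,1,0,1]=2$, $[2,0,1,0]=4$, $[1,1,1,1]=4$, $[2,2,0,2]=4$, or $[3,1,0,1]=4$, and Lemma \ref{separatinglemma} is available to spread the deleted green vertices along a plate path when a separation is needed.

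The main obstacle is twofold. First, the construction must actually attain $k+3$ rather than the weaker $k+4$ that the transfer Lemma \ref{greenchargedtoneutral} yields from $\mathcal A_{2k+2,0,1,0}$; the extra dimension is saved precisely by shunting a balanced chunk onto a plate, where a Hamiltonian cycle costs only dimension $a+2$, and this forces the analysis to run right at the boundary $n=k+3$ (plates of dimension exactly $k+2$), where there is no slack and the auxiliary bridge and cut vertices must be chosen with the same care the paper exercises for $n=4$ in its small lemmas. Second, the neutral portion on a plate is governed by the families $[2m]$ and $[2m,0,1,0]$, which for growing $k$ are invoked at indices up to roughly $k/2$ and are themselves only conjecturally equal to $m+2$ and $m+3$; consequently the cleanest route is a simultaneous induction in which $[2k+1,1,0,1]=k+3$, $[2k]=k+2$, and $[2k,0,1,0]=k+3$ are proved together, each inductive step feeding smaller instances of all three families into the plate constructions. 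Verifying that every subcase closes at dimension $k+2$ — in particular that no subcase secretly requires a family member of index larger than is available — is where the bulk of the bookkeeping lies.
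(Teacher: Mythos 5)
First, note that the paper itself does not prove this statement: it is posed as a conjecture, and within the paper only the lower bound $[2k+1,1,0,1]\ge k+3$ and the cases $k=1,2$ (the lemmas $[3,1,0,1]=4$ and $[5,1,0,1]=5$) are established; the cases $k=3$ and $k\ge 4$ are deferred to \cite{CGGL2} and \cite{CGGL4}, the latter depending on Locke's conjecture, whose proof in turn depends on the present conjecture. Your lower bound is correct and is a legitimate variant of the paper's own argument: you force a degree-one green vertex $g^\ast$ that cannot be an interior vertex of a spanning path, while the paper forces a degree-two red vertex whose only surviving neighbors are the two prescribed terminals, so that the path through it must be $g_1,r,g_2$. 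Either obstruction shows $k+2\notin\mathcal{A}_{2k+1,1,0,1}$.

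The genuine gap is in your upper bound, and it is structural rather than bookkeeping. Your induction rests on the assumption that the splitting coordinate can be chosen so that one plate carries a neutral sub-fault ($a$ red and $a$ green deleted vertices, $a\ge 1$) while the other carries a charge-one remainder. That is false. Already at the first inductive step $k=3$, in the boundary dimension $n=k+3=6$, consider the fault with red part $\{110000,\,101000,\,011000,\,000011\}$ and green part $\{000010,\,000001,\,000111\}$: for each of the coordinates $1,2,3$ one plate receives exactly two red vertices (charge $2$) and the other two red and three green vertices (charge $1$); for each of the coordinates $4,5,6$ one plate receives either one red and two green or zero red and one green vertices (charge $1$) and the other the rest (charge $2$). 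No coordinate produces a neutral plate, so your case analysis never starts. (An analogous example with red part $\{11000,10100,01100\}$ and green part $\{00010,00001\}$ shows the same phenomenon for $k=2$, $n=5$.) This is precisely why the paper's own proof of $[5,1,0,1]=5$ must invoke the charge-two result $[2,2,0,2]=4$ in its Case C, and why $[8]=6$ needs $[4,2,0,2]=5$ and $[3,3,0,3]\le 6$: splitting an odd, charge-one fault unavoidably produces plates of charge at least two, with masses and charges that grow with $k$. Consequently a simultaneous induction on only the three families $[2k+1,1,0,1]$, $[2k]$, $[2k,0,1,0]$, supplemented by finitely many small charged lemmas, cannot close; one needs the full two-parameter grid of charged statements, which is exactly what the paper's table and the cited follow-up papers develop. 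Since, in addition, none of your subcases is actually carried out, your upper bound remains a plan rather than a proof — consistent with the fact that the statement is left as a conjecture in the paper itself.
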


In this article we have proved that this conjecture is true for $k=1$ and $k=2$ 
and the proof for the case $k=3$ is contained in \cite{CGGL2}. The proof of this 
conjecture for $k\ge 4$, which depends on the proof of Conjecture \ref{C1}, is 
contained in \cite{CGGL4}. Here we can show that $[2k+1,1,0,1]
\ge k+3.$ Indeed, let $r$ be any red vertex in $\Q_{k+2}$ and $\F$ be a fault of mass $2k+1$ that  contains any $k+1$
red vertices different from  $r$ and all the green vertices adjacent to $r$ except two vertices $g_1$ and $g_2.$ Then,
obviously, the only path in $ \Q_{k+2} - \F$ that connects $g_1$ to $g_2$ and visits $r$ is of length $3$ and cannot be
a Hamiltonian path of $ \Q_{k+2} - \F$ if $k\ge 1.$

The following conjecture is a direct corollary of Conjecture \ref{chargeoneonepathconjecture}.

\begin{conjecture} \label{chargezeroonepathconjecture} Let $k \ge 1.$
Then $[2k,0,1,0]=k+3.$
\end{conjecture}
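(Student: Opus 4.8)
The plan is to derive both bounds from Conjecture~\ref{chargeoneonepathconjecture}, which asserts $[2k+1,1,0,1]=k+3$; indeed this is exactly why the statement is billed as a direct corollary.

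\emph{Upper bound.} First I would obtain $[2k,0,1,0]\le k+3$ as an immediate application of Lemma~\ref{neutraltocharged} with $(M,C,N,O)=(2k,0,1,0)$. The hypotheses are trivially met: $M=2k$, $C=0$, $O=0$ are all even, $C\le M$, $O\ge C$, and $N=1\ge 1$. The relevant source set is $\mathcal{A}_{M+1,C+1,N-1,O+1}=\mathcal{A}_{2k+1,1,0,1}$, which by Conjecture~\ref{chargeoneonepathconjecture} contains every integer $\ge k+3$. It then remains only to verify the critical inequality~(\ref{criticalinequality}) for each candidate dimension $d\ge k+3$; with $N=1$ it reads $d+1-\binom{2}{2}>\frac{2k}{2}+1$, that is $d>k+1$, which holds for all $d\ge k+3$. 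Since also $k+3\ge\log_2(2k+2)$, Lemma~\ref{neutraltocharged} gives $d\in\mathcal{A}_{2k,0,1,0}$ for every $d\ge k+3$, whence $[2k,0,1,0]\le k+3$.

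\emph{Lower bound.} The reverse inequality $[2k,0,1,0]\ge k+3$, equivalently $k+2\notin\mathcal{A}_{2k,0,1,0}$, is the substantive part, and I expect it to be the main obstacle. It cannot simply be transferred from $[2k+1,1,0,1]\ge k+3$ through Lemma~\ref{neutraltocharged}, whose implication runs in the wrong direction, so a genuine counterexample in $\Q_{k+2}$ is required: a neutral fault $\F$ of mass $2k$ together with a red--green pair admitting no Hamiltonian path of $\Q_{k+2}-\F$ joining them. For $k=1$ such a configuration is already supplied by Lemma~\ref{twodeletedonepath} (the forbidden pairs $\mathcal{B}_{\{r_1,g_1\}}$ in $\Q_3$), and for $k=2$ by the explicit $\Q_4$ example used to prove $[4,0,1,0]=5$; the remaining task is to manufacture such configurations for all $k$.

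\emph{Why this is hard.} The difficulty is structural. Because a neutral fault of mass $2k$ deletes exactly $k$ vertices of each colour, it can lower the degree of any single vertex of $\Q_{k+2}$ by at most $k$, so no vertex can be pushed below degree~$2$. Consequently the familiar local obstructions are unavailable: one cannot create a non-terminal vertex of degree $\le 1$ (which would have to be an endpoint), nor force a short cycle, since forcing a $4$-cycle already requires $2k$ same-colour deletions and thereby destroys neutrality. Any counterexample must therefore rest on a global bottleneck, exactly as in the $k=1$ case, where $\Q_3$ minus an antipodal pair is a single $6$-cycle and two terminals that are non-adjacent on that cycle cannot be joined. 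I would accordingly search for a family realising a comparable global obstruction in $\Q_{k+2}$, for instance by embedding a low-dimensional bad configuration and arguing inductively on $k$; producing such a family uniformly is the crux of the problem.
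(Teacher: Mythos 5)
The genuine gap is the lower bound, which is the only part of this statement the paper actually proves unconditionally. Your upper-bound derivation is correct and matches the paper's intent: the statement is billed as a ``direct corollary'' of Conjecture \ref{chargeoneonepathconjecture} precisely because Lemma \ref{neutraltocharged} with $(M,C,N,O)=(2k,0,1,0)$ transfers membership from $\mathcal{A}_{2k+1,1,0,1}$, the critical inequality reducing to $d>k+1$. But you then declare that manufacturing a counterexample family in $\Q_{k+2}$ is ``the crux of the problem'' and stop, whereas the paper produces one in two lines. Take the standard generators $x_1,\dots,x_{k+2}$ of ${\bf Z}_2^{k+2}$, any red vertex $r$, and set $\F=\{rx_1,\dots,rx_k,\ rx_{k+2}x_1,\dots,rx_{k+2}x_k\}$: this deletes $k$ green neighbors of $r$ and $k$ red neighbors of the adjacent green vertex $rx_{k+2}$, so $\F$ is a neutral fault of mass $2k$. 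Prescribe the neutral pair $rx_{k+1}$, $rx_{k+1}x_{k+2}$. In $\Q_{k+2}-\F$ the vertices $r$ and $rx_{k+2}$ each have degree exactly $2$, and neither is a terminal, so any Hamiltonian path joining the prescribed ends must use both remaining edges at each of them; this forces the segment $rx_{k+1},\ r,\ rx_{k+2},\ rx_{k+1}x_{k+2}$, whose two ends are exactly the prescribed terminals. Hence the whole path would have to be this length-$3$ segment, which misses $2^{k+2}-2k-4>0$ vertices when $k\ge 1$. Therefore $k+2\notin\mathcal{A}_{2k,0,1,0}$ and $[2k,0,1,0]\ge k+3$.

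The reason you missed this is that your structural heuristic led you astray. From the (correct) observation that a neutral fault cannot push any single vertex of $\Q_{k+2}$ below degree $2$, you conclude that local obstructions are unavailable and a ``global bottleneck'' analogous to the $\Q_3$ six-cycle is needed. But degree $2$ already suffices for a local obstruction: pinching two \emph{adjacent} vertices down to degree $2$ each and placing the two terminals at their remaining free neighbors traps the path. Neutrality is no impediment, because the $k$ green deletions are spent on neighbors of the red vertex $r$ while the $k$ red deletions are spent on neighbors of the green vertex $rx_{k+2}$. (For $k=1$ this construction specializes exactly to the forbidden-pair phenomenon of Lemma \ref{q3minustwovertices}: the deleted pair is adjacent and the terminals are the antipodes of the deleted vertices, consistent with your remark about $\Q_3$; so the uniform family you were searching for is the natural generalization of the very example you cited.)
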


In this article we have proved this conjecture for $k=1$ and $k=2.$ Let us prove that $[2k,0,1,0] \ge k+3.$

Let  $x_1, x_2, \dots, x_{k+2}$ be the standard generators of $ {\bf Z}_2^{k+2}.$ We select any red vertex $r$ in
$\Q_{k+2}$ and set
$$\F = \{rx_1, rx_2, \dots, rx_k, rx_{k+2}x_1, rx_{k+2}x_2, \dots,
rx_{k+2}x_{k} \}.$$ Then the only path that connects $rx_{k+1}$ to $rx_{k+1}x_{k+2}$ and visits $r$ is of length $3$
and cannot be a Hamiltonian path of $ \Q_{k+2} - \F$ if $k\ge 1.$

\begin{conjecture} \label{chargeonetwopathsconjecture}
Let $k \ge 0.$ Then   $[2k+1,1,1,1]=k+4.$
\end{conjecture}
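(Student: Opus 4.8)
The plan is to prove the two bounds $[2k+1,1,1,1]\ge k+4$ and $[2k+1,1,1,1]\le k+4$ separately, handling the upper bound by a reduction to a purely charged problem and the lower bound by an explicit local obstruction. Throughout, by the red--green duality and the balance condition, it suffices to consider a fault $\F$ in $\Q_n$ with $k+1$ red and $k$ green deleted vertices and to prescribe one neutral pair $\{r_1,g_1\}$ together with one (necessarily green) charged pair $\{g_2,g_3\}$; this is exactly the shape of the two settled base cases $[1,1,1,1]=4$ and $[3,1,1,1]=5$ (Lemma~\ref{l4202}).

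For the upper bound I would not attack the mixed problem directly but instead reduce it to an all-charged one. Applying Lemma~\ref{neutraltocharged} with $(M,C,N,O)=(2k+1,1,1,1)$, the derived tuple is $(2k+2,2,0,2)$, and for a dimension $m$ the critical inequality (\ref{criticalinequality}) collapses to $m+1-1>\tfrac{2k+2}{2}+1+1$, i.e. $m>k+3$. Hence at $m=k+4$ the hypothesis of Lemma~\ref{neutraltocharged} is met, while the side condition $m\ge\log_2(2k+6)$ is automatic; therefore $k+4\in\mathcal{A}_{2k+2,2,0,2}$ would force $k+4\in\mathcal{A}_{2k+1,1,1,1}$, that is $[2k+1,1,1,1]\le k+4$. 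Thus the entire upper bound is reduced to the companion statement $[2k+2,2,0,2]\le k+4$, whose base cases $[2,2,0,2]=4$ and $[4,2,0,2]=5$ (Lemma~\ref{l4202}) are already in hand. (The symmetric route through Lemma~\ref{greenchargedtoneutral}, which relates $[2k+1,1,1,1]$ to $[2k+2,0,2,0]$, is also available and the inequality (\ref{criticalinequality3}) likewise holds at $m=k+4$.)

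I would then establish $[2k+2,2,0,2]\le k+4$ by induction on $k$, following the plate-splitting template of Lemma~\ref{threechargedpaths}. For $n\ge k+4$ one writes $\F$ as $k+2$ red and $k$ green deletions together with two green charged pairs $\{g_1,g_2\},\{g_3,g_4\}$, splits $\Q_n$ into a top and a bottom copy of $\Q_{n-1}$ along a coordinate chosen to distribute the $k+2$ red deletions between the plates, and then runs through the distributions of the four green terminals. Each plate then carries at most $k+1$ red deletions, so the two subproblems fall either to the induction hypothesis $[2k,2,0,2]\le k+3$ or to already-proved values such as $[2,0,1,0]=4$ (Lemma~\ref{twodeletedonepath}), $[3,1,0,1]=4$, $[5,1,0,1]=5$, $[1,1,1,1]=4$, and Havel's and Dvo\v{r}\'{a}k's lemmas; the two partial coverings are stitched together across two parallel bridges exactly as in Lemmas~\ref{threecoveringlemma} and~\ref{threechargedpaths}, invoking the separation Lemma~\ref{separatinglemma} whenever a terminal must be kept away from a bridge. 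This inductive bookkeeping --- simultaneously keeping the bridges, the deleted vertices, and the terminals pairwise non-adjacent on both plates --- is the bulk of the work and the place most likely to throw up awkward low-dimensional boundary cases.

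For the lower bound I would generalize the obstruction already visible at $k=1$. In $\Q_{k+3}$ pick a green vertex $g^{\ast}$ and delete $k+1$ of its $k+3$ red neighbors (this uses all $k+1$ red deletions exactly), leaving $g^{\ast}$ with precisely two red neighbors $s_1,s_2$; choosing $g^{\ast}$ to be a non-terminal forces both edges $s_1g^{\ast}$ and $g^{\ast}s_2$ into whichever covering path contains $g^{\ast}$. I would then place the remaining $k$ green deletions and the four terminals around $s_1$ and $s_2$ so that these two forced edges, propagated through the nearly saturated neighborhoods of $s_1$ and $s_2$, compel $g_1$ and $g_2$ (endpoints of the two \emph{different} prescribed pairs) to lie on one and the same covering path, contradicting the prescription --- precisely as the choice $g^{\ast}=(0,0,0,1)$, $s_1=(1,0,0,1)$, $s_2=(0,0,1,1)$ does in the $\Q_4$ witness for $k=1$. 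The delicate point, and the main obstacle on this side, is the parity-induced ``off by one'': the $k+1$ available red deletions trap only one green vertex to degree two, so one cannot force an immediate short cycle and the contradiction must instead be extracted from the interaction of the single trap with the prescribed terminals. Producing a clean, dimension-uniform such configuration, rather than an ad hoc one for each $k$, is the step that will require the most care.
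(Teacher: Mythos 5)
First, be aware of what you are being compared against: this statement is a \emph{conjecture} in the paper, not a theorem. The paper itself establishes only two things about it: the equality for $k=0,1$ (via $[1,1,1,1]=4$ and $[3,1,1,1]=5$, Lemma~\ref{l4202}), and, in the discussion immediately following the conjecture, the general lower bound $[2k+1,1,1,1]\ge k+4$ via an explicit blocking configuration. Your proposal completes neither of your two halves. On the lower bound, your plan stops exactly where the work begins: you concede that producing a clean, dimension-uniform configuration ``will require the most care,'' but that configuration \emph{is} the proof, and the paper exhibits one. Moreover, your single-trap design (spending all $k+1$ red deletions on the neighborhood of one green vertex $g^{*}$) runs head-on into the off-by-one problem you yourself identify; the way around it is a different design, namely trapping \emph{two adjacent} vertices and letting the prescribed terminals do the rest of the blocking. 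Concretely, the paper takes a red vertex $r$ in $\Q_{k+3}$, deletes the $k$ green vertices $rx_1,\dots,rx_k$ and the $k+1$ red vertices $rx_{k+3}x_1,\dots,rx_{k+3}x_{k+1}$, and prescribes the green charged pair $\{rx_{k+1},rx_{k+2}\}$ and the neutral pair $\{rx_{k+2}x_{k+3},g\}$. The non-terminal vertex $rx_{k+3}$ then has only two available neighbors ($r$ and $rx_{k+2}x_{k+3}$), and the non-terminal vertex $r$ has only three ($rx_{k+1}$, $rx_{k+2}$, $rx_{k+3}$); since non-terminals must be interior to covering paths, every covering is forced to contain the length-three path $rx_{k+i},\,r,\,rx_{k+3},\,rx_{k+2}x_{k+3}$ for some $i\in\{1,2\}$, and both of its endpoints are terminals belonging to \emph{different} prescribed pairs, a contradiction. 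The idea your sketch is missing is precisely that the terminals themselves serve as blockers, so that $k$ deletions suffice to pin $r$ and $k+1$ to pin $rx_{k+3}$.

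On the upper bound, your reduction via Lemma~\ref{neutraltocharged} is correct (with $(M,C,N,O)=(2k+1,1,1,1)$ the critical inequality at dimension $k+4$ reads $k+4>k+3$), and it is in fact exactly how the paper handles $k=1$: in Lemma~\ref{l4202} the inclusion $5\in\mathcal{A}_{4,2,0,2}$ is what yields $[3,1,1,1]=5$. But the statement you reduce to, $[2k+2,2,0,2]\le k+4$, is itself open for $k\ge 2$, and your inductive sketch for it does not close. Splitting a fault of mass $2k+2$ across two plates leaves one plate carrying a fault of mass up to $2k+1$, so the plate subproblems cannot all be discharged by the fixed small-parameter lemmas you list ($[2,0,1,0]=4$, $[3,1,0,1]=4$, $[5,1,0,1]=5$, $[1,1,1,1]=4$); they require growing-mass values such as $[2k+1,1,0,1]=k+3$, which is Conjecture~\ref{chargeoneonepathconjecture} of this same paper, unproved here for $k\ge 3$ and settled only in the follow-up work \cite{CGGL2}, \cite{CGGL4}. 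So your upper-bound argument is a program, not a proof --- which is exactly why the statement is posed as a conjecture: at the time of this paper even the $k=2$ value $[5,1,1,1]$ was known only to satisfy $[5,1,1,1]\ge 5$ (see the table in Appendix~\ref{B}), whereas the conjecture asserts it equals $6$.
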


In this article we have proved this conjecture for $k=0,1.$  Let us prove that $[2k+1,1,1,1] \ge k+4.$

Let  $\{x_1, x_2, \dots, x_{k+3}\}$ be the standard generators of $ {\bf Z}_2^{k+3}.$ We select any red vertex $r$ in
$\Q_{k+3}$ and set
$$\F = \{rx_1, rx_2, \dots, rx_k, rx_{k+3}x_1, rx_{k+3}x_2, \dots,
rx_{k+3}x_{k+1} \}.$$ Then there does not exist a $2-$path covering of $\Q_{k+3} - \F $ that connects $rx_{k+1}$ to
$rx_{k+2}$ and $ r x_{k+2}x_{k+3}$ to any green vertex $g\notin \F$ for $r$ and $rx_{k+3}$ are blocked between all
deleted and terminal vertices.

Even though our main focus in this article is the production of path coverings with prescribed ends for the hypercube
with or without deleted vertices, we occasionally have considered the more general problem of prescribing ends and
edges. The following conjecture is related to this problem.

\begin{conjecture} Let $k \ge 0$ and $n \ge k+4.$ Let also $\F$
be any fault in $\Q_n$ with $k+1$ red vertices and $k$ green vertices, $g_1$ and $g_2$ be two green vertices in $ \Q_n -
\F,$ and $e=\{a,b\}$ be any edge different from $\{g_1,g_2\}$ and not incident to any of the vertices of $\F.$ Then
there exists a Hamiltonian path of $\Q_n - \F$ that connects $g_1$ to $g_2$ and passes through the edge $e.$
\end{conjecture}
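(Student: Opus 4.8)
The plan is to reduce the statement to two of the earlier conjectures by splitting the desired path at the prescribed edge. Write $e=\{a,b\}$ with $a$ green and $b$ red, which is forced since the two endpoints of an edge always have opposite parity. Because $b$ is red it can equal neither green terminal $g_1$ nor $g_2$, and since $a$ is green and $e\neq\{g_1,g_2\}$, the edge $e$ can be incident to at most one terminal, and only through its green endpoint $a$. This produces exactly two cases: the generic case $a\notin\{g_1,g_2\}$, and the degenerate case $a=g_1$ (the case $a=g_2$ being symmetric).

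In the generic case I would regard the sought path as cut by $e$ into a piece from $g_1$ to $a$ and a piece from $b$ to $g_2$, glued along $e$. Accordingly, set $\mathcal{E}=\{\{g_1,a\},\{b,g_2\}\}$. Here $\{g_1,a\}$ is a green charged pair and $\{b,g_2\}$ is a neutral pair, so $N=1$ and $O=1$; the four vertices are distinct and lie in $\Q_n-\F$, and one checks $g(\mathcal{E})-r(\mathcal{E})=1=r(\F)-g(\F)$, so $\mathcal{E}$ is in balance with $\F$. Since $\F$ has mass $2k+1$ and charge $1$, Conjecture \ref{chargeonetwopathsconjecture} (that $[2k+1,1,1,1]=k+4$) together with $n\ge k+4$ yields a $2$-path covering of $\Q_n-\F$ realizing $\mathcal{E}$. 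Re-inserting the edge $e$ concatenates the two paths into a single Hamiltonian path of $\Q_n-\F$ from $g_1$ to $g_2$ that traverses $e$.

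In the degenerate case $a=g_1$ I would instead absorb $g_1$ into the fault. The set $\F'=\F\cup\{g_1\}$ then has $k+1$ red and $k+1$ green vertices, hence is a neutral fault of mass $2k+2$, while neither $b$ (red, not in $\F$) nor $g_2$ lies in $\F'$. By Conjecture \ref{chargezeroonepathconjecture} applied with parameter $k+1$ (that $[2k+2,0,1,0]=k+4$) and $n\ge k+4$, there is a Hamiltonian path of $\Q_n-\F'$ joining the neutral pair $b$ and $g_2$; prefixing the edge $e=\{g_1,b\}$ turns it into a Hamiltonian path of $\Q_n-\F$ from $g_1$ to $g_2$ that starts along $e$. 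Note that for $k=0$ the two invoked statements are already the established facts $[1,1,1,1]=4$ and $[2,0,1,0]=4$, so there the argument is unconditional and recovers the red/green dual of Lemma \ref{chargeoneoneedgeonepath}. The reduction itself is routine bookkeeping; the real difficulty is shifted entirely onto Conjectures \ref{chargeonetwopathsconjecture} and \ref{chargezeroonepathconjecture}, whose proofs for general $k$ demand the full plate-splitting machinery of Sections 3--5 and are carried out in this paper only for small $k$. Thus the main obstacle lies not in the present statement but in the two lower-level conjectures on which it rests.
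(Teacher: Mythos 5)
The first thing to note is that the statement you were asked to prove is not proved in the paper at all: it appears there as a conjecture, and the authors establish only the case $k=0$ (which is the red/green dual of Lemma \ref{chargeoneoneedgeonepath}) plus a construction showing that the hypothesis $n \ge k+4$ cannot be weakened. Your reduction itself is sound in every detail: the parity argument forcing $a$ green and $b$ red, the observation that $e$ can meet at most one terminal and only through $a$, the balance check $g(\mathcal{E})-r(\mathcal{E})=1=r(\F)-g(\F)$ in the generic case, and the passage to the neutral fault $\F'=\F\cup\{g_1\}$ of mass $2k+2$ in the degenerate case. Indeed, this two-case splitting (use $[\,\cdot\,,\cdot,1,1]$ when $e$ misses the terminals, absorb the terminal into the fault and use $[\,\cdot\,,0,1,0]$ when it does not) is exactly the argument by which the paper proves Lemma \ref{chargeoneoneedgeonepath}. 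A bonus you did not point out: since the paper proves $[3,1,1,1]=5$ (Lemma \ref{l4202}) and $[4,0,1,0]=5$, your reduction also settles the case $k=1$ unconditionally, which is more than the paper explicitly claims for this conjecture.

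The genuine gap is that, for $k\ge 2$, what you have written is not a proof but a conditional reduction. Both pillars it rests on are themselves open conjectures of the paper: Conjecture \ref{chargeonetwopathsconjecture}, i.e.\ $[2k+1,1,1,1]=k+4$, is established there only for $k=0,1$, and Conjecture \ref{chargezeroonepathconjecture}, i.e.\ $[2k,0,1,0]=k+3$, which you need at parameter $k+1$, is established only for parameters $1$ and $2$ (it is in turn a corollary of Conjecture \ref{chargeoneonepathconjecture}, likewise open in the paper beyond small cases). So for general $k$ your argument proves only the implication that the present conjecture follows from those two; the actual difficulty --- the plate-splitting inductions that would be needed to prove the two path-covering statements for all $k$ --- is left entirely untouched. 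You acknowledge this honestly, and the implication is a worthwhile observation, but it should be understood that after your argument the statement remains exactly as open as it is in the paper.
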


In this article we have proved this conjecture for $ k =0.$ To see that $n \ge k+4$, assume that $n=k+3,$ and  let $r$
and $\F$ be selected as in the discussion of Conjecture \ref{chargeonetwopathsconjecture}. Let $g_1 = r x_{k+1}, g_2 =
rx_{k+2} ,$ and $e= \{g_2, rx_{k+3} x_{k+2}\}.$ Then the only path in $\Q_n - \F $ that connects $g_1$ to $g_2,$ passes
through $e,$ and visits $ rx_{k+3}$ is $ g_2, rx_{k+3}x_{k+2}, rx_{k+3}, r, g_1$ which obviously is not a Hamiltonian
path of $\Q_{k+3}- \F.$

Finally, we point out that in \cite{castanedagotchev3} we use results from this article to obtain the following
generalization of a theorem  of Fu \cite{fu}:

\begin{theorem}[\cite{castanedagotchev3}] Let $f$ and $n$ be integers with $n \ge 5$ and $0 \le f \le 3n-7.$
Then for any set of vertices $\F$ of cardinality $f$ in $\Q_n$ there exists a cycle in $\Q_n- \F$ of length at least
$2^n - 2f.$
\end{theorem}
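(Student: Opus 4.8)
The plan is to prove the result by induction on $n$, splitting $\Q_n$ into two plates $\Q_n^{top}\cong\Q_n^{bot}\cong\Q_{n-1}$ and using the prescribed-ends path coverings of this paper to stitch the plates together. First I would record the bipartite bookkeeping. Writing $r(\F)$ and $g(\F)$ for the numbers of red and green deleted vertices and assuming without loss of generality that $r(\F)\ge g(\F)$, any cycle of $\Q_n-\F$ uses equally many red and green vertices, so its length is at most $2^n-2r(\F)\ge 2^n-2f$; thus a cycle of length $2^n-2f$ is one that avoids $\F$ and leaves off exactly $g(\F)$ further red and $r(\F)$ further green vertices. Because the target length is $2^n-2f$ rather than $2^n-f$, I have a budget of $f$ extra vertices to discard, and this slack is precisely what keeps the per-plate fault counts inside the inductive range even when $f$ is as large as $3n-7$.

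To make the induction close I would strengthen the statement so that it also produces, for any prescribed non-deleted edge $e$, a cycle of length at least $2^n-2f$ through $e$ (equivalently a long path between the two adjacent endpoints of $e$). This strengthening is exactly the prescribed-ends feature captured by the symbols $[M,C,N,O]$, and it is what lets two plate-cycles be glued: if $e=\{a,b\}$ lies in the top plate with $av,bv$ non-deleted, I take a long top cycle through $e$ and a long bottom cycle through the mirror edge $\{av,bv\}$, delete $e$ and $\{av,bv\}$, and reconnect by the two bridges $\{a,av\}$, $\{b,bv\}$; the result is a single cycle whose length is the sum of the two, hence at least $2^n-2f$, and it can itself be routed through any further prescribed edge. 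Without this compatibility one is reduced to matching an edge of one plate-cycle to an edge of the other by pure counting, which fails because the cyclic edge densities are only of order $1/n$; resolving this matching is the first main obstacle, and the prescribed-ends lemmas (e.g.\ Corollary~\ref{corpathconnectingadjacentvertices}, $[2,0,1,0]=4$, $[3,1,0,1]=4$, $[4,2,0,2]=5$, $[5,1,0,1]=5$) are exactly the tool that removes it.

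The inductive step then reduces to choosing the splitting dimension $v$ well. I would split so that both plates carry at most $3(n-1)-7=3n-10$ faults; a dimension failing this must place at least $3n-9$ faults on one side and at most $f-3n+9\le 2$ on the other, so it forces nearly all faults to agree in coordinate $v$. Let $m$ be the coordinatewise majority vertex of $\F$. If \emph{every} dimension were bad then $\sum_{x\in\F}\mathrm{dist}(x,m)\le n(f-3n+9)$, while distinctness of the faults gives $\sum_{x\in\F}\mathrm{dist}(x,m)\ge f-1$ (an isoperimetric-type count in the spirit of Lemma~\ref{neighborscardinality}). Comparing the two bounds shows that a balanced dimension always exists except possibly when $f=3n-7$ and $5\le n\le 8$. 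In the generic balanced case I apply the strengthened induction hypothesis to each plate and glue as above, obtaining the required cycle of length at least $2^n-2f$.

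The remaining work, and the second main obstacle, is the finite list of exceptional configurations: the base dimension $n=5$ (which cannot be reduced below $\Q_4$) and the highly concentrated near-maximal faults in dimensions $5\le n\le 8$. For these I would still split into two plates, but now let one plate carry almost all of the faults while the other is nearly intact; the heavily faulted plate is covered by a long prescribed-ends path or cycle using the catalogue of small-dimensional values already established ($[2,2,0,2]=4$, $[3,1,1,1]=5$, $[4,0,1,0]=5$, $[0,0,3,0]=5$ of Lemma~\ref{threecoveringlemma}, $[3,3,0,3]\le 6$ of Lemma~\ref{threechargedpaths}, and the Hamiltonicity results $[4]=4$, $[6]=5$, $[8]=6$ of Lemmas~\ref{locke2}, \ref{locke3}, \ref{locke4}), while Lemma~\ref{separatinglemma} keeps the prescribed endpoints well separated along the covering paths. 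Assembling these special cases is routine but lengthy, and it is where essentially all of the case analysis is consumed.
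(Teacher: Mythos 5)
This theorem is only quoted in the paper from \cite{castanedagotchev3}; no proof of it appears in the present article, so your proposal can only be judged on its own terms, and it contains a fatal gap. The gap is that your strengthened induction hypothesis --- for every fault $\F$ with $|\F|=f\le 3n-7$ and \emph{every} non-deleted edge $e$ there is a cycle of $\Q_n-\F$ of length at least $2^n-2f$ through $e$ --- is false throughout the range in which you need it. Take $e=\{a,b\}$ and let $\F$ consist of the $n-1$ neighbors of $a$ other than $b$; then $f=n-1\le 3n-7$ for all $n\ge 3$, yet $a$ has degree one in $\Q_n-\F$, so no cycle of $\Q_n-\F$ passes through $a$, let alone through $e$. (Taking $\F=\bigl(\mathcal{N}(a)\cup\mathcal{N}(b)\bigr)\setminus\{a,b\}$, of size $2n-2\le 3n-7$ for $n\ge 5$, isolates both endpoints.) A universally prescribed edge can always be strangled by clustering faults around its endpoints, and your budget $f=\Theta(n)$ is exactly large enough to do this; note that every prescribed-end result in this paper keeps the fault mass bounded by a constant while $n$ grows, precisely to avoid this phenomenon. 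Since your inductive step applies the strengthened hypothesis to plates of dimension $n-1$ with fault budget $3(n-1)-7=3n-10$, and $3n-10\ge (n-1)-1$ for all $n\ge 4$, the step invokes a false statement and the induction does not close. What your gluing actually requires is much weaker --- \emph{some} edge of the already-built top cycle whose mirror edge is fault-free and can be forced into a long bottom cycle --- so any repair must replace ``through any prescribed edge'' by an existential or adaptive formulation (a long cycle through at least one edge of a prescribed large family, or a prescribed-\emph{ends} long-path-covering statement in the spirit of the $[M,C,N,O]$ machinery, where feasibility conditions are built in). That is not a cosmetic change: it changes what must be proved on the plates.

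Even granting a corrected formulation, essentially all of the difficulty sits exactly where you defer to ``routine but lengthy'' work. The base case $n=5$ with $f$ up to $8$, and the concentrated configurations with $f=3n-7$ and $6\le n\le 8$, cannot be handled by the catalogue you list: in the concentrated case the heavy plate carries at least $3n-9>3(n-1)-7$ faults, which is beyond both the induction hypothesis and every result of this paper (whose largest fault mass is $8$, and that only for neutral faults in dimension at least $6$). Pure counting also fails in these cases: already in $\Q_5$ with eight concentrated faults, the light plate may contain two like-colored faults, so its longest cycle has length at most $12$, while the six faults on the heavy plate can block up to twelve of that cycle's mirror edges, leaving no detour edge guaranteed by counting. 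So these cases require bespoke constructions in which the gluing edges and terminals are chosen during the construction rather than prescribed in advance; they are the heart of the proof, not an afterthought.
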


\begin{appendix}
\section{$2-$path coverings of $\Q_4$}\label{2pathcoveringsofQ4}

When a neutral pair is deleted from $\Q_4$ one can still freely prescribe the ends for a $2-$path covering of the
resulting graph. In spite the fact that the dimension is so low we find it difficult to verify this statement by
inspection. Therefore, we provide a proof below for the benefit of the reader.

\begin{lemma} \label{neutralpathsinq4} Let $\F=\{r,g\}$ be a neutral fault in $\Q_4,$ and
$r_1,r_2,g_1,g_2$ be two red and two green vertices in $\Q_4-\F.$ Then there exists a $2-$path covering of $\Q_4 -\F$
with one path connecting $r_1$ to $g_1$ and the other connecting $r_2$ to $g_2.$
\end{lemma}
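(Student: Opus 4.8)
The plan is to split $\Q_4$ into two three-dimensional plates and to assemble each of the two prescribed paths from pieces living on the plates, joined by bridges. Since $r$ and $g$ have opposite parity they differ in at least one coordinate, so I may split $\Q_4$ along such a coordinate and assume $r\in\Q_4^{top}$ and $g\in\Q_4^{bot}$. Each plate is then a copy of $\Q_3$: the top plate $\Q_4^{top}-\{r\}$ has three red and four green usable vertices, and the bottom plate $\Q_4^{bot}-\{g\}$ has four red and three green. The automorphism that simultaneously interchanges the two colors and the two plates fixes this configuration (it sends the deleted red top vertex to the deleted green bottom vertex), so it induces a duality on the problem; together with the freedom to interchange the two prescribed pairs, this duality lets me classify the essentially different cases by the plate (top or bottom) occupied by the red and the green endpoint of each pair, reducing the analysis to a short list of representative distributions.

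The building blocks are the $\Q_3$-level results already available. Inside a full plate I would use Havel's lemma $[0,0,1,0]=1$ to produce a Hamiltonian path between two vertices of opposite parity and Dvo\v{r}\'ak's lemma $[0,0,2,0]=2$ to produce a prescribed $2$-path covering. On the top plate, which is $\Q_3$ missing the red vertex $r$, every Hamiltonian path has seven vertices and hence joins two \emph{green} vertices, and by $[1,1,0,1]=2$ any two prescribed green vertices can be so joined; dually, every Hamiltonian path of the bottom plate joins two prescribed red vertices. When a plate must in addition avoid a second, bridge-forced vertex of the opposite color — so that it behaves like $\Q_3$ with a neutral pair removed — I would appeal to Lemma \ref{q3minustwovertices}, using part (1) for the generic connection and parts (3)--(4) to reroute in the exceptional situation where the two relevant terminals form a pair in the forbidden family $\mathcal{B}$. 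The resulting plate pieces are then stitched into $P_1=r_1\!-\!g_1$ and $P_2=r_2\!-\!g_2$ by bridges, performing surgery (cutting a plate path at an edge whose endpoints, together with their bridge-neighbours, are neither terminals nor deleted vertices) whenever a path must cross between the plates or a plate segment must be inserted into a path anchored on the other plate.

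I would then dispose of the representative cases one at a time. When both endpoints of a path lie on the same plate, that path must nonetheless make an excursion to the other plate, because each plate carries an odd number of usable vertices and so cannot be covered on its own; I would realize such an excursion by surgery, cutting a Hamiltonian plate path produced by $[1,1,0,1]=2$ and splicing in a segment of the opposite plate, the cut edge being chosen with the help of Lemma \ref{twodifferentedges} so that the inserted segment avoids the remaining terminals. When the two endpoints of a path lie on opposite plates, I would build that path from a top segment and a bottom segment joined by a bridge, the leftover vertices of each plate being absorbed either into the other path or into a parity-correcting excursion. In every case the plate segments are furnished by $[0,0,1,0]=1$, $[0,0,2,0]=2$, $[1,1,0,1]=2$, or Lemma \ref{q3minustwovertices}, and the bridge and cut vertices are chosen red or green so that the colors match and no terminal is consumed.

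The genuine difficulty, and the reason the statement resists direct inspection, is the smallness of the plates: a plate is only $\Q_3$, so after a deletion and a bridge the remaining room is extremely tight, the parity of the plates (seven vertices each) forces carefully placed crossings, and the exceptional non-connectable pairs recorded by the sets $\mathcal{B}_{\{\cdot,\cdot\}}$ in Lemma \ref{q3minustwovertices} genuinely obstruct some of the naive routings. The plan to overcome this is to exploit the available freedom — the choice of splitting coordinate (when $r$ and $g$ are at distance three there are three admissible coordinates), the choice of which green or red bridge vertices to use, and the two distinct reroutings supplied by parts (3) and (4) of Lemma \ref{q3minustwovertices} — to steer every case away from the forbidden configurations. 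Verifying that this can always be arranged, case by case, is the crux of the proof.
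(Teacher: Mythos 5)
There is a genuine gap: your proposal is a plan rather than a proof. The entire case analysis --- which you yourself describe as ``the crux of the proof'' --- is never carried out; you list the tools you would use and the freedoms you would exploit, but you do not exhibit a single case, and you do not verify that the exceptional configurations can always be avoided. This matters because the obstructions are real: the paper's own counterexamples showing $[1,1,1,1]>3$ and $[3,1,0,1]>3$ live precisely in $\Q_3$ minus one red vertex, i.e.\ in exactly the deficient plates your decomposition produces. So whenever your strategy calls for a prescribed-ends $2$-path covering of a once-deficient plate (and several of your case types force this), the needed sub-result is simply false in general, and nothing in your sketch shows that the choices of splitting coordinate and bridge vertices always suffice to steer around the bad configurations on \emph{both} plates simultaneously. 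A smaller but telling slip: your parity argument that ``each plate carries an odd number of usable vertices and so cannot be covered on its own'' is wrong as stated --- a charged (green--green) Hamiltonian path of $\Q_4^{top}-\{r\}$ covers all seven of its vertices; parity only forbids a \emph{neutral} path from doing so, and this distinction governs where crossings are actually forced.

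Your decomposition also forfeits the one structural simplification that makes the paper's proof tractable. Since $r$ and $g$ have opposite parity, in $\Q_4$ they differ in one or three coordinates and hence \emph{agree} in at least one; the paper splits along a coordinate of agreement, putting both deleted vertices on the same plate. The other plate is then an intact copy of $\Q_3$, where $[0,0,1,0]=1$, $[0,0,2,0]=2$, and Corollary \ref{refinedhavel} apply without reservation, while the deficient plate is exactly the object classified by Lemma \ref{q3minustwovertices}, forbidden pairs $\mathcal{B}_{\{r,g\}}$ included. You instead split along a coordinate of disagreement, so that each plate loses a vertex and every case must fight the tight $n=3$ obstructions twice. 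The approach is not obviously unworkable, but it is strictly harder than the paper's, and in its present form --- with the decisive verifications deferred --- it does not establish the lemma.
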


\begin{proof} The deleted vertices $r$ and $g$ have opposite parity and
belong to $\Q_4$. Therefore we can split $\Q_4$ in such way that both vertices belong to the same plate, say
$\Q_4^{top}$. We consider all essentially different cases that depend on the distribution of the vertices
$r_1,r_2,g_1,g_2$ between the plates.

\emph{Case 1.} $r_1,r_2,g_1,g_2 \in \Q_4^{top}$.

\emph{Subcase 1(a).} Let  $\{r_1,g_1\}, \{r_2,g_2\} \in \mathcal{B}_{\{r,g\}}.$ Then there exists a one-letter word $x$
such that $(r_1,x), (g_1,x)$ is a $2-$path covering of $ \Q_4^{top} -\{r,g,r_2,g_2\}.$ Let $(r_1 x v, \mu), (r_2 v,
\nu)$ be a $2-$path covering of $\Q_4^{bot}$ that connects $r_1 xv$ to $ g_1 x v,$ and $r_2 v$ to $g_2v.$ Such path
covering exists since $[0,0,2,0]=2.$ The desired $2-$path covering of $\Q_4 -\{r,g\}$ is $(r_1, x v \mu v x),$ $(r_2, v
\nu v ).$

\emph{Subcase 1(b).} If either $\{r_1,g_1\}$ or $\{r_2,g_2\}$ is not in $\mathcal{B}_{\{r,g\}}$ we can assume without
loss of generality that  $\{r_1,g_1\} \not \in \mathcal{B}_{\{r,g\}}.$ Then, according to Lemma
\ref{q3minustwovertices}(1), there exists a Hamiltonian path $(r_1, \xi)$ of $\Q_4^{top}-\{r,g\}$ that connects $r_1$ to
$g_1$. Let $ \xi = \eta \theta \zeta $ with $ (r_1 \eta, r_1 \eta \theta)$ equals $(r_2, g_2) $ or $(g_2,r_2).$ Let also
$ (r_1 \eta' v, \mu)$ be a Hamiltonian path of $\Q_4^{bot}$ that connects $ r_1 \eta' v$ to $ g_1 (\zeta^R)'v.$ The
desired $2-$path covering of $\Q_4 - \{r,g\}$ is $(r_1, \eta' v \mu v \zeta^*), (r_1\eta, \theta).$

\emph{Case 2.} $r_1,r_2,g_1$ are on the top plate and $g_2$ is on the bottom plate.

\emph{Subcase 2(a).} If $\{g_1,r_2\} \not \in \mathcal{B}_{\{r,g\}}$ then, according to Lemma
\ref{q3minustwovertices}(1), there exists a Hamiltonian path $(g_1, \omega )$ of $ \Q_4^{top}- \F $ that connects $ g_1$
to $r_2.$ Let $\omega= \xi \eta $ with $ g_1 \xi = r_1$ and let $ ( r_1 \varphi(\eta)v, \theta)$ be a Hamiltonian path
of $ Q_4^{bot}$ that connects $ r_1 \varphi(\eta)v$ to $ g_2.$ The desired $2-$path covering of $\Q_4 - \{r,g\}$ is
$(g_1, \xi), (g_2,\theta^R v \eta^*).$

\emph{Subcase 2(b).} If $\{g_1,r_2\} \in \mathcal{B}_{\{r,g\}}$ then $\{g_1,r_1\} \not \in \mathcal{B}_{\{r,g\}}$ and
there exists a Hamiltonian path $ (g_1, \omega)$ of $\Q_4 - \F$ that connects $ g_1$ to $ r_1.$ Let $\omega =\xi \eta $
with $ g_1 \xi = r_2.$ We have to consider two sub-subcases:

(i) $g_2v = r_1 $ or $ g_2v = r_2.$

We observe that the lengths of $\xi$ and $\eta$ are $ 1$ and $4$ or $ 3$ and $2.$

If $\xi $ is the longer word, then we use $[0,0,2,0]=2$ to produce a $2-$path covering $(g_1 \xi'v, \mu), (g_1 \xi ''v,
\nu)$ of $\Q_4^{bot}$ that connects $ g_1 \xi' v$ to $g_2$ and $ g_1 \xi''v$ to $ r_2 \varphi(\eta)v.$ The desired
$2-$path covering of $\Q_4 - \{r,g\}$ is $(g_1, \xi''v\nu v \eta^*), (g_2,\mu^R v \varphi(\xi^R)).$

If $\eta$ is the longer word, then we use $[0,0,2,0]=2$ to produce a $2-$path covering $(g_1 \xi'v, \mu), (r_2
\varphi(\eta)v , \nu)$ of $\Q_4^{bot}$ that connects $ g_1 \xi' v$ to $r_1 (\eta^R)''v$ and $ r_2 \varphi(\eta)v $ to
$g_2.$ The desired $2-$path covering of $\Q_4 - \{r,g\}$ is $(g_1, \xi'v\mu v \eta^{**}), (r_2, \varphi(\eta)v\nu ).$

(ii) $g_2v$ is an interior vertex of the path $ (g_1, \omega).$

If $ \xi = \theta \zeta$ with $ g_1 \theta = g_2 v$ then we use $[1,1,0,1]=2$ to produce a Hamiltonian path $(g_1
\theta' v , \mu)$ of $ \Q_4^{top} - \{g_2\}$ that connects $ g_1\theta' v$ to $ r_2 \varphi(\eta)v.$ The desired
$2-$path covering of $\Q_4 - \{r,g\}$ is $(g_1, \theta' v \mu v \eta^*  ), (r_2, \zeta^R v  ).$

If $\eta = \theta \zeta $ with $ r_2 \theta = g_2 v $ then we use $[1,1,0,1]=2$ to produce a Hamiltonian path $(g_1 \xi'
v , \mu)$ of $ \Q_4^{top} - \{g_2\}$ that connects $ g_1\xi' v$ to $ r_1 (\zeta^R)'v.$ The desired $2-$path covering of
$\Q_4 - \{r,g\}$ is $(g_1, \xi' v \mu v \zeta^* ), (r_2, \theta v).$

\emph{Case 3.} $r_1,r_2 \in \Q_4^{top}$ and $g_1,g_2 \in \Q_4^{bot}$.

Find a Hamiltonian path of $\Q_4^{top}-\{g\}$ that connects $r_1$ to $r_2$. The vertex $r$ belongs to that path. Cut
that path just before $r$ and right after $r$ and connect these two vertices with bridges to the bottom plate. Let $r_3$
and $r_4$ be the ends of these bridges that belong to the bottom plate. Then use $[0,0,2,0]=2$ to find a $2-$path
covering of the bottom plate that connects $r_3$ and $r_4$ to the appropriate vertices $g_1$ and $g_2$.

\emph{Case 4.} $r_1,g_1 \in \Q_4^{top}$ and $r_2,g_2 \in \Q_4^{bot}$.

Consider $\Q_4^{top}$. It is not difficult to verify that either there is a Hamiltonian path for $\Q_4^{top}-\{r,g\}$
that connects $r_1$ to $g_1$ or there is a path with length $3$ connecting $r_1$ to $g_1$ such that exactly one edge
remains not covered. In the first case use $[0,0,1,0]=1$ to find a Hamiltonian path for $\Q_4^{bot}$ connecting $r_2$ to
$g_2$. In the second case denote by $r_3$ and $g_3$ the vertices in the bottom plate that are neighbors of the vertices
in $\Q_4^{top}$ that are not covered. Use Corollary \ref{refinedhavel} to find a Hamiltonian path for $\Q_4^{bot}$ that
connects $r_2$ to $g_2$ and passes trough the edge $\{r_3,g_3\}$. Cut that path at that edge and using two bridges
connect both pieces to the non-covered edge from the top plate.

\emph{Case 5.} $r_1,g_2 \in \Q_4^{top}$ and $r_2,g_1 \in \Q_4^{bot}$.

We consider two subcases:

\emph{Subcase 5(a).} Assume that $\{r_1,g_2\} \not \in \mathcal{B}_{\{r,g\}}.$ Then there is a Hamiltonian path $ (r_1,
\xi)$ of $ \Q_4^{top}- \{r,g\}$ that connects $r_1$ to $g_2.$ There are three sub-subcases that depend on whether or not
$r_2$ or $g_1$ are adjacent to vertices inside of the path $(r_1, \xi).$

(i) Assume that $\xi = \eta \theta$ with $r_1 \eta v= g_1.$ Let $(r_1 \eta \varphi(\theta)v, \mu) $ be a Hamiltonian
path of $\Q_4^{bot}- \{g_1\}$ that connects $r_1 \eta \varphi(\theta)v$ to $ r_2.$ The desired $2-$path covering of $
\Q_4 - \F$ for this case is $(r_1, \eta v), (r_2, \mu^R v \theta^*).$

(ii) Assume that $ \xi = \eta \theta$ with $g_2 \theta^R v= r_2.$ Let $(r_1 \eta'v, \mu) $ be a Hamiltonian path of
$\Q_4^{bot}- \{r_2\}$ that connects $r_1 \eta' v$ to $ g_1.$ The desired $2-$path covering of $ \Q_4 - \F$ for this case
is $(r_1, \eta' v \mu), (r_2, v \theta).$

(iii) Finally, let neither $r_2$ nor $g_1$ be adjacent to a vertex in the path $(r_1,\xi).$ Let $\xi = x y \eta $ for
some letters $x,y,$ and a word $\eta.$ Then there is a $2-$path covering $(r_1xv, \mu), (r_1 xyv,\nu)$ of $\Q_4^{bot}$
that connects $ r_1xv$ to $ g_1$ and $ r_1 xy v$ to $r_2.$ The desired $2-$path covering of $ \Q_4 - \F$ for this case
is $(r_1,xv  \mu), (r_2, \nu^R v\eta).$

\emph{Subcase 5(b).} Let $\{r_1,g_2\} \in \mathcal{B}_{\{r,g\}}.$ Then, according to Lemma \ref{q3minustwovertices},
there exist two distinct $2-$path coverings of $\Q_4^{top}-\{r,g\}$ with paths of length $2,$ one starting at $r_1$ and
the other starting at $g_2.$ We can choose a $2-$path covering of $\Q_4^{top}-\{r,g\}$ to be $(r_1, \xi), (g_2,\eta),$
with $r_1 \xi v \neq g_1$ or $ g_2 \eta v \neq r_2.$ There are three sub-subcases:

(i) Let $r_1 \xi v \neq g_1$ and $ g_2 \eta v \neq r_2.$ Let $ (r_1 \xi v, \mu), (g_2 \eta v, \nu)$ be a $2-$path
covering of $\Q_4^{bot}$ that connects $ r_1 \xi v $ to $ g_1$ and $ g_2 \eta v $ to $ r_2.$ The desired $2-$path
covering of $ \Q_4 - \F$ for this case is $(r_1,\xi v  \mu), (g_2, \eta v \nu).$

(ii) Let $r_1 \xi v \neq g_1$ and $ g_2 \eta v = r_2.$ Let $ (r_1 \xi v, \mu)$ be a Hamiltonian path  of $\Q_4^{bot}-
\{r_2\}$ that connects $ r_1 \xi v $ to $ g_1.$ The desired $2-$path covering of $ \Q_4 - \F$ for this case is $(r_1,\xi
v  \mu), (g_2, \eta v ).$

(iii)  Let $r_1 \xi v = g_1$ and $ g_2 \eta v \neq r_2.$ This case is completely symmetrical to case (ii).

\emph{Case 6.} $r_1 \in \Q_4^{top}$ and $r_2,g_1,g_2 \in \Q_4^{bot}$.

Use Lemma \ref{twodifferentedges} to find a Hamiltonian path of $\Q_4^{top}-\{g\}$ that connects $r$ to $r_1$ and such
that the vertex $g_3$ which is next to $r$ in this path is not adjacent to $r_2$. Let the second end of the bridge that
begins at $g_3$ be $r_3$. Use $[0,0,2,0]=2$ to find a $2-$path covering of the bottom plate that connects $r_3$ to $g_1$
and $r_2$ to $g_2$.

\emph{Case 7.} $r_1,r_2,g_1,g_2 \in \Q_4^{bot}$.

Use $[0,0,2,0]=2$ to find a $2-$path covering of $Q_4^{bot}$ that connects $r_1$ to $g_1$ and $r_2$ to $g_2$. Then find
an edge that belongs to one of the two paths whose neighbors $r_3$ and $g_3$ in $Q_4^{top}$ are not deleted vertices and
also $\{r_3,g_3\} \not \in \mathcal{B}_{\{r,g\}}.$ Cut that path at that edge and use Lemma \ref{q3minustwovertices} to
find a Hamiltonian path for $Q_4^{top}-\{r,g\}$ that connects $r_3$ to $g_3$.
\end{proof}

\section{Summary of results}\label{B}

The following table summarizes some of the results obtained in this paper.
The rows represent admissible combinations of $M$ and $C$ and the columns
contain all the values of $N$ and $O$ such that $N+O \le 3$. Each star in the
table represents an impossible case. The missing entries in the table
correspond to values of $[M,C,N,O]$ that we do not know yet. The inequalities
in the table represent an upper or lower bound of the corresponding entry.
Finally, the entries with an asterisk are results that were obtained after
this paper was submitted for publication and therefore their proofs are not
contained in this paper.

\begin{center}
\vskip10pt
\begin{tabular}{||c|c|c|c|c|c|c|c|c|c||}
\hline $MC \backslash NO$ & $01$ & $10$ & $20$ & $11$ & $02$ & $30$ & $21$ & $12$ & $03$ \\ \hline $00$  & $\star$ & $1$
& $2$ & $\star$ & $4$ & $5$ & $\star$ & $4^*$  &  $\star$   \\ $11$  & $2$ & $\star$ & $\star$ & $4$ & $\star$ & $\star$ &
$5^*$  &  $\star$  &  $5^*$\\ $20$  & $\star$ & $4$ & $4$ & $\star$ & $5$ &   & $\star$ &   &  $\star$   \\ $22$  & $\star$
& $\star$ & $\star$ & $\star$ & $4$ & $\star$ & $\star$ & $\le 6$  &  $\star$   \\ $31$  & $4$ & $\star$ & $\star$ & $5$
& $\star$ & $\star$ &  &  $\star$ &     \\ $33$  & $\star$ & $\star$ & $\star$ & $\star$ & $\star$ & $\star$& $\star$ &
$\star$  &  $\le 6$   \\ $40$  & $\star$ & $5$ & $5^*$ & $\star$ & $$ & $$ & $\star$ &   &  $\star$   \\ $42$  & $\star$ &
$\star$ & $\star$ & $\star$ & $5$ & $\star$ & $\star$ &   &  $\star$   \\ $44$  & $\star$ & $\star$ & $\star$ & $\star$
& $\star$ & $\star$ & $\star$ &  $\star$ &  $\star$   \\ $51$  & $5$ & $\star$ & $\star$ & $\geq 5$ & $\star$ & $\star$
& $ $ & $\star$  &  $  $   \\ \hline
\end{tabular}
\end{center}
\end{appendix}
\end{document}